\newcommand{\R}{{\mathbb R}}
\newcommand{\N}{\mathbb{N}}
\newcommand{\Sym}{\mathcal{S}}
\newcommand{\Symv}{{\vec{\mathcal{S}}}}
\newcommand{\BX}{\hat{B}}
\newcommand{\T}{\mathcal{T}}
\newcommand{\TX}{\hat{\T}}
\newcommand{\Q}{\mathcal{Q}}
\newcommand{\QX}{\hat{\Q}}
\newcommand{\I}{\mathcal{I}_n}
\newcommand{\IX}{\hat{\mathcal{I}}_n}
\newcommand{\pair}[1]{\langle #1 \rangle}
\newcommand{\alphav}{{\vec{\alpha}}}
\newcommand{\kappav}{{\vec{\kappa}}}
\newcommand{\deltav}{{\vec{\delta}}}
\newcommand{\lambdav}{{\vec{\lambda}}}
\newcommand\tauv{{\vec{\tau}}}
\newcommand{\Deltav}{{\vec{\Delta}}}
\newcommand\wl{\omega}
\newcommand\wlv{{\vec\wl}}
\newcommand{\deltabar}{\bar{\delta}}
\newcommand{\CP}{\mathcal{C}}
\newcommand{\CPv}{{\vec{\CP}}}
\newcommand{\OP}[1]{\theta_{#1}} % orbit partition
\newcommand{\SGP}[1]{\CP_{#1}} % subgraph partition
\newcommand{\SGPv}[1]{\CPv_{#1}} % subgraph partition
\newcommand{\Graphs}{\mathcal{G}}
\newcommand{\nuv}{\vec{\nu}}
\newcommand{\rhov}{\vec{\rho}}
\newcommand{\Y}[1]{Y_{\pair{#1}}}
\newcommand\piv{{\vec{\pi}}}
\newcommand{\Piv}{\vec{\Pi}}
\newtheorem{theorem}{Theorem}[section]
\newtheorem{lemma}[theorem]{Lemma}
\newtheorem{definition}[theorem]{Definition}
\newtheorem{corollary}[theorem]{Corollary}
\newtheorem{example}[theorem]{Example}
\DeclareMathOperator{\conv}{conv}
\begin{document}
\title{Sherali-Adams Relaxations of Graph Isomorphism Polytopes}

\author{Peter N. Malkin\footnote{BHP-Billiton Ltd, Resource and Business Optimisation. The paper was researched while the author was at the Department of Mathematics, University of California, Davis. Partially supported by NSF grant DMS-0914107 and an IBM OCR award.}}

\maketitle

\begin{abstract}
We investigate the Sherali-Adams lift \& project hierarchy applied to a graph isomorphism polytope whose integer points encode the isomorphisms between two graphs.  
In particular, the Sherali-Adams relaxations characterize a new vertex classification algorithm for graph isomorphism, which we call the \emph{generalized vertex classification algorithm}.  
This algorithm generalizes the classic vertex classification algorithm and generalizes the work of Tinhofer on polyhedral methods for graph automorphism testing.  
We establish that the Sherali-Adams lift \& project hierarchy when applied to a graph isomorphism polytope needs $\Omega(n)$ iterations in the worst case before converging to the convex hull of integer points.
We also show that this generalized vertex classification algorithm is also strongly related to the well-known Weisfeiler-Lehman algorithm, which we show can also be characterized in terms of the Sherali-Adams relaxations of a semi-algebraic set whose integer points encode graph isomorphisms.
\end{abstract}

\section{Introduction}

Classical combinatorial optimization problems have been traditionally approached
by means of linear programming relaxations.
Recently, there has been significant interest in understanding \emph{lift-and-project} techniques for
constructing hierarchies of such relaxations for combinatorial problems.  
These lift-and-project methods lift relaxations
to refined systems of polynomial equations and inequalities, and project back to the original space, offering tighter
approximations of the convex hull in question.  
Such lift-and-project procedures have been proposed by
Lov\'asz and Schrijver \cite{LovaszShrijver1991}, 
Lasserre \cite{Lasserre2001}, Sherali-Adams \cite{SheraliAdams1990}, among many others.
The key computational interest in these relaxations is that linear optimization over the $k^{th}$ relaxation can be done in polynomial time for fixed $k$. %(e.g. $O(n^k)$ time). 

Many relaxations of combinatorial problems have been investigated using these methods.  These methods have been shown not to converge in polynomial time for various NP-hard problems.  
Arora, Bollob\'{a}s, Lov\'{a}sz and Tourlakis \cite{AroraTourlakis} prove that Vertex Cover does not converge after $\Omega(\epsilon \log n)$ rounds of Lov\'{a}sz-Schrijver in the worst case, where $n$ is the number of vertices.  
Shoenebeck, Trevisan, and Tulsiani show that Max Cut does not converge after $\Omega(\epsilon n)$ rounds of Lov\'{a}sz-Schrijver in the worst case.  Shoenebeck further proved that Vertex Cover and $k$-Uniform Hypergraph Vertex Cover require at least $\Omega(n)$ rounds of the Lasserre relaxation in the worst case.  

The problems of determining whether a simple undirected graph has a non-trivial automorphism (graph automorphism problem) and whether two simple undirected graphs are isomorphic (graph isomorphism problem) are important problems in complexity theory as they are two of the few problems not known to be in $P$ nor to be $NP$-complete.
The combinatorial approaches such as the Weisfeiler-Lehman method (\cite{WeisfeilerLehman68}) do not give a polynomial time algorithm for graph automorphism or graph isomorphism (\cite{CaiFurer1992}).
It is then natural to study lift-and-project techniques to approach the graph isomorphism and automorphism problems applied to algebraic sets of points encoding graph automorphism and isomorphism.
In \cite{DeLoeraHillarMalkinOmar2010}, the authors investigated semidefinite relaxations of sets of points encoding the graph isomorphism and automorphism problems.
In this paper, we investigate the application of the Sherali-Adams technique to sets of points that encode the graph isomorphism and automorphism problems towards the ultimate goal of better understanding the Sherali-Adams technique and the complexity of these problems.

Specifically, we show that the Sherali-Adams relaxations of a well-known polytope encoding of graph isomorphism do not converge after $\Omega(n)$ steps in the worst case.
In order to do this, we show that the Sherali-Adams relaxations of this polytope correspond to an algorithm for graph isomorphism that is strongly related to the famous Weisfeiler-Lehman algorithm;
this result can be thought of as a $k$-dimensional analog of the work of Tinhofer \cite{Tinhofer1986}.
Lastly, we also give a semi-algebraic set encoding of graph isomorphism whose Sherali-Adams relaxations correspond to the Weisfeiler-Lehman algorithm.
There is thus a surprisingly strong correspondence between combinatorial approaches and polyhedral approaches to graph isomorphism.

\subsection{Background}

In this section, we introduce some of the most prevalent algorithms to deal with the graph isomorphism and automorphism problems.

For the remainder of the exposition, we consider simple undirected graphs with vertex set $V=\{1,2,...,n\}$.
We denote the set of edges of a graph $G$ by $E_G$ and the adjacency matrix of $G$ by $A_G$.
The neighbors of a vertex $u\in V$ are the vertices in the set  $\delta_G(u)=\{v\in V:\{u,v\}\in E_G\}$.
An \emph{isomorphism} from a graph $G$ to a graph $H$ (both with vertex set $V$) 
is a bijection $\psi: V \to V$ that is adjacency preserving.  That is, 
$\{u,v\} \in E_G$ if and only if $\{\psi(u),\psi(v)\} \in E_H$.  When $G=H$, we call
$\psi$ an \emph{automorphism} of $G$.
We denote the set of automorphisms of a graph $G$ as $AUT(G)$ and the set of isomorphisms
from graphs $G$ to $H$ as $ISO(G,H)$.

A famous combinatorial approach for the graph automorphism and isomorphism problems is the classic vertex classification algorithm (see e.g. \cite{CorneilGotlieb1970,ReadCorneil1977,McKay1980})  (C-V-C algorithm).
In the C-V-C automorphism algorithm, the vertex set $V$ of a graph $G$ is partitioned into equivalence classes $\{V_1,V_2,\ldots,V_m\}$ that are invariant under automorphisms, that is, for all automorphisms $\psi\in AUT(G)$, we have $\psi(V_i)=\{\psi(v):v\in V_i\}=V_i$ for all $1\leq i \leq m$.
If the C-V-C algorithm returns a complete partition, that is, $|V_i|=1$ for $1\leq i \leq m$, then $G$ is asymmetric (i.e. has no non-trivial automorphism).
The C-V-C algorithm starts from the trivial vertex partition $\{V\}$ and proceeds by iteratively refining the vertex partition based on the equivalence classes of the neighbors of a given vertex.
In particular, given a vertex partition $\{V_1,V_2,\ldots,V_m\}$, the vertices $u,v \in V_i$ for $1\le i\le m$ are in the same refined equivalence class in the next iteration if $|\delta_G(u)\cap V_j|=|\delta_G(v)\cap V_j|$ for $1\le j \le m$.
This process is repeated until the partition stabilizes, which happens in at most $n$ iterations.

For the isomorphism problem, the C-V-C isomorphism algorithm partitions the vertex sets of graphs $G$ and $H$ into equivalence classes $(V_1,...,V_m)$ and $(W_1,...,W_m)$ respectively such that for all isomorphisms $\psi\in ISO(G,H)$ we have $\psi(V_i)=W_i$.
If $|V_i|\ne |W_i|$ for some $1\leq i \leq m$, then there is no isomorphism from $G$ to $H$, and also, if there exists $u\in V_s$ and $v\in W_s$ such that $|\delta_G(u)\cap V_j|\ne|\delta_H(v)\cap W_j|$ for some $1\le j \le m$, then again there is no isomorphism from $G$ to $H$.
The sequences $(V_1,...,V_m)$ and $(W_1,...,W_m)$ are each constructed separately using
the C-V-C automorphism algorithm with the modification that the
sets in the partition are ordered at each stage in a way
that is invariant under isomorphism (see Section \ref{sec:VC} for details).

Despite the simplicity of the C-V-C algorithm, it works well in practice and is the basis of many implementations of graph isomorphism and graph automorphism testing including the well-known \texttt{nauty} package of McKay \cite{McKay1980}.  
Such empirical successes have been theoretically justified by many authors (see \cite{BabaiErdos1980}, \cite{BabaiLuks1983}, \cite{BabaiKucera1979}).
Notably, a slight variant of the C-V-C algorithm gives a linear-time graph isomorphism algorithm that
works for almost all graphs \cite{BabaiKucera1979}.
Despite its generic success, the C-V-C algorithm does not work at all for some of
the most important classes of graphs, namely regular graphs.
Indeed, if the graph is regular, then the C-V-C algorithm returns the trivial partition.
Despite this, the algorithm is a well known tool used in graph isomorphism algorithms in order to drastically reduce
 the search space of candidates, and the algorithm can be combined with backtracking technique in order to have a complete algorithm (see e.g., \cite{McKay1980}).

The ineffectiveness of the C-V-C algorithm on regular graphs prompted the need for an extension of the method giving rise to the Weisfeiler-Lehman algorithm \cite{Weisfeiler76, WeisfeilerLehman68} (W-L algorithm).
The aim of the $k$-dim W-L algorithm is to partition the set of $k$-tuples of vertices, $V^k=\{(u_1,...,u_k):u_1,...,u_k\in V\}$, into equivalence classes
$\{V_1^k,...,V^k_m\}$ that are invariant under automorphism meaning that $\psi(V^k_i)=\{(\psi(u_1),...,\psi(u_k)):u\in V^k_i\}=V^k_i$ for all $\psi\in AUT(G)$.
If the partition is complete, that is, $|V^k_i|=1$ for $i=1,...,m$, then the graph $G$ is asymmetric.
In the $k$-dim W-L algorithm, we start with a partition $\{V_1^k,...,V^k_m\}$ of $k$-tuples of vertices into subgraph type, that is, $u\equiv v$ if and only if 
$u_i=u_j \Leftrightarrow v_i=v_j$ and $\{u_i,u_j\} \in E_G \Leftrightarrow
\{v_i,v_j\} \in E_G$ for all $1\le i < j\le n$, or in other words, the ordered subgraphs of $G$ induced by the ordered sets of vertices $\{u_1,...,u_k\}$ and $\{v_1,...,v_k\}$ are identical.
Then, analogous to the C-V-C algorithm, we iteratively refine the partition as follows:
the vertices $u,v \in V^k_i$ for $1\le i\le m$ are in the same refined equivalence class in the next iteration if
there exists a bijection $\psi: V \rightarrow V$ such that $(u_1,...,u_{r-1},w,u_{r+1},...,u_k) \equiv (v_1,...,v_{r-1},\psi(w),v_{r+1},...,v_k)$
for all $1\leq r \leq k$ and for all $w\in V$. 
Note that the $n$-dim W-L algorithm is trivially necessary and sufficient due to the conditions on the initial partition.

For the isomorphism problem, analogous to the C-V-C algorithm, the $k$-dim W-L algorithm partitions $V^k$ into two sequences of equivalence classes $(V^k_1,...,V^k_m)$ for $G$ and $(W^k_1,...,W^k_m)$ for $H$ such that $\psi(V^k_i)=W^k_i$ for all $\psi\in ISO(G,H)$.
The partitions are constructed in a similar manner to the automorphism case (see Section \ref{sec:VC+WL} for details).

The $k$-dim W-L algorithm does not give a polynomial time algorithm for the graph isomorphism problem as established in \cite{CaiFurer1992} where they constructed a class of pairs of non-isomorphic graphs $(G_n,H_n)$ with $O(n)$ vertices such that the W-L algorithm needs $\Omega(n)$ iterations to distinguish the graphs.  
The graphs $(G_n,H_n)$ are presented in \cite{CaiFurer1992}.

Isomorphisms of graphs can naturally be represented by permutation matrices: an isomorphism $\psi \in ISO(G,H)$ can be represented by the $n$ by $n$ matrix $X_{\psi} = (X_{uv})_{1 \leq u,v \leq n}$ with $X_{uv} = 1$ if $\psi(u) = v$, and $X_{uv}=0$ otherwise.  We define $\Psi_{G,H}\subseteq \{0,1\}^{n\times n}$ to be the set of permutation matrices representing $ISO(G,H)$,
and we define $\Psi_G=\Psi_{G,G}$ as the set of permutation matrices
representing $AUT(G)$.
The isomorphism problem is thus the same problem as determining if $\Psi_{G,H}\ne \emptyset$, and the automorphism problem is the same problem as determining if $\Psi_G\neq \{\I\}$
where $\I$ is the identity matrix.
It is then natural to consider polyhedral relaxations of the sets $\Psi_{G,H}$ and
$\Psi_G$ and determine under what conditions the relaxations answer the isomorphism and automorphism problems.

Tinhofer \cite{Tinhofer1986} examined the following polyhedral relaxation of 
$\Psi_{G,H}$:
\[\T_{G,H}=\{X\in[0,1]^{n\times n}: XA_G=A_HX, Xe = X^Te = e\}\]
where $e$ is the $n$-dimensional column vector of 1's.
Note that the graphs $G$ and $H$ are isomorphic
if and only if there exists a permutation matrix $X\in \{0,1\}^{n\times n}$
such that $XA_G=A_HX$.  Thus $\Psi_{G,H}$ is precisely the integer points in the polytope $\T_{G,H}$, i.e. $\Psi_{G,H} = \T_{G,H} \cap \{0,1\}^{n \times n}$.  Tinhofer studied the polytopes $\T_{G,H}$ in hopes that most of them would be integral.
In \cite{Tinhofer1986}, Tinhofer showed that $\T_{G,H}=\emptyset$ if and only if the C-V-C algorithm determines that $G$ and $H$ are not isomorphic, and that $\T_G=\{\I\}$ if and only if the C-V-C algorithm determines that $G$ is asymmetric.

Tinhofer also established a strong relationship between $\T_{G,H}$ and the C-V-C algorithm.  
If $\{V_1,...,V_m\}$ is the partition of the vertices of $G$ given by the C-V-C algorithm, then for all $u,v \in V$, we have $u\not\equiv v$ if and only if $X_{uv}=0$ for all $X\in \T_G$.  Analogously, if $(V_1,...,V_m)$ is a partition of the vertices of $G$ and $(W_1,...,W_m)$ is a partition of the vertices of $H$ returned by the C-V-C algorithm,
then for all $u\in V_s, v\in W_t$, we have $s\ne t$ if and only if $X_{uv}=0$ for all $X\in \T_{G,H}$.
In other words, Tinhofer shows that the polyhedral relaxations $\T_{G,H}$ and $\T_G$ are geometric analogues of the C-V-C algorithm for the isomorphism and automorphism problems respectively.

In this paper, we examine Sherali-Adams relaxations of $\T_{G,H}$ and $\T_G$.
In general, given a semi-algebraic set 
$$P = \{x \in [0,1]^n \ | \ f_1(x)=0,...,f_s(x)=0, g_1(x)\ge 0,...,g_t(x)\ge0 \}$$
 where the $f_j(x)$ and $g_j(x)$ are polynomials in $\mathbb{R}[x_1,...,x_n]$,
the Sherali-Adams relaxations of $P$ are linear inequality descriptions of relaxations of $\conv(P\cap \{0,1\}^n)$, the convex hull
of the integer points $P$.
In particular, the Sherali-Adams hierarchy of relaxations $[0,1]^n\supseteq P^1 \supseteq \cdots \supseteq P^n = \conv(P\cap \{0,1\}^n)$ is a hierarchy of polytope relaxations of $P\cap \{0,1\}^n$ obtained by lifting $P$ to a non-linear polynomial system of equations and inequalities, linearizing the system giving an extended formulation and projecting back down.  

The $k^{th}$ relaxation $P^k$ is obtained as follows.  
First, we generate a set of polynomial inequalities given by
\begin{eqnarray}\label{eqn:sherali}
\prod_{i \in I} x_i  f_j(x) &=& 0 \quad \forall I \subseteq \{1,...,n\}, |I|\le k-1, 1\leq j \leq s\\
\prod_{i \in I} x_i \prod_{i \in J\setminus I} (1-x_i) g_j(x)&\ge& 0 \quad \forall I\subseteq J \subseteq \{1,...,n\}, |J|\le k-1,1\le j \le t,\\
\prod_{i \in I} x_i \prod_{i \in J\setminus I} (1-x_i)&\ge& 0 \quad \forall I\subseteq J \subseteq  \{1,...,n\}, |J|\le k.
\end{eqnarray}
All of these polynomial equations and inequalities, which include the original set of equations and inequalities, are satisfied on $P$.
We then expand the polynomials and make all monomials square-free by replacing any occurrence of $x_i^2$ by $x_i$ because $x_i^2 - x_i = 0$ is valid on $P\cap\{0,1\}^n$.
Next, we \emph{linearize} the system of equations
by replacing each monomial $\prod_{i \in I} x_i$ where $I\subseteq\{1,...,n\}$
and $|I|\geq 1$ with a new variable $y_I$.
The result is a set of linear inequalities describing a polyhedron $\hat{P}^k$ in extended $y$ space.  
To achieve $P^k$, we finally project the extended polyhedron $\hat{P}^k$ onto the space of $x_i=y_{\{i\}}$ variables,
that is, $P^k=\{x: x_i=y_{\{i\}}\ \forall 1\leq i \leq n, y \in \hat{P}^k\}$.  Note that at most $n$ iterations are needed before the Sherali-Adams relaxations converge to the convex hull,
that is, $P^n =  \conv(P\cap \{0,1\}^n)$ (see e.g. \cite{Laurent2003}).

\subsection{Our Contribution}

We generalize Tinhofer's work by studying the \emph{Sherali-Adams} relaxations of the polytopes $\T_{G,H}$ and $\T_G$.
First, we introduce a combinatorial algorithm, called the \emph{generalized vertex classification algorithm} or \emph{$k$-dimensional vertex classification algorithm} ($k$-dim C-V-C algorithm), whose relationship with $\T^k_G$ and $\T^k_{G,H}$ is analogous to the relationship between the C-V-C algorithm for the automorphism and isomorphism problem and $\T_G$ and $\T_{G,H}$ respectively.

For the automorphism problem, analogous to the C-V-C automorphism algorithm and the $k$-dim W-L automorphism algorithm, the $k$-dim C-V-C automorphism algorithm partitions $V^k$ into equivalence classes $\{V^k_1,...,V^k_m\}$ such that $\psi(V^k_i)=V^k_i$ for all $\psi\in AUT(G)$  (see Section \ref{sec:VC} for details).
For the isomorphism problem, analogous to the C-V-C algorithm and the $k$-dim W-L algorithm, the $k$-dim C-V-C algorithm partitions $V^k$ into two sequences of equivalence classes $(V^k_1,...,V^k_m)$ for $G$ and $(W^k_1,...,W^k_m)$ for $H$ such that $\psi(V^k_i)=W^k_i$ for all $\psi\in ISO(G,H)$.
The partitions are constructed in a similar manner to the automorphism case (see Section \ref{sec:VC}).

The following theorem summarizes the relationship between the $k$-dim C-V-C algorithm for the automorphism and isomorphism problem to the corresponding $k$th Sherali-Adams relaxations $\T^k_G$ and $\T^k_{G,H}$ respectively.

\begin{theorem}\label{thm:SheraliAdamsVC}
The polytope $\T^k_G= \{\I\}$ if and only if the $k$-dim C-V-C algorithm determines that $G$ is asymmetric.
The polytope $\T^k_{G,H}= \emptyset$ if and only if the $k$-dim C-V-C algorithm determines that $G$ and $H$ are not isomorphic.
\end{theorem}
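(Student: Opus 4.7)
The plan is to interpret the Sherali-Adams lifted variables $y_I$ on $\T_{G,H}$ as ``consistency weights'' for partial matchings between vertices of $G$ and $H$, and then to identify the $k$-dim C-V-C equivalence classes with the supports of these variables at level $k$. Concretely, the variables of $\T_{G,H}$ are the $X_{uv}$ indexed by $(u,v) \in V \times V$, so a monomial $\prod_{(u,v) \in I} X_{uv}$ for $I = \{(u_1,v_1),\dots,(u_k,v_k)\}$ records a proposed partial assignment sending the $k$-tuple $\bu = (u_1,\dots,u_k)$ to $\bv = (v_1,\dots,v_k)$. After linearization this becomes $y_I$, and on integer points $y_I = 1$ precisely when there is an isomorphism extending $\bu \mapsto \bv$. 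The goal is then to show that the indices $I$ on which $y_I$ is forced to vanish in $\TX^k_{G,H}$ are exactly those where $\bu$ and $\bv$ fall in non-corresponding classes of the $k$-dim C-V-C algorithm, and analogously for $\TX^k_G$.

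For the forward direction, I would argue by induction on the iterations of the $k$-dim C-V-C algorithm that each refinement step forces additional $y_I$ to be zero in $\TX^k_{G,H}$. The base case uses the initial partition by subgraph type: the Sherali-Adams constraints coming from multiplying $XA_G = A_HX$ and the degree constraints by monomials $\prod_{(s,t) \in J} X_{st}$ with $|J| \le k-1$ force $y_I = 0$ whenever $\bu$ and $\bv$ induce different labeled subgraphs. The inductive step uses that whenever the C-V-C refinement splits a class because no bijection $\psi: V \to V$ realizes the local equivalences required by the algorithm, the corresponding ``$\sum_w$''-style Sherali-Adams equations derived by multiplying $Xe = e$ and $X^Te = e$ by rank-$(k-1)$ monomials cannot balance, forcing $y_I = 0$ for the offending $I$. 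Iterating this to stabilization gives the ``if'' direction of both statements; the automorphism statement follows by specializing $H = G$ and noting the identity permutation always lives in $\T^k_G$.

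For the reverse direction, I would exhibit an explicit feasible $y \in \TX^k_{G,H}$ whenever the stabilized $k$-dim C-V-C algorithm does not certify non-isomorphism. The natural candidate is obtained by symmetrizing over the equivalence classes: for $|I| \le k$, set $y_I$ to be proportional (by appropriate inclusion/exclusion-style coefficients so that the marginal and square-free identities hold) to the inverse size of the C-V-C class containing the tuple determined by $I$. I would then verify the Sherali-Adams inequalities (nonnegativity of the lifted monomials in $x_i$ and $1-x_i$) and the lifted versions of $XA_G = A_HX$ and $Xe = X^Te = e$ in turn; the stability property of the $k$-dim C-V-C partition --- namely the existence of a bijection matching the classes of certain $(k{-}1)$-dimensional restrictions --- is precisely what is needed to cancel terms in the lifted adjacency equation. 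The automorphism case is handled by the same construction with $H = G$, and $\T^k_G \supsetneq \{\I\}$ follows by finding a symmetric $y$ whose projection differs from the identity.

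The main obstacle I expect is the reverse direction: writing down a closed form for $y$ that simultaneously satisfies every lifted version of the adjacency and doubly-stochastic constraints, and verifying this using only the stabilization property of the $k$-dim C-V-C algorithm. The forward direction is essentially a careful bookkeeping induction and should follow once the correct correspondence between monomials and $k$-tuples is set up. I would likely prove the two directions in tandem, showing by induction on iterations $t$ that ``$y_I = 0$ whenever the partition at iteration $t$ separates $\bu$ from $\bv$'' can be matched by a feasible $y$ achieving equality on the complement, so that the characterization holds at the stable partition.
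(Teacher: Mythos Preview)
Your overall strategy matches the paper's: it too proves the two directions separately via (i) showing that non-equivalence under the stable $k$-dim C-V-C partition forces $\Y{u,v}=0$ in $\TX^k_{G,H}$, and (ii) exhibiting the explicit feasible point $\Y{u,v}=|\piv_s|^{-1}$ when $[u]_\piv=[v]_\tauv=s$ for the stable partitions. Two cautions, however. First, your description of the C-V-C refinement criterion --- ``no bijection $\psi:V\to V$ realizes the local equivalences'' --- is the Weisfeiler--Lehman criterion, not the $k$-dim C-V-C one; the latter separates $u$ and $v$ when the \emph{count vectors} $(|\delta^i_G(u)\cap\pi_s|)_{s,i}$ differ, and your forward argument must target that condition specifically. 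The paper's key device is to derive, by summing the lifted adjacency equations over a class $\tauv_t$, the identity $\sum_{w\in\tauv_s}\bigl(|\delta^i_H(w)\cap\tauv_t|-|\delta^i_G(u)\cap\piv_t|\bigr)\Y{u,w}=0$, and then to run an induction on the maximum count value $M$ so that the sum collapses to a nonnegative combination equal to zero; your ``cannot balance'' sketch does not yet identify this step.

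Second, the reverse direction is simpler than you anticipate: no inclusion--exclusion is needed, just the flat assignment $\Y{u,v}=|\piv_s|^{-1}$. The real work there is not analytic but combinatorial --- verifying that this is \emph{well-defined} (different tuple representations $\langle u,v\rangle=\langle u',v'\rangle$ of the same index set must give the same value), which the paper handles via a lemma showing that $\Delta$-stable partitions assign equal class sizes to tuples with the same underlying vertex set. Once well-definedness is in hand, checking the lifted $XA_G=A_HX$ equations reduces to a double-counting identity $|\piv_s||\delta^i_G(u)\cap\piv_t|=|\piv_t||\delta^i_H(v)\cap\piv_s|$ valid on stable partitions, and the paper has already shown (in the Birkhoff analysis) that among the Sherali--Adams inequalities only $Y_I\ge 0$ survives as non-redundant, so that part is immediate.
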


We will actually prove a more general and stronger result than the above theorem that illustrates that the the polytopes $\T^k_G$ and $\T_{G,H}$ are geometric analogues of the $k$-dim C-V-C automorphism and isomorphism algorithms respectively 
(see Section \ref{sec:Comparison}). 

Above, we introduced a vertex classification algorithm that corresponds to the Sherali-Adams relaxations of a given polytope.
Somewhat surprisingly, we also found a relaxation of $\Psi_{G,H}$ whose Sherali-Adams relaxations naturally correspond to the W-L algorithm.
Consider the following semi-algebraic set $\Q_{G,H}$:
\begin{align*}
X_{u_1v_1}X_{u_2v_2} &= 0 \ \forall \{u_1,u_2\}\in E_G, \{v_1,v_2\}\not\in E_H,\\
X_{u_1v_1}X_{u_2v_2} &= 0 \ \forall \{u_1,u_2\}\not\in E_G, \{v_1,v_2\}\in E_H, \\
Xe = X^Te &= e, X\in[0,1]^{n\times n}.
\end{align*}
Note that $\Q_{G,H}\cap\{0,1\}^{n\times n} = \Psi_{G,H}$
because the equations $X_{u_1v_1}X_{u_2v_2}=0$ enforce that edges must map onto edges and non-edges (2-vertex independent sets) must map onto non-edges.
We define $\Q_G=\Q_{G,G}$, and $\Q^k_{G,H}$ (respectively $\Q^k_G$) as the $k$th Sherali-Adams relaxations of $\Q_{G,H}$ (respectively $\Q_G$).
The following theorem summarizes the relationship between the W-L algorithm and the Sherali-Adams relaxations of $\Q_{G,H}$.

\begin{theorem}\label{thm:SheraliAdamsWL}
Let $k>1$. The polytope $\Q^{k+1}_{G}= \{\I\}$ if and only if the $k$-dim W-L algorithm
determines that $G$ is asymmetric.
The polytope $\Q^{k+1}_{G,H}= \emptyset$ if and only if the $k$-dim W-L algorithm
determines that $G$ and $H$ are not isomorphic.
\end{theorem}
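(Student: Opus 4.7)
The plan is to prove the isomorphism statement of Theorem \ref{thm:SheraliAdamsWL}; the automorphism version follows by setting $H = G$ and tracking the identity matrix $\I$. First I analyze the structure of $\QX^{k+1}_{G,H}$: the defining quadratic equations force $y_J = 0$ whenever $J$ contains an edge-vs-non-edge conflict; the row/column sum equations $\sum_v X_{uv} = 1$, multiplied by monomials of degree $\geq 1$ and linearized, force $y_J = 0$ when $J$ has two pairs sharing a row or column (via $X_{uv}X_{uv'} = 0$ plus non-negativity) and yield the marginalization identities $\sum_v y_{J \cup \{(u,v)\}} = y_J$ for $u$ not in the row set of $J$; and the monotonicity $y_{J'} \leq y_J$ for $J \subseteq J'$ is an immediate consequence. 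Hence the support of any feasible $y$ lies on partial isomorphisms from $G$ to $H$.

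The forward direction rests on the following key lemma: \emph{if $y$ is feasible and $y_J > 0$ for a partial isomorphism $J$ of size $k$, then for any ordering of $J$ as ordered $k$-tuples $(\bu, \bv)$, the $k$-dim W-L colors of $\bu$ in $G$ and $\bv$ in $H$ agree at every refinement round}, with the case $|J| < k$ reduced to this by monotonicity. The base case (round zero, ordered subgraph type) is the partial-isomorphism property, plus W-L equivariance under joint permutation of positions. For the inductive step with $|J| = k$: the marginalization identities $\sum_{w'} y_{J \cup \{(w, w')\}} = y_J$ and $\sum_w y_{J \cup \{(w, w')\}} = y_J$ show that $z(w, w') := y_{J \cup \{(w, w')\}}/y_J$ is doubly stochastic on $(V \setminus \bu) \times (V \setminus \bv)$; by Birkhoff--von Neumann it is supported on a perfect matching, extending to a bijection $\psi : V \to V$ with $\psi(u_r) = v_r$ for $r \leq k$ and $y_{J \cup \{(w, \psi(w))\}} > 0$ for every $w$. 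Monotonicity gives $y_{(J \setminus \{(u_r, v_r)\}) \cup \{(w, \psi(w))\}} > 0$ for each position $r$, and the inductive hypothesis applied to the corresponding tuple pair (which has the same repetition pattern on both sides by partial injectivity) yields $\bu|_{r \to w} \sim_i \bv|_{r \to \psi(w)}$; the single $\psi$ then witnesses the $k$-dim W-L refinement $\bu \sim_{i+1} \bv$. Summing the marginalized measure $\mu(\bu,\bv) := y_{\{(u_r,v_r)\}}$ over each stable W-L color class yields $|C_\alpha^G| = |C_\alpha^H|$, which is precisely W-L non-distinguishability.

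For the converse, assume the $k$-dim W-L does not distinguish $G$ from $H$ and let $c$ be the stable coloring with $m_\alpha := |C_\alpha^G| = |C_\alpha^H|$. Set $\mu(\bu, \bv) := [c(\bu) = c(\bv)]/m_{c(\bu)}$ on $V^k(G) \times V^k(H)$, so that both marginals of $\mu$ equal $1$. For a partial isomorphism $J$ of size $\ell \leq k+1$, define $y_J$ as a suitably normalized sum of $\mu(\bu', \bv')$ over pairs of $k$-tuple extensions realizing $J$ in a chosen set of positions. Equivariance of the stable W-L coloring under joint position permutation ensures that $y_J$ is independent of the choice of positions and the ordering of $J$; the marginalization identities follow from the summation structure; non-negativity and the edge-exclusion constraints are immediate from the W-L-compatible support of $\mu$. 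This yields a feasible point in $\QX^{k+1}_{G,H}$. The automorphism statement is handled identically with $H = G$, noting that $y$ equal to the linearization of $\I$ is forced precisely when W-L partitions $V^k(G)$ into singletons.

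The principal technical obstacle is the inductive step of the key lemma: the $k$-dim W-L refinement requires a \emph{single} bijection $\psi : V \to V$ to witness the refinement across \emph{all} positions $r$ and all $w \in V$ simultaneously. The hierarchy shift $k \mapsto k+1$ is exactly what makes this work: the ``add one pair'' move at SA level $k+1$ supplies (via Birkhoff) the uniform matching $\psi$, after which monotonicity converts ``add $(w, \psi(w))$ to $J$'' into the ``replace position $r$ by $(w, \psi(w))$'' condition required by W-L, uniformly in $r$.
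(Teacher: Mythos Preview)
Your forward direction is correct and is a genuinely different, more direct argument than the paper's. The paper never invokes Birkhoff--von Neumann; instead it introduces an intermediate $(k+1)$-dimensional $\Delta$-V-C algorithm, proves that $\Delta$ is combinatorially equivalent to $\QX^{k+1}$ (Lemmas~\ref{lem:DeltaPoly} and~\ref{lem:Q non-empty}), and separately proves that $(k+1)$-dim $\Delta$ and $k$-dim W-L coincide (Corollaries~\ref{cor:xi=Delta} and~\ref{cor:xiv=Deltav}). Your extraction of a single bijection $\psi$ from the doubly-stochastic kernel $z(w,w')=\Y{(\bu,w),(\bv,w')}/\Y{\bu,\bv}$ at level $k+1$, followed by monotonicity to drop position $r$, is cleaner for this specific theorem; the paper's route buys a reusable framework relating several refinement operators but is longer. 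One small correction: take $z$ doubly stochastic on all of $V\times V$, not on $(V\setminus\bu)\times(V\setminus\bv)$; the marginalization $\sum_{w'}\Y{(\bu,w),(\bv,w')}=\Y{\bu,\bv}$ already holds for \emph{every} $w$ (when $w=u_s$ the only nonzero summand is $w'=v_s$), and this handles repeated entries in $\bu$ without a separate case.

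Your converse has a real gap. You define $y_J$ for $|J|\le k+1$ as ``a suitably normalized sum of $\mu(\bu',\bv')$ over pairs of $k$-tuple extensions realizing $J$,'' but when $|J|=k+1$ no $k$-tuple can realize $J$, so the description is empty as written. What is actually needed is an extension of the $k$-dim W-L colouring to $(k+1)$-tuples: one sets $c^{+}(\bu)=(c(\rho(\bu)),\,(c(\phi_i(\rho(\bu),u_{k+1})))_{i=1}^{k})$ and then $y_J=1/|c^{+}\text{-class}|$ for any $(k+1)$-tuple $\bu$ with $\pair{\bu,\bv}=J$. The non-trivial point is well-definedness: one must show that the class size does not depend on which ordering of $J$ one chooses, which in turn requires that the stable $k$-dim W-L partition is invariant under $\Sym_k$ and satisfies certain counting identities. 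The paper does exactly this via the map $\nu$ of Definition~\ref{def:nu}, with well-definedness established through Lemma~\ref{lem:tuple invariance} and Corollary~\ref{cor:set sizes}, and feasibility of the resulting $Y$ checked in Lemma~\ref{lem:Q non-empty}. Your sketch does not supply these steps, and ``equivariance under joint position permutation'' alone is not enough: you also need the $\Delta$-stability of the induced $(k+1)$-dim partition to verify the marginalization constraints~(\ref{eqn2})--(\ref{eqn3}).
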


Again, we will actually prove a more general and stronger result than the above theorem that illustrates that the the polytopes $\Q^k_G$ and $\Q_{G,H}$ are geometric analogues of the $k$-dim W-L automorphism and isomorphism algorithms respectively (see Section \ref{sec:Comparison}).

The C-V-C algorithm and the W-L algorithm are strongly related:
the $k$-dim W-L algorithm is stronger than the $k$-dim C-V-C algorithm, but the $(k+1)$-dim C-V-C algorithm is stronger than the $k$-dim W-L algorithm.
This relationship is summarized in the following lemma, which compares the corresponding Sherali-Adams relaxations:
\begin{lemma}\label{lem:VCWL}
The inclusions $\Q^{k+1}_{G,H}\subseteq \T^k_{G,H} \subseteq \Q^{k}_{G,H}$ and $\Q^{k+1}_{G}\subseteq \T^k_{G} \subseteq \Q^{k}_{G}$ hold.
\end{lemma}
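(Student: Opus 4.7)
The plan is to establish each of the four inclusions by explicit polynomial derivations at the appropriate Sherali-Adams level; the automorphism case $\Q^{k+1}_G \subseteq \T^k_G \subseteq \Q^k_G$ is the specialization $H = G$ of the isomorphism case, so I focus on the latter.

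For the inclusion $\T^k_{G,H} \subseteq \Q^k_{G,H}$, I would show that every defining equation of $\Q^k_{G,H}$, namely
\[
\prod_{(a, b) \in I} X_{ab} \cdot X_{u_1 v_1} X_{u_2 v_2} = 0 \quad \text{for } |I| \leq k - 1 \text{ and a bad pair } (u_1, v_1), (u_2, v_2),
\]
is implied by the SA-lift of $\T$ at level $k$. The core derivation multiplies the linear $\T$-equation $(XA_G - A_HX)_{u_1, v_2} = 0$ by $X_{u_2, v_2}$, uses the column-stochasticity consequence $X_{u_2 v_2} X_{w v_2} = 0$ for $w \neq u_2$ (itself a level-$2$ consequence of $X^T e = e$) to collapse the right-hand side, and obtains an identity relating $\sum_{w \in \delta_G(v_2)} X_{u_1 w} X_{u_2 v_2}$ to $X_{u_2 v_2}$ weighted by the indicator of the relevant edge condition. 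Combining with the row-sum identity $\sum_w X_{u_1 w} X_{u_2 v_2} = X_{u_2 v_2}$ and the box-nonnegativity constraints $X_{u_1 w} X_{u_2 v_2} \geq 0$, this forces $X_{u_1 v_1} X_{u_2 v_2} = 0$ for every bad pair. Tensoring the derivation by $\prod_{(a, b) \in I} X_{ab}$ yields the lifted version at level $k$.

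For the reverse inclusion $\Q^{k+1}_{G,H} \subseteq \T^k_{G,H}$, I would derive $\T$'s linear equations, lifted by up to $k - 1$ multipliers, from the SA-lifted $\Q$ at level $k + 1$. The plan is to expand the two sums defining $(XA_G)_{a, b}$ and $(A_HX)_{a, b}$ by inserting completeness relations $\sum_v X_{c v} = 1$ for a suitable dummy index $c$, and then use $\Q$'s quadratic constraints to eliminate bad-pair summands; after simplification, the two sides of $\T$'s equation $(XA_G)_{a, b} = (A_HX)_{a, b}$ coincide. The one-level shift (between $k + 1$ for $\Q$ and $k$ for $\T$) is precisely the gap needed to bridge $\Q$'s degree-$2$ base constraints and $\T$'s degree-$1$ base constraints at the SA level.

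The main obstacle will be the careful accounting of multiplier sizes and lifted-variable indices throughout each derivation, especially when auxiliary identities (row/column-sum consequences) must be tensored with additional monomials, and when square-free reductions $X_{uv}^2 = X_{uv}$ apply at overlapping index sets. The argument for the automorphism case follows identically by setting $H = G$.
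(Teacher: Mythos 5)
Your proposal is correct and follows essentially the same route as the paper's proof: both inclusions are derived by explicit algebraic manipulation of the lifted $Y$-variables, with $\T^k_{G,H}\subseteq \Q^k_{G,H}$ obtained by multiplying a $\T$-equation by one extra $X$-variable and using the Birkhoff combinatorial vanishing (non-coherent $Y$'s are zero) to collapse one of the two sums into a single surviving term, then invoking nonnegativity; and $\Q^{k+1}_{G,H}\subseteq \T^k_{G,H}$ obtained by expanding one side of the lifted $\T$-equation via the row/column-sum relations, killing the bad-pair summands with $\Q$'s quadratic constraints, and reassembling, with the one-level offset ($k+1$ vs.\ $k$) absorbing $\Q$'s higher base degree via the projection onto $\BX^k$. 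Note in passing that your specific choice of indices $(u_1,v_2)$ and $X_{u_2 v_2}$ requires care in matching the $G$/$H$ roles against the convention $XA_G=A_HX$, but this is a labeling detail that does not affect the substance of the derivation (the paper itself has an internal inconsistency on this point), and the automorphism case is, as you say, the specialization $H=G$.
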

Together with Theorem \ref{thm:SheraliAdamsVC} and Theorem \ref{thm:SheraliAdamsWL},
Lemma \ref{lem:VCWL} implies that if the $k$-dim C-V-C algorithm determines that $G$ and $H$ are not isomorphic, then the $k$-dim W-L algorithm also determines that $G$ and $H$ are not isomorphic,
and if the $k$-dim W-L algorithm determines that $G$ and $H$ are not isomorphic, then the $(k+1)$-dim C-V-C algorithm determines that $G$ and $H$ are not isomorphic. Analogous statements hold for automorphism.

Combining Theorem \ref{thm:SheraliAdamsVC}, Theorem \ref{thm:SheraliAdamsWL} and Lemma \ref{lem:VCWL} and the result established by \cite{CaiFurer1992} on the complexity of the W-L algorithm, we arrive at the following complexity results for the Sherali-Adams relaxations of the graph isomorphism polytopes $\T^k_{G,H}$ and $\Q^k_{G,H}$.
\begin{corollary}\label{cor:Sherali}
There exists a class of pairs of non-isomorphic graphs $(G_n,H_n)$ where $G_n$ and $H_n$ have $O(n)$ vertices such that the Sherali-Adams procedure applied to the sets $\T_{G,H}$ and $\Q_{G,H}$ needs $\Omega(n)$ iterations to converge to $\conv(\Psi_{G_n,H_n})=\emptyset$.
\end{corollary}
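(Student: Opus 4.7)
The plan is to chain Theorem~\ref{thm:SheraliAdamsVC} and Theorem~\ref{thm:SheraliAdamsWL} together with Lemma~\ref{lem:VCWL}, and feed them the Cai--F\"urer--Immerman pairs of graphs from \cite{CaiFurer1992}. Concretely, I would fix the pairs $(G_n,H_n)$ of non-isomorphic graphs on $O(n)$ vertices for which the $k$-dim W-L algorithm fails to distinguish $G_n$ from $H_n$ for some $k=\Omega(n)$. Since $G_n\not\cong H_n$, we have $\Psi_{G_n,H_n}=\emptyset$ and thus $\conv(\Psi_{G_n,H_n})=\emptyset$, so proving non-convergence amounts to exhibiting points in $\T^k_{G_n,H_n}$ and $\Q^k_{G_n,H_n}$ for $k=\Omega(n)$.

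For the $\Q$-hierarchy, I would invoke Theorem~\ref{thm:SheraliAdamsWL} in contrapositive form: because the $k$-dim W-L algorithm does not determine non-isomorphism of $(G_n,H_n)$, the polytope $\Q^{k+1}_{G_n,H_n}$ is non-empty. This directly gives the claimed $\Omega(n)$ lower bound on the number of rounds for $\Q$.

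For the $\T$-hierarchy, I would chain this with the inclusion $\Q^{k+1}_{G,H}\subseteq \T^{k}_{G,H}$ from Lemma~\ref{lem:VCWL}: applying it to the above pairs yields $\T^{k}_{G_n,H_n}\supseteq \Q^{k+1}_{G_n,H_n}\neq\emptyset$, so the $\T$-hierarchy also fails to reach $\conv(\Psi_{G_n,H_n})$ in fewer than $\Omega(n)$ rounds. Equivalently, one can phrase this via Theorem~\ref{thm:SheraliAdamsVC}: if $\T^{k}_{G_n,H_n}=\emptyset$, then $\Q^{k+1}_{G_n,H_n}=\emptyset$ by Lemma~\ref{lem:VCWL}, which by Theorem~\ref{thm:SheraliAdamsWL} forces the $k$-dim W-L algorithm to certify non-isomorphism, contradicting Cai--F\"urer--Immerman.

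There is essentially no obstacle once the three preceding results and the Cai--F\"urer--Immerman construction are in place; the corollary is a clean combinatorial consequence, with the hard work hidden inside Theorems~\ref{thm:SheraliAdamsVC} and~\ref{thm:SheraliAdamsWL} (the Sherali--Adams/vertex-classification correspondences) and inside \cite{CaiFurer1992}. The only bookkeeping subtlety is the index shift coming from the $+1$ in $\Q^{k+1}$ of Lemma~\ref{lem:VCWL}, which is harmless because $\Omega(n)+1=\Omega(n)$.
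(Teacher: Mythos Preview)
Your proposal is correct and is essentially identical to the paper's own argument: the paper simply states that the corollary follows by combining Theorem~\ref{thm:SheraliAdamsVC}, Theorem~\ref{thm:SheraliAdamsWL}, Lemma~\ref{lem:VCWL}, and the Cai--F\"urer--Immerman construction from \cite{CaiFurer1992}, exactly as you have outlined. Your handling of the $+1$ index shift and the use of the inclusion $\Q^{k+1}_{G,H}\subseteq \T^{k}_{G,H}$ to transfer the lower bound from $\Q$ to $\T$ match the intended reasoning.
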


Our paper is organized as follows.
In Section \ref{sec:CombAlg}, we give a detailed description of the combinatorial algorithms for graph automorphism and isomorphism.  
In Section \ref{sec:SA}, we describe the Sherali-Adams relaxation of the sets $\T_{G,H}$ and $\Q_{G,H}$.
Section \ref{sec:Comparison} shows that Sherali-Adams relations are geometric analogues of the corresponding combinatorial algorithms thus proving Theorems~\ref{thm:SheraliAdamsVC} and \ref{thm:SheraliAdamsWL}.

\section{Combinatorial Algorithms}
\label{sec:CombAlg}
In this section, we describe in detail the combinatorial algorithms for automorphism and isomorphism as described in the introduction, and we describe how they relate to each other.
There are strong similarities between the combinatorial algorithms we consider in this section.
The similarities are such that we first present a general framework for the algorithms to avoid having to prove analogous results.

\subsection{Automorphism Algorithms}
In this section, we present a general framework for an automorphism algorithm.
First, we give some necessary definitions and background.

We denote by $\Pi^k$ the set of all partitions of $V^k$.
For any $\pi \in \Pi^k$, we say two sets $u,v$ are equivalent with respect to $\pi$ if they lie in the same cell of $\pi$.  This is denoted by $u \equiv_{\pi} v$.
Given $\pi,\tau\in\Pi^k$, we write $\pi\leq \tau$ if $u\equiv_\pi v$ implies $u\equiv_\tau v$ for all $u,v\in V^k$.
It is well known that $\leq$ is a partial order on $\Pi^k$ and that $\Pi^k$ forms a complete lattice under $\leq$.  
That is, given $\pi,\tau\in \Pi^k$, there is a unique coarsest partition $\pi\wedge\tau\in\Pi^k$ such that $\pi\geq\pi\wedge\tau$ and $\tau\geq \pi\wedge\tau$ --
each cell of $\pi\wedge\tau$ is a non-empty intersection of a cell in $\pi$ and a cell in $\tau$.

Let $\Graphs$ be the set of all simple undirected graphs with vertex set $V=\{1,...,n\}$.
Let $\psi:V\rightarrow V$ be a bijection.
For a $k$-tuple $u\in V^k$, we define $\psi(u)=(\psi(u_1),...,\psi(u_k))$,
and for a set of $k$-tuples $W\subseteq V^k$, we define $\psi(W)=\{\psi(u):u\in W\}$.
Given a graph $G\in\Graphs$, a partition $\pi=\{\pi_1,...,\pi_m\}$ and a bijection $\psi:V \rightarrow V$, % we define $\psi(\pi)=\{\psi(\pi_1),...,\psi(\pi_m)\}$.
we define $AUT(G,\pi)=\{\psi\in AUT(G):\psi(\pi_i)=\pi_i \ \forall 1\leq i\leq m\}$.
In this paper, we address the more general graph automorphism question of whether $|AUT(G,\pi)|=1$ for a given partition $\pi\in\Pi^k$,
which is useful when using a backtracking algorithm for solving the automorphism problem (see for example \cite{McKay1980}).

We denote by $\OP{G}^k(\pi)\in \Pi^k$, the $k$-dimensional orbit partition of graph $G$ with respect to $\pi$, the partition where $u\equiv_{\OP{G}^k(\pi)} v$ if there exists $\psi \in AUT(G,\pi)$ such that $\psi(u)=v$, and we write $\OP{G}^k$ for the orbit partition of $G$.
\begin{example}
Let $G$ be the graph with vertex set $\{1,2,3,4\}$ and edge set 
\[
E_G = \{ \{1,2\},\{2,3\},\{2,4\},\{3,4\} \}.
\]
This graph has two automorphisms: the trivial automorphism and the automorphism that swaps vertices 3 and 4.
The orbit partition is $\OP{G}^k=\{V^2_1,...,V^2_{10}\}$ where
\begin{align*}
V_1^2 &= \{(1,1)\},&
V_2^2 &= \{(2,2)\},&
V_3^2 &= \{(3,3),(4,4)\},& \\
V_4^2 &= \{(1,2)\},&
V_5^2 &= \{(2,1)\},&
V_6^2 &= \{(1,3),(1,4)\},& \\
V_7^2 &= \{(3,1),(4,1)\},&
V_8^2 &= \{(2,3),(2,4)\},&
V_9^2 &= \{(3,2),(4,2))\},\\
V_{10}^2 &= \{(3,4),(4,3)\}.&
\end{align*}
\end{example}

We now define an equivalence relation that provides a general framework for graph automorphism.
\begin{definition}\label{def:alpha}
An equivalence relation $\alpha$ on $\Graphs \times \Pi^k \times V^k$ is a \emph{vertex classification (V-C) equivalence relation} if it has the following properties
for all $G\in\Graphs$, $\pi\in \Pi^k$ and $u,v\in V^k$:
\begin{enumerate}
\item $(G,\pi,u) \equiv_\alpha (G,\pi,v)$ implies $u\equiv_\pi v$; \label{def1}
\item $(G,\pi,u) \equiv_\alpha (G,\pi,v)$ implies $(G,\pi',v)\equiv_\alpha(G,\pi',u)$ for all $\pi'\in\Pi^k$ where $\pi\leq\pi'$; and \label{def2}
\item $(G,\pi,u)\equiv_\alpha(G,\pi,\psi(u))$ for all automorphisms $\psi\in AUT(G,\pi)$. \label{def3}
%\item $(G,\pi,u)\equiv_\alpha(G,\pi,v)$ implies $(G,\pi,\sigma(u))\equiv_\alpha(G,\pi,\sigma(v))$ for all $\sigma\in\Sym_k$ if $\pi$ is $\Sym_k$-invariant; \label{def4}
\end{enumerate}
\end{definition}

\begin{example}
For the C-V-C algorithm, we use the vertex classification relation $\delta$ where for a partition $\pi\in\Pi$ and graph $G\in\Graphs$, we define
$(G,\pi,u) \equiv_\delta (G,\pi,v)$ if and only if $u\equiv_{\pi}v$ and $|\delta_G(u)\cap \pi_i|=|\delta_G(v)\cap \pi_i|$ for all $\pi_i\in \pi.$
\end{example}

Using a V-C relation, we can construct a function that refines partitions for the graph $G$ as follows:
\begin{definition}\label{def:equiv fn}
Let $\alpha$ be a V-C equivalence relation, and let $G\in\Graphs$.
We define $\alpha_G: \Pi^k \rightarrow \Pi^k$ where $u \equiv_{\alpha_G(\pi)} v$ if $(G,\pi,u)\equiv_{\alpha}(G,\pi,v)$
for all $\pi\in\Pi^k$ and $u,v\in\Pi^k$. % for $\pi\in\Pi^k$.
We say that a partition is $\alpha_G$-stable if $\alpha_G(\pi)=\pi$,
and we denote by $\alpha^*_G(\pi)$ as the fixed point given by recursively applying $\alpha_G$ to $\pi$,
and specifically, we define $\alpha_G^k=\alpha_G^*(\{V^k\})$ where $\{V^k\}$ is the trivial partition.
\end{definition}
This function on partitions has the following useful properties: (1) $\pi\geq\alpha_G(\pi)$, which is clear from the definition;
(2) $\pi'\geq\pi$ implies $\alpha_G(\pi') \geq \alpha_G(\pi)$, which follows since $(G,\pi,u)\equiv_{\alpha}(G,\pi,v)$ implies $(G,\pi',u)\equiv_{\alpha}(G,\pi',v)$;
and (3) $\alpha_G(\OP{G}^k(\pi))=\OP{G}^k(\pi)$,
and moreover, $AUT(G,\pi)=AUT(G,\alpha_G(\pi))=AUT(G,\alpha_G^*(\pi))$,
which follows since Definition \ref{def:alpha} property (\ref{defv3}) implies that $u \equiv_{\alpha_G(\pi)} \psi(u)$ for all $\psi\in AUT(G,\pi)$ and for all $u,v\in V^k$.

The $k$-dim $\alpha$-V-C automorphism algorithm is thus to compute $\alpha^*(\pi)$.
We call such an algorithm a $k$-dimensional $\alpha$-Vertex Classification algorithm (or $k$-dim $\alpha$-V-C algorithm for short).
Since $\pi\geq \OP{G}^k(\pi)$, by the properties of $\alpha$, we must have $\pi\geq \alpha^*_G(\pi)\geq \OP{G}^k(\pi)$.
Thus, if $\alpha^*_G(\pi)$ is a complete partition, we have shown that $|AUT(G,\pi)|=1$.

From above, the orbit partitions $\OP{G}^k(\pi)$ are $\alpha_G$-stable for all $\pi\in\Pi^k$, and $\alpha^*_G(\pi)$ is $\alpha_G$-stable by definition.
The set of $\alpha_G$-stable partitions is closed under the $\vee$ operator meaning that if $\pi_1$ and $\pi_2$ are $\alpha_G$-stable, then $\pi_1\vee\pi_2$ is $\alpha_G$-stable. 
This follows since $\pi_1\vee\pi_2\geq\pi_1,\pi_2$ implies $\pi_1\vee\pi_2\geq\alpha_G(\pi_1\vee\pi_2)\geq\pi_1,\pi_2$, which implies that $\pi_1\vee\pi_2=\alpha_G(\pi_1\vee\pi_2)$ since $\pi_1\vee\pi_2$ is the unique finest partition coarser than $\pi_1$ and $\pi_2$.
Since the complete partition is $\alpha_G$-stable, we conclude that for every partition $\pi \in \Pi^k$, there is a unique coarsest $\alpha_G$-stable partition finer than $\pi$.
Crucially, we now show that $\alpha^*_G(\pi)$ is the unique coarsest $\alpha_G$-stable partition finer than $\pi$ as follows:
Let $\pi'$ be the unique coarsest $\alpha_G$-stable partition finer than $\pi$.
Since $\pi\geq\pi'$, we have $\alpha_G(\pi)\geq\alpha_G(\pi')=\pi'$,
and thus, $\alpha^*_G(\pi)\geq\pi'$, 
but $\alpha^*_G(\pi)$ is $\alpha_G$-stable and $\pi'$ is the unique coarsest $\alpha_G$-stable partition finer than $\pi$, 
and therefore, $\alpha^*_G(\pi) = \pi'$.

A simple but useful V-C equivalence relation is the \emph{combinatorial} equivalence relation $\CP$ on $\Graphs\times\Pi^k\times V^k$:
First, we define $\CP$ as a combinatorial equivalence relation on $V^k$ where $u\equiv_\CP v$ if $u_i=u_j \Leftrightarrow v_i=v_j$ for all $1\leq i,j\leq k$.
Also, we define $\CP$ as an equivalence relation on $\Graphs\times V^k$:
For all $G\in\Graphs$ and $u,v\in V^k$, we define $(G,u) \equiv_{\CP} (G,v)$ if $u\equiv_\CP v$ and $\{u_i,u_j\}\in E_G \Leftrightarrow \{v_i,v_j\}\in E_G$ for all $1\leq i,j\leq k$.
Then, for all $G\in\Graphs$, $\pi\in\Pi^k$ and $u,v\in V^k$, we define $(G,\pi,u) \equiv_{\CP} (G,\pi,v)$ if $u\equiv_\pi v$ and $(G,u)\equiv_\CP(H,v)$.
It is straight-forward to show that this is indeed a V-C equivalence relation.
As in Definition \ref{def:equiv fn}, we define the partition refinement function $\CP_G:\Pi^k\rightarrow\Pi^k$,
and a $\CP_G$-stable partition is thus a partition $\pi\in\Pi^k$ where $\CP_G(\pi)=\pi$.
Note that by construction, $\CP_G(\pi)=\CP_G^*(\pi)$ and $\pi$ is $\CP_G$-stable if and only if $\pi\leq\CP_G^k$.
We define $\SGP{G}^k = \CP_G^*(\{V^k\})$, that is, the partition where $u \equiv_{\SGP{G}^k} v$ if and only if $(G,u)\equiv_\CP(H,v)$.
This is the coarsest $\CP_G$-stable partition.

%We are interested in V-C equivalence relations that compute $\CP$-stable partitions, that is, if $\alpha$ is a V-C equivalence relation, we want that $\alpha_G^*(\pi)$ is $\CP$-stable for all $\pi\in\Pi^k$.
%If $\alpha$ does not have this property, then we have two options:
%(1) First, we can preprocess the given partition using $\CP$ and then apply $\alpha$, that is, we compute $\alpha_G^*(\CP^*_G(\pi))$ for all $\pi$.
%Since $\alpha_G^*(\CP^*_G(\pi))\leq \CP^*_G(\pi)) \leq \CP^k_G$, we have that $\alpha_G^*(\pi))$ is $\CP$-stable as required.
%(2) Second, we can simply combine $\CP$ and $\alpha$ giving another V-C equivalence relation as follows:
%we define the equivalence relation $\kappa$ on $\Graphs\times\Pi^k\times V^k$ where $(G,\pi,u)\equiv_{\kappa}(G,\pi,v)$ if $(G,\pi,u)\equiv_{\CP}(G,\pi,v)$ and $(G,\pi,u)\equiv_{\alpha}(G,\pi,v)$.
%It is straight-forward to verify that $\kappa$ is a V-C equivalence relation and that
%$\kappa^*_G(\pi)$ is $\CP$-stable for all $\pi\in\Pi^k$.
%These two approaches are equivalent in the sense that $\alpha_G^*(\CP^*_G(\pi)) = \kappa^*_G(\pi)$ for all $\pi\in\Pi^k$ since $\alpha_G^*(\CP_G^*(\pi))$ and $\kappa^*_G(\pi)$ are $\alpha$ and $\CP$ stable subpartitions of $\pi$.
%In this paper, we favour the second approach since we find it leads to a more concise exposition.

A useful property of partitions is invariance under permutation of tuple components, defined as follows:
Let $\sigma\in\Sym_k$ be a permutation where $\Sym_k$ is the symmetric group on $k$ elements.
For all tuples $u\in V^k$, we define $\sigma(u)$ as the permutation of the tuples components, that is, $\sigma(u)_i = u_{\sigma(i)}$ for all $1\leq i \leq k$.
We also define $\sigma(\pi)=\{\sigma(\pi_1),...,\sigma(\pi_m)\}$ where $\sigma(\pi_i)=\{\sigma(u):u\in \pi_i\}$ for all $1\leq i\leq m$.
We say a partition $\pi\in\Pi^k$ is $\Sym_k$-invariant if $\sigma(\pi)=\pi$ for all $\sigma\in\Sym_k$ 
or equivalently $\sigma(u) \equiv_{\pi} \sigma(v)$ for all $\sigma\in\Sym_k$
(or equivalently $u \equiv_{\sigma(\pi)} v$ for all $\sigma\in\Sym_k$).
For example, the partition $\SGP{G}^k$ is $\Sym_k$-invariant.
If a given partition $\pi$ is not $\Sym_k$-invariant, then we can easily compute the coarsest $\Sym_k$-invariant partition finer than $\pi$ using the following useful V-C equivalence relation.
First, let $\Sigma_k=\{\sigma_1,\sigma_2\}\subset \Sym_k$ be a generating set of $\Sym_k$; it is well known that we need only two permutations to generate $\Sym_k$: a two cycle and a $k$-cycle.
We define the \emph{symmetric} equivalence relation $\Sym$ on $\Pi^k\times V^k$ where $(\pi,u)\equiv_{\Sym}(\pi,v)$ if $\sigma_i(u) \equiv_\pi \sigma_i(v)$ for all $\sigma_i\in \Sigma_k$.
This equivalence relation induces a V-C equivalence relation on $\Graphs\times\Pi^k\times V^k$
where $(G,\pi,u) \equiv_\Sym (G,\pi,v)$ if $(\pi,u)\equiv_\Sym(\pi,v)$ for all $G\in\Graphs$, $\pi\in\Pi^k$ and $u,v\in V^k$.
As in Definition \ref{def:equiv fn}, we define the partition refinement function $\Sym:\Pi^k\rightarrow \Pi^k$, and a partition $\pi \in\Pi^k$ is \emph{$\Sym$-stable} if $\Sym(\pi)=\pi$,
and we denote by $\Sym^*(\pi)$ the unique coarsest $\Sym$-stable partition that is finer than $\pi$.
%Then, we have that $AUT(G,\pi)=AUT(G,\Sym^*(\pi))$ for all $G\in\Graphs$.
It is straight-forward to show that $\pi$ is $\Sym$-stable if and only if $\pi$ is $\Sym_k$-invariant, and thus, $\Sym^*(\pi)$ is $\Sym_k$-invariant;
thus, $\Sym^*(\pi)$ is the coarsest $\Sym_k$-invariant subpartition of $\pi$.
%It the following, we will use the term $\Sym$-stable instead of $\Sym_k$-invariant.

\subsubsection{Comparing and Combining V-C equivalence relations}
\label{sec:vc compare}
Here, we consider the general case of comparing two vertex classification equivalence relations $\alpha$ and $\kappa$.

First, Lemma \ref{lem:vc equiv} below gives a condition under which
the $k$-dim $\alpha$-V-C algorithm is at least as strong as the $k$-dim $\kappa$-V-C algorithm, that is, $\alpha^*_G(\pi) \leq \kappa^*_G(\pi)$
and $\alpha^*_G(\pi)$ is $\kappa_G$-stable,
in which case, we say that $\alpha$ implies $\kappa$.
\begin{lemma}\label{lem:vc equiv}
Let $G\in\Graphs$.
Let $\alpha$ and $\kappa$ be two V-C equivalence relations such that 
for all $\pi\in\Pi^k$ where $\alpha(\pi)=\pi$, we have
$(G,\pi,u)\equiv_{\alpha}(G,\pi,v)$ implies $(G,\pi,u)\equiv_{\kappa}(G,\pi,v)$ for all $u,v\in V^k$.
Then, for all $\pi\in\Pi^k$, we have $\alpha_G(\pi)=\pi$ implies $\kappa_G(\pi)=\pi$.
Moreover, for all $\pi\in\Pi^k$, we have $\alpha^*_G(\pi) \leq \kappa^*_G(\pi)$.
\end{lemma}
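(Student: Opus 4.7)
The plan is to prove the two assertions in sequence, with the second deduced immediately from the first together with the lattice-theoretic characterization of $\alpha^*_G(\pi)$ and $\kappa^*_G(\pi)$ already established in the preceding discussion (namely, that these are the unique coarsest $\alpha_G$-stable, respectively $\kappa_G$-stable, partitions finer than $\pi$).

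For the first assertion, I fix $\pi \in \Pi^k$ with $\alpha_G(\pi) = \pi$ and aim to show $\kappa_G(\pi) = \pi$ by proving the two inequalities separately. The inclusion $\kappa_G(\pi) \leq \pi$ is essentially immediate from Definition \ref{def:alpha} property (1) applied to $\kappa$: if $u \equiv_{\kappa_G(\pi)} v$, then $(G,\pi,u) \equiv_\kappa (G,\pi,v)$, so by property (1) we have $u \equiv_\pi v$. For the reverse inclusion $\pi \leq \kappa_G(\pi)$, I take $u,v \in V^k$ with $u \equiv_\pi v$. Since $\pi = \alpha_G(\pi)$, unfolding the definition of $\alpha_G$ gives $(G,\pi,u) \equiv_\alpha (G,\pi,v)$. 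At this point the hypothesis of the lemma, specialized to the $\alpha_G$-stable partition $\pi$, applies and yields $(G,\pi,u) \equiv_\kappa (G,\pi,v)$, which is exactly $u \equiv_{\kappa_G(\pi)} v$. Combining the two inequalities gives $\kappa_G(\pi) = \pi$.

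For the second assertion, I note that $\alpha^*_G(\pi)$ is $\alpha_G$-stable by construction and satisfies $\alpha^*_G(\pi) \leq \pi$. Applying the first assertion to $\alpha^*_G(\pi)$ in place of $\pi$ shows that $\alpha^*_G(\pi)$ is also $\kappa_G$-stable. But $\kappa^*_G(\pi)$ is, by the discussion preceding the lemma, the unique coarsest $\kappa_G$-stable partition finer than $\pi$, so any $\kappa_G$-stable partition $\leq \pi$ must be $\leq \kappa^*_G(\pi)$. Therefore $\alpha^*_G(\pi) \leq \kappa^*_G(\pi)$.

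I do not expect a genuine obstacle here: the only place requiring care is to avoid conflating the equivalence relation $\alpha$ on $\Graphs \times \Pi^k \times V^k$ with its induced partition-refinement operator $\alpha_G$ on $\Pi^k$, and to correctly orient the refinement order (where $\leq$ means finer). Once those are kept straight, both parts reduce to invoking Definition \ref{def:alpha} property (1), the hypothesis of the lemma, and the already-proved universal property of $\alpha^*_G(\pi)$ and $\kappa^*_G(\pi)$.
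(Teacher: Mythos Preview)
Your proof is correct and follows essentially the same route as the paper: first show that $\alpha_G$-stability implies $\kappa_G$-stability directly from the hypothesis (the paper compresses your two-inequality argument into the phrase ``we immediately have that $\kappa(\pi)=\pi$''), and then use that $\kappa^*_G(\pi)$ is the unique coarsest $\kappa_G$-stable partition below $\pi$ to conclude $\alpha^*_G(\pi)\leq\kappa^*_G(\pi)$. Your version is slightly more explicit in unpacking the first step via Definition~\ref{def:alpha}(1), but there is no substantive difference in strategy.
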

\begin{proof}
Firstly, assuming $\alpha(\pi)=\pi$, since $(G,\pi,u)\equiv_{\alpha}(G,\pi,v)$ implies $(G,\pi,u)\equiv_{\kappa}(G,\pi,v)$ for all $u,v\in V^k$, 
we immediately have that $\kappa(\pi)=\pi$.
Then, $\alpha^*_G(\pi)$ is $\alpha_G$-stable and thus $\kappa_G$-stable, and since $\kappa^*_G(\pi)$ is the unique maximal $\kappa_G$-stable subpartition of $\pi$, we must have $\alpha^*_G(\pi) \leq \kappa^*_G(\pi)$.
\end{proof}

Second, we consider when we wish to combine $\alpha$ and $\kappa$ in order to construct a stronger V-C algorithm that implies both $\alpha$ and $\kappa$; there are two ways we can combine the equivalence relations.
(1) First, we can define a new equivalence relation $\lambda$ on $\Graphs\times\Pi^k\times V^k$ where 
$(G,\pi,u)\equiv_{\lambda}(G,\pi,v)$ if $(G,\pi,u)\equiv_{\alpha}(G,\pi,v)$ and $(G,\pi,u)\equiv_{\kappa}(G,\pi,v)$
for all $G\in\Graphs$, $\pi\in\Pi^k$ and $u,v\in V^k$.
Then, clearly $\lambda$ is a V-C equivalence relation that implies $\alpha$ and $\kappa$, and by Lemma \ref{lem:vc equiv} above, we have
$\lambda^*_G(\pi) \leq \alpha^*_G(\pi),\kappa^*_G(\pi)$ and $\lambda^*_G(\pi)$ is $\alpha_G$ and $\kappa_G$ stable for all $\pi\in\Pi^k$.
(2) Second, we first apply $\kappa_G$ and then $\alpha_G$, that is, given $\pi$, we compute $\alpha_G^*(\kappa_G^*(\pi))$. This partition is $\alpha_G$-stable but not necessarily $\kappa_G$-stable unless $\alpha$ is $\kappa$-stable as defined below.
\begin{definition}\label{def:vc stable}
Let $\alpha,\kappa$ be two V-C equivalence relations. 
We say that $\alpha$ is $\kappa$-stable if for all $G\in\Graphs$ and $\pi\in\Pi$ where $\kappa_G(\pi)=\pi$, 
we have $\kappa_G(\alpha_G(\pi)) = \alpha_G(\pi)$.
\end{definition}
Thus, if $\alpha$ is $\kappa$-stable, then $\alpha_G^*(\kappa_G^*(\pi))$ is $\alpha_G$-stable and $\kappa_G$-stable,
and moreover, the two approaches are equivalent meaning that $\alpha_G^*(\kappa_G^*(\pi)) = \lambda_G^*(\pi)$.
This follows since $\lambda_G^*(\pi)\leq\kappa_G^*(\pi)$ implies $\lambda_G^*(\pi)\leq\alpha_G^*(\kappa_G^*(\pi))$,
and $\alpha_G^*(\kappa_G^*(\pi))\leq\pi$ implies $\alpha_G^*(\kappa_G^*(\pi))\leq\lambda_G^*(\pi)$ as required. 

Specifically, in this paper, we are interested in V-C equivalence relations that imply $\CP$ and $\Sym$.
So, given a V-C equivalence relation $\alpha$ that does not have this property, we will combine it with $\CP$ and $\Sym$.
The V-C equivalence relations we are interested in are $\CP$-stable and $\Sym$-stable, so we can apply either of the two approaches above.
In fact, every V-C equivalence relation is $\CP$-stable, since $\pi$ is $\CP_G$ stable if and only if $\pi\leq\CP_G^k$, and $\pi\leq\CP_G^k$ implies
$\alpha_G(\pi)\leq\CP_G^k$.
In this paper, we use the first approach for combining $\alpha$ with $\CP$ and $\Sym$ since we find it leads to a more concise exposition.

\subsection{Isomorphism Algorithms}
In this section, we present a general framework for an isomorphism algorithm.  First, we give some necessary definitions and background.

For isomorphism, we need to work with ordered partitions.
We denote by $\Piv^k$ the set of all ordered partitions of $V^k$, that is, $\Piv^k = \{ (\pi_1,...,\pi_m) : \{\pi_1,....,\pi_m\} \in \Pi^k\}$.
For an ordered partition $\piv\in\Piv^k$, we denote $\piv_i$ as the $i$th set in the partition.
For a partition $\piv=(\piv_1,...,\piv_m)\in\Piv^k$ and a bijection $\psi: V\rightarrow V$, 
we define $\psi(\piv)=(\psi(\piv_1),...,\psi(\piv_m))$.
We define $ISO(G,\piv, H, \tauv)=\{\psi\in ISO(G,H):\psi(\piv)=\tauv\}$.
Analogous to the automorphism algorithm, we address the more general graph isomorphism question of whether $ISO(G,\piv, H, \tauv) = \emptyset$ for given partitions $\piv,\tauv\in\Pi^k$, which is useful when using a backtracking algorithm for solving the isomorphism problem as in the automorphism case (see for example \cite{McKay1980}).

For a $k$-tuple $u\in V^k$ and an ordered partition $\piv\in\Piv^k$, we denote the index of the equivalence class of $u\in V^k$ as $[u]_\piv=i$ where $u\in\piv_i$.
For ordered partitions $\piv=(\pi_1,\ldots,\pi_m)\in\Piv^k$, there is a natural preorder $\leqq_\piv$ on $V^k$ extending the equivalence $\equiv_\piv$ relation as follows: for $u\in \piv_s, v \in \piv_t$, we write $u \leqq_\piv v$ if and only if $s\leq t$.
In the following, let $\piv,\tauv\in\Piv^k$.
We write $\piv \leq \tauv$ if $u \equiv_\piv v \Rightarrow u \equiv_\tauv v$ for all $u,v\in V^k$,
and we write $\piv \simeq \tauv$ if $u \equiv_\piv v \Leftrightarrow u \equiv_\tauv v$ for all $u,v\in V^k$.
We write $\piv \preceq \tauv$ if $u \leqq_\piv v$ implies $u \leqq_\tauv v$.
So, $\piv \preceq \tauv$ implies that $\piv \leq \tauv$.
We write $\piv \approx \tauv$ if $|\piv|=|\tauv|$ and $|\piv_i|=|\tauv_i|$ for all $1\leq i \leq |\piv|$.
We write $(\piv,\tauv) \leq(\piv',\tauv')$ if $[u]_\piv = [v]_\tauv \Rightarrow [u]_{\piv'} = [v]_{\tauv'}$,
and we write $(\piv,\tauv) \simeq(\piv',\tauv')$ if $[u]_\piv = [v]_\tauv \Leftrightarrow [u]_{\piv'} = [v]_{\tauv'}$.

\begin{definition}
\label{def:alphav}
We call a total preorder $\alphav$ on $\Graphs \times \Piv^k \times V^k$ a \emph{vertex classification (V-C) preorder} if  $\alphav$ has the following properties
for all $G,H\in\Graphs$, $\piv,\tauv\in \Piv^k$ and $u,v\in V^k$: 
\begin{enumerate}
\item $(G,\piv,u) \leqq_{\alphav} (H,\tauv,v)$ implies $[u]_\piv \leq [v]_\tauv$; \label{defv1}
\item $(G,\piv,u) \equiv_{\alphav} (H,\tauv,v)$ implies $(G,\piv',v)\equiv_{\alphav}(H,\tauv',u)$ for all $\piv',\tauv'\in\Piv^k$ where $(\piv,\tauv)\leq(\piv',\tauv')$; and \label{defv2}
\item $(G,\piv,u) \equiv_{\alphav} (\psi(G),\psi(\piv),\psi(u))$ for all bijections $\psi:V \rightarrow V$. \label{defv3}.
\end{enumerate}
\end{definition}

\begin{example}
For the C-V-C algorithm, we use the vertex classification relation $\delta$ as follows: given $G,H\in\Graphs$ and $\piv,\tauv\in\Piv$, we define
$(G,\piv,u) \leqq_{\deltabar} (H,\tauv,v)$ if and only if $[u]_\piv < [v]_\tauv$ or $[u]_\piv = [v]_\tauv$ and
$$(|\delta_G(u)\cap \piv_1|,...,|\delta_G(u)\cap \piv_m|) \leq_{lex} (|\delta_G(v)\cap \tauv_1|,...,|\delta_G(v)\cap \tauv_m|)$$
where $\leq_{lex}$ is the standard lexicographic ordering.
\end{example}

Using a V-C preorder, we can construct an ordered partition function that refines ordered partitions.
\begin{definition}\label{def:preorder fn}
Let $\alphav$ be a V-C preorder, and let $G\in\Graphs$.
We define $\alphav_G: \Piv^k \rightarrow \Piv^k$ where for all $\piv\in\Piv^k$ and $u,v\in V^k$, we have $u \leqq_{\alphav_G(\piv)} v$ if $(G,\piv,u)\leqq_{\alphav}(G,\piv,v)$.
We say that a partition $\piv$ is $\alphav_G$-stable if $\alphav_G(\piv) = \piv$,
and we denote by $\alphav^*_G(\piv)$ the fixed point reached by recursively applying $\alphav$ to $\piv$,
and specifically, we define $\alphav_G^k=\alphav_G^*((V^k))$ where $(V^k)$ is the trivial ordered partition.
\end{definition}
The function $\alphav_G$ has the properties that $\piv \succeq \alphav_G(\piv)$,
and crucially, $\psi(\alphav_G(\piv)) = \alphav_{\psi(G)}(\psi(\piv))$ since $(G,\piv,u) \equiv_\alphav (G,\piv,v)$ implies 
$(\psi(G),\psi(\piv),\psi(v))\equiv_\alphav(\psi(G),\psi(\piv),\psi(v))$ by property (3).

The concept of a V-C preorder is analogous to the concept of a V-C equivalence relation from before.
Specifically, a V-C preorder $\alphav$ induces a V-C equivalence relation $\alpha$ on $\Graphs\times\Pi^k\times V^k$ where we define $(G,\pi,u) \equiv_{\alpha} (G,\pi,v)$ if $(G,\piv,u) \equiv_{\alphav} (G,\piv,v)$ for all $\piv\simeq\pi$.
By property (2) above, $(G,\piv,u) \equiv_{\alphav} (G,\piv,v)$ if and only if $(G,\piv',u) \equiv_{\alphav} (G,\piv',v)$ for all $\piv'\simeq\piv$.
Thus, given $\piv\simeq\pi$, we have $\alphav_G(\piv)\simeq\alpha_G(\pi)$ and $\alphav^*_G(\piv)\simeq\alpha_G^*(\pi)$, and also, the partition $\piv$ is $\alphav_G$-stable if and only if $\pi$ is $\alpha_G$-stable.
This is useful since it means that proving results for $\alphav$ implies results for $\alpha$.

Before describing the algorithm for graph isomorphism, we need to define an equivalence relation on $\Graphs\times\Piv^k$.
This equivalence condition will serve as a useful necessary but not sufficient condition for isomorphism.
\begin{definition}\label{def:preorder equiv}
Given a V-C preorder $\alphav$, we define an equivalence relation $\equiv_{\alphav}$ on $\Graphs\times \Piv^k$ where for all $G,H\in\Graphs$, $\piv,\tauv\in\Piv^k$ and $u,v\in V^k$, we have $(G,\piv) \equiv_{\alphav} (H,\tauv)$ if $\alphav_G(\piv)\approx\alphav_H(\tauv)$ and $(G,\piv,u) \equiv_{\alphav}(H,\tauv,v) \Leftrightarrow [u]_{\alphav_G(\piv)}=[v]_{\alphav_H(\tauv)}$.
\end{definition}
In other words, $(G,\piv) \equiv_{\alphav} (H,\tauv)$  if and only if there exists a bijection of tuples $\gamma: V^k\rightarrow V^k$ where 
for all $u\in V^k$, we have $(G,\piv,u) \equiv_{\alphav}(H,\tauv,\gamma(u))$. %, in which case, we have $\gamma(\alphav_G(\piv))=\alphav_H(\tauv)$.
The equivalence relation $\equiv_{\alphav}$ has analogous properties to those defining the preorder $\alphav$.
Note that we always have $(G,\piv) \equiv_{\alphav} (G,\piv)$.
\begin{lemma}\label{lem:equiv props}
Let $\alphav$ be a vertex classification preorder on $\Graphs\times\Piv^k\times V^k$.
Then, for all $G,H\in\Graphs$ and $\piv,\tauv\in\Piv^k$,
\begin{enumerate}
\item $(G,\piv) \equiv_{\alphav} (H,\tauv)$ implies $(\alphav_G(\piv), \alphav_H(\tauv))\leq (\piv,\tauv)$ and $\piv \approx \tauv$;
\item $(G,\piv) \equiv_{\alphav} (H,\tauv)$ implies $(G,\piv')\equiv_{\alphav}(H,\tauv')$ and $(\alphav_G(\piv),\alphav_H(\tauv))\leq(\alphav_G(\piv'),\alphav_H(\tauv'))$ for all $\piv',\tauv'\in\Piv^k$ where $(\piv,\tauv)\leq(\piv',\tauv')$; and
\item $(G,\piv) \equiv_{\alphav}(\psi(G),\psi(\piv))$ for all bijections $\psi:V\rightarrow V$.
\end{enumerate}
\end{lemma}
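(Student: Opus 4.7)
The plan is to reduce each of the three properties of $\equiv_{\alphav}$ on $\Graphs \times \Piv^k$ to the corresponding property of $\alphav$ as a V-C preorder (Definition \ref{def:alphav}). Throughout, I will use the useful reformulation that $(G,\piv) \equiv_{\alphav} (H,\tauv)$ holds if and only if there is a bijection $\gamma: V^k \to V^k$ with $(G,\piv,u) \equiv_{\alphav} (H,\tauv,\gamma(u))$ for every $u\in V^k$. The forward direction is obtained by picking, for each index $i$, any bijection from the $i$-th cell of $\alphav_G(\piv)$ onto the $i$-th cell of $\alphav_H(\tauv)$, which is possible because $\alphav_G(\piv)\approx\alphav_H(\tauv)$; the reverse direction uses transitivity of $\equiv_{\alphav}$ together with V-C preorder property (1) to verify that such a $\gamma$ forces both conditions in Definition \ref{def:preorder equiv}.

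For property (1), assume $(G,\piv)\equiv_{\alphav} (H,\tauv)$. If $[u]_{\alphav_G(\piv)} = [v]_{\alphav_H(\tauv)}$, the equivalence clause of Definition \ref{def:preorder equiv} gives $(G,\piv,u) \equiv_{\alphav} (H,\tauv,v)$, and applying V-C preorder property (1) in both directions of the preorder yields $[u]_\piv = [v]_\tauv$. This is exactly $(\alphav_G(\piv),\alphav_H(\tauv)) \leq (\piv,\tauv)$. Now each cell of $\piv$ is a disjoint union of cells of $\alphav_G(\piv)$, and similarly for $\tauv$. The previous argument shows that cells of $\alphav_G(\piv)$ and $\alphav_H(\tauv)$ that share an index lie in cells of $\piv$ and $\tauv$ that also share an index; summing cell sizes using $\alphav_G(\piv)\approx\alphav_H(\tauv)$ then yields $\piv \approx \tauv$.

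For property (2), pick a bijection $\gamma$ witnessing $(G,\piv) \equiv_{\alphav} (H,\tauv)$ via the reformulation above. V-C preorder property (2) applied pointwise lifts each $(G,\piv,u)\equiv_{\alphav}(H,\tauv,\gamma(u))$ to $(G,\piv',u) \equiv_{\alphav} (H,\tauv',\gamma(u))$, so the same $\gamma$ witnesses $(G,\piv')\equiv_{\alphav}(H,\tauv')$. For the comparison $(\alphav_G(\piv),\alphav_H(\tauv)) \leq (\alphav_G(\piv'),\alphav_H(\tauv'))$, suppose $[u]_{\alphav_G(\piv)} = [v]_{\alphav_H(\tauv)}$. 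Then $(G,\piv,u) \equiv_{\alphav} (H,\tauv,v)$; V-C preorder property (2) lifts this to $(G,\piv',u) \equiv_{\alphav} (H,\tauv',v)$; and the equivalence clause in Definition \ref{def:preorder equiv} applied to $(G,\piv')\equiv_{\alphav}(H,\tauv')$ gives $[u]_{\alphav_G(\piv')}=[v]_{\alphav_H(\tauv')}$, as required.

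Property (3) is the most direct: V-C preorder property (3) gives $(G,\piv,u) \equiv_{\alphav} (\psi(G),\psi(\piv),\psi(u))$ for every $u$, so the componentwise extension of $\psi$ to $V^k$ is a bijection $V^k\to V^k$ that serves as the witness $\gamma$ for $(G,\piv) \equiv_{\alphav} (\psi(G),\psi(\piv))$. The main obstacle in the whole argument is establishing the reformulation itself, specifically verifying that the existence of a bijection $\gamma$ with $(G,\piv,u) \equiv_{\alphav} (H,\tauv,\gamma(u))$ for all $u$ implies the index-matching biconditional $(G,\piv,u) \equiv_{\alphav} (H,\tauv,v) \Leftrightarrow [u]_{\alphav_G(\piv)}=[v]_{\alphav_H(\tauv)}$ required by Definition \ref{def:preorder equiv}. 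This requires chaining transitivity of $\equiv_{\alphav}$ with V-C preorder property (1) to show that cells of $\alphav_G(\piv)$ and $\alphav_H(\tauv)$ paired by $\gamma$ must carry identical indices and that $\gamma$ cannot merge distinct cells; once that reformulation is in hand, all three properties follow cleanly.
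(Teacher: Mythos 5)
Your proof is correct and follows essentially the same route as the paper's: part (1) argues directly from the biconditional in Definition \ref{def:preorder equiv} plus V-C preorder property (1), part (2) uses property (2) both for the cell comparison and to transport the witnessing bijection $\gamma$, and part (3) uses property (3) with the componentwise extension of $\psi$ as the witness. The reformulation you flag as the main obstacle (existence of $\gamma$ with $(G,\piv,u)\equiv_{\alphav}(H,\tauv,\gamma(u))$ for all $u$ being equivalent to the definition) is also used, without explicit justification, by the paper's proof; its verification needs only the totality and transitivity of $\leqq_{\alphav}$ and Definition \ref{def:preorder fn}, not V-C preorder property (1) as your sketch suggests, but that does not affect the soundness of the argument.
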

\begin{proof}
(1) Assume $(G,\piv) \equiv_{\alphav} (H,\tauv)$.
Since $[u]_{\alphav_G(\piv)} = [v]_{\alphav_H(\tauv)}$ implies $(G,\piv,u) \equiv_{\alphav}(H,\tauv,v)$ and thus $[u]_\piv = [v]_\tauv$, we have $(\alphav_G(\piv), \alphav_H(\tauv))\leq (\piv,\tauv)$,
which together with the fact that $\alphav_G(\piv)\approx\alphav_H(\tauv)$ implies $\piv \approx \tauv$.

(2) Again assume $(G,\piv) \equiv_{\alphav} (H,\tauv)$.
First, we have $(\piv',\tauv')\geq(\piv,\tauv)$ implies $(\alphav_G(\piv'),\alphav_H(\tauv'))\geq(\alphav_G(\piv),\alphav_H(\tauv))$.
This follows since $[u]_{\alphav_G(\piv)}=[v]_{\alphav_H(\tauv)}$ implies $(G,\piv,u)\equiv_{\alphav}(G,\tauv,v)$, which by Definition \ref{def:alphav} implies that $(G,\piv',u)\equiv_{\alphav}(H,\tauv',v)$, and therefore,
$[u]_{\alphav_G(\piv')} = [v]_{\alphav_H(\tauv')}$ as required.
Second, since $(G,\piv) \equiv_{\alphav} (H,\tauv)$, there exists a tuple bijection $\gamma:V^k\rightarrow V^k$ such that $(G,\piv,u) \equiv_{\alphav} (H,\tauv,\gamma(u))$ for all $u \in V^k$ implying that $(G,\piv',u) \equiv_{\alphav} (H,\tauv',\gamma(u))$ for all $u \in V^k$ by property of Definition \ref{def:alphav}.
Hence, $(G,\piv') \equiv_{\alphav} (H,\tauv')$. 

(3) Let $\psi:V\rightarrow V$ by a bijection, which thus induces a tuple bijection $\psi:V^k\rightarrow V^k$.
Then, from property (3) of Definition \ref{def:alphav}, we have $(G,\piv,u)\equiv_{\alphav}(\psi(G),\psi(\piv),\psi(u))$ for all $u\in V^k$,
and thus, $(G,\piv)\equiv_\alphav(\psi(G),\psi(\piv))$.
\end{proof}

It follows from part (3) of Lemma \ref{lem:equiv props} above that $(G,\piv) \not\equiv_{\alphav}(H,\tauv)$ implies $ISO(G,\piv,H,\tauv)=\emptyset$.
Moreover, if $(G,\piv) \equiv_{\alphav}(H,\tauv)$, then $ISO(G,\piv,H,\tauv) = ISO(G,\alphav_G(\piv),H,\alphav_H(\tauv))$.
This follows from property (3) of Definition \ref{def:alphav}, which says that if $(G,\piv,u) \not\equiv_{\alphav}(H,\tauv,v)$, then there does not exist $\psi\in ISO(G,\piv,H,\tauv)$ such that $\psi(u)=v$. 
Then, it follows from the definition of $(G,\piv) \equiv_{\alphav}(H,\tauv)$ that 
%$\alphav_G(\piv)\approx\alphav_H(\tauv)$ and $(G,\piv,u) \equiv_{\alphav}(H,\tauv,v) \Leftrightarrow [u]_{\alphav_G(\piv)}=[v]_{\alphav_H(\tauv)}$, we thus have 
$ISO(G,\piv,H,\tauv) = ISO(G,\alphav_G(\piv),H,\alphav_H(\tauv))$.
Hence, if $(G,\alphav_G(\piv)) \not \equiv_{\alphav} (H,\alphav_H(\tauv))$, then $ISO(G,\alphav_G(\piv),H,\alphav_H(\tauv)) =\emptyset = ISO(G,\piv,H,\tauv)$.

We show below in Lemma \ref{lem:iso} that under the condition that $\piv\approx\tauv$, we have $ISO(G,\piv,H,\tauv)=ISO(G,\alpha_G^*(\piv),H,\alpha_H^*(\tauv))$.
So, if $(G,\alphav^*_G(\piv))\not\equiv_{\alphav} (H,\alphav^*_H(\tauv))$, then $ISO(G,\alpha_G^*(\piv),H,\alpha_H^*(\tauv)) =\emptyset$,
and therefore, $ISO(G,\piv,H,\tauv)=\emptyset$.
We can now present the $\alphav$-V-C algorithm for isomorphism given $G,H\in\Graphs$ and $\piv,\tauv\in\Piv^k$ where $\piv\approx\tauv$:
first, we compute $\alphav^*_G(\piv)$; second, we compute $\alphav^*_H(\tauv)$; and finally, we check if
$(G,\alphav^*_G(\piv))\not \equiv_{\alphav} (H,\alphav^*_H(\tauv))$, and if so, then we have shown that $ISO(G,\piv,H,\tauv)=\emptyset$.
We may assume without loss of generality that $\piv\approx\tauv$ since it is easily verifiable and if $\piv\not\approx\tauv$, then trivially $ISO(G,\piv,H,\tauv)=\emptyset$.

The question arises whether during the isomorphism algorithm, we should not only check if $(G,\alphav^*_G(\piv))\equiv_{\alphav} (H,\alphav^*_H(\tauv))$,
but we should also check if $(G,\alphav^r_G(\piv))\equiv_{\alphav} (H,\alphav^r_H(\tauv))$ for all $r$ where $\alphav_G^r$ and $\alpha_H^r$ are the results of applying $\alphav_G$ and $\alphav_H$ respectively recursively $r$ times.
The following useful lemma and corollary establishes that this is not necessary.
\begin{lemma}\label{lem:equiv for all r}
Let $G,H\in\Graphs$ and let $\piv,\tauv,\piv',\tauv'\in\Piv^k$ where $(G,\piv')\equiv_{\alphav^*}(H,\tauv')$ and 
$(\piv',\tauv')\leq(\piv,\tauv)$.
For all $r\geq 0$,  we have $(\piv',\tauv')\leq (\alphav_G^r(\piv),\alphav_H^r(\tauv))$ and $(G,\alphav^r_G(\piv)) \equiv_{\alphav}(H,\alphav^r_H(\tauv))$. Also, $(G,\alphav^*_G(\piv)) \equiv_{\alphav}(H,\alphav^*_H(\tauv))$.
\end{lemma}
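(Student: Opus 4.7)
The plan is to proceed by induction on $r$ and use Lemma~\ref{lem:equiv props}(2) as the workhorse, applied twice per inductive step. I read the hypothesis $(G,\piv')\equiv_{\alphav^*}(H,\tauv')$ as delivering two facts: the ordinary equivalence $(G,\piv')\equiv_{\alphav}(H,\tauv')$, and the stability $\alphav_G(\piv')=\piv'$, $\alphav_H(\tauv')=\tauv'$ (the partitions are fixed points of their respective refinement operators). Both ingredients are needed to close the induction.

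For the base case $r=0$, we have $(\alphav_G^0(\piv),\alphav_H^0(\tauv))=(\piv,\tauv)$, so the desired containment is just the hypothesis $(\piv',\tauv')\leq(\piv,\tauv)$. Applying Lemma~\ref{lem:equiv props}(2) to this containment together with the equivalence $(G,\piv')\equiv_\alphav(H,\tauv')$ yields $(G,\piv)\equiv_\alphav(H,\tauv)$.

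For the inductive step, suppose $(\piv',\tauv')\leq(\alphav_G^r(\piv),\alphav_H^r(\tauv))$ and $(G,\alphav_G^r(\piv))\equiv_\alphav(H,\alphav_H^r(\tauv))$. A first application of Lemma~\ref{lem:equiv props}(2) to $(G,\piv')\equiv_\alphav(H,\tauv')$ and $(\piv',\tauv')\leq(\alphav_G^r(\piv),\alphav_H^r(\tauv))$ produces $(\alphav_G(\piv'),\alphav_H(\tauv'))\leq(\alphav_G^{r+1}(\piv),\alphav_H^{r+1}(\tauv))$. Stability of $\piv'$ and $\tauv'$ collapses the left-hand side to $(\piv',\tauv')$, giving $(\piv',\tauv')\leq(\alphav_G^{r+1}(\piv),\alphav_H^{r+1}(\tauv))$. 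A second application of Lemma~\ref{lem:equiv props}(2), now with this new comparison and the same hypothesis $(G,\piv')\equiv_\alphav(H,\tauv')$, delivers $(G,\alphav_G^{r+1}(\piv))\equiv_\alphav(H,\alphav_H^{r+1}(\tauv))$, closing the induction. The final ``also'' claim follows by noting that $\alphav_G^r(\piv)$ and $\alphav_H^r(\tauv)$ each stabilize after finitely many iterations; choosing $R$ large enough that $\alphav_G^R(\piv)=\alphav_G^*(\piv)$ and $\alphav_H^R(\tauv)=\alphav_H^*(\tauv)$, the $r=R$ instance of the main claim is exactly the statement $(G,\alphav_G^*(\piv))\equiv_\alphav(H,\alphav_H^*(\tauv))$.

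The real subtlety is not the induction itself but unpacking the hypothesis $\equiv_{\alphav^*}$: without the stability of $\piv'$ and $\tauv'$ one cannot pass from $(\alphav_G(\piv'),\alphav_H(\tauv'))\leq(\alphav_G^{r+1}(\piv),\alphav_H^{r+1}(\tauv))$ back to $(\piv',\tauv')\leq(\alphav_G^{r+1}(\piv),\alphav_H^{r+1}(\tauv))$, and the induction would break. Once stability and the base equivalence are in hand, Lemma~\ref{lem:equiv props}(2) alone drives the entire argument.
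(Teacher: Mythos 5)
Your proof is correct and uses essentially the same mechanism as the paper: induction on $r$ driven by Lemma~\ref{lem:equiv props}(2), with the fixed-point property $\alphav_G(\piv')=\piv'$, $\alphav_H(\tauv')=\tauv'$ (unpacked from $\equiv_{\alphav^*}$) supplying the crucial collapse $(\alphav_G(\piv'),\alphav_H(\tauv'))=(\piv',\tauv')$. The paper's version is slightly leaner in that it runs the induction only on the containment $(\piv',\tauv')\leq(\alphav_G^r(\piv),\alphav_H^r(\tauv))$ and derives the equivalence at each level as a byproduct, rather than carrying both claims through the induction and invoking the lemma twice per step, but this is a matter of bookkeeping rather than substance.
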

\begin{proof}
We show by induction that for all $r\geq0$, we have $(\piv',\tauv')\leq(\alphav_G^r(\piv),\alphav_H^r(\tauv))$ thus implying that $(G,\alphav_G^r(\piv))\equiv_{\alphav}(H,\alphav_H^r(\tauv))$ by Lemma \ref{lem:equiv props} as required.
It is true for $r=0$. Assume it is true for some $r$.
Then, $(\piv',\tauv')\leq(\alphav_G^r(\piv),\alphav_H^r(\tauv))$ implies that 
$(\piv',\tauv')\leq(\alphav_G^{r+1}(\piv),\alphav_H^{r+1}(\tauv))$ by Lemma \ref{lem:equiv props} part (2) and
since $\alphav_G(\piv')=\piv'$ and $\alphav_H(\tauv')=\tauv'$.
\end{proof}

\begin{corollary}\label{cor:equiv for all r}
Let $G,H\in\Graphs$ and let $\piv,\tauv\in\Piv^k$ where $\piv\approx \tauv$ and $(G,\alphav^*_G(\piv)) \equiv_{\alphav}(H,\alphav^*_H(\tauv))$.
Then for all $r$,  we have $(\alphav^*_G(\piv), \alphav^*_H(\tauv))\leq (\alphav_G^r(\piv),\alphav_H^r(\tauv))$ and $(G,\alphav^r_G(\piv)) \equiv_{\alphav}(H,\alphav^r_H(\tauv))$.
\end{corollary}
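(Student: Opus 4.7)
The plan is to apply Lemma~\ref{lem:equiv for all r} with the choice $\piv' = \alphav^*_G(\piv)$ and $\tauv' = \alphav^*_H(\tauv)$. These are $\alphav_G$- and $\alphav_H$-stable by the very definition of the star operation, and the hypothesis of the corollary supplies the equivalence $(G,\piv') \equiv_\alphav (H,\tauv')$. The only remaining condition of that lemma is the pair comparability $(\piv',\tauv') \leq (\piv,\tauv)$; once it is verified, the conclusion of Lemma~\ref{lem:equiv for all r} is literally the statement of the corollary.

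So the substantive step is to show $(\alphav^*_G(\piv), \alphav^*_H(\tauv)) \leq (\piv,\tauv)$. From the hypothesis $(G,\alphav^*_G(\piv)) \equiv_\alphav (H,\alphav^*_H(\tauv))$ together with Lemma~\ref{lem:equiv props} part~(1) we have $\alphav^*_G(\piv) \approx \alphav^*_H(\tauv)$, while $\piv \approx \tauv$ is already assumed. Property~(1) of Definition~\ref{def:alphav} gives $\alphav_G(\piv) \preceq \piv$ and, iterating, $\alphav^*_G(\piv) \preceq \piv$; analogously $\alphav^*_H(\tauv) \preceq \tauv$. The relation $\preceq$ forces each cell of the refinement to lie inside a single parent cell, and it orders refined cells so that those contained in a given parent cell form a block of consecutive indices.

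A short cell-counting argument now closes the gap. Let $(s_1,\ldots,s_N)$ be the common cell-size sequence of $\alphav^*_G(\piv)$ and $\alphav^*_H(\tauv)$, and let $(n_1,\ldots,n_M)$ be that of $\piv$ and $\tauv$. Denote by $k_i^G$ and $k_i^H$ the numbers of refined cells sitting inside $\piv_i$ and $\tauv_i$ respectively. Block-consecutivity says that $k_1^G$ is the unique integer with $s_1 + \cdots + s_{k_1^G} = n_1$, and the same equation determines $k_1^H$; since the sequences $(s_c)$ and $(n_i)$ are shared, $k_1^G = k_1^H$, and induction on $i$ yields $k_i^G = k_i^H$ for every $i$. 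Consequently the refined-to-parent index map is identical on the two graphs, so $[u]_{\alphav^*_G(\piv)} = [v]_{\alphav^*_H(\tauv)}$ implies $[u]_\piv = [v]_\tauv$, which is precisely $(\alphav^*_G(\piv), \alphav^*_H(\tauv)) \leq (\piv,\tauv)$. This cell-counting step is the only real obstacle; everything else is bookkeeping to insert the hypotheses of Lemma~\ref{lem:equiv for all r}.
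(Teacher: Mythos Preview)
Your proof is correct and follows essentially the same route as the paper: establish $\alphav^*_G(\piv)\preceq\piv$, $\alphav^*_H(\tauv)\preceq\tauv$, $\alphav^*_G(\piv)\approx\alphav^*_H(\tauv)$, and $\piv\approx\tauv$, deduce $(\alphav^*_G(\piv),\alphav^*_H(\tauv))\leq(\piv,\tauv)$, and invoke Lemma~\ref{lem:equiv for all r}. The paper asserts the pair-comparability implication in one line without justification, whereas you spell out the underlying cell-counting argument; otherwise the proofs are identical.
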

\begin{proof}
Since $\alphav_G^*(\piv)\preceq\piv$, $\alphav^*_H(\tauv)\preceq\tauv$, $\alphav_G^*(\piv)\approx\alphav_H^*(\tauv)$
and $\piv\approx\tauv$, we have $(\alphav_G^*(\piv),\alphav_H^*(\tauv))\leq (\piv,\tauv)$.
The result follows immediately by applying Lemma \ref{lem:equiv for all r}.
\end{proof}

Using the above lemma, we can now show that $ISO(G,\piv,H,\tauv)=ISO(G,\alpha_G^*(\piv),H,\alpha_H^*(\tauv))$ where $\piv\approx\tauv$.
\begin{lemma}\label{lem:iso}
Let $G,H\in\Graphs$. Let $\piv,\tauv\in\Piv^k$ where $\piv\approx \tauv$.
Then, $ISO(G,\piv,H,\tauv)=ISO(G,\alpha_G^*(\piv),H,\alpha_H^*(\tauv))$.
\end{lemma}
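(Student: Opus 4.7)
The plan is to prove the two set inclusions separately, interpreting $\alpha_G^*(\piv)$ as the ordered refinement $\alphav_G^*(\piv)$ of Definition \ref{def:preorder fn}, since the sets $ISO(\cdot,\cdot,\cdot,\cdot)$ are defined in terms of ordered partitions.

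For the inclusion $ISO(G,\piv,H,\tauv) \subseteq ISO(G,\alphav_G^*(\piv),H,\alphav_H^*(\tauv))$, I would take $\psi \in ISO(G,\piv,H,\tauv)$, noting that $\psi(G)=H$ and $\psi(\piv)=\tauv$. Property (\ref{defv3}) of Definition \ref{def:alphav} applied to the bijection $\psi$ then gives $(G,\piv,u) \equiv_{\alphav} (\psi(G),\psi(\piv),\psi(u)) = (H,\tauv,\psi(u))$ for every $u \in V^k$. By totality of the preorder and substitution, this yields $u \leqq_{\alphav_G(\piv)} v$ if and only if $\psi(u) \leqq_{\alphav_H(\tauv)} \psi(v)$, which is exactly the statement that $\psi(\alphav_G(\piv))=\alphav_H(\tauv)$ as ordered partitions. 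A straightforward induction on $r$, re-applying the same property with $\piv$ replaced by $\alphav_G^r(\piv)$ and $\tauv$ by $\alphav_H^r(\tauv)$, yields $\psi(\alphav_G^r(\piv))=\alphav_H^r(\tauv)$ for every $r \geq 0$; passing to the common fixed point gives $\psi(\alphav_G^*(\piv))=\alphav_H^*(\tauv)$, placing $\psi$ in the right-hand side.

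For the reverse inclusion, let $\psi \in ISO(G,\alphav_G^*(\piv),H,\alphav_H^*(\tauv))$ and write $\alphav_G^*(\piv) = (A_1,\dots,A_N)$ and $\alphav_H^*(\tauv) = (B_1,\dots,B_N)$. By hypothesis $\psi(A_j) = B_j$ for each $j$, so $|A_j|=|B_j|$. Since $\alphav_G^*(\piv) \preceq \piv$ and $\alphav_H^*(\tauv) \preceq \tauv$, each original cell splits as a contiguous run: $\piv_i = A_{a_{i-1}+1} \cup \cdots \cup A_{a_i}$ and $\tauv_i = B_{b_{i-1}+1}\cup\cdots\cup B_{b_i}$ for indices $0 = a_0 \leq \cdots \leq a_m = N$ and $0 = b_0 \leq \cdots \leq b_m = N$. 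I then compare prefix sizes using $\piv \approx \tauv$:
\[
\sum_{j=1}^{a_i} |A_j| \;=\; |\piv_1|+\cdots+|\piv_i| \;=\; |\tauv_1|+\cdots+|\tauv_i| \;=\; \sum_{j=1}^{b_i} |B_j| \;=\; \sum_{j=1}^{b_i} |A_j|.
\]
Since every $A_j$ is nonempty, this forces $a_i = b_i$ for each $i$, and therefore $\psi(\piv_i) = \bigcup_{j=a_{i-1}+1}^{a_i} B_j = \tauv_i$, so $\psi \in ISO(G,\piv,H,\tauv)$.

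The main obstacle is the reverse inclusion: there one must bridge the gap between cellwise correspondence at the finer $\alphav^*$-refinement level and correspondence of the original cells of $\piv$ and $\tauv$. The prefix-size comparison is the crucial step and relies essentially on the hypothesis $\piv \approx \tauv$; without matching cell sizes the chunk-boundaries $(a_i)$ and $(b_i)$ need not coincide, so $\piv \approx \tauv$ is not cosmetic but is genuinely used exactly where the direct forward argument fails.
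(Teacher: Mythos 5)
Your proof is correct, and it takes a genuinely different route from the paper's. The paper does not split into two set inclusions; instead it case-splits on whether $(G,\alphav_G^*(\piv))\equiv_\alphav(H,\alphav_H^*(\tauv))$, and in each case tracks the equality $ISO(G,\alphav_G^r(\piv),H,\alphav_H^r(\tauv)) = ISO(G,\alphav_G^{r+1}(\piv),H,\alphav_H^{r+1}(\tauv))$ down the refinement chain, invoking the single-step identity derived from property (\ref{defv3}) together with Lemma \ref{lem:equiv props} and Corollary \ref{cor:equiv for all r}. Your forward inclusion is essentially the same ingredient in a different wrapper: it iterates the paper's noted identity $\psi(\alphav_G(\piv))=\alphav_{\psi(G)}(\psi(\piv))$. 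Your reverse inclusion is where the argument truly diverges. The paper's reverse containment is again obtained indirectly by walking the chain of ISO-equalities back up; you instead exploit the structural fact that $\alphav_G^*(\piv)\preceq\piv$ and $\alphav_H^*(\tauv)\preceq\tauv$ (so each original cell is a contiguous union of refined cells) and close the gap with an elementary prefix-sum comparison, which uses $\piv\approx\tauv$ exactly once and transparently. This buys a self-contained, counting-style proof that avoids the equivalence-relation $\equiv_\alphav$ on $\Graphs\times\Piv^k$ and the supporting corollaries entirely, and it isolates cleanly where $\piv\approx\tauv$ is needed (only the reverse inclusion). What the paper's chain-tracking approach buys in exchange is that it also establishes, en route, the per-iteration facts $ISO(G,\piv,H,\tauv)=ISO(G,\alphav_G^r(\piv),H,\alphav_H^r(\tauv))$ for all $r$, which are reused in the surrounding development; your argument jumps straight to the fixed point without producing those intermediate identities.
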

\begin{proof}
Recall that for all $\piv',\tauv'\in\Piv^k$, if $(G,\piv') \equiv_{\alphav}(H,\tauv')$, then $ISO(G,\piv',H,\tauv') = ISO(G,\alphav_G(\piv'),H,\alphav_H(\tauv'))$, and if $(G,\piv') \not\equiv_{\alphav}(H,\tauv')$, then $ISO(G,\piv',H,\tauv')=\emptyset$.
Firstly, assume $(G,\alpha_G^*(\piv))\equiv_\alphav (H,\alpha_H^*(\tauv))$.
Then, by Corollary \ref{cor:equiv for all r}, for all $r\geq0$, we have $(G,\alphav^r_G(\piv)) \equiv_{\alphav}(H,\alphav^r_H(\tauv))$ implying that
$ISO(G,\alpha_G^r(\piv),H,\alpha_H^r(\tauv)) = ISO(G,\alphav_G^{r+1}(\piv'),H,\alphav_H^{r+1}(\tauv'))$.
Thus, $ISO(G,\piv,H,\tauv)=ISO(G,\alpha_G^*(\piv),H,\alpha_H^*(\tauv))$.

Secondly, assume $(G,\alpha_G^*(\piv))\not\equiv_\alphav (H,\alpha_H^*(\tauv))$.
Then, $ISO(G,\alpha_G^*(\piv),H,\alpha_H^*(\tauv))=\emptyset$.
There exists $r\geq0$ such that $(G,\alphav^r_G(\piv)) \not\equiv_{\alphav}(H,\alphav^r_H(\tauv))$ and 
$(G,\alphav^{\bar{r}}_G(\piv))\equiv_{\alphav}(H,\alphav^{\bar{r}}_H(\tauv))$ for all $\bar{r}<r$.
Then, we have $ISO(G,\alpha_G^r(\piv),H,\alpha_H^r(\tauv))=\emptyset$ since $(G,\alphav^r_G(\piv)) \not\equiv_{\alphav}(H,\alphav^r_H(\tauv))$, and also, we have $ISO(G,\piv,H,\tauv)=ISO(G,\alpha_G^r(\piv),H,\alpha_H^r(\tauv))$ since $ISO(G,\alpha_G^{\bar{r}}(\piv),H,\alpha_H^{\bar{r}}(\tauv)) = ISO(G,\alphav_G^{\bar{r}+1}(\piv'),H,\alphav_H^{\bar{r}+1}(\tauv'))$ for all $\bar{r}<r$.
Thus, again $ISO(G,\piv,H,\tauv)=ISO(G,\alpha_G^*(\piv),H,\alpha_H^*(\tauv))$ as required.
\end{proof}

There is one more definition we will use, which we define for notational convenience.
\begin{definition}\label{def:preorder equiv star}
Given a V-C preorder $\alphav$, we define the equivalence relation $\equiv_{\alphav^*}$ on $\Graphs\times \Piv^k$ where for all $G,H\in\Graphs$ and $\piv,\tauv\in\Piv^k$, we have $(G,\piv) \equiv_{\alphav^*} (H,\tauv)$ if $\alphav_G(\piv)=\piv$, $\alphav_H(\tauv)=\tauv$ and $(G,\piv) \equiv_{\alphav} (H,\tauv)$.
\end{definition}

Analogous to the combinatorial V-C equivalence relation $\CP$, we define the useful \emph{combinatorial} preorder $\CPv$ on $\Graphs\times\Piv^k\times V^k$ as follows:
First, we define the matrix $\CPv_G(u)\in\R^{k\times k}$ where $\CPv_G(u)_{ij} = 0$ if $u_i = u_j$ and $\CPv_G(u)_{ij} = 1$ if $\{u_i,u_j\}\in E_G$ and $\CPv_G(u)_{ij} = -1$ otherwise.
We then define the preorder $\CPv$ on $\Graphs\times\Piv^k \times V^k$ where $(G,\piv,u) \leqq_{\CPv} (H,\tauv,v)$ if
$([u]_\piv,\CPv_G(u)) \leq_{lex} ([v]_\tauv,\CPv_G(v))$ for all $u,v\in V^k$. 
It is straight-forward to verify that $\CPv$ is a V-C preorder.
Then, as in Definition \ref{def:preorder fn}, we define the function $\CPv_G:V^k\rightarrow V^k$ where $u \leqq_{\CPv_G(\piv)} v$ if $(G,\piv,u)\leqq_{\CPv}(G,\piv,v)$
for all $\piv\in\Piv^k$ and $u,v\in V^k$. We say $\piv$ is $\CPv_G$-stable if $\CPv_G(\piv)=\piv$, 
and we denote by $\CPv_G^*(\piv)$ the fixed point reached by recursively applying $\CPv_G$ to $\piv$.
Also, as in Definition \ref{def:preorder equiv}, we define the $\CPv$ equivalence relation on $\Graphs\times\Piv^k$.
We define   $\CPv_G^k = \CPv_G^*((V^k))$, so $u \leqq_{\CPv^k} v$ if $\CPv_G(u) \leq_{lex} \CPv_G(v)$.
Note that $(G,\piv) \equiv_{\CPv^*}(H,\tauv)$ if and only if $(\piv,\tauv)\leq(\CPv_G^k,\CPv_H^k)$ and $\piv\approx\tauv$.

Analogous to the unordered case, a useful property of pairs ordered partitions is invariance under permutation of tuple components, defined as follows.
Let $\piv\in\Piv^k$.
We define $\sigma(\piv)=(\sigma(\piv_1),...,\sigma(\piv_m))$ for all $\sigma\in\Sym_k$.
Given $\piv,\tauv\in\Piv^k$ where $\piv\approx\tauv$, we say that the pair $(\piv,\tauv)$ is $\Sym_k$-invariant if $(\piv,\tauv)\simeq (\sigma(\piv), \sigma(\tauv))$ for all $\sigma\in\Sym_k$, that is, 
for all $u,v\in V^k$, we have $[u]_\piv = [v]_\tauv\Leftrightarrow [u]_{\sigma(\piv)} = [v]_{\sigma(\tauv)}$  for all $\sigma\in\Sym_k$ (or equivalently $[u]_\piv = [v]_\tauv\Leftrightarrow[\sigma(u)]_\piv = [\sigma(v)]_\tauv$ for all $\sigma\in\Sym_k$).
%We are interested in V-C preorders that preserve $\Sym_k$-invariance for pairs of tuples.

%Also, without loss of generality, for the initial partitions $\piv,\tauv\in\Piv^k$, we will assume that $(\piv,\tauv)$ is $\Sym_k$-invariant.
If we are given partitions $\piv,\tauv\in \Piv^k$ such that $(\piv,\tauv)$ is not $\Sym_k$-invariant, then we can construct such initial partitions using a V-C preorder analogous to the symmetric equivalence relations as follows.
Let $\Sigma_k=\{\sigma_1,\sigma_2\}$ be a generating set of $\Sym_k$ as before.
For all $\piv\in\Piv^k$ and $u\in V^k$, we define $\Symv_\piv(u)=([\sigma_1(u)]_\piv,[\sigma_2(u)]_\piv)$.
Analogous to $\Sym$, we define the \emph{symmetric} V-C preorder $\Symv$ on $\Piv^k\times V^k$ where $(\piv,u) \leqq_{\Symv} (\tauv,v)$ if
$([u]_\piv,\Symv_\piv(u)) \leq_{lex} ([v]_\tauv,\Symv_\tauv(v))$ for all $u,v\in V^k$. 
The preorder $\Symv$ induces a preorder on $\Graphs\times\Piv^k\times V^k$ where we define $(G,\piv,u)\leqq_{\Symv}(G,\tauv,v)$ if $(\piv,u)\leqq_{\Symv}(\tauv,v)$
for all $G,H\in\Graphs$, $\piv,\tauv\in\Piv^r$ and $u,v\in V^k$.
It is straight-forward to verify that $\Symv$ is a V-C preorder.
Then, as in Definition \ref{def:preorder fn}, we define the function $\Symv:V^k\rightarrow V^k$, the concept of $\Symv$-stable, and the partition 
$\Symv^*(\piv)$.
Also, as in Definition \ref{def:preorder equiv}, we define the $\Symv$ equivalence relation on $\Piv^k$ inducing an equivalence relation on $\Graphs\times\Piv^k$.
%Then, we have that $\Sym^*(\piv)\not\equiv_\Sym \Sym^*(\tauv)$ implies that $ISO(G,\piv, H, \tauv)=\emptyset$ for all $G,H\in\Graphs$.
Furthermore, it is straight-forward to show that for all $\piv,\tauv\in\Piv^k$ where $\piv\approx\tauv$, the pair $(\piv,\tauv)$ is $\Sym_k$-invariant if and only if $\piv\equiv_{\Symv^*}\tauv$.

\subsubsection{Comparing and Combining V-C preorders}
\label{sec:vcv compare}
Next, we consider the general case of comparing and combining two different vertex classification preorders $\alphav$ and $\kappav$.
First, Lemma \ref{lem:vcv equiv} shows when 
the $k$-dim $\alpha$-V-C algorithm is at least as strong as the $k$-dim $\kappa$-V-C algorithm, in which case, we say that $\alphav$ implies $\kappav$.
\begin{lemma}\label{lem:vcv equiv}
Let $G,H\in\Graphs$.
Let $\alphav$ and $\kappav$ be two V-C preorders where
for all $\piv,\tauv\in\Piv^k$ where $(G,\piv)\equiv_{\alphav^*}(H,\tauv)$,
we have $(G,\piv,u)\equiv_{\alphav}(H,\tauv,v)$ implies $(G,\piv,u)\equiv_{\kappav}(H,\tauv,v)$ for all $u,v\in V^k$.
Then, for all $\piv,\tauv\in\Piv^k$ where $\piv\approx\tauv$, we have
$(G,\piv)\equiv_{\alphav^*}(H,\tauv)$ implies $(G,\piv)\equiv_{\kappav^*}(H,\tauv)$, and
$(G,\alphav_G^*(\piv))\equiv_{\alphav}(H,\alphav_H^*(\tauv))$ implies 
$(G,\kappav_G^*(\piv))\equiv_{\kappav}(H,\kappav_H^*(\tauv))$  and $(\alphav_G^*(\piv),\alphav_H^*(\tauv))\leq(\kappav_G^*(\piv),\kappav_H^*(\tauv))$.
\end{lemma}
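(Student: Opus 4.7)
The plan is to treat the two conclusions separately, proving the first (equivalence at the stable level) first and then bootstrapping it into the second (the comparison $(\alphav_G^*(\piv),\alphav_H^*(\tauv))\leq(\kappav_G^*(\piv),\kappav_H^*(\tauv))$) by an induction on the refinement step. The structure closely mirrors the proof of Lemma~\ref{lem:vc equiv} for the unordered case, but with the extra bookkeeping required for total preorders rather than mere equivalence relations.

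For the first conclusion, assuming $(G,\piv)\equiv_{\alphav^*}(H,\tauv)$, I will verify the three requirements of Definition~\ref{def:preorder equiv star}: $\kappav_G(\piv)=\piv$, $\kappav_H(\tauv)=\tauv$, and $(G,\piv)\equiv_\kappav(H,\tauv)$. To get $\kappav_G(\piv)=\piv$, I apply the hypothesis with $H=G$ and $\tauv=\piv$ (the condition $(G,\piv)\equiv_{\alphav^*}(G,\piv)$ is automatic once $\alphav_G(\piv)=\piv$) to obtain that the $\kappav$-equivalence classes on $(G,\piv)$ coincide with those of $\piv$. To promote this equality of equivalence classes to equality of full preorders, I use totality of $\kappav$ together with property~(\ref{defv1}) of Definition~\ref{def:alphav}: if $[u]_\piv<[v]_\piv$ and $(G,\piv,v)\leqq_\kappav(G,\piv,u)$ held, then property~(\ref{defv1}) would force $[v]_\piv\leq[u]_\piv$, a contradiction. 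The same argument gives $\kappav_H(\tauv)=\tauv$. Finally, $(G,\piv)\equiv_\kappav(H,\tauv)$ follows from the hypothesis in one direction (using $(G,\piv)\equiv_\alphav(H,\tauv)$ to translate matching cell indices in $\piv,\tauv$ into $\alphav$-equivalence and then into $\kappav$-equivalence), and from property~(\ref{defv1}) of $\kappav$ in the other.

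For the second conclusion, assume $(G,\alphav_G^*(\piv))\equiv_\alphav(H,\alphav_H^*(\tauv))$; since $\alphav_G^*(\piv)$ and $\alphav_H^*(\tauv)$ are already $\alphav$-stable by construction, this is equivalent to $(G,\alphav_G^*(\piv))\equiv_{\alphav^*}(H,\alphav_H^*(\tauv))$, and by the first conclusion we also have $(G,\alphav_G^*(\piv))\equiv_{\kappav^*}(H,\alphav_H^*(\tauv))$. I will then induct on $r\geq 0$ to prove simultaneously that $(\alphav_G^*(\piv),\alphav_H^*(\tauv))\leq(\kappav_G^r(\piv),\kappav_H^r(\tauv))$ and $(G,\kappav_G^r(\piv))\equiv_\kappav(H,\kappav_H^r(\tauv))$. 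The base case $r=0$ reduces to Corollary~\ref{cor:equiv for all r} at $r=0$, which gives $(\alphav_G^*(\piv),\alphav_H^*(\tauv))\leq(\piv,\tauv)$. For the inductive step, I feed the inductive inequality together with $(G,\alphav_G^*(\piv))\equiv_\kappav(H,\alphav_H^*(\tauv))$ into Lemma~\ref{lem:equiv props}(2) for $\kappav$, obtaining both $(G,\kappav_G^{r+1}(\piv))\equiv_\kappav(H,\kappav_H^{r+1}(\tauv))$ wait no; more precisely, Lemma~\ref{lem:equiv props}(2) then yields $(G,\kappav_G^r(\piv))\equiv_\kappav(H,\kappav_H^r(\tauv))$ and $(\kappav_G(\alphav_G^*(\piv)),\kappav_H(\alphav_H^*(\tauv)))\leq(\kappav_G^{r+1}(\piv),\kappav_H^{r+1}(\tauv))$, and the $\kappav$-stability of $\alphav_G^*(\piv)$ and $\alphav_H^*(\tauv)$ from Part~1 lets me identify the left-hand side with $(\alphav_G^*(\piv),\alphav_H^*(\tauv))$, closing the induction. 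Taking $r$ large enough that $\kappav_G^r=\kappav_G^*$ and $\kappav_H^r=\kappav_H^*$ gives both halves of the second conclusion.

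The main obstacle is the first conclusion's leap from an equivalence-class hypothesis to equality of ordered partitions $\kappav_G(\piv)=\piv$: the assumption only controls when two tuples are $\kappav$-equivalent, whereas stability is a statement about the full preorder, including strict comparisons between different cells. The saving grace is that $\kappav$ is a \emph{total} preorder and property~(\ref{defv1}) respects cell indices, so no strict comparison under $\piv$ can be inverted by $\kappav$. Once this careful step is in place, the rest is essentially an induction parallel to Lemma~\ref{lem:equiv for all r}, and I expect no serious technical surprises.
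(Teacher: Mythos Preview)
Your proposal is correct and follows essentially the same route as the paper. For the first conclusion, the paper obtains $\kappav_G(\piv)=\piv$ and $\kappav_H(\tauv)=\tauv$ by appealing to Lemma~\ref{lem:vc equiv} together with the correspondence between ordered and unordered stability, while you unpack this directly (and your observation that property~(\ref{defv1}) together with totality promotes equality of equivalence classes to equality of the full preorder is exactly what justifies that correspondence); for the second conclusion, the paper applies Lemma~\ref{lem:equiv for all r} (for $\kappav$, with $\piv'=\alphav_G^*(\piv)$, $\tauv'=\alphav_H^*(\tauv)$) as a black box, whereas you re-run its induction via Lemma~\ref{lem:equiv props}(2)---so you could shorten your argument by simply citing Lemma~\ref{lem:equiv for all r} once you have established $(G,\alphav_G^*(\piv))\equiv_{\kappav^*}(H,\alphav_H^*(\tauv))$ and $(\alphav_G^*(\piv),\alphav_H^*(\tauv))\leq(\piv,\tauv)$.
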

\begin{proof}
Assume, $(G,\piv)\equiv_{\alphav^*}(G,\tauv)$.
Then, from Lemma \ref{lem:vc equiv}, we have $\kappav_G(\piv)=\piv$ and $\kappav_H(\tauv)=\tauv$.
Also, since $(G,\piv)\equiv_{\alphav}(G,\tauv)$, there exists a tuple bijection $\gamma:V^k\rightarrow V^k$ such that $(G,\piv,u)\equiv_\alphav(H,\tauv,\gamma(u))$ for all $u\in V^k$, 
implying that $(G,\piv,u)\equiv_\kappav(H,\tauv,\gamma(u))$ for all $u\in V^k$, and thus, $(G,\piv)\equiv_{\kappav}(G,\tauv)$ as required.

Assume $(G,\alphav_G^*(\piv))\equiv_{\alphav}(H,\alphav_H^*(\tauv))$.
Using Corollary \ref{cor:equiv for all r}, we have $(\alphav_G^*(\piv),\alphav_H^*(\tauv))\leq (\piv,\tauv)$.
From part (1), we have that $(G,\alphav_G^*(\piv))\equiv_{\kappav}(H,\alphav_H^*(\tauv))$.
Then, using Lemma \ref{lem:equiv for all r}, we have 
$(G,\kappav_G^*(\piv))\equiv_{\kappav}(H,\kappav_H^*(\tauv))$  and 
$(\alphav_G^*(\piv),\alphav_H^*(\tauv))\leq(\kappav_G^*(\piv),\kappav_H^*(\tauv))$ as required.
\end{proof}

A key point here is that we do \emph{not} need the stronger condition that $(G,\piv,u)\leqq_{\alphav}(H,\tauv,v)$ implies $(G,\piv,u)\leqq_{\kappav}(H,\tauv,v)$.
The preorder is not important; only the implied equivalence relation is important, but the preorder is necessary in order to compare different partitions.

Moreover, if $\alphav$ implies $\kappav$, then $\alpha$ implies $\kappa$ where $\alpha$ and $\kappa$ are the induced V-C equivalence relations on $\Graphs\times\Pi^k\times V^k$ for $\alphav$ and $\kappav$ respectively.
This follows since $(G,\piv,u)\equiv_{\alphav}(G,\piv,v)$ if and only if $(G,\pi,u)\equiv_{\alpha}(G,\pi,v)$ where $\piv\simeq\pi$
and similarly for $\kappav$, and thus, if $(G,\piv,u)\equiv_{\alphav}(H,\tauv,v)$ implies $(G,\piv,u)\equiv_{\kappav}(H,\tauv,v)$, then 
$(G,\pi,u)\equiv_{\alpha}(G,\pi,v)$ implies $(G,\pi,u)\equiv_{\kappa}(G,\pi,v)$.

As in the automorphism case, we also consider when we wish to combine $\alphav$ and $\kappav$ in order to construct a stronger V-C algorithm that implies $\alphav$ and $\kappav$; there are two ways we can combine the preorders.
(1) First, we can define a new equivalence relation $\lambdav$ on $\Graphs\times\Pi^k\times V^k$ where $(G,\piv,u)\leqq_{\lambdav}(G,\piv,v)$ if $(G,\piv,u)\lneqq_{\alphav}(G,\piv,v)$ or 
$(G,\piv,u)\equiv_{\alphav}(G,\piv,v)$ and $(G,\piv,u)\leqq_{\kappav}(G,\piv,v)$
for all $G,H\in\Graphs$, $\piv,\tauv\in\Piv^k$ and $u,v\in V^k$.
Then, clearly $\lambdav$ is a V-C preorder, and by Lemma \ref{lem:vcv equiv} above, $\lambdav$ implies $\alphav$ and $\kappav$, that is,
for all $G,H\in\Graphs$, $\piv,\tauv\in\Piv^k$ where $\piv\approx\tauv$,
 we have $(G,\piv)\equiv_{\lambdav^*}(H,\tauv)$ implies $(G,\piv)\equiv_{\alphav^*}(H,\tauv)$ and 
$(G,\piv)\equiv_{\kappav^*}(H,\tauv)$, and $(G,\lambdav_G^*(\piv))\equiv_{\lambdav}(H,\lambdav_H^*(\tauv))$ implies
$(G,\alphav_G^*(\piv))\equiv_{\alphav}(H,\alphav_H^*(\tauv))$,
$(G,\kappav_G^*(\piv))\equiv_{\kappav}(H,\kappav_H^*(\tauv))$ and 
$(\lambdav_G^*(\piv),\lambdav_H^*(\tauv))\leq(\alphav_G^*(\piv),\alphav_H^*(\tauv)),(\kappav_G^*(\piv),\kappav_H^*(\tauv))$.
(2) Second, we first apply $\kappav$ and then $\alphav$, that is, given $G,H\in\Graphs$ and $\piv,\tauv\in\Piv^k$ where $\piv\approx\tauv$, 
we check if $(G,\kappav_G^*(\piv))\equiv_{\kappav}(H,\kappav_H^*(\tauv))$ and if
$(G,\alphav_G^*(\kappav_G^*(\piv))) \equiv_{\alphav}(H,\alphav_H^*(\kappav^*(\tauv)))$. 
We do not necessarily have $(G,\alphav_G^*(\kappav_G^*(\piv))) \equiv_{\kappav}(H,\alphav_H^*(\kappav_H^*(\tauv)))$ unless $\alphav$ is $\kappav$-stable as defined below.
\begin{definition}
Let $\alphav,\kappav$ be two V-C equivalence relations. 
We say that $\alphav$ is $\kappav$-stable if for all $G,H\in\Graphs$ and $\piv,\tauv\in\Pi$ where $(G,\piv)\equiv_{\kappav^*}(H,\tauv)$,
we have $(G,\alphav_G(\piv))\equiv_{\kappav^*}(H,\alphav_H(\tauv))$.
\end{definition}
Thus, if $\alphav$ is $\kappav$-stable, then 
$(G,\alphav_G^*(\kappav_G^*(\piv))) \equiv_{\kappav^*}(H,\alphav_H^*(\kappav_H^*(\tauv)))$,
and moreover, the two approaches are equivalent meaning that for all $G,H\in\Graphs$ and $\piv,\tauv\in\Piv^k$ where $\piv\approx\tauv$, we have 
$(G,\lambdav_G^*(\piv))\equiv_{\lambdav}(H,\lambdav_H^*(\tauv))$ if and only if
$(G,\kappav_G^*(\piv))\equiv_{\kappav}(H,\kappav_H^*(\tauv))$ and
$(G,\alphav_G^*(\kappav_G^*(\piv))) \equiv_{\alphav}(H,\alphav_H^*(\kappav^*(\tauv)))$,
and furthermore, if $(G,\lambdav_G^*(\piv))\equiv_{\lambdav}(H,\lambdav_H^*(\tauv))$, then
$(\lambdav_G^*(\piv),\lambdav_H^*(\tauv))\simeq (\alphav_G^*(\kappav_G^*(\piv)),\alphav_H^*(\kappav_H^*(\tauv)))$.
We show this as follows:
Let $\piv'=\kappav_G^*(\piv)$ and $\tauv' =\kappav_H^*(\tauv)$.
First, assume $(G,\lambdav_G^*(\piv))\equiv_{\lambdav}(H,\lambdav_H^*(\tauv))$.
Then, $(G,\piv')\equiv_{\kappav}(H,\tauv')$, $(G,\lambdav_G^*(\piv))\equiv_{\alphav^*}(H,\lambdav_H^*(\tauv))$ and
$(\lambdav_G^*(\piv),\lambdav_H^*(\tauv))\leq(\piv',\tauv')$ from above.
Then, Lemma \ref{lem:equiv for all r}, we have 
$(G,\alphav_G^*(\piv')) \equiv_{\alphav}(H,\alphav_H^*(\tauv'))$
and 
$(\lambdav_G^*(\piv),\lambdav_H^*(\tauv))\leq(\alphav_G^*(\piv'),\alphav_H^*(\tauv'))$.
Second, assume $(G,\piv')\equiv_{\kappav}(H,\tauv')$ and
$(G,\alphav_G^*(\piv')) \equiv_{\alphav}(H,\alphav_H^*(\tauv'))$ implying 
$(G,\alphav_G^*(\piv')) \equiv_{\kappav^*}(H,\alphav_H^*(\tauv'))$ since $\alphav$ is $\kappav$-stable.
Thus,
$(G,\alphav_G^*(\piv')) \equiv_{\lambdav^*}(H,\alphav_H^*(\tauv'))$ since $\alphav$ is $\kappav$-stable.
Then, since $(\alphav_G^*(\piv'),\alphav_H^*(\tauv'))\leq (\piv,\tauv)$,
we have $(G,\lambdav_G^*(\piv))\equiv_{\lambdav}(H,\lambdav_H^*(\tauv))$ and 
$(\alphav_G^*(\piv'),\alphav_H^*(\tauv'))\leq(\lambdav_G^*(\piv),\lambdav_H^*(\tauv))$  by Lemma \ref{lem:equiv for all r} as required.

In this paper, in particular, we are interested in V-C preorders that imply $\CPv$ and $\Symv$.
If a V-C preorder $\alphav$ that does not have this property, then we will combine it with $\CPv$ and $\Symv$.
Moreover, the V-C preorders we are interested in are $\CPv$-stable and $\Symv$-stable, so we can apply either of the two approaches above.
In fact, every V-C equivalence relation is $\CPv$-stable, since $(G,\piv)\equiv_{\CPv^*}(H,\tauv)$ if and only if $(\piv,\tauv)\leq(\CP_G^k,\CP_H^k)$ and $\piv\approx\tauv$, and moreover, $(\piv,\tauv)\leq(\CP_G^k,\CP_H^k)$ and $(G,\piv)\equiv_\alphav(H,\tauv)$ imply $(\alpha_G(\piv),\alpha_H(\tauv))\leq(\CP_G^k,\CP_H^k)$ and $\alphav_G(\piv)\approx\alphav_H(\tauv)$ as required.
We will use the first approach for combining $\alphav$ with $\CPv$ and $\Symv$ since we find it leads to a more concise exposition.

\subsection{W-L Algorithm}
\label{sec:WL}

In this section, we rigorously introduce the $k$-dimensional Weisfeiler-Lehman Method where $k>1$.  This algorithm is ubiquitous in literature (see e.g., \cite{Weisfeiler76, WeisfeilerLehman68, CaiFurer1992}).  %This method is only defined for $k>1$.
%Next, we define a vertex classification equivalence relation $\wl$ for the $k$-dim W-L algorithm.
First, we need the following definitions.
\begin{definition}
Given $u\in V^k$ and $v\in V$, we define $\phi_i(u,v) = (u_1,...,u_{i-1},v,u_{i+1},...,u_k).$
\end{definition}
We define the equivalence relation $\wl'$ on $\Graphs\times\Pi^k\times V^k$ where 
$(G,\pi,u) \equiv_{\wl'} (G,\pi,v)$ if $u \equiv_{\pi} v$ and there exists a bijection
$\psi: V\rightarrow V$ such that $\phi_i(u,w)\equiv_{\pi} \phi_i(v,\psi(w))$ for all
$1\leq i \leq k$ and for all $w\in V$.
%We define the function $\wl:\Pi^k\rightarrow\Pi^k$ where $u \equiv_{\wl(\pi)} v$ if $(\pi,u)\equiv_
It is straight-forward to verify that $\wl'$ is a V-C equivalence relation.
We want a V-C equivalence relation that implies $\CP$ and $\Sym$, so we combine $\wl'$ with $\Sym$ and $\CP$ as follows
(indeed, $\wl'$ does not explicitly use the properties of $G$).

\begin{definition}
We define the equivalence relation $\wl$ on $\Graphs\times\Pi^k\times V^k$ where 
$(G,\pi,u) \equiv_\wl (G,\pi,v)$ if $(G,\pi,u) \equiv_{\wl'} (G,\pi,v)$, $(G,\pi,u) \equiv_\CP(G,\pi,v)$ and $(\pi,u) \equiv_\Sym(\pi,v)$
for all $G\in\Graphs$, $\pi\in\Pi^k$ and $u,v\in V^k$.
\end{definition}
%It is straight-forward to verify that $\wl$ is a V-C equivalence relation.
%In the following, we will treat $\wl$ as an equivalence relation on $\Pi^k\times V^k$ and implicitly as a vertex classification equivalence relation on $\Graphs\times\Pi^k\times V^k$.
Then, as in Definition \ref{def:equiv fn}, we define the function $\wl_G:\Pi^k\rightarrow \Pi^k$, and a partition $\pi\in\Pi^k$ is $\wl_G$-stable if $\wl_G(\pi)=\pi$, and we denote by $\wl_G^*(\pi)$ the unique coarsest $\wl$-stable partition that is finer than $\pi$.
Also, we define $\wl_G^k$ as the unique coarsest $\wl_G$-stable partition, that is, $\wl_G^k=\wl_G^*(\{V^k\})$.
%%Since $\wl$ does not explicitly use the properties of $G$,
%%it is crucial that the initial partition of the $k$-dim $\wl$-V-C automorphism algorithm is a refinement of $\SGP{G}^k$.
Given a partition $\pi\in\Pi^k$, the $k$-dim $\wl$-V-C automorphism algorithm is thus to compute $\wl_G^*(\pi)$, which if complete, implies
that $|AUT(G,\pi)|=1$.

The $k$-dim W-L algorithm (see e.g., \cite{CaiFurer1992}) is essentially to compute $\wl'^*_G(\CP_G^k)$.
It is straight-forward to show that $\wl'$ is both $\CP$-stable and $\Sym$-stable (see Definition \ref{def:vc stable}); therefore, $\wl'^*_G(\CP_G^k) = \wl_G^k$
(see Section \ref{sec:vc compare}), and
so the $k$-dim W-L algorithm is equivalent to the $\wl$-V-C automorphism algorithm; however, we consider the more general case of arbitrary starting partitions.

Below, we define an ordered vertex classification relation $\wlv$ for the $k$-dim W-L algorithm.
We define $$\wlv_\piv(u,w) = ([\phi_1(u,w)]_\piv, [\phi_2(u,w)]_\piv, \cdots, [\phi_k(u,w)]_\piv),$$
and we define $\wlv_\piv(u) = (\wlv_\piv(u,w_1),...,\wlv_\piv(u,w_n))$
where $\{w_1,...,w_n\}=V$ and $\wlv_\piv(u,w_i)\leq_{lex}\wlv_\piv(u,w_j)$ for all $i<j$. %, which is an sequence of $k$-tuples of indices of equivalence classes of $\piv$.
Then, we define $\wlv'$ a preorder on $\Graphs\times\Piv^k\times V^k$ where $(G,\piv,u) \leqq_{\wlv'} (H,\tauv,v)$ if 
$([u]_\piv,\wlv_\piv(u)) \leq_{lex} ([v]_\tauv,\wlv_\tauv(v)).$
It is straight-forward to verify that $\wlv$ is a V-C preorder.
In other words, $\wlv_\piv(u) = \wlv_\tauv(v)$ means that $[u]_\piv=[v]_\tauv$ and there exists a bijection 
$\psi: V\rightarrow V$ such that $[\phi_i(u,w)]_\piv = [\phi_i(v,\psi(w))]_\tauv$ for all $1 \leq i \leq k$ and for all $w\in V$.
%We can induce a preorder $\wlv'$ on $\G\times\Piv^k\times V^k$ where $(G,\piv,u) \leqq_{\wlv'} (H,\tauv,v)$ if $(\piv,u) \leqq_\wlv(\tauv,v)$. 
We want a V-C preorder that implies $\CPv$ and $\Symv$, so we combine $\wlv'$ with $\Symv$ and $\CPv$ as follows
\begin{definition}
We define $\wlv$ a preorder on $\Graphs\times\Piv^k\times V^k$ where we have $(G,\piv,u) \leqq_{\wlv} (H,\tauv,v)$ if we have
$([u]_\piv,\Symv_\piv(v),\CPv_\piv(v),\wlv_\piv(u)) \leq_{lex} ([v]_\tauv,\Symv_\tauv(v),\CPv_\tauv(v),\wlv_\tauv(v)).$
\end{definition}
%It is straight-forward to verify that $\wlv$ is a V-C preorder.
%In the following, we will treat $\wlv$ as a preorder on $\Piv^k\times V^k$ and implicity as a vertex classification preorder on $\Graphs\times\Piv^k\times V^k$.
%Again, since $\wlv$ does not use the properties of graphs, 
%it is crucial to the $k$-dim $\wlv$-V-C isomorphism algorithm that the algorithm starts from partitions $\piv,\tauv\in\Piv^k$ such that 
%$(G,\piv)\equiv_{\CPv^*}(H,\tauv)$.
As in Definition \ref{def:preorder fn}, we define the function $\wlv_G:\Piv^k\rightarrow\Piv^k$, and we say that a partition $\piv\in\Piv^k$ is $\wlv_G$-stable
if $\wlv_G(\piv)=\piv$. Also, we define $\wlv_G^*(\piv)$ as the fixed point reach by recursively applying $\wlv_G$, and specifically, we define
$\wlv_G^k=\wlv^*_G((V^k))$.
Also, analogous to Definition \ref{def:preorder equiv}, we define the equivalence relation $\wlv$ on $\Graphs\times\Piv^k$.

Given $\piv,\tauv\in\Piv^k$ where $\piv\approx\tauv$,
the $k$-dim $\wlv$-V-C algorithm is thus to compute $\wlv_G^*(\piv)$ and $\wlv_H^*(\tauv)$ and then to check whether $(G,\wlv_G^*(\piv)) \equiv_\wlv (H,\wlv_H^*(\tauv))$.
The $k$-dim W-L isomorphism algorithm as presented in \cite{CaiFurer1992} is essentially to check whether $(G,\CPv_G^k)\equiv_{\CPv}(H,\CPv_H^k)$,
and then to check whether $(G,\wlv'^*_G(\CPv_G^k)) \equiv_{\wlv'} (H,\wlv'^*_H(\CPv_H^k))$.
Since $\wlv'$ is $\Symv$-stable and $\CPv$-stable, which is straight-forward to verify, we have $(G,\CPv_G^k)\equiv_{\CPv}(H,\CPv_H^k)$,
and $(G,\wlv'^*_G(\CPv_G^k)) \equiv_{\wlv'} (H,\wlv'^*_H(\CPv_H^k))$ if and only if $(G,\wlv_G^*(\piv)) \equiv_\wlv (H,\wlv_H^*(\tauv))$
(see Section \ref{sec:vcv compare}), and so,
the $k$-dim W-L isomorphism algorithm is equivalent to the $k$-dim $\wlv$-V-C algorithm;
however, we consider the more general case of arbitrary starting ordered partitions.

\begin{example}
We demonstrate the $\wlv$-V-C algorithm for $k=2$.
Let $G$ be the graph as in the previous example with vertex set $\{1,2,3,4\}$ and edge set $E(G) = \{ \{1,2\},\{2,3\},\{3,4\},\{2,4\} \}$.
Starting from the trivial ordered partition $\piv^0=(V^2)$, the first iteration computes $\piv^1 = \SGPv{G}^2$, which consists of three sets:
\begin{align*}
\piv^1_1 &= \{(1,1),(2,2),(3,3),(4,4)\}, \\
\piv^1_2 &= \{(1,2),(2,1),(2,3),(3,2),(2,4),(4,2),(3,4),(4,3)\}, \\
\piv^1_3 &= \{(1,3),(3,1),(1,4),(4,1)\}.
\end{align*}
Below we list all $\phi$ values for the pair $u=(1,1)\in V_1^2$.
\begin{align*}
\phi_1(u,1) = (1,1), &\qquad  \phi_1(u,2) = (2,1), & \phi_1(u,3) = (3,1), & \qquad \phi_1(u,4) = (4,1) \\
\phi_2(u,1) = (1,1), & \qquad \phi_2(u,2) = (1,2), & \phi_2(u,3) = (1,3), & \qquad \phi_2(u,4) = (1,4)
\end{align*}
and hence $\wlv_\piv(1,1) = ((1,1),(2,2),(3,3),(3,3))$.
We repeat this procedure for all possible pairs in $V_1^2$ and obtain
\begin{align*}
\wlv_\piv((1,1)) &= ((1,1),(2,2),(3,3),(3,3)), \ & \wlv_\piv((2,2)) &= ((1,1),(2,2),(2,2),(2,2)), \\
\wlv_\piv((3,3)) &= ((1,1),(2,2),(2,2),(3,3)), \ & \wlv_\piv((4,4)) &= ((1,1),(2,2),(2,2),(3,3)).
\end{align*}
and hence $(1,1)$ and $(2,2)$ are in new cells by themselves, while $(3,3),(4,4)$ remain together.
Continuing in this fashion, we obtain the partition $\piv=(\piv_1,..,\piv_{10})$ where
\begin{align*}
\piv_1 &= \{(2,2)\},& \piv_{2} &= \{(3,3),(4,4)\},& \piv_3 &= \{(1,1)\},\\
\piv_4 &= \{(3,4),(4,3)\},& \piv_5 &= \{(3,2),(4,2))\},& \piv_6 &= \{(2,3),(2,4)\}, \\
\piv_7 &= \{(2,1)\},& \piv_{8}^2 &= \{(1,2)\},& \piv_9 &= \{(1,3),(1,4)\},\\
\piv_{10} &= \{(3,1),(4,1)\}.
\end{align*}
This is indeed the unique coarsest $\wlv$-stable partition $\wlv_G^2$.
\end{example}

\subsection{The $\delta$-Vertex Classification Algorithm}
\label{sec:VC}

In this section, we give a detailed and precise description of the $k$-dimensional C-V-C algorithm for the isomorphism and automorphism problem.
\begin{definition}
For $u\in V^k$, we define $\delta^i_G(u) = \{\phi_i(u,w) : w \in \delta_G(u) \}$.
\end{definition}

For graph automorphism, we define the vertex classification equivalence relation $\delta'$ where
$(G,\pi,u) \equiv_{\delta'} (G,\pi,v)$ if and only if $u \equiv_\pi v$ and $|\delta^i_G(u)\cap \pi_s|=|\delta^i_G(v)\cap \pi_s|$ and $|\deltabar^i_G(u)\cap \pi_s|=|\deltabar^i_G(v)\cap \pi_s|$ for all $\pi_s\in\pi$ and $1\leq i \leq k$.
It is straight-forward to show that the relation $\delta$ on $\Graphs\times \Pi^k\times V^k$ is a V-C equivalence relation that is $\CP$-stable and $\Sym$-stable.
We combine $\delta'$ with $\CP$ and $\Sym$ as follows.
\begin{definition}
We define the equivalence relation $\delta$ on $\Graphs\times\Pi^k\times V^k$ where 
$(G,\pi,u) \equiv_\delta(G,\pi,v)$ if $(G,\pi,u) \equiv_{\delta'} (G,\pi,v)$, $(G,\pi,u) \equiv_\CP(G,\pi,v)$ and $(\pi,u) \equiv_\Sym(\pi,v)$
for all $G\in\Graphs$, $\pi\in\Pi^k$ and $u,v\in V^k$.
\end{definition}
Then, as in Definition \ref{def:equiv fn}, we define the function $\delta_G:\Pi^k\rightarrow \Pi^k$, the concept of $\delta_G$-stable, and the partitions $\delta_{G}^*(\pi)$ and $\delta_G^k$.
In the $1$-dim case, for all $\pi\in\Pi$, we trivially have that $\Sym(\pi)=\pi$ and $\CP(\pi)=\pi$, and moreover, $|\delta^i_G(u)\cap \pi_s|=|\delta^i_G(v)\cap \pi_s|$ implies that $|\deltabar^i_G(u)\cap \pi_s|=|\deltabar^i_G(v)\cap \pi_s|$ for all $\pi_s\in\pi$ and $1\leq i \leq k$.
Thus, in this case, the $\delta$-V-C automorphism algorithm is the same as the C-V-C algorithm in the literature where
a $\delta_G$-stable partition is referred to as an \emph{equitable} partition with respect to $G$ (in e.g., \cite{McKay1980}).

We now consider graph isomorphism.
First, we need the following definitions:
For all $\piv=(\piv_1,...,\piv_m)\in\Piv^k$, $1\leq i \leq k$, and $u\in V^k$, we define
$$\deltav^i_{G,\piv}(u) = (|\delta^i_G(u)\cap \piv_1|,...,|\delta^i_G(u)\cap\piv_m|,|\deltabar^i_G(u)\cap \piv_1|,...,|\deltabar^i_G(u)\cap \piv_m|),$$
and also, we define $\deltav_{G,\piv}(u) = (\deltav^1_{G,\piv}(u),...,\deltav^k_{G,\piv}(u)).$
We define the preorder $\deltav'$ where $(G,\piv,u)\leqq_{\deltav'}(H,\tauv,v)$ if 
$([u]_\piv,\deltav_{G,\piv}(u)) \leq_{lex} ([v]_\tauv,\deltav_{H,\tauv}(v))$
for all $G,H\in\Graphs$, $\piv,\tauv\in\Piv^k$ and $u,v\in V^k$.
It is straight-forward to check that $\deltav'$ satisfies the properties of a vertex classification preorder, which is $\CPv$-stable and $\Symv$-stable.
We then combine $\deltav'$ with $\CPv$ and $\Symv$ as follows.
\begin{definition}
We define $\deltav$ a preorder on $\Graphs\times\Piv^k\times V^k$ where we have $(G,\piv,u) \leqq_{\deltav} (H,\tauv,v)$ if we have
$([u]_\piv,\Symv_\piv(v),\CPv_\piv(v),\deltav_{G,\piv}(u)) \leq_{lex} ([v]_\tauv,\Symv_\tauv(v),\CPv_\tauv(v),\deltav_{H,\tauv}(v)).$
\end{definition}
Then, as in Definition \ref{def:preorder fn}, we define the function $\deltav_G:V^k\rightarrow V^k$, the concept of $\deltav_G$-stable, and the partitions 
$\deltav^*_G(\piv)$ and $\deltav_G^k$.
Also, as in Definition \ref{def:preorder equiv}, we define the $\deltav$ equivalence relation on $\Graphs\times \Piv^k$.
%We thus arrive at an algorithm for graph isomorphism; given $\piv,\tauv\in\Piv^k$ where $\piv\approx\tauv$, we have
%$ISO(G,\piv,H,\tauv)=\emptyset$ if $(G,\deltav^*_G(\piv))\not\equiv_{\deltav}(H,\deltav^*_H(\tauv))$.

Again, in the $1$-dim case, the $\deltav$-V-C algorithm simplifies to the C-V-C algorithm for isomorphism.

\subsection{$\Delta$-Vertex Classification Algorithm}
\label{sec:DVC}
In this section, we describe the $k$-dim $\Delta$-V-C algorithm for the automorphism problem
and the $k$-dim $\Deltav$-V-C algorithm for the isomorphism problem.
We will see that the $k$-dim $\Delta$ and $\Deltav$ V-C algorithms are strongly
related to the $k$-dim W-L algorithm and thus useful in proving results about
the $k$-dim W-L algorithm. 

\begin{definition}
For $u\in V^k$, we define $\Delta^i(u) = \{\phi_i(u,w) : w \in V\}$.
\end{definition}

For graph automorphism, we define the equivalence relation $\Delta'$ on $\Graphs\times\Pi^k \times V^k$ where
$(\pi,u) \equiv_{\Delta'} (\pi,v)$ if and only if $u \equiv_\pi v$ and $|\Delta^i(u)\cap \pi_s|=|\Delta^i(v)\cap \pi_s|$ for all $\pi_s\in\pi$ and $1\leq i \leq k$.
It is straight-forward to verify that $\Delta'$ is vertex classification equivalence relation that is $\CP$-stable and $\Sym$-stable.
We combine $\delta'$ with $\CP$ and $\Sym$ as follows.
\begin{definition}
We define the equivalence relation $\Delta$ on $\Graphs\times\Pi^k\times V^k$ where 
$(G,\pi,u) \equiv_\Delta(G,\pi,v)$ if $(G,\pi,u) \equiv_{\Delta'} (G,\pi,v)$, $(G,\pi,u) \equiv_\CP(G,\pi,v)$ and $(\pi,u) \equiv_\Sym(\pi,v)$
for all $G\in\Graphs$, $\pi\in\Pi^k$ and $u,v\in V^k$.
\end{definition}
As in Definition \ref{def:equiv fn}, we define the function $\Delta_G:\Pi^k\rightarrow \Pi^k$, the concept of $\Delta_G$-stable, and the partitions $\Delta_{G}^*(\pi)$ and $\Delta_G^k$.

For graph isomorphism, we define the vertex classification preorder $\Deltav$.
First, we need the following definitions:
For all $\piv=(\piv_1,...,\piv_m)\in\Piv^k$, $1\leq i \leq k$, and $u\in V^k$, we define
$$\Deltav^i_\piv(u) = (|\Delta^i(u)\cap \piv_1|,...,|\Delta^i(u)\cap \piv_m|),$$
and also, we define $\Deltav_{\piv}(u) = (\Deltav^1_{\piv}(u),...,\Deltav^k_{\piv}(u)).$
Then, we can define the preorder $\Deltav'$ on $\Piv^k\times V^k$ where $(\piv,u)\leqq_{\Deltav'}(\tauv,v)$ if
$([u]_\piv,\Deltav_\piv(u)) \leq_{lex} ([v]_\tauv,\Deltav_\tauv(v))$.
It is straight-forward to check that $\Deltav'$ is a V-C preorder that is $\CPv$-stable and $\Symv$-stable.
We next combine it with $\CPv$ and $\Symv$.
\begin{definition}
We define $\Deltav$ a preorder on $\Graphs\times\Piv^k\times V^k$ where we have $(G,\piv,u) \leqq_{\Deltav} (H,\tauv,v)$ if we have
$([u]_\piv,\Symv_\piv(v),\CPv_\piv(v),\Deltav_{\piv}(u)) \leq_{lex} ([v]_\tauv,\Symv_\tauv(v),\CPv_\tauv(v),\Deltav_{\tauv}(v)).$
\end{definition}
Then, we define the function $\Deltav_G:V^k\rightarrow V^k$, the concept of $\Deltav_G$-stable, and the partitions $\Deltav_G^*(\piv)$ and $\Deltav_G^k$ as in Definition \ref{def:preorder fn}.
Also, we define the $\Deltav$ equivalence relation on $\Graphs\times \Piv^k$ as in Definition \ref{def:preorder equiv}.

\subsection{Comparison of Combinatorial Approaches}
\label{sec:VC+WL}
In this section, we show that $\wlv$ implies $\deltav$, which implies $\Deltav$, and subsequently, $\wl$ implies $\delta$, which implies $\Delta$.
\begin{lemma}\label{lem:delta+Delta}
Let $G,H\in\Graphs$, and let $\piv,\tauv\in\Piv^k$ where  $(G,\piv)\equiv_{\deltav^*}(H,\tauv)$.
We have $(G,\piv,u)\equiv_{\deltav}(H,\tauv,v)$ implies $(G,\piv,u)\equiv_\Deltav(H,\tauv,v)$ for all $u,v\in V^k$.
\end{lemma}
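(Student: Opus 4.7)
The plan is to invoke Lemma \ref{lem:vcv equiv} with $\alphav=\deltav$ and $\kappav=\Deltav$, for which I must verify the point-wise implication at any pair of stable, $\deltav^*$-equivalent partitions. The content of the argument is purely definitional: the data recorded by $\deltav$ refines that recorded by $\Deltav$, because for every simple graph $G$ and vertex $u$, the sets $\delta_G(u)$ and $\deltabar_G(u)$ partition $V$, hence $\delta^i_G(u)$ and $\deltabar^i_G(u)$ partition $\Delta^i(u)$ (as disjoint subsets of $V^k$).

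First I would unpack the hypothesis $(G,\piv,u)\equiv_\deltav (H,\tauv,v)$. By the definition of $\deltav$ as a lexicographic preorder, equality in $\deltav$ is equivalent to equality in each component, so we obtain simultaneously $[u]_\piv = [v]_\tauv$, $\Symv_\piv(u) = \Symv_\tauv(v)$, $\CPv_\piv(u) = \CPv_\tauv(v)$, and most importantly
\[
\deltav_{G,\piv}(u) = \deltav_{H,\tauv}(v).
\]
Expanding the last equality componentwise, this says $|\delta^i_G(u)\cap \piv_s|=|\delta^i_H(v)\cap \tauv_s|$ and $|\deltabar^i_G(u)\cap \piv_s|=|\deltabar^i_H(v)\cap \tauv_s|$ for every $1\leq i\leq k$ and every cell index $s$.

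Second I would derive the corresponding statement for $\Deltav$. From $\Delta^i(u)=\delta^i_G(u)\sqcup\deltabar^i_G(u)$ as a disjoint union, intersecting with $\piv_s$ gives $|\Delta^i(u)\cap\piv_s| = |\delta^i_G(u)\cap\piv_s| + |\deltabar^i_G(u)\cap\piv_s|$, and analogously for $v$, $H$, $\tauv_s$. Adding the two equalities from the previous step yields $|\Delta^i(u)\cap\piv_s|=|\Delta^i(v)\cap\tauv_s|$ for all $i,s$, i.e.\ $\Deltav_\piv(u) = \Deltav_\tauv(v)$. Combined with the already-established equalities of the $[\cdot]$, $\Symv$, and $\CPv$ components, the full lexicographic tuple defining $\Deltav$ agrees for $(G,\piv,u)$ and $(H,\tauv,v)$, so $(G,\piv,u)\equiv_\Deltav (H,\tauv,v)$.

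There is no real obstacle here: the stability hypothesis $(G,\piv)\equiv_{\deltav^*}(H,\tauv)$ is not used in the pointwise direction and appears only to match the shape of Lemma \ref{lem:vcv equiv}, which in turn will give the global statement that the $\deltav$-V-C algorithm is at least as strong as the $\Deltav$-V-C algorithm. The only care required is bookkeeping: making sure that the $\Symv$ and $\CPv$ components of the two preorders are defined identically (they are), so that equality of the $\deltav$-tuples forces equality of the $\Deltav$-tuples without any additional information.
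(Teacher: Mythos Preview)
Your proof is correct and follows exactly the paper's approach: the paper's proof is the single line ``The result follows from the fact that $\Delta^i(u)=\delta_G^i(u)\cup\deltabar_G^i(u)$,'' and you have simply spelled out the bookkeeping it leaves implicit. Your observation that the stability hypothesis is unused in the pointwise direction, and is present only to match the shape of Lemma~\ref{lem:vcv equiv}, is also correct.
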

\begin{proof}
The result follows from the fact that $\Delta^i(u)=\delta_G^i(u)\cup\deltabar_G^i(u)$.
\end{proof}
\begin{lemma}\label{lem:xi+delta}
Let $G,H\in\Graphs$, and let $\piv,\tauv\in\Piv^k$ where $(G,\piv)\equiv_{\wlv^*}(H,\tauv)$.
We have $(G,\piv,u)\equiv_\wlv(H,\tauv,v)$ implies $(G,\piv,u)\equiv_{\deltav}(H,\tauv,v)$ for all $u,v\in V^k$.
\end{lemma}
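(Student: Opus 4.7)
The strategy is to unpack $(G,\piv,u)\equiv_\wlv(H,\tauv,v)$ into a vertex bijection, then upgrade that bijection using $\CPv$-stability of $\piv,\tauv$ to match neighbor counts. First, unravelling the definition of $\wlv$ yields $[u]_\piv=[v]_\tauv$, equality of the $\Symv$ and $\CPv$ prefixes of $u$ and $v$, and equality of the sorted multisets $\wlv_\piv(u) = \wlv_\tauv(v)$. The last equality produces a bijection $\psi\colon V\to V$ such that
\[
[\phi_i(u,w)]_\piv = [\phi_i(v,\psi(w))]_\tauv \qquad \text{for all } 1\le i\le k \text{ and } w\in V.
\]

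The second step is to show that $\psi$ respects neighborhoods: $w\in\delta_G(u_i)$ if and only if $\psi(w)\in\delta_H(v_i)$, for each $i$. By hypothesis $(G,\piv)\equiv_{\wlv^*}(H,\tauv)$, so $\piv$ and $\tauv$ are $\wlv_G$- and $\wlv_H$-stable and therefore refine $\CPv_G^k$ and $\CPv_H^k$ respectively; consequently each cell carries a fixed $\CPv$-signature. Fix $i$ and $w$, and pick any index $j\ne i$ (using $k\ge 2$, as required by the standing W-L convention). The $(i,j)$-entry of $\CPv_G(\phi_j(u,w))$ encodes precisely whether $u_i=w$, $\{u_i,w\}\in E_G$, or neither, and is determined by the cell $[\phi_j(u,w)]_\piv$. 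Applying the displayed identity at index $j$ gives $[\phi_j(u,w)]_\piv=[\phi_j(v,\psi(w))]_\tauv$, so the corresponding $\CPv$-entry is the same in $H$, yielding $w\sim_G u_i\Leftrightarrow \psi(w)\sim_H v_i$. Thus $\psi$ restricts to bijections $\delta_G(u_i)\to\delta_H(v_i)$ and $\deltabar_G(u_i)\to\deltabar_H(v_i)$.

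Finally, I match $\deltav_{G,\piv}(u)$ with $\deltav_{H,\tauv}(v)$ coordinate by coordinate. For fixed $i$ and $s$, the map $w\mapsto\psi(w)$ gives a bijection
\[
\{w\in\delta_G(u_i):\phi_i(u,w)\in\piv_s\}\;\longrightarrow\;\{w'\in\delta_H(v_i):\phi_i(v,w')\in\tauv_s\},
\]
using the neighbor-preservation from Step 2 together with the displayed cell identity at index $i$. Hence $|\delta_G^i(u)\cap\piv_s|=|\delta_H^i(v)\cap\tauv_s|$, and an identical computation handles $\deltabar$, producing $\deltav_{G,\piv}(u)=\deltav_{H,\tauv}(v)$. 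Because the $\Symv$ and $\CPv$ lex-prefixes of $\deltav$ and $\wlv$ are identical, they are already matched by hypothesis, and $(G,\piv,u)\equiv_\deltav(H,\tauv,v)$ follows.

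The principal obstacle is Step 2: the WL signature only matches cell labels of one-coordinate perturbations, which a priori say nothing about adjacency between $u_i$ and the perturbing vertex $w$. The bridge is to view the pair $\{u_i,w\}$ as two coordinates of the tuple $\phi_j(u,w)$ for some $j\ne i$, so that $\CPv$-stability converts cell information into edge information. This is exactly where $k>1$ is essential, since for $k=1$ no such $j$ exists and both $\CPv$ and the informative part of $\wlv$ collapse.
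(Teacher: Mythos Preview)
Your proof is correct and follows essentially the same approach as the paper: extract the vertex bijection $\psi$ from $\wlv$-equivalence, then use the cell identity at an auxiliary index $j\ne i$ together with the $\CPv^*$-equivalence of $(G,\piv)$ and $(H,\tauv)$ (which follows from $(G,\piv)\equiv_{\wlv^*}(H,\tauv)$) to read off $w\in\delta_G(u_i)\Leftrightarrow\psi(w)\in\delta_H(v_i)$, and conclude the cardinality match. The only minor imprecision is that you phrase the key step as ``each cell carries a fixed $\CPv$-signature'' (a within-graph statement), whereas what you actually need and use is the cross-graph fact that cells with equal indices in $\piv$ and $\tauv$ have equal $\CPv$-signatures in $G$ and $H$ respectively; this is exactly $(G,\piv)\equiv_{\CPv^*}(H,\tauv)$, which the paper invokes explicitly.
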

\begin{proof}
Let $\piv=(\piv_1,...,\piv_m)$ and $\tauv=(\tauv_1,...,\tauv_m)$, and let $u,v\in V^k$.
Assume $(G,\piv,u)\equiv_\wlv(H,\tauv,v)$, so by assumption, there exists a bijection $\psi: V \rightarrow V$ such that $[\phi_j(u,w)]_\piv = [\phi_j(v,\psi(w))]_\tauv$ for all $1\leq j \leq k$ and for all $w\in V$.
Let $1\leq t\leq m$, $1\leq i \leq k$ and $w\in V$.
We show that $\phi_i(u,w)\in \delta^i_G(u)\cap \piv_t \Leftrightarrow \phi_i(v,\psi(w))\in \delta^i_H(v)\cap \tauv_t$ and that 
$\phi_i(u,w)\in \deltabar^i_G(u)\cap \piv_t \Leftrightarrow \phi_i(v,\psi(w))\in \deltabar^i_H(v)\cap \tauv_t$
implying that $|\delta^i_G(u)\cap \piv_t| = |\delta^i_H(v)\cap \tauv_t|$ and 
$|\deltabar^i_G(u)\cap \piv_t| = |\deltabar^i_H(v)\cap \tauv_t|$
since $\psi$ is a bijection, thereby proving that $(G,\piv,u)\equiv_{\delta}(H,\tauv,v)$.

First, $\phi_i(u,w)\in \piv_t \Leftrightarrow \phi_i(v,\psi(w))\in \tauv_t$ since $[\phi_i(u,w)]_\piv = [\phi_i(v,\psi(w))]_\tauv$ by assumption.
Let $j\ne i$.
Then, since $[\phi_j(u,w)]_\piv = [\phi_j(v,\psi(w))]_\tauv$ and $(G,\piv)\equiv_{\CPv^*}(H,\tauv)$, we have $(G,\phi_j(u,w))\equiv_\CP(H,\phi_j(v,\psi(w)))$.
Thus, $w\in \delta_G(u_i)\Leftrightarrow \psi(w)\in\delta_H(v_i)$ implying 
$\phi_i(u,w)\in \delta^i_G(u)\Leftrightarrow \phi_i(v,\psi(w))\in \delta^i_H(v)$ and
$\phi_i(u,w)\in \deltabar^i_G(u) \Leftrightarrow\phi_i(v,\psi(w))\in \deltabar^i_H(v)$ as required.
\end{proof}
Lemmas \ref{lem:xi+delta} and \ref{lem:delta+Delta} together with Lemma \ref{lem:vc equiv} means that the $\wl$ implies $\delta$, which implies $\Delta$ as stated in Lemma \ref{lem:aut comb comparison} below.

\begin{corollary}\label{lem:aut comb comparison}
Let $G\in\Graphs$, and let $\pi\in\Pi^k$.
We have $\OP{G}^k(\pi) \leq \wl_G^*(\pi) \leq \delta^*_G(\pi) \leq \Delta_G^*(\pi) \leq \pi$.
\end{corollary}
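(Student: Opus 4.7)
The plan is to read the corollary off the ordered (isomorphism) Lemmas \ref{lem:xi+delta} and \ref{lem:delta+Delta} together with Lemma \ref{lem:vc equiv}, and then pick up the two outermost inclusions from the general framework of Section \ref{sec:vc compare}. The key conceptual point is the preorder-to-equivalence-relation correspondence noted in Section \ref{sec:vcv compare}: if an ordered preorder $\alphav$ implies another preorder $\kappav$, then the induced (unordered) V-C equivalence relation $\alpha$ implies $\kappa$ in the sense required by Lemma \ref{lem:vc equiv}. So I would organise the proof as a short chain of four inequalities.

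For the middle two inclusions, I would apply Lemma \ref{lem:xi+delta} in the specialised form $G=H$ and $\piv\simeq\tauv\simeq\pi$ with $\pi$ being $\delta_G$-stable. Under this specialisation the hypothesis $(G,\piv)\equiv_{\deltav^*}(H,\tauv)$ becomes automatic, and the conclusion becomes precisely the hypothesis of Lemma \ref{lem:vc equiv} for $\alpha=\wl$ and $\kappa=\delta$. Lemma \ref{lem:vc equiv} then yields $\wl_G^*(\pi) \leq \delta_G^*(\pi)$. The same move using Lemma \ref{lem:delta+Delta} in place of Lemma \ref{lem:xi+delta} gives $\delta_G^*(\pi) \leq \Delta_G^*(\pi)$.

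The outer inclusions $\OP{G}^k(\pi) \leq \wl_G^*(\pi)$ and $\Delta_G^*(\pi) \leq \pi$ are not really extra work: both were recorded as general properties of any V-C equivalence relation right after Definition \ref{def:equiv fn}, namely that $\pi \geq \alpha_G(\pi)$ and that $\OP{G}^k(\pi)$ is $\alpha_G$-stable and therefore bounded above (in the $\leq$ ordering on $\Pi^k$) by the coarsest $\alpha_G$-stable partition finer than $\pi$, which is exactly $\alpha_G^*(\pi)$. Applied with $\alpha=\wl$ this gives the left-most inequality, and applied with $\alpha=\Delta$ the right-most.

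The only point that requires any thought, rather than sheer assembly, is checking that Lemmas \ref{lem:xi+delta} and \ref{lem:delta+Delta} really do hand us the hypothesis of Lemma \ref{lem:vc equiv} after the $\alphav\rightsquigarrow\alpha$ passage; but since the preorder-to-equivalence reduction of Section \ref{sec:vcv compare} is already written down and since we only need the diagonal case $G=H$, $\tauv\simeq\piv$, this is bookkeeping rather than a real obstacle. No further structural idea is needed beyond what has already been proved.
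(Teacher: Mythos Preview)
Your proposal is correct and mirrors the paper's own one-line justification (the text immediately preceding the corollary says it follows from Lemmas \ref{lem:xi+delta} and \ref{lem:delta+Delta} combined with Lemma \ref{lem:vc equiv}, and the outermost inequalities are exactly the general facts recorded after Definition \ref{def:equiv fn}). One small slip to fix: the hypothesis of Lemma \ref{lem:xi+delta} is $(G,\piv)\equiv_{\wlv^*}(H,\tauv)$, not $\equiv_{\deltav^*}$, so in the diagonal specialisation you need $\pi$ to be $\wl_G$-stable rather than $\delta_G$-stable---which is precisely the stability assumption Lemma \ref{lem:vc equiv} requires when $\alpha=\wl$.
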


Lemmas \ref{lem:xi+delta} and \ref{lem:delta+Delta} together with Lemma \ref{lem:vcv equiv} mean that the $\wlv$ implies $\deltav$, which implies $\Deltav$ as stated in Lemma \ref{lem:aut comb comparison} below.

\begin{lemma}\label{lem:iso comb comparison}
Let $\piv,\tauv\in\Piv^k$ where $\piv\approx\tauv$.
We have $(G,\wlv_G^*(\piv)) \equiv_\wlv (H,\wlv_H^*(\tauv)) \Rightarrow
(G,\deltav^*_G(\piv)) \equiv_{\deltav} (H,\deltav^*_H(\tauv))
\Rightarrow (G,\Deltav_G^*(\piv)) \equiv_\Deltav (H,\Deltav_H^*(\tauv))$
and $(\wlv_G^*(\piv),\wlv_H^*(\tauv))\leq(\deltav^*_G(\piv), \deltav^*_H(\tauv))\leq (\Deltav_G^*(\piv),\Deltav_H^*(\tauv))$.% \leq(\SGPv{G}^k,\SGPv{H}^k)$.
\end{lemma}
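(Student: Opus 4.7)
The plan is to apply the general comparison machinery of Lemma \ref{lem:vcv equiv} twice, using Lemmas \ref{lem:xi+delta} and \ref{lem:delta+Delta} as the two pointwise inputs. These two lemmas verify exactly the hypothesis that Lemma \ref{lem:vcv equiv} requires in order to upgrade a pointwise implication between V-C preorders into an implication between their fixed-point equivalences together with the corresponding refinement inequality $(\alphav_G^*(\piv),\alphav_H^*(\tauv)) \leq (\kappav_G^*(\piv),\kappav_H^*(\tauv))$.

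Concretely, first I would set $\alphav = \wlv$ and $\kappav = \deltav$. Lemma \ref{lem:xi+delta} states that whenever $(G,\piv) \equiv_{\wlv^*} (H,\tauv)$, the implication $(G,\piv,u) \equiv_{\wlv}(H,\tauv,v) \Rightarrow (G,\piv,u) \equiv_{\deltav}(H,\tauv,v)$ holds for all $u,v \in V^k$, which is precisely the hypothesis of Lemma \ref{lem:vcv equiv}. Applying that lemma to any $\piv\approx\tauv$ yields both $(G,\wlv_G^*(\piv)) \equiv_{\wlv}(H,\wlv_H^*(\tauv)) \Rightarrow (G,\deltav_G^*(\piv)) \equiv_{\deltav}(H,\deltav_H^*(\tauv))$ and the refinement $(\wlv_G^*(\piv),\wlv_H^*(\tauv)) \leq (\deltav_G^*(\piv),\deltav_H^*(\tauv))$.

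Next, I would repeat the same argument with $\alphav = \deltav$ and $\kappav = \Deltav$. Lemma \ref{lem:delta+Delta} gives the required implication under the assumption $(G,\piv) \equiv_{\deltav^*}(H,\tauv)$, so a second invocation of Lemma \ref{lem:vcv equiv} produces $(G,\deltav_G^*(\piv)) \equiv_{\deltav}(H,\deltav_H^*(\tauv)) \Rightarrow (G,\Deltav_G^*(\piv)) \equiv_{\Deltav}(H,\Deltav_H^*(\tauv))$ together with $(\deltav_G^*(\piv),\deltav_H^*(\tauv)) \leq (\Deltav_G^*(\piv),\Deltav_H^*(\tauv))$. Concatenating the two chains delivers the lemma.

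There is no real obstacle here; the only thing that needs a moment of care is bookkeeping, namely noting that the hypothesis of Lemma \ref{lem:vcv equiv} only requires the pointwise implication to hold on pairs $(\piv,\tauv)$ that are already fixed points of the finer preorder, and that this is exactly what Lemmas \ref{lem:xi+delta} and \ref{lem:delta+Delta} supply (they assume $(G,\piv)\equiv_{\wlv^*}(H,\tauv)$ and $(G,\piv)\equiv_{\deltav^*}(H,\tauv)$ respectively). Once that matching is observed, the proof is a direct two-step composition and no further computation is needed.
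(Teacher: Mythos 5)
Your proof is correct and matches the paper's approach exactly: the paper itself asserts the lemma as an immediate consequence of Lemmas \ref{lem:xi+delta} and \ref{lem:delta+Delta} fed into Lemma \ref{lem:vcv equiv}, applied first with $(\alphav,\kappav)=(\wlv,\deltav)$ and then with $(\alphav,\kappav)=(\deltav,\Deltav)$. Your remark that the hypotheses of the two input lemmas are stated precisely under the $\equiv_{\alphav^*}$ fixed-point condition that Lemma \ref{lem:vcv equiv} requires is the right piece of bookkeeping to check, and you checked it.
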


Next, we next show that $k$-dim $\Delta$ implies $(k-1)$-dim $\wl$.
First, we need a way of mapping a $k$-tuple onto an $(k-1)$-tuple and a $(k-1)$-tuple onto a $k$ tuple.
Also, we need a way of mapping a $k$-dimensional partition onto a $(k-1)$-dimensional partition.
\begin{definition}
We define the map $\rho: V^k \rightarrow V^{k-1}$ where $\rho(u)=(u_1,...,u_{k-1})$ for all $u\in V^k$,
and we define the map $\nu:V^{k-1} \rightarrow V^k$ where $\nu(u') = (u'_1,...,u'_{k-1},u'_{k-1})$ for all $u'\in V^{k-1}$.
We define the map $\rho: \Pi^k \rightarrow \Pi^{k-1}$ such that for $\pi \in \Pi^k$ and for $u',v'\in V^{k-1}$, we have $u'\equiv_{\rho(\pi)} v'$ if and only if $\nu(u') \equiv_{\pi} \nu(v')$.
\end{definition}
%Importantly, $\rho(\SGP{G}^k) = \SGP{G}^{k-1}$, and $\Sym(\rho(\pi))=\rho(\pi)$ if $\Sym(\pi)=\pi$, and also, $\pi\leq\tau$ implies $\rho(\pi)\leq\rho(\tau)$.
Next, we establish that $\Delta$ equivalence implies $\wl$ equivalence.
\begin{lemma} \label{lem:rho}
Let $G\in\Graphs$. Let $\pi\in\Pi^k$ where $\Delta_G(\pi)=\pi$, and let $u,v\in V^k$.
If $(G,\pi,u)\equiv_\Delta(G,\pi,v)$, then 
$(G,\rho(\pi),\rho(u))\equiv_\wl(G,\rho(\pi),\rho(v))$.
\end{lemma}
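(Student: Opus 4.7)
The plan is to produce a single bijection $\psi:V\to V$ that simultaneously witnesses every ingredient of the target $\wl$-equivalence, exploiting three consequences of $\Delta_G(\pi)=\pi$: namely $\pi\leq\CP_G^k$, $\pi$ is $\Sym_k$-invariant, and the $\Delta$-counts $|\Delta^i(x)\cap\pi_s|$ depend only on the cell of $x$. I would begin by applying the $\Delta$-condition at coordinate $k$ to $(u,v)$: since $|\Delta^k(u)\cap\pi_s|=|\Delta^k(v)\cap\pi_s|$ for every $\pi_s$, fix any bijection $\psi$ such that $\phi_k(u,w)\equiv_\pi\phi_k(v,\psi(w))$ for every $w\in V$.

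The first rigidity observation is that taking $w=u_{k-1}$ gives $\phi_k(u,u_{k-1})=\nu(\rho(u))$, whose last two coordinates coincide; by $\CP$-stability of $\pi$ the image $\phi_k(v,\psi(u_{k-1}))=(v_1,\ldots,v_{k-1},\psi(u_{k-1}))$ must have equal last two coordinates as well, forcing $\psi(u_{k-1})=v_{k-1}$ and hence $\nu(\rho(u))\equiv_\pi\nu(\rho(v))$, which by definition of $\rho(\pi)$ is exactly $\rho(u)\equiv_{\rho(\pi)}\rho(v)$. The $\CP$-part of the target $\wl$-equivalence follows by restricting the $\CP$-part of $(G,\pi,u)\equiv_\Delta(G,\pi,v)$ to the first $k-1$ coordinates. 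The $\Sym$-part is obtained by lifting each generator $\sigma'$ of $\Sym_{k-1}$ to $\sigma\in\Sym_k$ fixing position $k$ and re-running the rigidity argument on $(\sigma(u),\sigma(v))$: by $\Sym_k$-invariance and $\Delta_G$-stability these are $\Delta$-equivalent, so the same reasoning yields $\rho(\sigma(u))\equiv_{\rho(\pi)}\rho(\sigma(v))$, i.e., $\sigma'(\rho(u))\equiv_{\rho(\pi)}\sigma'(\rho(v))$.

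The main obstacle is the $\wl'$-part, which demands that the \emph{same} $\psi$ satisfy $T_i(u,w):=\nu(\phi_i(\rho(u),w))\equiv_\pi T_i(v,\psi(w))$ for every $1\leq i\leq k-1$ and $w\in V$; the difficulty is that a priori the $\Delta$-condition at each coordinate $i$ produces a \emph{different} bijection, and one must show that $\CP$-stability pins all of these to agree with $\psi$ where needed. For $i=k-1$, I would write $T_{k-1}(u,w)=\phi_{k-1}(\phi_k(u,w),w)$ and apply the $\Delta$-condition at coordinate $k-1$ to the $\Delta$-equivalent pair $\phi_k(u,w)\equiv_\Delta\phi_k(v,\psi(w))$ (which upgrades from $\pi$-equivalence via $\Delta_G$-stability); this yields a bijection $\mu$ with $\phi_{k-1}(\phi_k(u,w),w')\equiv_\pi\phi_{k-1}(\phi_k(v,\psi(w)),\mu(w'))$, and setting $w'=w$ and invoking $\CP$-stability (since $T_{k-1}(u,w)$ has equal last two coordinates) pins $\mu(w)=\psi(w)$.

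For $i\leq k-2$, I would first ``rotate'' coordinates $i$ and $k$: apply the transposition $\sigma=(i,k)\in\Sym_k$ to $\phi_k(u,w)\equiv_\pi\phi_k(v,\psi(w))$ via $\Sym_k$-invariance, then apply the $\Delta$-condition at coordinate $k$ to the resulting $\Delta$-equivalent pair $\sigma(\phi_k(u,w))\equiv_\Delta\sigma(\phi_k(v,\psi(w)))$. Evaluating the produced bijection at $u_{k-1}$ yields a tuple $\pi$-equivalent to $T_i(u,w)=(u_1,\ldots,u_{i-1},w,u_{i+1},\ldots,u_{k-1},u_{k-1})$ of the form $(v_1,\ldots,v_{i-1},\psi(w),v_{i+1},\ldots,v_{k-1},\ast)$; $\CP$-stability then pins $\ast=v_{k-1}$ (again using that $T_i(u,w)$'s last two coordinates coincide), identifying this tuple with $T_i(v,\psi(w))$. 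In both cases the mechanism is the same: $\CP$-stability rigidly forces any auxiliary bijection to agree with $\psi$ at precisely the arguments relevant to $T_i$, so the single bijection $\psi$ produced from coordinate $k$ suffices for all $k-1$ coordinate conditions.
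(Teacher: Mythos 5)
Your proof is correct and takes essentially the same route as the paper: you choose $\psi$ from the $\Delta$-count at coordinate $k$ and then use the rigidity imposed by $\CP$-stability (the fact that any cell of $\pi$ has a fixed combinatorial type) to force the $\wl'$-condition to hold at every other coordinate with that same $\psi$. The paper packages the rigidity step as Lemma~\ref{lem:tech} (applied once to obtain $\phi_i(\phi_k(u,w),w)\equiv_\pi\phi_i(\phi_k(v,w'),w')$ and once more inside Lemma~\ref{lem:rhov props} to descend to $\rho(\pi)$), whereas you re-derive it inline via per-coordinate auxiliary bijections that get pinned by $\CP$-rigidity, adding a transposition trick for $i\leq k-2$; both are instances of the same core argument.
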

For the proof of this, see Lemma \ref{lem:rhov} and its proof, where we prove the more general result for V-C preorders.
Now, we can show the result that $k$-dim $\Delta$ implies $(k-1)$-dim $\wl$.
\begin{lemma}\label{lem:rho2}
Let $G\in\Graphs$. Let $\pi\in\Pi^k$.
If $\Delta_G(\pi)=\pi$, then $\wl_G(\rho(\pi))= \rho(\pi)$.
Moreover, we have $\rho(\Delta_G^*(\pi)) \leq \wl_G^*(\rho(\pi))$.
\end{lemma}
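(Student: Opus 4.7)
The plan is to deduce both parts from Lemma~\ref{lem:rho} together with the order-theoretic properties of the refinement operators established earlier in Section~\ref{sec:CombAlg}.

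For the first assertion, I would first verify that $\rho$ is monotone in the sense that $\pi \leq \pi'$ implies $\rho(\pi) \leq \rho(\pi')$: this is immediate from the definition, since $u' \equiv_{\rho(\pi)} v'$ is defined as $\nu(u') \equiv_\pi \nu(v')$. With this in hand, the equality $\wl_G(\rho(\pi)) = \rho(\pi)$ is obtained by two opposite inclusions. The inclusion $\wl_G(\rho(\pi)) \leq \rho(\pi)$ is automatic since any V-C refinement operator takes a partition to a finer one. For the reverse $\rho(\pi) \leq \wl_G(\rho(\pi))$, suppose $u' \equiv_{\rho(\pi)} v'$, so that $\nu(u') \equiv_\pi \nu(v')$. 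Because $\pi$ is $\Delta_G$-stable, this means $(G,\pi,\nu(u')) \equiv_\Delta (G,\pi,\nu(v'))$. Applying Lemma~\ref{lem:rho} to $\nu(u'),\nu(v')$ yields $(G,\rho(\pi),\rho(\nu(u'))) \equiv_\wl (G,\rho(\pi),\rho(\nu(v')))$, and since $\rho \circ \nu$ is the identity on $V^{k-1}$, this is exactly $(G,\rho(\pi),u') \equiv_\wl (G,\rho(\pi),v')$, i.e.\ $u' \equiv_{\wl_G(\rho(\pi))} v'$.

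For the second assertion, the partition $\Delta_G^*(\pi)$ is $\Delta_G$-stable by construction of the fixed point, so the first part gives that $\rho(\Delta_G^*(\pi))$ is $\wl_G$-stable. Monotonicity of $\rho$ applied to $\Delta_G^*(\pi) \leq \pi$ gives $\rho(\Delta_G^*(\pi)) \leq \rho(\pi)$. Recall from Section~\ref{sec:CombAlg} that $\wl_G^*(\rho(\pi))$ is characterized as the unique coarsest $\wl_G$-stable partition finer than $\rho(\pi)$. Since $\rho(\Delta_G^*(\pi))$ is a $\wl_G$-stable partition finer than $\rho(\pi)$, this characterization immediately yields $\rho(\Delta_G^*(\pi)) \leq \wl_G^*(\rho(\pi))$, as required.

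There is no real obstacle here once Lemma~\ref{lem:rho} is available: the heavy lifting (matching $\Delta$-classes across $G$ to $\wl$-classes after contracting a coordinate) is done in that lemma, and what remains is only the bookkeeping of stability and the universal property of $\wl_G^*$. The one small point that must not be skipped is the monotonicity of $\rho$, and the observation that $\rho\circ\nu = \mathrm{id}_{V^{k-1}}$, which is what lets us invoke Lemma~\ref{lem:rho} on arbitrary elements of $V^{k-1}$ lifted through $\nu$.
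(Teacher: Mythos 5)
Your proof is correct and follows essentially the same approach as the paper: lift through $\nu$, apply Lemma~\ref{lem:rho}, use $\rho\circ\nu=\mathrm{id}$ to deduce $\wl_G$-stability of $\rho(\pi)$, then invoke the universal property of $\wl_G^*$ together with monotonicity of $\rho$. If anything you are slightly more careful than the paper in flagging monotonicity of $\rho$ and the identity $\rho\circ\nu=\mathrm{id}_{V^{k-1}}$, both of which the paper uses tacitly.
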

\begin{proof}
Let $\pi'= \rho(\pi)$.
Assume $\Delta_G(\pi)=\pi$. Let $u',v\in V^{k-1}$ where $u'\equiv_{\pi'} v'$.
Let $u= \nu(u')$ and $v=\nu(v')$. 
Then, $u\equiv_{\pi} v$ by definition of $\rho$, and thus, $(\pi,u)\equiv_{\Delta} (\pi,v)$ since $\Delta_G(\pi)=\pi$.
Then, by Lemma \ref{lem:rho}, we have $(\pi',u') \equiv_{\wl'} (\pi',v')$.
Thus, $\wl_G(\pi') = \pi'$ as required.
Next, the partition $\rho(\Delta_G^*(\pi))$ is $\Delta$-stable and thus $\wl$-stable from above.
Then, since $\rho(\Delta_G^*(\pi))$ is a $\wl$-stable subpartition of $\pi'$ and $\wl^*(\pi')$ is the unique maximal $\wl$-stable subpartition of $\pi'$,
we must have $\rho(\Delta_G^*(\pi)) \leq \wl^*(\pi')$ as required.
\end{proof}
Specifically, we have that $\rho(\Delta_G^k) \leq \wl_G^{k-1}$.
Next, we show that the $k$-dim $\Deltav$ implies $(k-1)$-dim $\wlv$.
First, we need an analogue of $\rho$ for ordered partitions.
\begin{definition}
We define the map $\rhov: \Piv^k \rightarrow \Piv^{k-1}$ such that for $\piv \in \Piv^k$ and for $u',v'\in V^{k-1}$, we have $u'\leqq_{\rhov(\piv)} v'$ if and only if $\nu(u') \leqq_\piv \nu(v')$.
\end{definition}
%Then, by construction $(G,\piv)\equiv_{\CPv^*}(H,\tauv)$ implies $(G,\rhov(\piv))\equiv_{\CPv^*}(H,\rhov(\tauv))$,
%and also, $(G,\piv)\equiv_{\Symv^*}(H,\tauv)$ implies $(G,\rhov(\piv))\equiv_{\Symv^*}(H,\rhov(\tauv))$.
%Before presenting the lemma, we note that if $(G,\piv)\equiv_{\Deltav^*}(H,\tauv)$ and $(G,\piv,u)\equiv_{\Deltav}(H,\tauv,v)$, then we have $[\rho(u)]_\piv=[\rho(v)]_\tauv$ since from Lemma \ref{lem:tech}, we have $[\phi_k(u,u_{k-1})]_\piv = [\phi_k(v,v_{k-1})]_\tauv$, and then, since $(G,\piv)\equiv_{\CPv^*}(H,\tauv)$ and by construction of $\rhov$, we have $[u']_{\piv'} = [v']_{\tauv'}$ since $\rho(\phi_k(u,u_{k-1}))=u'$ and $\rho(\phi_k(v,v_{k-1})) = v'$.
The next lemma is needed to establish that $k$-dim $\Deltav$ implies $(k-1)$-dim $\wlv$.
\begin{lemma}\label{lem:rhov}
Let $G,H\in\Graphs$. Let $\piv,\tauv\in\Piv^k$ where $(G,\piv)\equiv_{\Deltav^*}(H,\tauv)$, and let $u,v\in V^k$.
If $(G,\piv,u)\equiv_\Deltav(H,\tauv,v)$, then $(G,\rhov(\piv),\rho(u))\equiv_\wlv(H,\rhov(\tauv),\rho(v))$.
\end{lemma}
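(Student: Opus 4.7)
The plan is to establish the lemma in three stages. Write $u' = \rho(u)$, $v' = \rho(v)$, $\piv' = \rhov(\piv)$, and $\tauv' = \rhov(\tauv)$.

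First, I lift the hypothesis to the diagonal tuples by showing that $[\nu(u')]_\piv = [\nu(v')]_\tauv$, from which $\equiv_{\Deltav^*}$ immediately gives $(G,\piv,\nu(u')) \equiv_\Deltav (H,\tauv,\nu(v'))$. The argument exploits the $\CPv$-stability of $\piv$ and $\tauv$ (inherited from $\Deltav$-stability). The set $\Delta^k(u) = \{(u_1,\ldots,u_{k-1},w) : w \in V\}$ contains exactly one tuple whose last two coordinates coincide---namely $\nu(u')$, at $w = u_{k-1}$---and any cell of $\piv$ containing a diagonal tuple consists entirely of diagonal tuples, so $\Delta^k(u) \cap \piv_s$ is a singleton when $\piv_s = [\nu(u')]_\piv$ and empty for every other diagonal cell. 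The hypothesis gives $|\Delta^k(u) \cap \piv_s| = |\Delta^k(v) \cap \tauv_s|$ for every $s$, while $\equiv_{\Deltav^*}$ forces cells sharing an index to share their $\CPv$-pattern. The unique index singled out by ``diagonal pattern and intersection count $1$'' is therefore the same on the two sides, establishing the claim and hence $[u']_{\piv'} = [v']_{\tauv'}$. The $\CPv$-component of $(k-1)$-dimensional $\wlv$-equivalence at $(u',v')$ is then just a restriction of the $k$-dimensional hypothesis, and the $\Symv$-component follows by repeating the argument of this first step after pre-composing $u$ and $v$ with generators of $\Sym_{k-1}$ (extended to $\Sym_k$ by fixing the last coordinate), using the $\Sym_k$-invariance of the pair $(\piv,\tauv)$ inherited from $\equiv_{\Deltav^*}$.

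Next, I extract a single bijection $\psi: V \to V$ by matching the multisets of $\Delta^k$-destinations at $u,v$: choose $\psi$ so that $[\phi_k(u,w)]_\piv = [\phi_k(v,\psi(w))]_\tauv$ for every $w \in V$ (equivalently, $[\phi_k(\nu(u'),w)]_\piv = [\phi_k(\nu(v'),\psi(w))]_\tauv$, since $\phi_k$ ignores the $k$-th coordinate).

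Finally, I verify that this $\psi$ certifies $\wlv$-equivalence of $(G,\piv',u')$ with $(H,\tauv',v')$. Unfolding through $\rhov$, the remaining task is to show $[\nu(\phi_i(u',w))]_\piv = [\nu(\phi_i(v',\psi(w)))]_\tauv$ for every $w \in V$ and every $1 \leq i \leq k-1$. For $i = k-1$, $\nu(\phi_{k-1}(u',w)) = \phi_{k-1}(\phi_k(u,w),w)$ is the unique element of $\Delta^{k-1}(\phi_k(u,w))$ whose $(k-1)$-th coordinate equals the $k$-th; since $\phi_k(u,w)$ and $\phi_k(v,\psi(w))$ sit in corresponding cells by construction of $\psi$, $\Deltav$-stability lifts the correspondence to their $\Delta^{k-1}$-neighborhoods, and $\CPv$-stability picks out the unique diagonal destination on each side. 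For $i \leq k-2$, let $\sigma \in \Sym_k$ be the transposition swapping $i$ and $k$; one checks directly that $\nu(\phi_i(u',w)) = \phi_i(\nu(u'),w) = \phi_k(\sigma(\phi_k(u,w)), u_{k-1})$. The $\Sym_k$-invariance of $(\piv,\tauv)$ propagates the cell correspondence through $\sigma$, and the same ``unique diagonal representative in a $\Delta^k$-neighborhood'' argument then closes the case.

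The main obstacle is the last case, $i \leq k-2$: one must chain a $\Sym_k$-permutation with a $\Delta^k$-step while managing the fact that $u_{k-1}$ and $v_{k-1}$---the $V$-labels playing the role of the ``final'' coordinate on the two sides---are distinct and are not related by $\psi$. The argument succeeds precisely because $\CPv$-stability makes the ``diagonal pattern'' a purely combinatorial invariant of a cell, so the target cell is pinned down abstractly and no matching of labels beyond $\psi$ is required.
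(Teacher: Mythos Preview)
Your proof is correct and follows the paper's approach: construct $\psi$ by matching the $\Delta^k$-cells of $u$ and $v$, then push cell correspondences to the diagonal tuples $\nu(\phi_i(u',w))$ via the ``unique diagonal representative'' argument (packaged in the paper as Lemma~\ref{lem:tech}). The only difference is organizational: the paper treats all $1\le i\le k-1$ uniformly by applying Lemma~\ref{lem:tech} once to reach $\phi_i(\phi_k(u,w),w)$ and then invoking Lemma~\ref{lem:rhov props} (itself another instance of Lemma~\ref{lem:tech}) to project, whereas you split off $i=k-1$ and, for $i\le k-2$, replace one Lemma~\ref{lem:tech} step with an explicit appeal to the $\Sym_k$-invariance of $(\piv,\tauv)$---an equivalent route that inlines the appendix lemmas rather than citing them.
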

\begin{proof}
Let $m=|\piv|$. Let $\piv' = \rhov(\piv)$, $\tauv'=\rhov(\tauv)$, and let $u'=\rho(u)$ and $v'=\rho(v)$.
Assume $(G,\piv,u)\equiv_\Deltav(H,\tauv,v)$; so, we have $|\Delta^i(u)\cap \piv_t| = |\Delta^i(v)\cap \tauv_t|$ for all $1\leq t\leq m$ and $1\leq i \leq k$.
Then, $[u']_{\piv'} = [v']_{\tauv'}$ by Lemma \ref{lem:rhov props},
and also, $(G,\piv',u')\equiv_{\CPv}(H,\tauv',v')$ and $(\piv',u')\equiv_{\Symv}(\tauv',v')$
since $(G,\piv')\equiv_{\CPv^*}(H,\tauv')$ and $\piv'\equiv_{\Symv^*}\tauv'$ by Lemma \ref{lem:rhov props}.
%Then, since $(G,\piv)\equiv_{\CPv^*}(H,\tauv)$, we have $[u']_{\piv'} = [v']_{\tauv'}$ since $\rho(\phi_k(u,u_{k-1}))=u'$
%and $\rho(\phi_k(v,v_{k-1})) = v'$.
It remains to show that there exists a bijection $\psi:V\rightarrow V$ such that 
$[\phi_i(u',w)]_{\piv'} = [\phi_i(v',\psi(w))]_{\tauv'}$ for all $w\in V$ and $1\leq i\leq k-1$.

Let $U_t=\{w\in V: \phi_k(u,w)\in \piv_t\}$
and let $W_t= \{w\in V: \phi_k(v,w)\in \tauv_t\}$ for all $1\leq t \leq m$.
Note that $\{U_1,...,U_m\}$ and $\{W_1,...,W_m\}$ are partitions of $V$
such that $|\Delta^k(u)\cap \piv_t|=|U_t|=|W_t|=|\Delta^k(v)\cap \tauv_t|$ for all $1\leq  t\leq m$.
Let $\psi:V\rightarrow V$ be a bijection such that $\psi(U_t)=W_t$  for all $1\leq  t\leq m$.

Let $w\in U_t$ and let $w'=\psi(w)$.
Then, by construction,  $[\phi_k(u,w)]_\piv = [\phi_k(v,w')]_\tauv$, and since $(G,\piv)\equiv_{\Deltav^*}(H,\tauv)$,
we also have $(G,\piv, \phi_k(u,w)) \equiv_{\Deltav} (H,\tauv, \phi_k(v,w'))$.
From Lemma \ref{lem:tech}, we have $[\phi_i(\phi_k(u,w),w)]_\piv = [\phi_i(\phi_k(v,w'),w')]_\tauv$
and moreover $(G,\piv,\phi_i(\phi_k(u,w),w)) \equiv_{\Deltav} (H,\tauv,\phi_i(\phi_k(v,w'),w'))$ for all $1\leq i \leq k-1$.
Thus, $[\phi_i(u',w)]_{\piv'} = [\phi_i(v',w')]_{\tauv'}$ for all $1\leq i\leq k-1$
since $\rho(\phi_i(\phi_k(u,w),w))=\phi_i(u',w)$ and $\rho(\phi_i(\phi_k(v,w'),w'))= \phi_i(v',w')$.
\end{proof}
Then, using the lemma above, we arrive at the desired result.
\begin{lemma}\label{lem:Deltav to xiv}
Let $G,H\in\Graphs$. Let $\piv,\tauv\in\Piv^k$ where $\piv\approx\tauv$.
Then,
$(G,\piv) \equiv_{\Deltav^*} (H,\tauv)$ implies $(G,\rhov(\piv)) \equiv_{\wlv^*} (H,\rhov(\tauv))$.
Furthermore, we have $(G,\Deltav_G^*(\piv)) \equiv_\Deltav (H,\Deltav_H^*(\tauv))$ implies
$(G,\wlv^*(\rho(\piv))) \equiv_\wlv (H,\wlv^*(\rho(\tauv)))$ and $(\rhov(\Deltav_G^*(\piv)),\rhov(\Deltav_H^*(\tauv)))  \leq (\wlv_G^*(\rhov(\piv)),\wlv_H^*(\rhov(\tauv)))$.
\end{lemma}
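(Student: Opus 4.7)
My plan is to reduce both parts of the lemma to Lemma~\ref{lem:rhov}, which already translates a single $\Deltav$-equivalence into a single $\wlv$-equivalence on the projected tuples. The first part establishes the hypothesis-conclusion pattern on stable partitions, and the second part reduces to the first via Lemma~\ref{lem:equiv for all r}.

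For Part~1, assume $(G,\piv)\equiv_{\Deltav^*}(H,\tauv)$. I want to show (a) $\wlv_G(\rhov(\piv))=\rhov(\piv)$ (and symmetrically for $H$), and (b) $(G,\rhov(\piv))\equiv_{\wlv}(H,\rhov(\tauv))$. For (a), take $u',v'\in V^{k-1}$ with $u'\equiv_{\rhov(\piv)}v'$. By definition of $\rhov$, $\nu(u')\equiv_\piv\nu(v')$, and since $\Deltav_G(\piv)=\piv$, we get $(G,\piv,\nu(u'))\equiv_{\Deltav}(G,\piv,\nu(v'))$. Apply Lemma~\ref{lem:rhov} with $H=G$ and $\tauv=\piv$ (which trivially satisfies $(G,\piv)\equiv_{\Deltav^*}(G,\piv)$) and use $\rho\circ\nu=\mathrm{id}$ to deduce $(G,\rhov(\piv),u')\equiv_{\wlv}(G,\rhov(\piv),v')$, i.e.\ $u'\equiv_{\wlv_G(\rhov(\piv))}v'$. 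Since $\wlv_G(\rhov(\piv))\preceq\rhov(\piv)$ always and the first lex coordinate of $\wlv$ is $[u]_{\piv}$, so the ordering of $\rhov(\piv)$ is preserved; hence $\wlv_G(\rhov(\piv))=\rhov(\piv)$ as ordered partitions.

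For (b), use the equivalence $(G,\piv)\equiv_{\Deltav}(H,\tauv)$. For any $u',v'\in V^{k-1}$, set $u=\nu(u')$, $v=\nu(v')$; the characterization of $\equiv_{\Deltav}$ on pairs gives $(G,\piv,u)\equiv_{\Deltav}(H,\tauv,v)$ iff $[u]_\piv=[v]_\tauv$, and this last condition translates (via the definition of $\rhov$ on cells) to $[u']_{\rhov(\piv)}=[v']_{\rhov(\tauv)}$. One direction of the required equivalence is Lemma~\ref{lem:rhov} directly; the converse follows from property~(\ref{defv1}) of Definition~\ref{def:alphav} applied to $\wlv$. Combined with $\rhov(\piv)\approx\rhov(\tauv)$ (which follows from the bijection of cells implied by $\piv\equiv_{\Deltav}\tauv$ restricted to cells meeting $\nu(V^{k-1})$), we conclude $(G,\rhov(\piv))\equiv_{\wlv^*}(H,\rhov(\tauv))$.

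For Part~2, assume $(G,\Deltav_G^*(\piv))\equiv_{\Deltav}(H,\Deltav_H^*(\tauv))$; since the two partitions are $\Deltav$-stable by construction, this is $(G,\Deltav_G^*(\piv))\equiv_{\Deltav^*}(H,\Deltav_H^*(\tauv))$. Part~1 then gives $(G,\rhov(\Deltav_G^*(\piv)))\equiv_{\wlv^*}(H,\rhov(\Deltav_H^*(\tauv)))$. Because $\Deltav_G^*(\piv)\preceq\piv$ and similarly for $\tauv$, one has $(\rhov(\Deltav_G^*(\piv)),\rhov(\Deltav_H^*(\tauv)))\leq(\rhov(\piv),\rhov(\tauv))$. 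Now invoke Lemma~\ref{lem:equiv for all r} with $\alphav=\wlv$, $(\piv',\tauv')=(\rhov(\Deltav_G^*(\piv)),\rhov(\Deltav_H^*(\tauv)))$, and the pair $(\rhov(\piv),\rhov(\tauv))$: this yields both the desired inequality $(\rhov(\Deltav_G^*(\piv)),\rhov(\Deltav_H^*(\tauv)))\leq(\wlv_G^*(\rhov(\piv)),\wlv_H^*(\rhov(\tauv)))$ and the equivalence $(G,\wlv_G^*(\rhov(\piv)))\equiv_{\wlv}(H,\wlv_H^*(\rhov(\tauv)))$.

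The main obstacle is bookkeeping in Part~1(a)--(b): verifying that $\rhov$ transports the $\approx$ relation and the ordered structure faithfully, so that a partition-level equivalence of the projections really holds rather than merely an equivalence-class-level matching. This requires care because $\rho$ is not a bijection and the cells of $\rhov(\piv)$ only see those cells of $\piv$ intersecting $\nu(V^{k-1})$; once this is unpacked using the definition of $\rhov$ and the properties of $\Deltav$-equivalence (which sees both $\piv$-cells and their sizes), the reduction to Lemma~\ref{lem:rhov} is straightforward.
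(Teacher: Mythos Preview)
Your proposal is correct and follows essentially the same route as the paper: both proofs reduce Part~1 to Lemma~\ref{lem:rhov} and then derive Part~2 by feeding the stable pair $(\rhov(\Deltav_G^*(\piv)),\rhov(\Deltav_H^*(\tauv)))$ into Lemma~\ref{lem:equiv for all r}. The only cosmetic difference is in Part~1(b): the paper exhibits an explicit tuple bijection $\gamma'(u')=\rho(\gamma(\nu(u')))$ to witness $(G,\rhov(\piv))\equiv_{\wlv}(H,\rhov(\tauv))$, whereas you verify the two directions of Definition~\ref{def:preorder equiv} directly, using property~(\ref{defv1}) for one side and Lemma~\ref{lem:rhov} for the other; the bookkeeping you flag (that $\rhov$ preserves $\approx$ and the index-matching) is exactly what the paper packages into Lemma~\ref{lem:rhov props}, which you are implicitly invoking.
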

\begin{proof}
Let $\piv' = \rhov(\piv)$ and $\tauv'=\rhov(\tauv)$.
Assume $(G,\piv)\equiv_{\Deltav^*}(H,\tauv)$.
First, from Lemma \ref{lem:rho}, we have $\wlv_G(\piv')=\piv'$ and $\wlv_H(\tauv')=\tauv'$.
Then, there exists a tuple bijection $\gamma:V^k\rightarrow V^k$ such that $(G,\piv,u)\equiv_\Deltav(H,\tauv,\gamma(u))$ for all $u\in V^k$.
Now, we define bijection $\gamma':V^{k-1}\rightarrow V^{k-1}$ where for all $u'\in V^{k-1}$, we have $\gamma'(u') = \rho(\gamma(\nu(u')))$.
Let $u'\in V^{k-1}$ and let $u=\nu(u')$.
Then, we have $(G,\piv,u)\equiv_\Deltav(H,\tauv,\gamma(u))$ implying $(G,\piv',\rho(u))\equiv_\wlv(H,\tauv',\rho(\gamma(u)))$ by Lemma \ref{lem:rhov}
or equivalently $(G,\piv',u')\equiv_\wlv(H,\tauv',\gamma'(u'))$, and thus, $(G,\piv')\equiv_\wlv(H,\tauv')$.

Assume $(G,\Deltav_G^*(\piv))\equiv_{\Deltav}(H,\Deltav_H^*(\tauv))$.
Let $\piv'' = \rhov(\Deltav_G^*(\piv))$ and $\tauv'' = \rho(\Deltav_H^*(\tauv))$.
From Corollary \ref{cor:equiv for all r}, we have $(\Deltav_G^*(\piv),\Deltav_H^*(\tauv))\leq (\piv,\tauv)$ implying 
$(\piv'',\tauv'')\leq(\piv',\tauv')$.
Then, from above, $(G,\piv'')\equiv_{\wlv^*}(H,\tauv'')$,
and by Lemma \ref{lem:equiv for all r}, we have 
$(G,\wlv^*(\piv')) \equiv_\wlv (H,\wlv^*(\tauv'))$ and $(\piv'',\tauv'')  \leq (\wlv_G^*(\piv'),\wlv_H^*(\tauv'))$ as required.
\end{proof}

Next, we show that $k$-dim $\wl$ where $k>1$ implies $(k+1)$-dim $\Delta$.
First, we need a way of mapping a $k$-dimensional partitions to a $(k+1)$-dimensional partition.
\begin{definition}\label{def:nu}
We define the map $\nu: \Pi^k \rightarrow \Pi^{k+1}$ such that for $\pi \in \Pi^k$ and for $u,v\in V^{k+1}$, we have $u\equiv_{\nu(\pi)} v$ if $\rho(u)\equiv_{\pi}\rho(v)$ and $\phi_i(\rho(u),u_{k+1})\equiv_{\pi} \phi_i(\rho(v),v_{k+1})$ for all $1\leq i\leq k$.
\end{definition}
%Crucially (see Corollary \ref{lem:nu props}), if $\Sym(\pi)=\pi$, then $\Sym(\nu(\pi))=\nu(\pi)$, so the map preserves symmetry,
%and also, we have $\nu(\SGP{G}^k) = \SGP{G}^{k+1}$, and furthermore, $\pi\leq\tau$ implies 
%$\nu(\pi)\leq\nu(\tau)$ for all $\tau\in\Pi^k$, and thus, $\pi\leq\SGP{G}^k$ implies $\nu(\pi)\leq\SGP{G}^{k+1}$.

For the proof of the following lemma, see Lemma \ref{lem:nuv} and its proof, where we prove the more general result for V-C preorders.
\begin{lemma} \label{lem:nu}
Let $G\in\Graphs$. Let $k>1$, and let $\pi\in\Pi^k$ where $\wl_G(\pi)=\pi$. 
For all $u,v\in V^{k+1}$, if $u \equiv_{\nu(\pi)} v$ and $(G,\pi,\rho(u))\equiv_\wl(G,\pi,\rho(v))$, then $(G,\nu(\pi),u)\equiv_\Delta (H,\nu(\pi),v)$.
\end{lemma}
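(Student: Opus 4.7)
The plan is to unpack the conclusion $(G,\nu(\pi),u)\equiv_\Delta(G,\nu(\pi),v)$ into its three constituent conditions---$\CP$-equivalence, $\Sym$-equivalence, and $\Delta'$-equivalence---and verify each. Two ingredients are at hand. First, Definition~\ref{def:nu} unpacks $u\equiv_{\nu(\pi)}v$ into $\rho(u)\equiv_\pi\rho(v)$ together with $\phi_j(\rho(u),u_{k+1})\equiv_\pi\phi_j(\rho(v),v_{k+1})$ for every $1\le j\le k$. Second, $(G,\pi,\rho(u))\equiv_\wl(G,\pi,\rho(v))$ supplies a bijection $\psi:V\to V$ with $\phi_j(\rho(u),w)\equiv_\pi\phi_j(\rho(v),\psi(w))$ for every $w\in V$ and $1\le j\le k$, plus the accompanying $\CP$- and $\Sym$-equivalences of $\rho(u)$ and $\rho(v)$.

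For the $\CP$-part, $\wl_G(\pi)=\pi$ forces $\pi\le\CP_G^k$. Then $\rho(u)\equiv_\pi\rho(v)$ matches the equality/adjacency pattern on positions $1,\dots,k$, and for every $j\in\{1,\dots,k\}$ and every $l\ne j$ with $l\le k$ the equivalence $\phi_j(\rho(u),u_{k+1})\equiv_\pi\phi_j(\rho(v),v_{k+1})$ transports the relation between positions $j$ and $l$---that is, between $u_{k+1}$ and $u_l$---to the corresponding relation between $v_{k+1}$ and $v_l$. Intersecting over $j\ne l$, which is possible precisely because $k>1$, fills in the pattern involving position $k+1$. The $\Sym$-part reduces similarly: $\pi$ is $\Sym_k$-invariant by $\wl$-stability, which handles the subgroup of $\Sym_{k+1}$ fixing position $k+1$, and the $\phi_j$-conditions handle the generator(s) involving position $k+1$.

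The technical core is $\Delta'$-equivalence: $|\Delta^i(u)\cap\nu(\pi)_s|=|\Delta^i(v)\cap\nu(\pi)_s|$ for every $1\le i\le k+1$ and every cell $\nu(\pi)_s$. For $i=k+1$ the $\wl$-bijection $\psi$ works directly, since $\phi_{k+1}(u,w)=(\rho(u),w)$ and $\phi_{k+1}(v,\psi(w))=(\rho(v),\psi(w))$ satisfy both clauses of Definition~\ref{def:nu} (the first by $\rho(u)\equiv_\pi\rho(v)$, the second by the defining property of $\psi$), so $\psi$ is cell-preserving on $\Delta^{k+1}$. For $1\le i\le k$, the cell of $\phi_i(u,w)$ in $\nu(\pi)$ is determined by $[\phi_i(\rho(u),w)]_\pi$ together with $[\phi_j(\phi_i(\rho(u),w),u_{k+1})]_\pi$ for each $1\le j\le k$; the $j=i$ slot collapses to the constant $[\phi_i(\rho(u),u_{k+1})]_\pi$, matched by hypothesis, and for $j\ne i$ the commutation $\phi_j\phi_i=\phi_i\phi_j$ rewrites the slot as $[\phi_i(y_j,w)]_\pi$ where $y_j:=\phi_j(\rho(u),u_{k+1})\equiv_\pi\phi_j(\rho(v),v_{k+1})=:y_j'$.

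The main obstacle is that $\wl$-stability of $\pi$ only yields \emph{separate} bijections $\psi_j$ with $\phi_i(y_j,w)\equiv_\pi\phi_i(y_j',\psi_j(w))$ for each $j\ne i$, and a priori these need not agree with $\psi$ or with each other, so no single uniform bijection $V\to V$ automatically matches the joint cell tuples across $w$. I plan to sidestep this by multiset counting: for each candidate joint tuple of $\pi$-cells, show that the number of $w\in V$ realising it on the $u$-side equals the number on the $v$-side. The strategy is to apply $\wl$-stability to an extended tuple that simultaneously records $\rho(u)$ together with all the $y_j$, so that a single $\wl$-bijection for the extended tuple controls every slot at once. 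Setting up this enlarged $\wl$-equivalence---likely through iterated refinement using $\wl_G(\pi)=\pi$ on suitable auxiliary tuples---and verifying that it delivers the desired joint count is where I expect the delicate bookkeeping to lie.
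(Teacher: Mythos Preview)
Your treatment of the $\CP$- and $\Sym$-parts and of the $\Delta'$-count at index $i=k+1$ is correct and essentially matches the paper's argument. The incomplete step is the case $1\le i\le k$: you correctly identify the obstacle (separate bijections $\psi_j$ need not cohere), but your proposed fix via an ``extended tuple that simultaneously records $\rho(u)$ together with all the $y_j$'' is not workable as stated, since $\wl$-stability applies only to $k$-tuples and there is no obvious mechanism for a single $\wl$-bijection on a longer tuple to control all the slots $[\phi_i(y_j,w)]_\pi$ jointly.

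The paper avoids this difficulty altogether by reducing the case $1\le i\le k$ to the already-settled case $i=k+1$ via the $\Sym_{k+1}$-invariance of $\nu(\pi)$, which you yourself established in the $\Sym$-part. Pick $\sigma\in\Sym_{k+1}$ swapping positions $i$ and $k+1$. By $\Sym_{k+1}$-invariance, $\sigma(u)\equiv_{\nu(\pi)}\sigma(v)$, so in particular $\rho(\sigma(u))\equiv_\pi\rho(\sigma(v))$ by the definition of $\nu$. Now the crucial point: because $\wl_G(\pi)=\pi$, this $\pi$-equivalence \emph{alone} yields $(G,\pi,\rho(\sigma(u)))\equiv_\wl(G,\pi,\rho(\sigma(v)))$; there is no need to manufacture a bijection compatible with the original $\psi$. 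Your $i=k+1$ argument then applies verbatim to the pair $\sigma(u),\sigma(v)$, giving $|\Delta^{k+1}(\sigma(u))\cap\nu(\pi)_s|=|\Delta^{k+1}(\sigma(v))\cap\nu(\pi)_s|$ for every cell $s$. Since $\Delta^{k+1}(\sigma(u))=\sigma(\Delta^i(u))$ and $\sigma$ permutes the cells of $\nu(\pi)$, this is precisely $|\Delta^i(u)\cap\nu(\pi)_{s'}|=|\Delta^i(v)\cap\nu(\pi)_{s'}|$ for every cell $s'$, completing the $\Delta'$-equivalence.
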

The next lemma shows that the $k$-dim $\wl$ implies $(k+1)$-dim $\Delta$.
\begin{lemma} \label{lem:nu2}
Let $k>1$, and let $\pi\in \Pi^k$.
If $\wl_G(\pi)=\pi$, then $\Delta_G(\nu(\pi)) = \nu(\pi)$.
Moreover, $\nu(\wl_G^*(\pi)) \leq \Delta_G^*(\nu(\pi))$.
\end{lemma}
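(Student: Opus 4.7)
The plan is to mirror the structure of Lemma \ref{lem:rho2}, but with the direction of the coordinate map reversed: instead of pushing a $\Delta$-stable partition down via $\rho$, I would push a $\wl$-stable partition up via $\nu$. The statement splits cleanly into two parts, and I would handle each with the same stability-then-lattice-closure template used for $\rho$.

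For the first assertion, I would fix $\pi \in \Pi^k$ with $\wl_G(\pi) = \pi$, set $\pi' = \nu(\pi)$, and take arbitrary $u, v \in V^{k+1}$ with $u \equiv_{\pi'} v$. Definition \ref{def:nu} immediately gives $\rho(u) \equiv_{\pi} \rho(v)$. Because $\pi$ is $\wl_G$-stable, Definition \ref{def:equiv fn} converts this into $(G, \pi, \rho(u)) \equiv_{\wl} (G, \pi, \rho(v))$. Both hypotheses of Lemma \ref{lem:nu} are then in place (this is where the standing assumption $k > 1$ is actually consumed), so I can invoke it to conclude $(G, \nu(\pi), u) \equiv_{\Delta} (G, \nu(\pi), v)$. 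Since $u, v$ were arbitrary $\nu(\pi)$-equivalent tuples, this says exactly that $\Delta_G(\nu(\pi)) = \nu(\pi)$.

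For the second assertion, I would first record the easy monotonicity of $\nu$: if $\sigma \leq \tau$ in $\Pi^k$, then $\nu(\sigma) \leq \nu(\tau)$ in $\Pi^{k+1}$, which is immediate from Definition \ref{def:nu} because each of the two conditions defining $\nu(\sigma)$-equivalence is a $\sigma$-equivalence of a certain tuple that transfers automatically to $\tau$. Applying the first part of the lemma to $\wl_G^*(\pi)$ (which is $\wl_G$-stable by construction) yields that $\nu(\wl_G^*(\pi))$ is $\Delta_G$-stable. Combining monotonicity with $\wl_G^*(\pi) \leq \pi$ gives $\nu(\wl_G^*(\pi)) \leq \nu(\pi)$, so $\nu(\wl_G^*(\pi))$ is a $\Delta_G$-stable partition finer than $\nu(\pi)$. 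Since $\Delta_G^*(\nu(\pi))$ is the unique coarsest such partition (by the closure argument established in Section \ref{sec:CombAlg}), the desired inequality $\nu(\wl_G^*(\pi)) \leq \Delta_G^*(\nu(\pi))$ follows.

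The main obstacle, namely the hands-on combinatorial verification that the $\wl$-information at the $k$-tuple level propagates into the full $\Delta$-count information at the $(k+1)$-tuple level, will already be discharged by Lemma \ref{lem:nu}; everything else in this lemma is a routine stability/lattice chase parallel to the last paragraph of the proof of Lemma \ref{lem:rho2}. If Lemma \ref{lem:nu} were unavailable I would expect that step to be genuinely delicate, because one must carefully account for how the auxiliary $\Sym$ and $\CP$ components of $\wl$ and $\Delta$ line up after appending a coordinate; here, however, that work has been packaged away.
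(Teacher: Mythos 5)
Your proof is correct and follows essentially the same two-step structure as the paper: apply Lemma \ref{lem:nu} to establish $\Delta_G$-stability of $\nu(\pi)$, then use monotonicity of $\nu$ together with the uniqueness of the coarsest $\Delta_G$-stable refinement of $\nu(\pi)$ to obtain the inclusion. The only (minor) difference is that you spell out the monotonicity of $\nu$ explicitly, whereas the paper invokes it implicitly (it is recorded separately as Corollary \ref{lem:nu props}).
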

\begin{proof}
Assume $\wl_G(\pi)=\pi$, and let $u,v\in V^{k+1}$. We show $\Delta_G(\nu(\pi)) = \nu(\pi)$.
If $u \equiv_{\nu(\pi)} v$, then $\rho(u)\equiv_{\pi}\rho(v)$ and $(G,\pi,\rho(u))\equiv_\wl(G,\pi,\rho(v))$ since $\wl_G(\pi)=\pi$,
and thus by Lemma \ref{lem:nu}, $(G,\nu(\pi),u)\equiv_\Delta(G,\nu(\pi),v)$ as required.
Also, we have $\nu(\wl_G^*(\pi))$ is a $\Delta_G$-stable subpartition of $\nu(\pi)$ since $\wl_G^*(\pi)$ is $\wl_G$-stable.
Thus, since $\Delta_G^*(\nu(\pi))$ is the unique maximal subpartition of $\nu(\pi)$,
we have $\nu(\wl_G^*(\pi)) \leq \Delta_G^*(\nu(\pi))$.
\end{proof}
Specifically, we have $\nu(\wl_G^k) \leq \Delta_G^{k+1}$ since $\nu(\{V^k\})=\{V^{k+1}\}$.
Next, we establish the equivalence between the $k$-dim $\wl$ and $(k+1)$-dim $\Delta$ preorders.
\begin{corollary}\label{cor:xi=Delta}
Let $k>1$ and let $G\in\Graphs$. Let $\pi\in\Pi^k$.
Then, $\rho(\Delta_G^*(\nu(\pi))) = \wl_G^*(\pi)$.
\end{corollary}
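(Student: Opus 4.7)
The plan is to verify the two inclusions $\rho(\Delta_G^*(\nu(\pi))) \leq \wl_G^*(\pi)$ and $\wl_G^*(\pi) \leq \rho(\Delta_G^*(\nu(\pi)))$ separately, leaning on the transfer lemmas \ref{lem:rho2} and \ref{lem:nu2} together with one auxiliary identity: $\rho(\nu(\sigma)) = \sigma$ for every $\wl_G$-stable partition $\sigma \in \Pi^k$.

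For the $\leq$ direction, applying Lemma \ref{lem:rho2} to $\nu(\pi) \in \Pi^{k+1}$ gives $\rho(\Delta_G^*(\nu(\pi))) \leq \wl_G^*(\rho(\nu(\pi)))$. Unfolding the definition of $\rho(\nu(\pi))$ shows one of its conditions is literally $u' \equiv_\pi v'$, so $\rho(\nu(\pi)) \leq \pi$, and monotonicity of $\wl_G^*$ yields $\wl_G^*(\rho(\nu(\pi))) \leq \wl_G^*(\pi)$. For the $\geq$ direction, the second conclusion of Lemma \ref{lem:nu2} gives $\nu(\wl_G^*(\pi)) \leq \Delta_G^*(\nu(\pi))$ directly, and applying $\rho$ yields $\rho(\nu(\wl_G^*(\pi))) \leq \rho(\Delta_G^*(\nu(\pi)))$. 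Once we establish $\rho(\nu(\wl_G^*(\pi))) = \wl_G^*(\pi)$, this closes the other inclusion.

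The auxiliary identity $\rho(\nu(\sigma)) = \sigma$ for $\wl_G$-stable $\sigma$ is the main obstacle. Unwinding the definitions, $u' \equiv_{\rho(\nu(\sigma))} v'$ holds iff $u' \equiv_\sigma v'$ and, in addition, $\phi_i(u',u'_k) \equiv_\sigma \phi_i(v',v'_k)$ for every $1 \leq i < k$, so $\rho(\nu(\sigma)) \leq \sigma$ is automatic; the nontrivial direction is the reverse. Given $u' \equiv_\sigma v'$ I would use the $\wl'$ component of $\wl_G$-stability to produce a bijection $\psi : V \to V$ with $\phi_j(u',w) \equiv_\sigma \phi_j(v',\psi(w))$ for all $j,w$, and specialize to $w = u'_k$, $j = i$ to obtain $\phi_i(u',u'_k) \equiv_\sigma \phi_i(v',\psi(u'_k))$. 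The decisive step is the $\CP_G$ component of $\wl_G$-stability: the tuple $\phi_i(u',u'_k)$ carries $u'_k$ at both positions $i$ and $k$, so the coincidence pattern $x_i = x_k$ must be shared by its $\sigma$-equivalent counterpart $\phi_i(v',\psi(u'_k))$, forcing $\psi(u'_k) = v'_k$ and hence $\phi_i(v',\psi(u'_k)) = \phi_i(v',v'_k)$. Without the $\CP$ ingredient $\psi$ need not send $u'_k$ to $v'_k$ and the identity would fail, so this $\CP$-driven pattern-matching is where the delicacy of the corollary truly lives.
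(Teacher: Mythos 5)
Your proof is correct and mirrors the paper's argument: both inclusions are obtained from Lemmas~\ref{lem:rho2} and~\ref{lem:nu2}, combined with the identity $\rho(\nu(\sigma)) = \sigma$ for stable $\sigma$ and the elementary inequality $\rho(\nu(\pi)) \leq \pi$. The only point of divergence is that you re-derive the auxiliary identity directly from the $\wl'$- and $\CP$-components of $\wl_G$-stability, whereas the paper observes that $\wl_G^*(\pi)$ is $\Delta_G$-stable (via Corollary~\ref{lem:aut comb comparison}) and then cites Corollary~\ref{cor:id}, which already provides $\rho(\nu(\sigma))=\sigma$ under the weaker hypothesis of $\Delta_G$-stability.
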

\begin{proof}
By Lemma \ref{lem:nu2}, we have $\nu(\wl^*(\pi)) \leq \Delta_G^*(\nu(\pi))$, and since $\wl^*(\pi)$ is $\Delta$-stable,
we have $\wl^*(\pi) \leq \rho(\Delta_G^*(\nu(\pi))$ since $\rho(\nu(\wl^*(\pi)))=\wl^*(\pi)$ by Corollary \ref{cor:id}.
By lemma \ref{lem:rho2}, we have $\wl^*(\rho(\nu(\pi))) \geq \rho(\Delta_G^*(\nu(\pi))$,
but $\wl^*(\rho(\nu(\pi))) \leq \wl^*(\pi)$ since $\rho(\nu(\pi)) \leq \pi$ from Corollary \ref{cor:id}.
Therefore, $\wl^*(\pi) = \rho(\Delta_G^*(\nu(\pi))$.
\end{proof}
This implies that for all $\pi\in\Pi^k$, we have $\wl_G^*(\pi)$ is a complete partition if and only if  $\Delta_G^*(\nu(\pi))$ is a complete partition,
and so, the algorithms are essentially equivalent.
Specifically, 
$\wl_G^k$ is a complete partition if and only if  $\Delta_G^{k+1}$ is a complete partition,
and moreover, $\rho(\Delta_G^{k+1}) = \wl_G^k$.

Next, analogously, we show the equivalence of the $k$-dim $\Deltav$ and $(k-1)$-dim $\wlv$.
First, we need a map from ordered partitions of $k$-tuples to ordered partitions of $(k+1)$-tuples.
\begin{definition}\label{def:nuv}
Define $\nuv :\Piv^k \rightarrow \Piv^{k+1}$ where for all $\piv\in \Piv^k$ and $u,v \in V^{k+1}$, 
we have $u \leqq_{\nuv(\piv)} v$ if
$\rho(u) \lneqq_\piv \rho(v)$ or $\rho(u) \equiv_\piv \rho(v)$ and $\wlv_\piv(\rho(u),u_{k+1}) \leq_{lex} \wlv_\piv(\rho(v),v_{k+1})$.
\end{definition}
The next lemma is used to establish that $k$-dim $\wlv$ implies $(k+1)$-dim $\Deltav$.
\begin{lemma}\label{lem:nuv}
Let $G,H\in\Graphs$ and let $k>1$.
Let $\piv,\tauv\in\Piv^k$ where $(G,\piv)\equiv_{\wlv^*} (H,\tauv)$.
Let $u,v\in V^{k+1}$. 
If $[u]_{\nuv(\piv)} = [v]_{\nuv(\tauv)}$ and $(G,\piv,\rho(u))\equiv_\wlv(H,\tauv,\rho(v))$, then $(G,\nuv(\piv),u)\equiv_\Deltav(H,\nuv(\tauv),v)$.
\end{lemma}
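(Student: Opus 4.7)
To establish $(G,\nuv(\piv),u)\equiv_\Deltav(H,\nuv(\tauv),v)$ the plan is to verify, in turn, the four components of $\Deltav$-equivalence: equality of class index, $\Symv$-signature, $\CPv$-matrix, and $\Deltav$-signature. The index equality is exactly the hypothesis $[u]_{\nuv(\piv)}=[v]_{\nuv(\tauv)}$. For $\CPv_G(u)=\CPv_H(v)$ the assumption $(G,\piv,\rho(u))\equiv_\wlv(H,\tauv,\rho(v))$ already forces $\CPv_G(\rho(u))=\CPv_H(\rho(v))$, handling all pairs among the first $k$ coordinates; the remaining pairs between coordinate $k+1$ and some $j\le k$ are recovered by examining the $\CPv$-matrices of $\phi_j(\rho(u),u_{k+1})$ and $\phi_j(\rho(v),v_{k+1})$. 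These matrices agree because $[\phi_j(\rho(u),u_{k+1})]_\piv=[\phi_j(\rho(v),v_{k+1})]_\tauv$ for each $j$ (a componentwise consequence of $\wlv_\piv(\rho(u),u_{k+1})=\wlv_\tauv(\rho(v),v_{k+1})$, which is packaged in the hypothesis $[u]_{\nuv(\piv)}=[v]_{\nuv(\tauv)}$) together with $\CPv$-stability of $\piv,\tauv$ (automatic from $\wlv$-stability). The $\Symv$-signature match is handled analogously using $\Symv$-stability.

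The heart of the proof is the $\Deltav$-signature equality $\Deltav_{\nuv(\piv)}(u)=\Deltav_{\nuv(\tauv)}(v)$. Since $\wlv_\piv(\rho(u))=\wlv_\tauv(\rho(v))$, I select a bijection $\psi\colon V\to V$ with $\wlv_\piv(\rho(u),w)=\wlv_\tauv(\rho(v),\psi(w))$ for every $w$; the extra hypothesis $\wlv_\piv(\rho(u),u_{k+1})=\wlv_\tauv(\rho(v),v_{k+1})$ permits the further choice $\psi(u_{k+1})=v_{k+1}$. For $i=k+1$ the $\nuv(\piv)$-class of $\phi_{k+1}(u,w)=(\rho(u),w)$ depends only on $[\rho(u)]_\piv$ and $\wlv_\piv(\rho(u),w)$, so the pairing $w\leftrightarrow\psi(w)$ gives a class-preserving bijection between $\Delta^{k+1}(u)$ and $\Delta^{k+1}(v)$ and the $\Deltav^{k+1}$-counts match. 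For $1\le i\le k$ I use the same pairing: the $\rho$-projections agree in class by $\psi$, while for the $\wlv$-component of the class of $\phi_i(u,w)$ the coordinatewise expansion together with the commutation $\phi_m\circ\phi_i=\phi_i\circ\phi_m$ for $m\ne i$ (where the assumption $k>1$ is essential, as it supplies such distinct indices) reduces the required entrywise match to (i) the equality $[\phi_i(\rho(u),u_{k+1})]_\piv=[\phi_i(\rho(v),v_{k+1})]_\tauv$ for the $m=i$ entry, which is immediate from the hypothesis, and (ii) the equalities $[\phi_i(y_m,w)]_\piv=[\phi_i(y'_m,\psi(w))]_\tauv$ for each $m\ne i$, where $y_m:=\phi_m(\rho(u),u_{k+1})$ and $y'_m:=\phi_m(\rho(v),v_{k+1})$.

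The last reduction is the main obstacle: $\wlv$-stability of $\piv$ combined with $[y_m]_\piv=[y'_m]_\tauv$ only yields \emph{some} bijection witnessing $(G,\piv,y_m)\equiv_\wlv(H,\tauv,y'_m)$, and this need not a priori coincide with $\psi$. The plan for closing the gap is to use the compatibility $\psi(u_{k+1})=v_{k+1}$ to promote $\psi$ itself to a $\wlv$-witness for the shifted pair $(y_m,y'_m)$. Concretely, I re-expand $[\phi_i(y_m,w)]_\piv$ as $[\phi_m(\phi_i(\rho(u),w),u_{k+1})]_\piv$ via the same commutation rule, apply $\wlv$-stability to the $k$-tuples $\phi_i(\rho(u),w)$ and $\phi_i(\rho(v),\psi(w))$ which lie in corresponding classes under $\psi$, and use the distinguished matching $u_{k+1}\leftrightarrow v_{k+1}$ to identify the $\piv$-class of the $\phi_m$-extension with its $\tauv$-counterpart, which after re-commuting $\phi_m,\phi_i$ yields the claimed equality. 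Once this coherence step is established, collecting the four components produces $(G,\nuv(\piv),u)\equiv_\Deltav(H,\nuv(\tauv),v)$ as required.
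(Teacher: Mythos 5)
Your proof correctly handles the index, $\CPv$, and $\Symv$ components, and your reduction for $i=k+1$ via the bijection $\psi$ matches the paper. The genuine gap is exactly where you say "the main obstacle" lies, and the proposed fix does not close it. For $m\ne i$ you need $[\phi_i(y_m,w)]_\piv=[\phi_i(y'_m,\psi(w))]_\tauv$ with $y_m=\phi_m(\rho(u),u_{k+1})$. The $\wlv$-stability of $\piv,\tauv$ only guarantees that \emph{some} bijection $\chi$ (depending on the pair $(y_m,y'_m)$) witnesses $(G,\piv,y_m)\equiv_\wlv(H,\tauv,y'_m)$; there is no reason a priori that $\chi$ coincides with $\psi$, nor that $\chi$ can be chosen to send $u_{k+1}$ to $v_{k+1}$. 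Your "re-expansion" via $\phi_m\circ\phi_i=\phi_i\circ\phi_m$ then hits circularity: after commuting, demonstrating that extending $\phi_i(\rho(u),w)$ and $\phi_i(\rho(v),\psi(w))$ in position $m$ by $u_{k+1}$ and $v_{k+1}$ lands in the same cell requires exactly the $\wlv$-signature equality that you are trying to establish. The distinguished matching $u_{k+1}\leftrightarrow v_{k+1}$ was chosen for the pair $(\rho(u),\rho(v))$; it is not automatically a $\wlv$-witness for the shifted pair, and no coherence property of $\wlv$ supplies one.

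The paper avoids this problem entirely by a different route. Having established the $\Delta^{k+1}$-count equality directly with $\psi$ (your tractable case), it then invokes the $\Sym_{k+1}$-invariance of the pair $(\nuv(\piv),\nuv(\tauv))$: for every $\sigma\in\Sym_{k+1}$ one has $[\sigma(u)]_{\nuv(\piv)}=[\sigma(v)]_{\nuv(\tauv)}$, and since $(G,\piv)\equiv_{\wlv^*}(H,\tauv)$ this automatically supplies $(G,\piv,\rho(\sigma(u)))\equiv_\wlv(H,\tauv,\rho(\sigma(v)))$, so the $i=k+1$ argument applies to the permuted tuples. Taking $\sigma$ to be the transposition $(i,k+1)$ converts the $\Delta^{k+1}$-count for $(\sigma(u),\sigma(v))$ into the $\Delta^i$-count for $(u,v)$. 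This transfer by symmetry is precisely what replaces the coherence argument you needed and could not obtain. Your proof would be complete if you adopted that step in place of the attempted promotion of $\psi$.
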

\begin{proof}
Let $\piv' = \nuv(\piv)$ and $\tauv' = \nuv(\tauv)$. Let $u,v\in V^k$ and let $u'=\rho(u)$ and $v'=\rho(v)$.
Assume $[u]_{\piv'} = [v]_{\tauv'}$ and $(G,\piv,u')\equiv_\wlv(H,\tauv,v')$.
First, from Lemma \ref{lem:nuv props}, we have $(G,\piv')\equiv_{\CPv^*} (H,\tauv')$ and $\piv'\equiv_{\Symv^*}\tauv'$,
and thus, $(G,\piv',u)\equiv_\CPv(H,\tauv',v)$ and $(\piv',u)\equiv_\Symv(\tauv',v)$.
Since $(G,\piv,u')\equiv_\wlv (H,\tauv,v')$, we have $[u']_{\piv'} = [v']_{\tauv'}$ and 
there exists a bijection $\psi:V\rightarrow V$ such that 
$\wlv_\piv(u',w) = \wlv_\tauv(v',\psi(w))$ for all $w\in V$.

Next, we show that $|\Deltav_G^{k+1}(u)\cap \piv'_t|=|\Deltav_H^{k+1}(v)\cap \tauv'_t|$ for all $1\leq t \leq m $ where $m=|\piv'|=|\tauv'|$.
Let $U_t=\{w\in V: \phi_k(u,w)\in \piv'_t\}$ and $W_t= \{w\in V: \phi_k(v,w)\in \tauv'_t\}$.
We will show that $\psi(U_t)=W_t$ implying that $|\Deltav_G^{k+1}(u)\cap \piv'_t|=|U_t|=|W_t|=|\Deltav_H^{k+1}(v)\cap \tauv'_t|$ as required.
Now, for all $w\in U_t$, we have $[u']_\piv = [v']_\tauv$ and $\wlv_\piv(u',w) = \wlv_\tauv(v',\psi(w))$,
which by Lemma \ref{lem:nuv props} implies that $[\phi_{k+1}(u,w)]_{\piv'} = [\phi_{k+1}(v,\psi(w))]_{\tauv'}$, and thus, $\psi(w)\in W_t$.
Thus, $\psi(U_t)\subseteq W_t$, and analogously by symmetry, $\psi(W_t)\subseteq U_t$ as required.

Lastly, since $\piv\equiv_{\Symv^*} \tauv$ and $\piv'\equiv_{\Symv^*} \tauv'$, for all $\sigma\in\Sym_{k+1}$, we have $[\sigma(u)]_{\piv'} = [\sigma(v)]_{\tauv'}$ and 
$(G,\piv',\rho(\sigma(u)))\equiv_\wlv(H,\tauv',\rho(\sigma(v)))$.
Therefore, $|\Delta^i(u)\cap \piv'_t|=|\Delta^i(v)\cap \tauv'_t|$ for all $1\leq i \leq k$ and all $1\leq t\leq m$ as required.
\end{proof}
So, we arrive at the next lemma shows that $k$-dim $\wlv$ implies $(k+1)$-dim $\Deltav$.
\begin{lemma}\label{lem:xiv to Deltav}
Let $G,H\in\Graphs$. Let $k>1$ and let $\piv,\tauv\in\Piv^k$ where $\piv\approx\tauv$.
Then,
$(G,\piv) \equiv_{\wlv^*} (H,\tauv)$ implies $(G,\nuv(\piv)) \equiv_{\Deltav^*} (H,\nuv(\tauv))$, and
$(G,\wlv_G^*(\piv)) \equiv_\wlv (H,\wlv_H^*(\tauv))$ implies
$(G,\Deltav_G^*(\nuv(\piv)) \equiv_\Deltav (H,\Deltav_H^*(\nuv(\tauv)))$ and
$(\nuv(\wlv_G^*(\piv)),\nuv(\wlv_H^*(\tauv)))  \leq (\Deltav_G^*(\nuv(\piv)),\Deltav_H^*(\nuv(\tauv)))$.
\end{lemma}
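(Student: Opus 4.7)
The plan is to mirror the structure of Lemma~\ref{lem:Deltav to xiv}, which established the corresponding statement in the opposite direction via $\rhov$. The key technical input will be Lemma~\ref{lem:nuv}, which plays here the role that Lemma~\ref{lem:rhov} played there, together with the two ``propagation'' lemmas (Lemma~\ref{lem:equiv for all r} and Corollary~\ref{cor:equiv for all r}) that let us lift pointwise $\Deltav$-equivalences to the iterated closures.

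First I would prove the first implication. Assume $(G,\piv)\equiv_{\wlv^*}(H,\tauv)$, so in particular $\wlv_G(\piv)=\piv$, $\wlv_H(\tauv)=\tauv$, and there is a tuple bijection $\gamma:V^k\to V^k$ with $(G,\piv,u')\equiv_\wlv(H,\tauv,\gamma(u'))$ for every $u'\in V^k$. Applied within a single graph, Lemma~\ref{lem:nuv} immediately yields $\Deltav_G(\nuv(\piv))=\nuv(\piv)$ and $\Deltav_H(\nuv(\tauv))=\nuv(\tauv)$, since $[u]_{\nuv(\piv)}=[v]_{\nuv(\piv)}$ forces both $\rho(u)\equiv_\piv\rho(v)$ (from the definition of $\nuv$) and $(G,\piv,\rho(u))\equiv_\wlv(G,\piv,\rho(v))$ (from $\wlv_G$-stability), and likewise for $\tauv$. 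Next I would construct a tuple bijection $\gamma':V^{k+1}\to V^{k+1}$ witnessing $(G,\nuv(\piv))\equiv_\Deltav(H,\nuv(\tauv))$. For each $u'\in V^k$, the equivalence $(G,\piv,u')\equiv_\wlv(H,\tauv,\gamma(u'))$ supplies a bijection $\psi_{u'}:V\to V$ with $\wlv_\piv(u',w)=\wlv_\tauv(\gamma(u'),\psi_{u'}(w))$ for all $w\in V$. Set $\gamma'(u)=\phi_{k+1}(\gamma(\rho(u)),\psi_{\rho(u)}(u_{k+1}))$; this is a bijection on $V^{k+1}$ because the fibers of $\rho$ partition $V^{k+1}$ and each $\psi_{u'}$ is a bijection on $V$. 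Applying Lemma~\ref{lem:nuv} to the pair $(u,\gamma'(u))$ then gives $(G,\nuv(\piv),u)\equiv_\Deltav(H,\nuv(\tauv),\gamma'(u))$ for every $u$, establishing $(G,\nuv(\piv))\equiv_\Deltav(H,\nuv(\tauv))$ and hence $(G,\nuv(\piv))\equiv_{\Deltav^*}(H,\nuv(\tauv))$.

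For the second implication, set $\piv''=\nuv(\wlv_G^*(\piv))$ and $\tauv''=\nuv(\wlv_H^*(\tauv))$. Assuming $(G,\wlv_G^*(\piv))\equiv_\wlv(H,\wlv_H^*(\tauv))$, Corollary~\ref{cor:equiv for all r} gives $(\wlv_G^*(\piv),\wlv_H^*(\tauv))\leq(\piv,\tauv)$, which by the definition of $\nuv$ implies $(\piv'',\tauv'')\leq(\nuv(\piv),\nuv(\tauv))$. The first implication, applied at the $\wlv$-stable partitions $\wlv_G^*(\piv)$ and $\wlv_H^*(\tauv)$, yields $(G,\piv'')\equiv_{\Deltav^*}(H,\tauv'')$. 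Lemma~\ref{lem:equiv for all r} then propagates this equivalence up the $\Deltav$-refinement chain starting at $(\nuv(\piv),\nuv(\tauv))$, delivering both $(G,\Deltav_G^*(\nuv(\piv)))\equiv_\Deltav(H,\Deltav_H^*(\nuv(\tauv)))$ and the comparison $(\piv'',\tauv'')\leq(\Deltav_G^*(\nuv(\piv)),\Deltav_H^*(\nuv(\tauv)))$.

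The delicate step I expect to be the main obstacle is verifying that the per-tuple bijections $\psi_{u'}$ glue coherently into the bijection $\gamma'$ in a way that is compatible with all three components lexicographically bundled into $\Deltav$: the $\CPv$ and $\Symv$ pieces, as well as the $\Deltav'$ piece. The $\CPv$ and $\Symv$ parts follow from the analogous statements in Lemma~\ref{lem:nuv props} for $\nuv$, but one must be careful that $\nuv$ treats the $(k+1)$-th coordinate asymmetrically while $\Deltav$ requires $\Sym_{k+1}$-invariance; this asymmetry is absorbed precisely because $\wlv_\piv(u',\cdot)$ is symmetric in $V$ and because $\piv''$ is already $\Symv$-stable. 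Once that consistency is checked, the remaining verifications are routine and parallel those in Lemma~\ref{lem:Deltav to xiv}.
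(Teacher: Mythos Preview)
Your proposal is correct and follows essentially the same route as the paper's proof: construct the $(k+1)$-level bijection $\gamma'$ from $\gamma$ and the per-tuple bijections $\psi_{u'}$, invoke Lemma~\ref{lem:nuv} pointwise, then use Corollary~\ref{cor:equiv for all r} and Lemma~\ref{lem:equiv for all r} for the second implication. Two small remarks: the ``delicate step'' you flag is not actually an obstacle, since Lemma~\ref{lem:nuv} already outputs full $\Deltav$-equivalence (with the $\CPv$ and $\Symv$ components included), and the monotonicity $(\piv'',\tauv'')\leq(\nuv(\piv),\nuv(\tauv))$ is not quite ``by definition of $\nuv$'' but rather by Lemma~\ref{lem:nuv props}.
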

\begin{proof}
Let $\piv' = \nuv(\piv)$ and $\tauv' = \nuv(\tauv)$.
Assume, $(G,\piv)\equiv_{\wlv^*}(H,\tauv)$.
First, from Lemma \ref{lem:nu2}, we have $\Deltav_G(\piv')=\piv'$ and $\Deltav_H(\tauv')=\tauv'$.
We need to show $(G,\piv')\equiv_{\Deltav}(H,\tauv')$.
Since $(G,\piv)\equiv_{\wlv^*}(H,\tauv)$, there exists a tuple bijection $\gamma':V^k\rightarrow V^k$ such that
$(G,\piv,u')\equiv_\wlv(H,\tauv,\gamma'(u'))$ for all $u'\in V^k$.
First, since $(G,\piv,u')\equiv_\wlv (H,\piv,v')$, there must exist a bijection $\psi:V\rightarrow V$ such that 
$\wlv_\piv(u',w) = \wlv_\tauv(\gamma'(u'),\psi(w))$ for all $w\in V$.
Now, we define the bijection $\gamma:V^{k+1}\rightarrow V^{k+1}$ where for all $u\in V^{k+1}$, we have $\gamma(u) = (\gamma(u')_1,...,\gamma(u')_k,\psi(u_{k+1}))$.
Let $u\in V^{k+1}$ and let $u'=\rho(u)$.
Then, we have $(G,\piv,u')\equiv_\wlv(H,\tauv,\gamma(u'))$ and $\wlv_\piv(u',u_{k+1}) = \wlv_\tauv(\gamma'(u'),\psi(u_{k+1}))$ implying $(G,\piv',u)\equiv_\Deltav(H,\tauv',\gamma(u))$
by Lemma \ref{lem:nuv},
and thus, $(G,\piv')\equiv_{\Deltav}(H,\tauv')$.

Secondly, assume $(G,\wlv_G^*(\piv))\equiv_\wlv(H,\wlv_H^*(\tauv))$.
Let $\piv'' = \nuv(\wlv_G^*(\piv))$ and $\tauv'' = \nuv(\wlv_H^*(\tauv))$.
From above, $(G,\piv'')\equiv_{\Deltav^*}(H,\tauv'')$.
By Corollary \ref{cor:equiv for all r}, $(\wlv_G^*(\piv),\wlv_H^*(\tauv))\leq(\piv,\tauv)$,
and then, from Lemma \ref{lem:nuv props}, $(\piv'',\tauv'')\leq(\piv',\tauv')$.
Therefore, by Lemma \ref{lem:equiv for all r}, $(\piv'',\tauv'')  \leq (\Deltav_G^*(\piv'),\Deltav_H^*(\tauv'))$ and
$(G,\Deltav_G^*(\piv') \equiv_\Deltav (H,\Deltav_H^*(\tauv'))$ as required.
\end{proof}
Combining Lemmas \ref{lem:nuv} and \ref{lem:xiv to Deltav}, we arrive at the following result stating the essential equivalence of the $k$-dim $\wlv$ and $(k+1)$-dim $\Deltav$ preorders.
\begin{corollary} \label{cor:xiv=Deltav}
Let $G,H\in\Graphs$. Let $k>1$ and let $\piv,\tauv\in\Piv^k$ where $\piv\approx\tauv$ and $\nuv(\piv) \approx \nuv(\tauv)$.
We have $(G,\wlv_G^*(\piv)) \equiv_\wlv (H,\wlv_H^*(\tauv))$ if and only if $(G,\Deltav_G^*(\nuv(\piv))) \equiv_\Deltav (H,\Deltav_H^*(\nuv(\tauv)))$.
Moreover, if $(G,\wlv_G^*(\piv)) \equiv_\wlv (H,\wlv_H^*(\tauv))$, then
$(\wlv_G^*(\piv),\wlv_H^*(\tauv)) \simeq (\rhov(\Deltav_G^*(\nuv(\piv))),\rhov(\Deltav_H^*(\nuv(\tauv))))$.
\end{corollary}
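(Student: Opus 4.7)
The plan is to mirror the unordered argument used in Corollary \ref{cor:xi=Delta}, chaining Lemma \ref{lem:xiv to Deltav} (giving $k$-dim $\wlv \Rightarrow (k+1)$-dim $\Deltav$) with Lemma \ref{lem:Deltav to xiv} (giving $(k+1)$-dim $\Deltav \Rightarrow k$-dim $\wlv$ via $\rhov$), using an ordered analogue of Corollary \ref{cor:id} to control the compositions $\rhov \circ \nuv$ and $\nuv \circ \wlv_G^*$.

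First I would handle the forward implication. Assuming $(G,\wlv_G^*(\piv)) \equiv_\wlv (H,\wlv_H^*(\tauv))$, Lemma \ref{lem:xiv to Deltav} directly yields $(G,\Deltav_G^*(\nuv(\piv))) \equiv_\Deltav (H,\Deltav_H^*(\nuv(\tauv)))$ together with the comparison $(\nuv(\wlv_G^*(\piv)),\nuv(\wlv_H^*(\tauv))) \leq (\Deltav_G^*(\nuv(\piv)),\Deltav_H^*(\nuv(\tauv)))$. Applying $\rhov$ to both sides (monotone by definition) and invoking the ordered identity $\rhov(\nuv(\sigmav)) \simeq \sigmav$ for every $\wlv$-stable $\sigmav$ (the ordered analogue of Corollary \ref{cor:id}, essentially because $\nuv$ records the full $\wlv$-refinement information that $\rhov$ then collapses back), one gets $(\wlv_G^*(\piv),\wlv_H^*(\tauv)) \leq (\rhov(\Deltav_G^*(\nuv(\piv))),\rhov(\Deltav_H^*(\nuv(\tauv))))$.

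For the reverse implication, assume $(G,\Deltav_G^*(\nuv(\piv))) \equiv_\Deltav (H,\Deltav_H^*(\nuv(\tauv)))$ and apply Lemma \ref{lem:Deltav to xiv} at level $k+1$ to the pair $\nuv(\piv), \nuv(\tauv)$. This produces $(G,\wlv_G^*(\rhov(\nuv(\piv)))) \equiv_\wlv (H,\wlv_H^*(\rhov(\nuv(\tauv))))$ with $(\rhov(\Deltav_G^*(\nuv(\piv))),\rhov(\Deltav_H^*(\nuv(\tauv)))) \leq (\wlv_G^*(\rhov(\nuv(\piv))),\wlv_H^*(\rhov(\nuv(\tauv))))$. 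Using the one-sided identity $\rhov(\nuv(\piv)) \leq \piv$ (the other half of the ordered analogue of Corollary \ref{cor:id}), monotonicity of $\wlv^*_G$ gives $\wlv_G^*(\rhov(\nuv(\piv))) \leq \wlv_G^*(\piv)$, and symmetrically for $H$. Combining with Lemma \ref{lem:equiv props}(2) transfers the $\wlv$-equivalence up to $(G,\wlv_G^*(\piv)) \equiv_\wlv (H,\wlv_H^*(\tauv))$, and simultaneously yields $(\rhov(\Deltav_G^*(\nuv(\piv))),\rhov(\Deltav_H^*(\nuv(\tauv)))) \leq (\wlv_G^*(\piv),\wlv_H^*(\tauv))$. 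The two comparison chains together give the claimed $\simeq$.

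The main obstacle I anticipate is the ordered version of Corollary \ref{cor:id}: one must verify both $\rhov(\nuv(\piv)) \leq \piv$ in general and $\rhov(\nuv(\sigmav)) \simeq \sigmav$ whenever $\sigmav$ is $\wlv_G$-stable. In the unordered setting this is clean because $\nu$ only duplicates the final coordinate; in the ordered setting $\nuv$ is defined via a lexicographic comparison that incorporates the full vector $\wlv_\piv(\rho(u),u_{k+1})$, so one has to check that a $\wlv_G$-stable $\sigmav$ already encodes this data, whence $\rhov \circ \nuv$ does not lose information. Once this lemma is in hand, the chaining above is routine.
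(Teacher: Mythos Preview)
Your proposal is correct and follows essentially the same route as the paper: chain Lemma~\ref{lem:xiv to Deltav} with Lemma~\ref{lem:Deltav to xiv} and use the ordered identity $\rhov\circ\nuv$ to close the loop; the ``ordered analogue of Corollary~\ref{cor:id}'' you anticipate needing is exactly Lemma~\ref{lem:idv} (which requires only $\Deltav$-stability, weaker than the $\wlv$-stability you assume). One small slip: in the reverse direction the transfer from $(G,\wlv_G^*(\rhov(\nuv(\piv))))\equiv_\wlv(H,\wlv_H^*(\rhov(\nuv(\tauv))))$ up to $(G,\wlv_G^*(\piv))\equiv_\wlv(H,\wlv_H^*(\tauv))$ needs Lemma~\ref{lem:equiv for all r} rather than Lemma~\ref{lem:equiv props}(2), and you must first check $\rhov(\nuv(\piv))\approx\rhov(\nuv(\tauv))$ (the paper gets this from Corollary~\ref{cor:equiv for all r} and Lemma~\ref{lem:rhov props}) before converting the individual $\preceq$ relations from Lemma~\ref{lem:idv} into the pair comparison $(\rhov(\nuv(\piv)),\rhov(\nuv(\tauv)))\leq(\piv,\tauv)$.
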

\begin{proof}
Let $\piv'=\nuv(\piv)$ and $\tauv'=\nuv(\tauv)$.
Assume $(G,\wlv_G^*(\piv)) \equiv_\wlv (H,\wlv_H^*(\tauv))$.
From Lemma \ref{lem:xiv to Deltav}, we have $(G,\Deltav_G^*(\piv')) \equiv_\Deltav (H,\Deltav_H^*(\tauv'))$.
Also, from Lemma \ref{lem:xiv to Deltav}, we have $(\nuv(\wlv_G^*(\piv)),\nuv(\wlv_H^*(\tauv)))  \leq (\Deltav_G^*(\piv'),\Deltav_H^*(\tauv'))$,
which implies $(\wlv_G^*(\piv),\wlv_H^*(\tauv))  \leq (\rhov(\Deltav_G^*(\piv')),\rhov(\Deltav_H^*(\tauv')))$ 
since $\rhov(\nuv(\wlv_G^*(\piv)))=\wlv_G^*(\piv)$ and 
$\rhov(\nuv(\wlv_H^*(\tauv)))=\wlv_H^*(\tauv)$ from Lemma \ref{lem:idv} since $\wlv_G^*(\piv)$ and $\wlv_G^*(\piv)$ are $\Deltav$-stable. % and since $(\rho(\Deltav_H^*(\tauv)),\rho(\Deltav_H^*(\tauv)))$

Assume $(G,\Deltav_G^*(\piv')) \equiv_\Deltav (H,\Deltav_H^*(\tauv'))$.
Let $\piv''=\rhov(\piv')$ and $\tauv''=\rhov(\tauv')$. 
Then, since $\piv'\approx\tauv'$, we have $(G,\piv') \equiv_\Deltav (H,\tauv')$ by Corollary \ref{cor:equiv for all r}, which implies that $\piv''\approx\tauv''$.
Then, since $\piv\approx\tauv$ and $\piv''\approx\tauv''$, and also, $\piv''\preceq\piv$ and $\tauv''\preceq\tauv$ from Lemma \ref{lem:idv}, we have $(\piv'',\tauv'') \leq (\piv,\tauv)$.
Now, from Lemma \ref{lem:Deltav to xiv}, we have $(G,\wlv_G^*(\piv''))\equiv_\wlv(H,\wlv_H^*(\tauv''))$ and
$(\rhov(\Deltav_G^*(\piv')),\rhov(\Deltav_H^*(\tauv'))) \leq (\wlv_G^*(\piv''),\wlv_H^*(\tauv''))$,
and since $(\wlv_G^*(\piv''),\wlv_H^*(\tauv''))\leq(\piv'',\tauv'') \leq (\piv,\tauv)$ by Corollary \ref{cor:equiv for all r}, 
$(G,\wlv_G^*(\piv))\equiv_\wlv(H,\wlv_H^*(\tauv))$ and 
$(\wlv_G^*(\piv''),\wlv_H^*(\tauv'')) \leq (\wlv_G^*(\piv),\wlv_H^*(\tauv))$ by Lemma \ref{lem:equiv for all r} implying 
$(\rhov(\Deltav_G^*(\piv')),\rhov(\Deltav_H^*(\tauv'))) \leq (\wlv_G^*(\piv),\wlv_H^*(\tauv))$,
as required.
\end{proof}
Specifically, we have $(G,\wlv_G^k) \equiv_\wlv (H,\wlv_H^k)$ if and only if 
$(G,\Deltav_G^{k+1}) \equiv_\Deltav (H,\Deltav_H^{k+1})$.
Moreover, $(G,\wlv_G^k) \equiv_\wlv (H,\wlv_H^k)$ or
$(G,\Deltav_G^{k+1}) \equiv_\Deltav (H,\Deltav_H^{k+1})$ implies
$(\wlv_G^k,\wlv_H^k) \simeq (\rhov(\Deltav_G^{k+1}),\rhov(\Deltav_H^{k+1}))$.

\section{Sherali-Adams relaxations}
\label{sec:SA}
In this section, we give explicit descriptions of the Sherali-Adams relaxations for different graph automorphism and isomorphism polytopes.

\subsection{Birkhoff Polytope}
In this section, we give an explicit description of the Sherali-Adams relaxations of the Birkhoff polytope.
The Birkhoff polytope (for a given positive integer $n$) is the polytope whose integer points are all $n \times n$ permutation matrices:
$$B = \{X \in [0,1]^{n\times n}: Xe=X^Te=e\}.$$
The Birkhoff polytope is precisely $\T_G$ when $G$ is the complete graph $K_n$ or a stable graph.
It is well-known that the Birkhoff polytope is integral (its extreme points are integer),
so the Sherali-Adams relaxations of the Birkhoff polytope are not interesting in themselves. 
However, we use the Birkhoff polytope to illustrate the Sherali-Adams approach explicitly, 
as this serves as the foundation for Sherali-Adams relaxations of polytopes we are
interested in.  We also prove some minor results that are useful later in the paper.  We begin 
with a more explicit formulation of the Birkhoff polytope.  This is given by the inequalities below:
\begin{align*}
\sum_{w\in V} X_{uw} - 1 &= 0 \ \ \  \forall u\in V, \\
\sum_{w\in V} X_{wv} - 1 &= 0 \ \ \  \forall v\in V, \\
0 \le X_{uv}& \le 1 \ \ \  \forall u,v \in V.
\end{align*}

The first step in computing the $k^{th}$ Sherali-Adams relaxation is to multiply each equation above with monomials $\prod_{(r,s)\in I} X_{rs}$ for all $I\subseteq V^2=\{(u,v):u,v\in V\}$ where $|I|\leq k-1$.  
Moreover, we must introduce the inequalities $\prod_{(u,v)\in I} X_{uv}\prod_{(u,v)\in J\setminus I} (1-X_{uv}) \ge 0$ for all $I\subseteq J \subseteq V^2$ where $|J|\leq k$.  We do so as follows:
\begin{align*}
\prod_{(r,s)\in I} X_{rs} \left(\sum_{w\in V} X_{uw}\right)  - \prod_{(r,s)\in I} X_{rs} &= 0
\ \ \ \forall I\subseteq V^2, |I|\le k-1, \forall u \in V, \\
\prod_{(r,s)\in I} X_{rs} \left(\sum_{w\in V} X_{wv}\right) - \prod_{(r,s)\in I} X_{rs} &= 0 
\ \ \  \forall I\subseteq V^2, |I|\le k-1,\forall v\in V, \\
\prod_{(u,v)\in I} X_{uv}\prod_{(u,v)\in J\setminus I} (1-X_{uv}) &\ge 0
\ \ \ \forall I\subseteq J\subseteq V^2, |J|\le k.
\end{align*}

Next, we linearize the above equations to produce the $k^{th}$-degree Sherali Adams extended formulation relaxation,
$\hat{B}^k$.
Here, we have replaced $X_{uv}^2$ with $X_{uv}$ and replaced the monomial $\prod_{(u,v)\in I}X_{uv}$ where $I\subseteq V^2$ with the variable $Y_I$.
\begin{align}
\sum_{w\in V} Y_{I\cup\{(u,w)\}} - Y_I &= 0
\ \ \  \forall I\subseteq V^2, |I| \le k-1, \forall u\in V, \label{equali} \\
\sum_{w\in V} Y_{I\cup\{(w,v)\}} - Y_I &= 0
\ \ \  \forall I\subseteq V^2, |I| \le k-1, \forall v\in V, \label{equalj} \\
\sum_{I \subseteq K \subseteq J} (-1)^{|K\setminus I|} Y_{K} &\ge 0 
\ \ \ \forall I\subseteq J\subseteq V^2, |J|\le k,
\label{ineq}  \\
Y_\emptyset &= 1.
\end{align}

Recall that the integer points in $B$ are the set of $n \times n$ permutation matrices.
The integers points in $\hat{B}^k$ are in bijection with the integer points in $B$.
Indeed, a permutation matrix $P \in \{0,1\}^{n \times n}$ is in one-to-one correspondence
with the point $Y_I \in \hat{B}^k$ if
\[
Y_I = 1 \Leftrightarrow X_{uv} = 1 \ \ \forall \ (u,v) \in I, \ \ \ Y_I = 0 \ \ \mbox{otherwise}.
\]

Many of the inequalities in (\ref{ineq}) above are redundant.
In fact, the only inequalities we need to keep are $Y_I\ge 0$ for all $I\subseteq V^2$.
We show that the other inequality constraints $\sum_{I \subseteq K \subseteq J} (-1)^{|K\setminus I|} Y_{K} \ge 0$  for all 
$I\subseteq J\subseteq V^2, |J|\le k$
are implied by the constraints (\ref{equali}), (\ref{equalj}) and the condition $Y_I\ge 0$, for all $I\subseteq V^2$.
This is established by induction on $|J\setminus I|$. This is trivially true when $|J \setminus I|=0$,
so assume it is true for $|J \setminus I|\leq l$.
Let $(u,v) \in J\setminus I$, with $|J \setminus I| \leq l+1$.
Then,
\begin{align}
 & \ \ \ \ \sum_{I \subseteq K \subseteq J} (-1)^{|K\setminus I|} Y_{K} \notag \\
\label{redundent1}
 & = \sum_{I \subseteq K \subseteq (J\setminus\{(u,v)\})}
\left((-1)^{|K\setminus I|} Y_{K} - (-1)^{|K\setminus I|}  Y_{K\cup \{(u,v)\}}\right) \\
\label{redundent2}
&= \sum_{I \subseteq K \subseteq (J\setminus\{(u,v)\})} \left((-1)^{|K\setminus I|} Y_{K} - 
(-1)^{|K\setminus I|}\left(Y_{K} -
\sum_{w\in V\setminus \{v\}} Y_{K\cup\{(u,w)\}}\right)\right)\\
\label{redundent3}
&= \sum_{I \subseteq K \subseteq (J\setminus\{(u,v)\})} 
\left(\sum_{w\in V\setminus \{v\}} (-1)^{|K\setminus I|} Y_{K\cup\{(u,w)\}}\right)\\
&= \sum_{w\in V\setminus \{v\}}  \left(\sum_{I \subseteq K \subseteq (J\setminus\{(u,v)\})}
(-1)^{|K\setminus I|} Y_{K\cup\{(u,w)\})} \right)\\
\label{redundent4}
&= \sum_{w\in V\setminus \{v\}} \left( \sum_{(I\cup\{(u,w)\}) \subseteq K \subseteq (J\setminus\{(u,v)\})} 
(-1)^{|K\setminus (I\cup\{(u,w)\})|} Y_{K}  \right)\\
&\ge 0.
\end{align}
To progress from (\ref{redundent1}) to (\ref{redundent2}), we used equation
(\ref{equali}).  To show (\ref{redundent4}) is non-negative, we have that $\sum_{\bar{I} \subseteq K \subseteq \bar{J}} 
(-1)^{|K\setminus \bar{I}|} Y_{K}\geq 0$ where $\bar{I}=I\cup\{(u,w)\}$ and $\bar{J}=J\setminus\{(u,v)\}$ for all $w\in V\setminus \{v\}$ by
assumption since $|\bar{J}\setminus\bar{I}|\leq l$.

Thus, we arrive at the following description of $\hat{B}^k$.  
There are still redundant inequalities here, but for the purpose of this paper,
this description suffices.
\begin{align}
\sum_{w\in V} Y_{I\cup\{(u,w)\}} - Y_I &= 0
\ \ \  \forall I\subseteq V^2, |I| \le k-1, \forall u\in V,  \label{e2} \\
\sum_{w\in V} Y_{I\cup\{(w,v)\}} - Y_I &= 0
\ \ \  \forall I\subseteq V^2, |I| \le k-1, \forall v\in V, \label{e3} \\
Y_I &\ge 0 \ \ \ \forall I\subseteq V^2, |I|\le k, \\
Y_\emptyset &= 1.
\end{align}

The following sets of equations, which we will find useful later, are implied by
(\ref{e2}) and (\ref{e3}):
\begin{align}
\sum_{v_1,...,v_s\in V}Y_{\{(u_1,v_1),...,(u_s,v_s)\}} = 1 
\ \ \ \forall u_1,...,u_s\in V, \ \forall s\leq k, \label{eqn1:sums}\\
\sum_{u_1,...,u_s\in V}Y_{\{(u_1,v_1),...,(u_s,v_s)\}} = 1 
\ \ \ \forall v_1,...,v_s\in V, \ \forall s \leq k. \label{eqn2:sums}
\end{align}
This set of equations is equivalent to saying that any given sequence of vertices 
$u_1,...,u_s\in V$ must map onto to one and only one other sequence of vertices 
$v_1,...,v_s\in V$. We can prove this by induction on $s$. This is true for $s=1$. So, 
let us assume that it is true for $s$ and we will prove it is true for $s+1$.
\begin{align*}
\sum_{v_1,...,v_{s+1}\in V}Y_{\{(u_1,v_1),...,(u_{s+1},v_{s+1})\}} &= 
\sum_{v_1,...,v_s\in V}Y_{\{(u_1,v_1),...,(u_s,v_s)\}} = 1
 \ \ \ \forall u_1,...,u_{s+1}\in V,\\
\sum_{u_1,...,v_{s+1}\in V}Y_{\{(u_1,v_1),...,(u_{s+1},v_{s+1})\}} &= 
\sum_{u_1,...,u_s\in V}Y_{\{(u_1,v_1),...,(u_s,v_s)\}} = 1
 \ \ \ \forall v_1,...,v_{s+1}\in V.
\end{align*}
In the above equations, the first equality follows from equations 
(\ref{e2}) and (\ref{e3}) and the second follows by assumption.

Another useful set of equations, which are also implied by (\ref{e2}) and (\ref{e3}) are as follows:
\begin{align}
Y_{I \cup \{(u,v_1),(u,v_2)\}} &= 0 \ \ \ \forall u,v_1,v_2\in V, v_1\ne v_2, I\subseteq V^2, |I|\leq k-2, \label{eqn1:comb}\\
Y_{I \cup \{(u_1,v),(u_2,v)\}} &= 0 \ \ \ \forall u_1,u_2,v\in V, u_1\ne u_2, I\subseteq V^2, |I|\leq k-2. \label{eqn2:comb}
\end{align}
These equations say that no bijection from $V$ to $V$ can map $u$ onto $v_1$ and $u$ onto $v_2$, and similarly, no bijection can map $u_1$ onto $v$ and $u_2$ onto $v$.
We prove this as follows:
Let $u,v\in V$ and $I\subseteq V^2$ where $|I|\leq k-2$.
Then, from equation \ref{e2},  we have
$$\sum_{w\in V} Y_{I\cup\{(u,v)(u,w)\}} - Y_{I\cup\{(u,v)\}} = 
\sum_{w\in V\setminus\{v_1\}} Y_{I\cup\{(u,v)(u,w)\}} = 0.$$
This implies that $ Y_{I\cup\{(u,v)(u,w)\}} = 0$ for all $w\in V$ where $w \ne v$.
Similarly, from equation \ref{e3},  we have
$$\sum_{w\in V} Y_{I\cup\{(u,v)(w,v)\}} - Y_{I\cup\{(u,v)\}} = 
\sum_{w\in V\setminus\{u\}} Y_{I\cup\{(u,v)(w,v)\}} = 0.$$
This implies that $Y_{I\cup\{(u,v)(w,v)\}} = 0$ for all $w\in V$ where $w \ne u$ as required.

In order to relate Sherali-Adams relaxations with combinatorial algorithms, we find it more convenient to use a slightly different notation than above for formulating $\hat{B}^k$. 
We use the following $k$-tuple notation.
\begin{definition}
Given $k$-tuples $u,v\in V^k$, we define $\pair{u,v}=\{(u_1,v_1),...,(u_k,v_k)\}$.
\end{definition}
Using this new notation, we can replace $Y_I$ by $\Y{u,v}$ in the formulation of $\hat{B}^k$ for any $I \subseteq V^2$.
The reformulation of $\hat{B}^k$ with this notation is as follows:
\begin{align}
\sum_{w\in \Delta^i(v)} \Y{u,w} - Y_{\pair{u,v}\setminus\{(u_i,v_i)\}} &= 0
\ \ \  \forall u,v\in V^k, 1\leq i \leq k, \label{eqn2} \\
\sum_{w\in \Delta^i(u)} \Y{w,v} - Y_{\pair{u,v}\setminus\{(u_i,v_i)\}} &= 0
\ \ \  \forall u,v\in V^k, 1\leq i \leq k, \label{eqn3} \\
\Y{u,v} &\ge 0 \ \ \  \forall u,v \in V^k, \label{eqn4}  \\
Y_\emptyset &= 1. \label{eqn5}
\end{align}
There is a lot of redundancy in the above formulation of $\hat{B}^k$ since there are many ways of writing $I$ as $\pair{u,v}$ for some $u,v\in V^k$, but we will find the above description very useful.

We can also reformulate (\ref{eqn1:sums}) and (\ref{eqn2:sums}) using tuple notation:
\begin{align}
\sum_{u\in V^k}\Y{u,v} &= 1 \ \ \ \forall u\in V^k, \label{eqn1:tuple sum}\\
\sum_{v\in V^k}\Y{u,v} &= 1 \ \ \ \forall v\in V^k. \label{eqn2:tuple sum}
\end{align}
This set of equations enforces that any given $k$-tuple of vertices $u\in V^k$ must map onto to 
one and only one other $k$-tuple $v\in V^k$.

We can also reformulate (\ref{eqn1:comb}) and (\ref{eqn2:comb}) concisely using tuple notation as follows:
\begin{align}
\Y{u,v} &= 0 \ \ \ \forall u,v\in V^k, u \not\equiv_\CP v. \label{eqn:tuple comb}
\end{align}
Recall that $u \not\equiv_\CP v$ means that $u_i=u_j$ and $v_i\ne v_j$ for some $1\leq i,j\leq k$.

\subsection{Tinhofer Polytope}
In this section, we give an explicit description of the Sherali-Adams relaxations of the graph isomorphism polytope $\T_{G,H}$
for graphs $G$ and $H$.  As described in the introduction, the polyhedron $\T_{G,H}$ is defined explicitly as follows:
$$ \T_{G,H} = \left\{ X \in B: \sum_{w\in\delta_G(v)} X_{uw} - \sum_{w\in\delta_H(u)}X_{wv} = 0
\ \forall u,v \in V \right\}.$$
There is a nice and intuitive interpretation of the set of the constraints (in addition to $X\in B$) for the Tinhofer polytope.
Let $u,v\in V$ and $X\in B\cap\{0,1\}^{n\times n}$, and let $\psi: V\rightarrow V$ be the corresponding bijection to the permutation matrix $X$.
Then, the expression $\sum_{w\in\delta_G(v)} X_{uw}$ equals 1 if $\psi(u)\in\delta_H(v)$ and equals 0 otherwise.
Similarly, the expression $\sum_{w\in\delta_H(u)}X_{wv}$ equals 1 if $\psi^{-1}(v) \in \delta_G(u)$ and equals 0 otherwise.
Thus, $u$ is adjacent to $\psi^{-1}(v)$ in $G$ if and only if $\psi(u)$ is adjacent to $v$ in $H$, or in other words, edges must map onto edges and non-edges must map onto non-edges.

Recall that the last three sets of equations define the Birkhoff polytope, which we generated the Sherali-Adams relaxation for in the last section.   We therefore determine the Sherali-Adams relaxation of the first set of equations.
First, we multiply the equations with monomials $\prod_{(r,s)\in I} X_{rs}$ for all $I\subseteq V^2=\{(u,v):u,v\in V\}$ where $|I|\leq k-1$.  
\begin{align*}
\prod_{(r,s)\in I} X_{rs}
\left(\sum_{w\in\delta_H(v)}X_{uw}-\sum_{w\in\delta_G(u)}X_{wv}\right)&=0
\ \ \  \forall I\subseteq V^2, |I|\le k-1,\forall u,v\in V.
\end{align*}

Next, we linearize the above equations.
\begin{align}
\sum_{w\in\delta_H(v)} Y_{I\cup\{(u,w)\}} - \sum_{w\in\delta_G(u)}Y_{I\cup\{(w,v)\}} &= 0
\ \ \ \forall I\subseteq V^2, |I|\le k-1, \forall u,v\in V. \label{eqn1:delta non-tuple}
\end{align}

In order to relate the relaxation $\TX^k_{G,H}$ with the $k$-dim V-C algorithm, we find it more convenient to use $k$-tuple notation as in the previous section.
\begin{align}
\sum_{w\in\delta^i_H(v)} \Y{u,w} - \sum_{w\in\delta^i_G(u)}\Y{w,v} &= 0
\ \ \ \forall u,v\in V^k, 1\leq i \leq k. \label{eqn1:delta}
\end{align}

Then, combining the above equations with the equations for the $k$th Sherali-Adams relaxation of the Birkhoff polytope gives the $k^{th}$-degree Sherali Adams extended formulation relaxation of $\T_{G,H}$, which we denote by $\TX^k_{G,H}$.
$$\TX^k_{G,H} = \left\{ Y \in \BX^k: \sum_{w\in\delta^i_H(v)} \Y{u,w} - \sum_{w\in\delta^i_G(u)}\Y{w,v} = 0
\ \forall u,v\in V^k, 1\leq i \leq k\right\}.$$

Recall the integer points in $\T_{G,H}$ are in bijection with isomorphisms from $G$ to $H$.  
In an analogous manner, the integer points in $\TX^k_{G,H}$ are in bijection with isomorphisms from $G$ to $H$ as follows.
A point $Y\in \TX^k_{G,H}$ is integer if and only if there exists an isomorphism $\psi \in ISO(G,H)$ such that for all $u,v\subseteq V^k$, we have $\Y{u,v} = 1$ if $\psi(u_i)=v_i$ for all $1\leq i \leq k$ and $\Y{u,v}=0$ otherwise.

The $k$th Sherali-Adams relaxations of the Tinhofer polytopes $\T_G$ and $\T_{G,H}$ are then the polytopes $\T^k_G\subseteq B$ and $\T^k_{G,H}\subseteq B$ respectively defined as the projection of $\TX^k_G$ and $\TX^k_{G,H}$ onto $B$ respectively.
Lastly, if $\T^k_G=\{\I\}$, then $|AUT(G)|=1$ and if $\T^k_{G,H}= \emptyset$, then $ISO(G,H)=\emptyset$.

\subsection{$\Delta$-Polytope}

We define the semi-algebraic se $Q_{G,H}\subseteq B$ as the set of all $X\in B$ such that
\begin{align}
X_{u_1v_1}X_{u_2v_2} &= 0 \ \ \ \forall \{u_1,u_2\}\in E_G, \{v_1,v_2\}\not\in E_H, \label{eqnWL.1}\\
X_{u_1v_1}X_{u_2v_2} &= 0 \ \ \ \forall \{u_1,u_2\}\not\in E_G, \{v_1,v_2\}\in E_H. \label{eqnWL.2}
\end{align}

Note that $Q_{G,H}\cap\{0,1\}^{n\times n} = \T_{G,H}\cap\{0,1\}^{n\times n}$, that is, the set of permutation matrices in bijection with $ISO(G,H)$.   This is because equations (\ref{eqnWL.1}) and (\ref{eqnWL.2}) enforce that edges must map onto edges and non-edges must map onto non-edges.

We now describe the Sherali-Adams relaxations of $Q_{G,H}$ the first two sets of equations.
The first step in computing the $k$th Sherali-Adams relaxation is introducing:
\begin{align*}
\prod_{(r,s)\in I} X_{rs}
\left(X_{u_1v_1}X_{u_2v_2}\right) &= 0 \ \ \ \forall I\subseteq V^2, |I|\le k-1,  \{u_1,u_2\}\in E_G, \{v_1,v_2\}\not\in E_H, \\
\prod_{(r,s)\in I} X_{rs}
\left(X_{u_1v_1}X_{u_2v_2}\right) &= 0 \ \ \ \forall I\subseteq V^2, |I|\le k-1, \{u_1,u_2\}\not\in E_G, \{v_1,v_2\}\in E_H.
\end{align*}
Next, we linearize the above equations.
\begin{align}
Y_{I\cup\{(u_1,v_1),(u_2,v_2)\}}&= 0 \ \ \ \forall I\subseteq V^2, |I|\le k-1,  \{u_1,u_2\}\in E_G, \{v_1,v_2\}\not\in E_H, \label{eqn1:Delta}\\
Y_{I\cup\{(u_1,v_1),(u_2,v_2)\}}&= 0 \ \ \ \forall I\subseteq V^2, |I|\le k-1, \{u_1,u_2\}\not\in E_G, \{v_1,v_2\}\in E_H. \label{eqn2:Delta}
\end{align}
In tuple notation, these equations are as follows:
\begin{align*}
\Y{u,v} &= 0 \ \ \ \forall u,v\in V^{k+1}, \{u_i,u_j\}\in E_G, \{v_i,v_j\}\not\in E_H \text{ for some } 0\leq i<j\leq k+1,\\
\Y{u,v} &= 0 \ \ \ \forall u,v\in V^{k+1}, \{u_i,u_j\}\not\in E_G, \{v_i,v_j\}\in E_H \text{ for some } 0\leq i<j\leq k+1.
\end{align*}
We can write these equations more concisely as follows:
\begin{align}
\Y{u,v} &= 0 \ \ \ \forall u,v\in V^{k+1}, (G,u) \not\equiv_\CP (H,v). \label{eqn:Delta}
\end{align}
Note that we are including the implied equations \ref{eqn:tuple comb} in the above set of equations since $(G,u) \equiv_\CP (H,v)$ implies $u\equiv_\CP v$.
Then, we arrive at the $k$-th Sherali-Adams extended formulation $\QX^k_{G,H}$ as follows:
$$\QX^k_{G,H} = \{Y \in \BX^k: \Y{u,v} = 0 \ \forall u,v\in V^k, \CPv_G(u) \ne \CPv_H(v) \}.$$
Note that we have omitted the variables $\Y{u,v}$ from $\QX^k_{G,H}$ where $|\pair{u,v}|=k+1$ since $\Y{u,v}=0$ and the variables $\Y{u,v}$ do not appear in the Birkhoff polytope $\BX^k$.

Note that trivially $(G,u) \not\equiv_\CP (H,v)$ for all $u,v\in V$. Thus, $\QX^1_{G,H}=\BX^1$, so the first iteration is not interesting.

The $k$th Sherali-Adams relaxations of the $\Delta$ polytopes $\Q_G$ and $\Q_{G,H}$ are then the polytopes $\Q^k_G\subseteq B$ and $\Q^k_{G,H}\subseteq B$ respectively defined as the projection of $\QX^k_G$ and $\QX^k_{G,H}$ onto $B$ respectively.
Lastly, if $\Q^k_G=\{\I\}$, then $|AUT(G)|=1$ and if $\Q^k_{G,H}= \emptyset$, then $ISO(G,H)=\emptyset$.

\subsection{Comparison of Polyhedra}
In this section, we prove Lemma \ref{lem:VCWL}, which states that $Q^{k+1}_{G,H}\subseteq \T^k_{G,H} \subseteq Q^k_{G,H}$.
First, we show that $\T^k_{G,H} \subseteq Q^k_{G,H}$, which is implied by the next lemma and corollary.
\begin{lemma}
Let $G,H\in\Graphs$, and ket $Y\in \TX^k_{G,H}$. Then, $\Y{u,v} = 0$ for all $u,v\in V^k$ where $(G,u) \not\equiv_\CP (H,v).$
\end{lemma}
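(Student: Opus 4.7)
The plan is to case-split on how $(G,u)\not\equiv_\CP(H,v)$ fails. Either the combinatorial types of the tuples themselves disagree, i.e.\ $u\not\equiv_\CP v$, or they agree ($u_i=u_j \Leftrightarrow v_i=v_j$) but there is an edge/non-edge mismatch at some pair of indices. The first case falls out immediately from (\ref{eqn:tuple comb}), which was derived from the Birkhoff row/column-sum equations (\ref{e2})--(\ref{e3}) alone and hence is valid on all of $\BX^k \supseteq \TX^k_{G,H}$.

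For the second case, I would reduce by symmetry and re-indexing to the situation $\{u_1,u_2\}\in E_G$ while $\{v_1,v_2\}\notin E_H$, with $u\equiv_\CP v$; simplicity of the graphs together with $u\equiv_\CP v$ guarantees $u_1\ne u_2$ and $v_1\ne v_2$. My strategy is to isolate $\Y{u,v}$ as a single summand on one side of a Tinhofer--Sherali-Adams row/column equation whose other side can be argued to vanish by combinatorial consistency alone. Specifically, set $I_0 := \pair{u,v}\setminus\{(u_2,v_2)\}$, so $|I_0|=k-1$, and apply (\ref{eqn1:delta non-tuple}) with pivot $(u_1,v_2)$:
$$\sum_{w\in\delta_H(v_2)} Y_{I_0\cup\{(u_1,w)\}} \;=\; \sum_{w\in\delta_G(u_1)} Y_{I_0\cup\{(w,v_2)\}}.$$

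For the left-hand side, I would argue that every summand is zero. The pair $(u_1,v_1)$ already lies in $I_0$, so any $w\ne v_1$ forces $I_0\cup\{(u_1,w)\}$ to contain two distinct pairs sharing first coordinate $u_1$, and (\ref{eqn1:comb}) kills the $Y$-variable; the remaining choice $w=v_1$ does not occur in the sum because $v_1\notin\delta_H(v_2)$ by the case hypothesis. Hence the sum on the right is also zero, and since every summand there is non-negative, each individual summand must vanish. Taking $w=u_2\in\delta_G(u_1)$ yields $Y_{I_0\cup\{(u_2,v_2)\}} = \Y{u,v} = 0$. The symmetric sub-case $\{u_1,u_2\}\notin E_G$ and $\{v_1,v_2\}\in E_H$ runs in parallel: apply the constraint with pivot $(u_2,v_1)$ and the same $I_0$, now killing the right-hand side via (\ref{eqn2:comb}) (the hypothesis forces $u_1\notin\delta_G(u_2)$, so every admissible $w$ creates a second-coordinate conflict with $(u_1,v_1)\in I_0$), and read off $\Y{u,v}$ as the $w=v_2$ summand on the left.

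I do not expect a real obstacle. The only moving parts are choosing the pivot $(p,q)$ and the set $I_0$ so that the side I wish to kill is combinatorially inconsistent at every summand, and then leveraging $Y\ge 0$ on the other side to extract vanishing of the isolated $\Y{u,v}$ term. The one subtlety worth flagging is that the combinatorial-consistency equations (\ref{eqn1:comb})--(\ref{eqn2:comb}) are genuinely implied by the Birkhoff equations (\ref{e2})--(\ref{e3}) and are therefore available on $\TX^k_{G,H}$ without any extra assumption.
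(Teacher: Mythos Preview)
Your proposal is correct and follows essentially the same argument as the paper: the paper also chooses $I'=I\cup\{(u_1,v_1)\}$ and applies the Tinhofer--Sherali-Adams constraint (\ref{eqn1:delta non-tuple}) with pivot $(u_1,v_2)$ (respectively $(u_2,v_1)$), kills one side via the combinatorial-consistency equations (\ref{eqn1:comb})--(\ref{eqn2:comb}), and reads off $\Y{u,v}=0$ from the nonnegativity of the other side. The only cosmetic difference is that you work with the specific set $I_0=\pair{u,v}\setminus\{(u_2,v_2)\}$ while the paper phrases it for arbitrary $I$ with $|I|\le k-2$, and you explicitly dispatch the $u\not\equiv_\CP v$ case via (\ref{eqn:tuple comb}) whereas the paper leaves that implicit.
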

\begin{proof}

Let $u_1,u_2,v_1\in V$ such that $\{u_1,u_2\}\not\in E_G$, and let $I\subseteq V^2$ where $|I|\leq k-2$. Then, from equation (\ref{eqn1:delta non-tuple}), we have
$$\sum_{w\in\delta_H(v_1)} Y_{I\cup\{(u_1,v_1),(u_2,w)\}} - 
\sum_{w\in\delta_G(u_2)}Y_{I\cup\{(u_1,v_1),(w,v_1)\}} =
\sum_{w\in\delta_H(v_1)} Y_{I\cup\{(u_1,v_1),(u_2,w)\}} = 0.  $$
The first equality follows since $Y_{I\cup\{(u_1,v_1),(w,v_1)\}} = 0$ for all $w \in\delta_G(u_2)$ from equations (\ref{eqn:tuple comb}) since $u_1 \not \in \delta_G(u_2)$.
Thus, it follows that $Y_{I\cup\{(u_1,v_1),(u_2,w)\}} = 0$ for all $w \in \delta_H(v_1)$.

Similarly, let $u_1,v_1,v_2\in V$ such that $\{v_1,v_2\}\not\in E_G$, and let $I\subseteq V^2$ where $|I|\leq k-2$. Then, from equation (\ref{eqn1:delta non-tuple}), we have
$$\sum_{w\in\delta_H(v_2)} Y_{I\cup\{(u_1,v_1),(u_1,w)\}} - 
\sum_{w\in\delta_G(u_1)}Y_{I\cup\{(u_1,v_1),(w,v_2)\}} =
-\sum_{w\in\delta_G(u_1)}Y_{I\cup\{(u_1,v_1),(w,v_2)\}} = 0.$$
The first equality follows since $Y_{I\cup(u_1,v_1)(u_1,w)} = 0$ for all $w \in\delta_H(v_2)$ from equations (\ref{eqn:tuple comb}) since $v_1 \not \in \delta_G(v_2)$.
Thus, it follows that $Y_{I\cup\{(u_1,v_1),(w,v_2)\}} = 0$ for all $w \in \delta_G(u_1)$ as required.
\end{proof}
The corollary below then follows directly from the definition of $\QX^k_{G,H}$ and $\TX^k_{G,H}$.
\begin{corollary}
Let $G,H\in\Graphs$. We have $\TX^k_{G,H}\subseteq \QX^k_{G,H}$, and $\T^k_{G,H}\subseteq \Q^k_{G,H}$.
\end{corollary}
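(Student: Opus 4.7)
The plan is to observe that this corollary is essentially a direct consequence of the preceding lemma together with the defining inclusions of the relevant polytopes. First I would note that by construction $\TX^k_{G,H} \subseteq \BX^k$, since $\TX^k_{G,H}$ is defined by adjoining the linearized adjacency-preservation equations to $\BX^k$. Likewise, $\QX^k_{G,H}$ is defined as precisely the set
\[
\QX^k_{G,H} = \{ Y \in \BX^k : \Y{u,v} = 0 \text{ for all } u,v \in V^k \text{ with } \CPv_G(u) \neq \CPv_H(v)\},
\]
which (as noted in the previous subsection) is the same as requiring $\Y{u,v}=0$ whenever $(G,u)\not\equiv_\CP (H,v)$.

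Given any $Y \in \TX^k_{G,H}$, the preceding lemma supplies the single nontrivial fact needed: $\Y{u,v} = 0$ for all $u,v \in V^k$ with $(G,u) \not\equiv_\CP (H,v)$. Combined with $Y \in \BX^k$, this verifies both defining conditions of $\QX^k_{G,H}$, yielding $\TX^k_{G,H} \subseteq \QX^k_{G,H}$.

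For the second inclusion, I would recall that $\T^k_{G,H}$ and $\Q^k_{G,H}$ are the projections of $\TX^k_{G,H}$ and $\QX^k_{G,H}$ respectively onto the $X$-variables (i.e., onto the coordinates $\Y{u,v}$ with $|\pair{u,v}|=1$). Since projection preserves inclusion, $\TX^k_{G,H} \subseteq \QX^k_{G,H}$ immediately gives $\T^k_{G,H} \subseteq \Q^k_{G,H}$. There is essentially no obstacle here: the real content lives in the prior lemma, and the corollary just unpacks definitions. The only minor care point is matching the two equivalent phrasings of the defining condition for $\QX^k_{G,H}$ (namely ``$\CPv_G(u) \neq \CPv_H(v)$'' versus ``$(G,u)\not\equiv_\CP (H,v)$''), which agree by definition of the combinatorial preorder $\CPv$.
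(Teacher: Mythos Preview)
Your proposal is correct and matches the paper's approach exactly: the paper states that the corollary ``follows directly from the definition of $\QX^k_{G,H}$ and $\TX^k_{G,H}$,'' which is precisely the unpacking you carry out, using the preceding lemma for the one nontrivial step and then noting that projection preserves inclusion.
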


We now show that $Q^{k+1}_{G,H} \subseteq \T^k_{G,H}$, which is implied by the following lemma.
First, let $\rho:\BX^{k+1} \rightarrow \BX^k$ be the projection of $\BX^{k+1}$ onto $\BX^k$, that is,
for $Y\in \BX^{k+1}$, we define $\rho(Y)\in\BX^k$ where $\rho(Y)_I= Y_I$ for all $|I|\leq k$.
\begin{lemma}
Let $G,H\in\Graphs$. We have $\rho(\QX^{k+1}_{G,H})\subseteq \TX^k_{G,H}$.
\end{lemma}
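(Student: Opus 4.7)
The plan is to verify the $\delta$-equation (\ref{eqn1:delta}) for $\rho(Y)$ whenever $Y \in \QX^{k+1}_{G,H}$; containment $\rho(Y) \in \BX^k$ is immediate since the Birkhoff equations (\ref{e2})--(\ref{e3}) and non-negativities defining $\BX^k$ are all inherited from those at level $k+1$. So fix $u, v \in V^k$ and $1 \le i \le k$, and set $\pi = \pair{u,v} \setminus \{(u_i,v_i)\}$, so $|\pi| \le k-1$. The goal is to verify
\[
\sum_{b \in \delta_H(v_i)} Y_{\pi \cup \{(u_i,b)\}} \;=\; \sum_{a \in \delta_G(u_i)} Y_{\pi \cup \{(a,v_i)\}}.
\]

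First I would lift each side by one level using the Birkhoff sum constraints at level $k+1$ (available because $Y \in \BX^{k+1}$). Applying (\ref{e3}) to $I = \pi \cup \{(u_i,b)\}$ with column $v_i$ gives
\[
Y_{\pi \cup \{(u_i,b)\}} = \sum_{a \in V} Y_{\pi \cup \{(u_i,b),(a,v_i)\}},
\]
and symmetrically (\ref{e2}) gives $Y_{\pi \cup \{(a,v_i)\}} = \sum_{b \in V} Y_{\pi \cup \{(a,v_i),(u_i,b)\}}$. After substitution, the LHS is an unrestricted double sum over $a,b \in V$ of the common variable $Y_{\pi \cup \{(u_i,b),(a,v_i)\}}$, qualified only by $b \in \delta_H(v_i)$; the RHS is the same double sum qualified only by $a \in \delta_G(u_i)$.

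The heart of the argument is then to invoke the defining combinatorial constraint of $\QX^{k+1}_{G,H}$: the variable $\Y{u',v'}$ vanishes whenever $(G,u') \not\equiv_\CP (H,v')$. Ordering the index set $\pi \cup \{(u_i,b),(a,v_i)\}$ as $\pair{u',v'}$ for tuples $u',v' \in V^{k+1}$ that place $u_i$ and $a$ in two distinguished coordinates (and $b, v_i$ in the corresponding coordinates of $v'$), the $\CP$ condition at precisely those two coordinates reads $\{u_i,a\} \in E_G \Leftrightarrow \{b,v_i\} \in E_H$ on nonzero terms. Hence every nonzero LHS summand with $b \in \delta_H(v_i)$ automatically has $a \in \delta_G(u_i)$, and vice versa for the RHS; both sides then collapse to the same quantity
\[
\sum_{\substack{a \in \delta_G(u_i) \\ b \in \delta_H(v_i)}} Y_{\pi \cup \{(u_i,b),(a,v_i)\}},
\]
which is the desired equality.

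The main technical care needed is the bookkeeping between set-indexed variables and the tuple-based $\CP$ condition: one must choose an ordering exposing the edge condition on the two newly introduced positions, and then check that potential overlaps (when $(u_i,b)$ or $(a,v_i)$ already lies in $\pi$) and any further $\CP$ vanishings coming from interactions of the new pairs with $\pi$ appear symmetrically on both sides, so do not obstruct the collapse. The Birkhoff expansion step is robust to such overlaps since (\ref{e2})--(\ref{e3}) hold verbatim at any index set of the required size.
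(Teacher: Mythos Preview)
Your proof is correct and follows essentially the same approach as the paper: lift each side of the Tinhofer constraint by one Birkhoff level using (\ref{e2})--(\ref{e3}), then use the $\QX^{k+1}$ vanishing constraints to restrict the new summation index so that both sides collapse to the common double sum over $a\in\delta_G(u_i)$, $b\in\delta_H(v_i)$. The only cosmetic difference is that the paper works directly with the set-indexed constraints (\ref{eqn1:Delta})--(\ref{eqn2:Delta}), which spares the tuple-ordering bookkeeping you flag in your last paragraph.
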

\begin{proof}

Let $Y\in Q^{k+1}_{G,H}$. Then, for all $u,v\in V$ and $I \subseteq V^2$ where $|I|\leq k-1$, we have
\begin{align*}
\sum_{w\in\delta_H(v)} Y_{I\cup \{(u,w)\}}
& = \sum_{w\in\delta_H(v)} \sum_{w'\in V} Y_{I\cup\{(u,w),(w'v)\}} \\
& = \sum_{w\in\delta_H(v)} \sum_{w'\in \delta_G(u)} Y_{I\cup\{(u,w),(w'v)\}} \\
& = \sum_{w\in V} \sum_{w'\in \delta_G(u)} Y_{I\cup\{(u,w),(w',v)\}}
= \sum_{w'\in \delta_G(u)} Y_{I\cup\{(w'v)\}}.
\end{align*}
The first and last equalities are valid for $\BX^{k+1}_{G,H}$ and follow from equations (\ref{e2}) and (\ref{e3}) respectively.
The second equality is valid for $\QX^{k+1}_{G,H}$ and follows since $Y_{I\cup\{(u,w),(w'v)\}} = 0$ for all 
$w\in\delta_H(v)$ and $w'\in\deltabar_G(u)$ from equation (\ref{eqn2:Delta}),
and similarly, the third equality is valid for $\QX^{k+1}_{G,H}$ and follows since $Y_{I\cup\{(u,w),(w'v)\}} = 0$ for all 
$w\in\deltabar_H(v)$ and $w'\in\delta_G(u)$ from equation (\ref{eqn1:Delta}).
Thus, $\rho(Y)\in \TX^k_{G,H}$ as required.
\end{proof}

\section{Comparison of Combinatorial and Polyhedral approaches}
\label{sec:Comparison}
In this section, we compare the combinatorial algorithm with the polyhedral approach of using Sherali-Adams relaxations.

In order to compare the combinatorial and polyhedral approaches, we introduce \emph{partition polytopes}, 
which are a natural polyhedral analogue of vertex partitions.
Given a partition $\pi\in\Pi^k$ and a polytope $P\subseteq \BX^k$, we define the \emph{partition polytope of $P$} with respect to $\pi$, written $P(\pi)$, as the following polytope:
$$P(\pi) = \{Y \in P :  \Y{u,v} = 0 \ \forall u,v\in V^k, u \not\equiv_{\pi} v\}.$$
Similarly, given ordered partitions $\piv$ and $\tauv$ and a polytope $P\subseteq \BX^k$, we define the \emph{partition polytope of $P$} with respect to $\piv$ and $\tauv$, written $P(\piv,\tauv)$, as the following polytope:
$$P(\piv,\tauv) = \{Y \in P :  \Y{u,v} = 0 \ \forall u,v\in V^k, [u]_\piv\ne[v]_\tauv\}.$$

We call the important partition polytopes $\BX^k(\pi)$ and $\BX^k(\piv,\tauv)$ as the \emph{Birkhoff partition polytopes.}
There is natural bijection between integer points in $\BX^k(\pi)$ set of bijections $\psi: V\rightarrow V$ such that $\psi(\pi_i)=\pi_i$ for all $\pi_i\in\pi$, and similarly, there is a natural bijection between the integer points in 
$\BX^k(\piv,\tauv)$ and set of bijections $\psi: V\rightarrow V$ such that $\psi(\piv)=\tauv$.

Note that we have $\BX^k(\SGP{G}^k) = \QX^k_G$ and if $(G,\SGPv{G}^k)\equiv_\CPv(H,\SGPv{H}^k)$, then $\BX^k(\SGPv{G}^k,\SGPv{H}^k) = \QX^k_{G,H}$ by construction.

Using the concept of partition polytopes, we can give a precise definition of when a polytope can be considered to contain the same combinatorial information as a V-C equivalence relation.
\begin{definition}
Let $G\in\Graphs$.
We say that a given polyhedron $P\subseteq \BX^k$ and a V-C equivalence relation $\alpha$ are \emph{combinatorially equivalent} with respect to $G$ if for all $\pi\in\Pi^k$and for all $u,v\in V^k$, we have $(G,\alpha_G^*(\pi),u) \not\equiv_{\alpha} (G,\alpha_G^*(\pi),v)$ if and only if $\Y{u,v}=0$ for all $Y\in P(\pi)$.
\end{definition}
It follows immediately from the definition that if $P$ and $\alpha$ are combinatorially equivalent, then $P(\pi)=P(\alpha_G^*(\pi))$.
Also, as shown in the lemma below, if $P$ is combinatorially equivalent to $\alpha$, then crucially  $\alphav^*_G(\pi)$ is complete if and only if $P(\pi)=\{\IX^k\}$ or equivalently $\rho^k(P(\pi)) = \{\I\}$ where $\rho^k:\BX^k\rightarrow B$ is a projective map.
Here, $\IX^k$ is the extended identity matrix where for all $u,v\in V^k$, we have $\hat{\mathcal{I}}^k_{n,\pair{u,v}} = 1$ if $u= v$ and 0 otherwise.
\begin{lemma}
Let $P\subseteq \BX^k$ and let $\alpha$ be a V-C equivalence relation.
If $P$ and $\alpha$ are combinatorially equivalent, then for all $\pi\in\Pi^k$,
we have $\alpha_G^*(\pi)$ is complete if and only if $P(\pi) = \{\IX^k\}$ or
equivalently $\rho^k(P(\pi)) = \{\I\}$.
\end{lemma}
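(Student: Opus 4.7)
The strategy is to unfold the definition of combinatorial equivalence and combine it with the Birkhoff row/column sum identities $\sum_{v\in V^k}\Y{u,v}=1$ and $\sum_{u\in V^k}\Y{u,v}=1$ derived in Section~\ref{sec:SA}, which hold on $\BX^k$ and therefore on any $P(\pi)\subseteq\BX^k$.

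For the forward direction, assume $\alpha_G^*(\pi)$ is complete. I would first observe that $P(\pi)\neq\emptyset$: since $u\equiv_{\alpha_G^*(\pi)} u$ for every $u\in V^k$, combinatorial equivalence forbids the statement ``$\Y{u,u}=0$ for all $Y\in P(\pi)$,'' hence guarantees at least one $Y\in P(\pi)$ with $\Y{u,u}>0$. Next, fix any $Y\in P(\pi)$ and any $u\in V^k$. Completeness means $u\not\equiv_{\alpha_G^*(\pi)} v$ for every $v\neq u$; combinatorial equivalence then forces $\Y{u,v}=0$ for every such $v$, and the row-sum identity yields $\Y{u,u}=1$. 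Comparing with $\IX^k_{\pair{u,u}}=1$ and $\IX^k_{\pair{u,v}}=0$ for $u\ne v$, we conclude $Y=\IX^k$, so $P(\pi)=\{\IX^k\}$.

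For the reverse direction, assume $P(\pi)=\{\IX^k\}$. For any $u\ne v$ in $V^k$, $\IX^k_{\pair{u,v}}=0$ by definition, so $\Y{u,v}=0$ for every $Y\in P(\pi)$. Combinatorial equivalence, applied at the stable partition $\alpha_G^*(\pi)$, then yields $u\not\equiv_{\alpha_G^*(\pi)} v$. Combined with reflexivity, this means every class of $\alpha_G^*(\pi)$ is a singleton, i.e.\ the partition is complete.

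The auxiliary equivalence $P(\pi)=\{\IX^k\}\Leftrightarrow\rho^k(P(\pi))=\{\I\}$ is handled separately: the forward direction is immediate from $\rho^k(\IX^k)=\I$, while for the converse, iterating the Birkhoff identity $\sum_{w\in V}Y_{I\cup\{(u,w)\}}=Y_I$ with nonnegativity $Y_I\ge 0$ produces the subset-monotonicity $Y_J\le Y_I$ for $I\subseteq J$ and $|J|\le k$. If $\rho^k(Y)=\I$ then $Y_{\{(u,v)\}}=0$ for all $u\ne v$ in $V$, and for any $u\ne v$ in $V^k$ one can pick a coordinate $i$ with $u_i\ne v_i$ to obtain $\Y{u,v}\le Y_{\{(u_i,v_i)\}}=0$; the row sums then force $\Y{u,u}=1$, so $Y=\IX^k$. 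I anticipate no deep obstacle; the only care point is to ensure the Birkhoff identities and the derived subset-monotonicity are legitimately invoked inside the subpolytope $P(\pi)$, which is granted by the inclusion $P(\pi)\subseteq\BX^k$. The conceptual crux is simply that combinatorial equivalence translates ``off-diagonal entries vanish'' into ``the partition is a singleton partition,'' while Birkhoff row sums convert the first statement into genuine equality with $\IX^k$.
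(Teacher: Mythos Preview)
Your proposal is correct and follows essentially the same approach as the paper: non-emptiness of $P(\pi)$ from reflexivity plus combinatorial equivalence, the row-sum identity to upgrade ``off-diagonal entries vanish'' to $Y=\IX^k$, and the subset-monotonicity (the paper's Lemma~\ref{lem:supset}) for the projection equivalence. Your reverse direction is actually stated more cleanly than the paper's, which argues the contrapositive with the slightly awkward hypothesis $|P(\pi)|>1$ rather than $P(\pi)\neq\{\IX^k\}$, but the underlying idea is identical.
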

\begin{proof}
First, let $Y\in P(\pi)$ such that $\Y{u,v}=0$ if and only if $u\ne v$ for all $u,v\in V^k$.
Then, from equation (\ref{eqn1:tuple sum}), we must have $\Y{u,u}=1$ for all $u\in V^k$, and so, $Y=\IX^k$.
Now, assume that $\alpha_G^*(\pi)$ is complete.
We must have $P(\pi)\neq \emptyset$ since $\Y{u,u}\ne0$ for some $Y\in P(\pi)$ and $u\in V^k$ from the definition of combinatorial equivalence.
Then, for all $Y\in P(\pi)$, we have $\Y{u,v}=0$ if and only if $u\ne v$ for all $u,v\in V^k$, but this implies $Y=\IX^k$ for all $Y\in P(\pi)$ from above, and thus, $P(\pi)=\{\IX^k\}$.
Second, assume $|P(\pi)|>1$.
Then, there must exist a $Y\in P(\pi)$ such that $\Y{u,v}\ne0$ for some $u\ne v$ (otherwise there is only one possible $Y\in P(\pi)$ from above).
Thus,  $(G,\alpha_G^*(\pi),u) \equiv_{\alpha} (G,\alpha_G^*(\pi),v)$, and $\alpha_G^*(\pi)$ is not complete.

Lastly, we show that $P(\pi) = \{\IX^k\}$ if and only if $\rho^k(P(\pi)) = \{\I\}$.
The forwards implication is clear, so we prove the converse.
Assume $\rho^k(P(\pi))=\{\I\}$, and let $Y\in P(\pi)$.
Then, $\rho^k(P(\pi))=\{\I\}$ implies that for all $u,v\in V^k$ where $|\pair{u,v}|=1$,
we have $\Y{u,v} = 1$ if $u=v$ and $\Y{u,v} = 0$ otherwise.
By Lemma \ref{lem:supset}, we have $\Y{u,v}=0$ for all $u,v\in V^k$ where
$|\pair{u,v}|\geq 2$ and $u \ne v$ since there must exist $u',v'\in V^k$ where
$\pair{u',v'}\subseteq \pair{u,v}$, $u'\ne v'$, $|\pair{u',v'}|=1$ and $\Y{u',v'}=0$.
\end{proof}

We now extend the definition of combinatorial equivalence to V-C preorders.

\begin{definition}
\label{defn: comb equiv}
Let $G,H\in\Graphs$.
We say that a given polyhedron $P\subseteq \BX^k$ and a V-C preorder $\alphav$ are \emph{combinatorially equivalent} with respect to $(G,H)$ if for all $\piv,\tauv\in\Piv^k$ and for all $u,v\in V^k$, we have %where $\piv\equiv_{\Symv}\tauv$
\begin{enumerate}
\item if $(G,\piv,u) \not\equiv_{\alphav} (H,\tauv,v)$, then $\Y{u,v}=0$ for all $Y\in P(\piv,\tauv)$; and
\item if $(G,\piv)\equiv_{\alphav^*}(H,\tauv)$, then $P(\piv,\tauv)\ne\emptyset$, and moreover,
if $(G,\piv,u) \equiv_{\alphav} (H,\tauv,v)$, then there exists $Y\in P(\piv,\tauv)$ such that $\Y{u,v}\ne0$.
\end{enumerate}
\end{definition}
We show below that if $P$ and $\alphav$ are combinatorially equivalent, then $(G,\piv)\not\equiv_{\alphav}(H,\piv)$ implies $P(\piv,\tauv) = \emptyset$,
and moreover, if $\piv \approx \tauv$, then $P(\piv,\tauv) = P(\alphav_G^*(\piv), \alphav_G^*(\piv))$.
Thus, it follows immediately that if $\piv \approx \tauv$, then $P(\piv,\tauv) = \emptyset$ if and only if $(G,\alphav_G^*(\piv))\not\equiv_{\alphav}(H,\alphav_G^*(\piv))$
and $(G,\alphav_G^*(\piv))\equiv_{\alphav}(H,\alphav_H^*(\tauv))$ implies 
$(G,\alphav_G^*(\piv),u) \not\equiv_{\alphav} (H,\alphav_H^*(\tauv),v)$ if and only if $\Y{u,v}=0$ for all $Y\in P(\piv,\tauv)$ and $u,v\in V^k$.
So, if $P$ and $\alphav$ are combinatorially equivalent, then the polytope and the partition contain the same combinatorial information in some sense.
Also, if $\alphav$ and $P$ are combinatorially equivalent, then it follows that $\alpha$ (the induced V-C equivalence relation) and $P$ are also combinatorially equivalent.

We now show some useful properties of combinatorial equivalence.
First, we show that if $P$ and $\alphav$ are combinatorially equivalent, then $(G,\piv)\not\equiv_{\alphav}(H,\piv)$ implies $P(\piv,\tauv) = \emptyset$.
\begin{lemma}\label{lem:partition poly empty}
Let $\piv,\tauv\in\Piv^k$ such that for all $u,v\in V^k$, if $(G,\piv,u) \not\equiv_{\alphav} (H,\tauv,v)$, then $\Y{u,v}=0$ for all $Y\in P(\piv,\tauv)$.
If $(G,\piv)\not\equiv_{\alphav} (H,\tauv)$, then $P(\piv,\tauv) = \emptyset$.
\end{lemma}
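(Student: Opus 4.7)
The plan is to argue by contradiction: suppose some $Y\in P(\piv,\tauv)$ exists, and derive $(G,\piv)\equiv_{\alphav}(H,\tauv)$, which contradicts the hypothesis. By Definition \ref{def:preorder equiv} this requires establishing both $\alphav_G(\piv)\approx\alphav_H(\tauv)$ and the biconditional $(G,\piv,u)\equiv_{\alphav}(H,\tauv,v)\Leftrightarrow[u]_{\alphav_G(\piv)}=[v]_{\alphav_H(\tauv)}$. Since $P\subseteq\BX^k$, every $Y\in P(\piv,\tauv)$ satisfies the Birkhoff sum constraints (\ref{eqn1:tuple sum}) and (\ref{eqn2:tuple sum}), and by hypothesis has $\Y{u,v}=0$ whenever $(G,\piv,u)\not\equiv_{\alphav}(H,\tauv,v)$; these are the two ingredients I will combine.

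First, write $\alphav_G(\piv)=(W_1,\ldots,W_m)$ and $\alphav_H(\tauv)=(Z_1,\ldots,Z_{m'})$. Because the $W_i$ (respectively $Z_j$) are precisely the $\equiv_{\alphav}$-equivalence classes on the $G$-side (respectively $H$-side), transitivity of $\equiv_{\alphav}$ across the two sides gives a partial injection $f$ on indices: if $(G,\piv,u_0)\equiv_{\alphav}(H,\tauv,v_0)$ for any single pair with $u_0\in W_i$ and $v_0\in Z_j$, then the same holds for every $u\in W_i$ and every $v\in Z_j$, and the index $j$ is uniquely determined by $i$ (and vice versa). Applying $\sum_v\Y{u,v}=1$ to an arbitrary $u\in W_i$ forces some $v$ with $\Y{u,v}>0$, which by the hypothesis on $P(\piv,\tauv)$ must satisfy $(G,\piv,u)\equiv_{\alphav}(H,\tauv,v)$, so $f(i)$ is defined; the symmetric column constraint shows $f$ is surjective. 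Hence $f\colon\{1,\ldots,m\}\to\{1,\ldots,m'\}$ is a bijection and $m=m'$.

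Next I would show $f$ is the identity. For $i<i'$, pick $u\in W_i$, $u'\in W_{i'}$ and $v\in Z_{f(i)}$, $v'\in Z_{f(i')}$ that are cross-side $\equiv_{\alphav}$-related. By the definition of the induced preorder on $\alphav_G(\piv)$ we have $(G,\piv,u)\lneqq_{\alphav}(G,\piv,u')$, so transitivity of the total preorder $\leqq_{\alphav}$ gives $(H,\tauv,v)\lneqq_{\alphav}(H,\tauv,v')$, which by definition of $\alphav_H(\tauv)$ means $f(i)<f(i')$. A monotone bijection $\{1,\ldots,m\}\to\{1,\ldots,m\}$ must be the identity, yielding the biconditional. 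For the remaining cell-size equalities $|W_i|=|Z_i|$ needed for $\approx$, I would double-count the double sum $\sum_{u\in W_i,\,v\in Z_i}\Y{u,v}$: summed $v$-first it collapses to $\sum_{u\in W_i}1=|W_i|$ (the $v$-sum over all of $V^k$ is $1$ and is supported in $Z_i$), while summed $u$-first it collapses to $\sum_{v\in Z_i}1=|Z_i|$.

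The main subtlety will be setting up the pairing $f$ cleanly as a well-defined injection across the $G$- and $H$-sides; this rests on combining the fact that the $W_i$ and $Z_j$ are distinct $\equiv_{\alphav}$-classes on their respective sides with the transitivity of $\equiv_{\alphav}$ spanning both sides. Once $f$ is in place, the monotonicity argument and the size double-count are routine, and together they produce $(G,\piv)\equiv_{\alphav}(H,\tauv)$, contradicting the hypothesis and forcing $P(\piv,\tauv)=\emptyset$.
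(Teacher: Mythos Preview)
Your argument is correct and shares the same engine as the paper's proof: assume some $Y\in P(\piv,\tauv)$ exists, then use the Birkhoff row/column sums (\ref{eqn1:tuple sum}) and (\ref{eqn2:tuple sum}) together with the vanishing hypothesis to double-count $\sum_{u\in W_i}\sum_{v\in Z_j}\Y{u,v}$ and force the cross $\equiv_{\alphav}$-classes to have equal size.

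The difference is one of economy. You work directly from Definition~\ref{def:preorder equiv} and verify both clauses: you build the bijection $f$ of cell indices, prove it is monotone (hence the identity) to get the biconditional, and then double-count to get $|W_i|=|Z_i|$. The paper instead uses the reformulation stated just after Definition~\ref{def:preorder equiv}, namely that $(G,\piv)\equiv_{\alphav}(H,\tauv)$ holds iff there is a tuple bijection $\gamma$ with $(G,\piv,u)\equiv_{\alphav}(H,\tauv,\gamma(u))$ for all $u$. Negating this gives a single $\equiv_{\alphav}$-class whose $G$-side and $H$-side have different sizes, and one double-count on that class finishes the proof. In particular, your monotonicity step (showing $f$ is the identity) is not needed: once the double-count shows every cross class has matching sizes, the bijection $\gamma$ already exists, which by the reformulation is exactly $(G,\piv)\equiv_{\alphav}(H,\tauv)$. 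Your route is self-contained from the original definition; the paper's is shorter because it cashes in on the bijection characterization established earlier.
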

\begin{proof}
Since $(G,\piv)\not\equiv_{\alphav}(H,\tauv)$, there exists $\hat{u}\in V^k$ such that $|U|\neq |W|$ where
$$ U=\{w\in V^k: (G,\piv,w) \equiv_{\alphav}(G,\piv,\hat{u})\}\text{ and } W=\{w\in V^k: (H,\tauv,w) \equiv_{\alphav}(G,\piv,\hat{u})\}.$$
Note that $(G,\piv,u)\equiv_{\alphav}(H,\tauv,v)$ for all $u\in U$ and $v \in W$.
Now, from (\ref{eqn1:tuple sum}), for all $u\in U$, we have $\sum_{v\in V^k} \Y{u,v} = 1$ implying $\sum_{v\in W} \Y{u,v} = 1$
since for all $v\not\in W$, we have $(G,\piv,u)\not\equiv_{\alphav}(H,\tauv,v)$ implying that $\Y{u,v}=0$.
Similarly, from (\ref{eqn2:tuple sum}), for all $v\in W$, we have that $\sum_{u\in V^k} \Y{u,v} = 1$ implying $\sum_{u\in U} \Y{u,v} = 1$.
Then,
\begin{align*}
|U| = \sum_{u\in U}1 =
\sum_{u\in U}\left(\sum_{v\in W} \Y{u,v}\right)=
\sum_{v\in W}\left(\sum_{u\in U} \Y{u,v}\right)=
\sum_{v\in W} 1 = |W|.
\end{align*}
Thus, $|U|=|W|$, which is a contradiction.
\end{proof}

The next two lemmas show that if $P$ and $\alphav$ are combinatorially equivalent and $\piv \approx \tauv$, then $P(\piv,\tauv) = P(\alphav_G^*(\piv), \alphav_G^*(\piv))$.
\begin{lemma}\label{lem:partition poly equiv}
Let $P\subseteq \BX^k$ and let $\piv,\tauv\in\Piv^k$ 
such that for all $u,v\in V^k$, if $(G,\piv,u) \not\equiv_{\alphav} (H,\tauv,v)$, then $\Y{u,v}=0$ for all $Y\in P(\piv,\tauv)$.
If $(G,\piv)\equiv_{\alphav} (H,\tauv)$, then $P(\piv,\tauv) = P(\alphav_G(\piv),\alphav_H(\tauv))$.
\end{lemma}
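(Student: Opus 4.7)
The plan is to establish both inclusions with the abbreviations $\piv' := \alphav_G(\piv)$ and $\tauv' := \alphav_H(\tauv)$. The linchpin of the argument is Definition \ref{def:preorder equiv}: under the hypothesis $(G,\piv)\equiv_{\alphav}(H,\tauv)$ it supplies the biconditional
\[
[u]_{\piv'} = [v]_{\tauv'} \iff (G,\piv,u)\equiv_{\alphav}(H,\tauv,v)
\]
for all $u,v\in V^k$. With this in hand, each containment reduces to a short logical check.

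For $P(\piv',\tauv')\subseteq P(\piv,\tauv)$, I would first invoke Lemma \ref{lem:equiv props}(1) to conclude $(\piv',\tauv')\leq(\piv,\tauv)$, i.e.\ $[u]_{\piv'}=[v]_{\tauv'}$ implies $[u]_\piv=[v]_\tauv$. Contrapositively, $[u]_\piv\neq [v]_\tauv$ forces $[u]_{\piv'}\neq [v]_{\tauv'}$, and hence any $Y\in P(\piv',\tauv')$ already satisfies the defining zero constraints of $P(\piv,\tauv)$. For the reverse inclusion $P(\piv,\tauv)\subseteq P(\piv',\tauv')$, fix $Y\in P(\piv,\tauv)$ and any pair $u,v$ with $[u]_{\piv'}\neq [v]_{\tauv'}$. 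The biconditional above yields $(G,\piv,u)\not\equiv_{\alphav}(H,\tauv,v)$, and the combinatorial hypothesis of the lemma then gives $\Y{u,v}=0$ directly.

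The only place any real care is needed is the reverse inclusion: the problematic pairs are those $u,v$ with $[u]_\piv=[v]_\tauv$ but $[u]_{\piv'}\neq[v]_{\tauv'}$, i.e.\ the pairs that the refinement by $\alphav_G,\alphav_H$ separates even though the original partitions do not. These pairs are precisely what the combinatorial hypothesis was assumed for, so once one identifies Definition \ref{def:preorder equiv} as the correct translation between combinatorial inequivalence and partition-index inequality, the proof becomes essentially a one-line application in each direction.
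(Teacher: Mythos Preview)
Your proposal is correct and follows essentially the same route as the paper: both directions hinge on the same two facts, namely that $(G,\piv)\equiv_{\alphav}(H,\tauv)$ yields $(\piv',\tauv')\leq(\piv,\tauv)$ (the paper's ``recall that'' is exactly your appeal to Lemma~\ref{lem:equiv props}(1)) and that Definition~\ref{def:preorder equiv} gives the biconditional $[u]_{\piv'}=[v]_{\tauv'}\Leftrightarrow (G,\piv,u)\equiv_{\alphav}(H,\tauv,v)$, which together with the lemma's hypothesis forces the remaining zero constraints.
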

\begin{proof}
Firstly, recall that $(G,\piv)\equiv_{\alphav} (H,\tauv)$ implies that $(\piv,\tauv) \geq (\alphav_G(\piv),\alphav_H(\tauv))$
meaning that $[u]_\piv \ne [v]_\tauv \Rightarrow [u]_{\alphav_G(\piv)}\ne[v]_{\alphav_H(\tauv)}$,
and thus, $P(\piv,\tauv) \supseteq P(\alphav_G(\piv),\alphav_H(\tauv))$.
Secondly,  $(G,\piv)\equiv_{\alphav} (H,\tauv)$ means that
$[u]_{\alphav_G(\piv)}\neq[v]_{\alphav_H(\tauv)} \Leftrightarrow (G,\piv,u)\not\equiv_{\alphav}(G,\tauv,v)$,
and thus, $P(\piv,\tauv) \subseteq P(\alphav_G(\piv),\alphav_H(\tauv))$ as required.
\end{proof}

\begin{lemma}
Let $P\subseteq \BX^k$ and let $\piv,\tauv\in\Piv^k$  where $\piv\approx \tauv$
such that for all $u,v\in V^k$, if $(G,\piv,u) \not\equiv_{\alphav} (H,\tauv,v)$, then $\Y{u,v}=0$ for all $Y\in P(\piv,\tauv)$.
We have $P(\piv,\tauv) = P(\alphav_G^*(\piv), \alphav_H^*(\tauv))$.
\end{lemma}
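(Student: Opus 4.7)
The plan is to chain Lemmas \ref{lem:partition poly empty} and \ref{lem:partition poly equiv} along the refinement sequence $\piv^r := \alphav_G^r(\piv)$ and $\tauv^r := \alphav_H^r(\tauv)$, which stabilizes at $\alphav_G^*(\piv)$ and $\alphav_H^*(\tauv)$ after finitely many iterations. The argument splits naturally into two cases depending on whether $(G, \alphav_G^*(\piv)) \equiv_{\alphav} (H, \alphav_H^*(\tauv))$ holds.

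In the equivalent case, Corollary \ref{cor:equiv for all r} forces $(G, \piv^r) \equiv_{\alphav} (H, \tauv^r)$ at every iteration. I would induct on $r$ to show simultaneously that $P(\piv,\tauv) = P(\piv^r, \tauv^r)$ and that the implication hypothesis holds at $(\piv^r, \tauv^r)$, that is, $(G, \piv^r, u) \not\equiv_{\alphav} (H, \tauv^r, v)$ implies $\Y{u,v} = 0$ for all $Y\in P(\piv^r,\tauv^r)$. The inductive step is a single application of Lemma \ref{lem:partition poly equiv}; the key observation that makes the refined implication hypothesis propagate is that under $(G,\piv^r)\equiv_{\alphav}(H,\tauv^r)$ the non-equivalence $(G,\piv^r,u)\not\equiv_{\alphav}(H,\tauv^r,v)$ becomes equivalent to $[u]_{\piv^r}\ne [v]_{\tauv^r}$, and the latter condition is exactly what the defining constraints of the partition polytope $P(\piv^r,\tauv^r)$ kill. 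Taking $r$ large enough to reach the fixed point finishes this case.

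In the non-equivalent case, let $r$ be the smallest index with $(G, \piv^r) \not\equiv_{\alphav} (H, \tauv^r)$, which exists by the contrapositive of Corollary \ref{cor:equiv for all r}. For indices below $r$ the induction from the previous paragraph applies and yields $P(\piv,\tauv) = P(\piv^r, \tauv^r)$, and Lemma \ref{lem:partition poly empty} at step $r$ gives $P(\piv^r, \tauv^r) = \emptyset$. Hence the left-hand side of the claim vanishes. For the right-hand side, $(G, \alphav_G^*(\piv)) \not\equiv_{\alphav} (H, \alphav_H^*(\tauv))$ by assumption, and at the fixed point the partitions $\alphav_G^*(\piv)$ and $\alphav_H^*(\tauv)$ are $\alphav_G$- and $\alphav_H$-stable respectively, so the implication hypothesis at the fixed point follows directly from the partition-polytope definition combined with stability; a second application of Lemma \ref{lem:partition poly empty} then yields $P(\alphav_G^*(\piv), \alphav_H^*(\tauv)) = \emptyset$.

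The main obstacle is verifying that the implication hypothesis of Lemma \ref{lem:partition poly equiv}, originally assumed only at $(\piv,\tauv)$, transfers to each refined pair $(\piv^r,\tauv^r)$ along the chain. The transfer is not tautological because the $\alphav$ preorder becomes strictly stronger at finer partitions. The argument has to exploit the equivalence $(G,\piv^r) \equiv_{\alphav} (H,\tauv^r)$ at each intermediate step to turn the $\alphav$-non-equivalence into plain partition-index inequality, so that the partition polytope's constraints discharge the hypothesis automatically; this is what allows the induction to close.
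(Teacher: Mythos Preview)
Your overall approach—splitting on whether $(G,\alphav_G^*(\piv))\equiv_{\alphav}(H,\alphav_H^*(\tauv))$, invoking Corollary~\ref{cor:equiv for all r} to propagate equivalence along the refinement chain, and then chaining Lemma~\ref{lem:partition poly equiv} (resp.\ Lemma~\ref{lem:partition poly empty})—is exactly the paper's argument. You are also right to flag the propagation of the implication hypothesis as the subtle point; the paper applies Lemma~\ref{lem:partition poly equiv} at each step without explicitly verifying its hypothesis there.

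However, your resolution of that subtlety contains an index slip that prevents the induction from closing. You assert that under $(G,\piv^r)\equiv_{\alphav}(H,\tauv^r)$ the non-equivalence $(G,\piv^r,u)\not\equiv_{\alphav}(H,\tauv^r,v)$ is equivalent to $[u]_{\piv^r}\ne[v]_{\tauv^r}$. By Definition~\ref{def:preorder equiv} it is in fact equivalent to $[u]_{\alphav_G(\piv^r)}\ne[v]_{\alphav_H(\tauv^r)}$, i.e.\ to $[u]_{\piv^{r+1}}\ne[v]_{\tauv^{r+1}}$. With the correct index, the defining constraints of $P(\piv^r,\tauv^r)$ only kill $\Y{u,v}$ when $[u]_{\piv^r}\ne[v]_{\tauv^r}$, which is strictly weaker, so the partition-polytope constraints do \emph{not} automatically discharge the hypothesis needed at step $r$. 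Moreover, property~(\ref{defv2}) of Definition~\ref{def:alphav} goes the wrong way here (equivalence at a finer pair implies equivalence at a coarser pair, not conversely), so you cannot recover the step-$r$ hypothesis from the step-$0$ hypothesis by monotonicity alone.

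In the paper's actual applications (Lemmas~\ref{lem:DeltaPoly} and~\ref{lem:deltaPoly}) the implication hypothesis is established for \emph{all} $\piv,\tauv$, so the needed hypothesis at each $(\piv^r,\tauv^r)$ is available directly and no propagation argument is required. Read in that context, both the paper's proof and yours go through; but if one insists on the lemma's literal statement with the hypothesis only at the initial $(\piv,\tauv)$, neither argument fills the gap, and it is not clear the gap can be filled without strengthening the hypothesis.
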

\begin{proof}
First, assume $(G,\alphav_G^*(\piv)) \equiv_{\alphav} (H,\alphav_H^*(\tauv))$.
Then, by Corollary \ref{cor:equiv for all r}, we have $(G,\alphav_G^r(\piv)) \equiv_{\alphav} (H,\alphav_H^r(\tauv))$ for all $r$ 
implying that $P(\alphav_G^r(\piv),\alphav_H^r(\tauv)) = P(\alphav_G^{r+1}(\piv),\alphav_H^{r+1}(\tauv))$ for all $r$
by Lemma \ref{lem:partition poly equiv} above,
and therefore, $P(\piv,\tauv) = P(\alphav_G^*(\piv), \alphav_H^*(\tauv))$.

Second, assume $(G,\alphav_G^*(\piv)) \not\equiv_{\alphav} (H,\alphav_H^*(\tauv))$.
Then, by Lemma \ref{lem:partition poly empty}, we have $P(\alphav_G^*(\piv), \alphav_H^*(\tauv))= \emptyset$.
Now, there exists an $r'$ such that $(G,\alphav_G^r(\piv)) \equiv_{\alphav} (H,\alphav_H^r(\tauv))$ for all $r<r'$, and $(G,\alphav_G^r(\piv)) \not\equiv_{\alphav} (H,\alphav_H^r(\tauv))$.
Thus, from Lemma \ref{lem:partition poly equiv}, we have $P(\piv,\tauv)=P(\alphav_G^r(\piv),\alphav_H^r(\tauv))$, but 
$P(\alphav_G^r(\piv),\alphav_H^r(\tauv))=\emptyset$ from Lemma \ref{lem:partition poly empty},
and therefore, $P(\piv,\tauv) = P(\alphav_G^*(\piv), \alphav_H^*(\tauv))$.
\end{proof}

In the remainder of this section, we will show the combinatorial equivalence of the V-C preorders and polyhedra we have seen previously.

\subsection{The $\Delta$-V-C algorithm and the $\Delta$-polytope}
In this section, we prove the result that
$\Delta$ is combinatorially equivalent to $\QX^k_G$, % that is, $\Delta_G^*(\pi)$ is complete if and only if $\QX^k_G(\pi)=\{\IX^k\}$ for all $\pi\in\Pi^k$ where $\pi\leq\SGP{G}^k$,
and also that
$\Deltav$ is combinatorially equivalent to $\QX^k_{G,H}$. 
Specifically, we have that $\Delta_G^k$ is complete if and only if $\QX^{k+1}_G=\{\IX^k\}$ or equivalently 
$\Q^{k+1}_G=\{\I\}$, we have $(G,\Deltav_G^k)\equiv_\Deltav(H,\Deltav_H^k)$ if and only if $\QX^k_{G,H}\ne\emptyset$
or equivalently $\Q^k_{G,H}\ne\emptyset$.

 We now show that $\Deltav$ and $\QX^k$ satisfy the two conditions of Definition \ref{defn: comb equiv}.
\begin{lemma}
\label{lem:DeltaPoly}
Let $\piv,\tauv\in\Piv^k$.
For all $u,v\in V^k$, if $(G,\piv,u) \not\equiv_\Deltav (H,\tauv,v)$, then $\Y{u,v}=0$ for all $Y\in \QX^k(\piv,\tauv)$.
\end{lemma}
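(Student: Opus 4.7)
The plan is to do a case analysis based on which of the four conditions constituting $\Deltav$-equivalence fails for $(G,\piv,u)$ and $(H,\tauv,v)$. Unpacking the definition, $(G,\piv,u)\equiv_\Deltav(H,\tauv,v)$ requires (i) $[u]_\piv = [v]_\tauv$, (ii) $\Symv_\piv(u) = \Symv_\tauv(v)$, (iii) $\CPv_G(u) = \CPv_H(v)$, and (iv) $\Deltav_\piv(u) = \Deltav_\tauv(v)$. Take a point $Y \in \QX^k(\piv,\tauv)$ and handle each case in turn.

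Case (i) is immediate from the definition of the partition polytope. Case (iii) is immediate from the defining equations of $\QX^k_{G,H}$ (i.e.\ $\Y{u,v}=0$ whenever $(G,u)\not\equiv_{\CP}(H,v)$). For case (ii), note that $\Y{u,v}$ depends only on the set $\pair{u,v}$, so $\Y{u,v}=\Y{\sigma(u),\sigma(v)}$ for every $\sigma\in\Sym_k$; if $\Symv_\piv(u)\ne\Symv_\tauv(v)$ then some generator $\sigma_j\in\Sigma_k$ witnesses $[\sigma_j(u)]_\piv\ne[\sigma_j(v)]_\tauv$, and then case (i) applied to $(\sigma_j(u),\sigma_j(v))$ forces $\Y{\sigma_j(u),\sigma_j(v)}=0$.

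The main case is (iv): assume $|\Delta^i(u)\cap\piv_t|\ne|\Delta^i(v)\cap\tauv_t|$ for some $i,t$, and I will show $Y_{\pair{u,v}\setminus\{(u_i,v_i)\}}=0$, whence $\Y{u,v}=0$ by the monotonicity inequality $\Y{u,v}\le Y_I$ for $I\subseteq\pair{u,v}$ (coming from Lemma~\ref{lem:supset}, i.e.\ the standard Sherali-Adams inequality with $|J\setminus I|=1$). The key is a double-counting argument using equations~(\ref{eqn2}) and~(\ref{eqn3}) together with the partition polytope constraint. Concretely, for each fixed $w$ with $\phi_i(u,w)\in\piv_t$, equation~(\ref{eqn2}) applied to the pair $(\phi_i(u,w),\phi_i(v,v_i))$ gives
\[
\sum_{w'\in V}\Y{\phi_i(u,w),\phi_i(v,w')} \;=\; Y_{\pair{u,v}\setminus\{(u_i,v_i)\}},
\]
and the partition constraint lets me restrict the sum to $w'$ with $\phi_i(v,w')\in\tauv_t$. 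Summing over all admissible $w$ gives $|\Delta^i(u)\cap\piv_t|\cdot Y_{\pair{u,v}\setminus\{(u_i,v_i)\}}$ on the right. Running the symmetric calculation with equation~(\ref{eqn3}) (fixing $w'$ and summing over $w$) gives the same double sum on the left but $|\Delta^i(v)\cap\tauv_t|\cdot Y_{\pair{u,v}\setminus\{(u_i,v_i)\}}$ on the right. Equating them forces $Y_{\pair{u,v}\setminus\{(u_i,v_i)\}}=0$.

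I expect cases (i)--(iii) to be essentially bookkeeping, and the real work is the double-counting identity in case (iv). The main subtlety there is making sure that restricting the sums by the partition indicator $[\cdot]_\piv=t$ is legitimate, which is exactly what being in $\QX^k(\piv,\tauv)$ buys us, and that the final reduction $\Y{u,v}\le Y_{\pair{u,v}\setminus\{(u_i,v_i)\}}$ is in hand; both pieces are already available from the Sherali--Adams machinery developed earlier in the paper, so no new polyhedral tools are needed.
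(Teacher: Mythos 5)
Your proof is correct, and the case decomposition into the $[\cdot]$, $\Symv$, $\CPv$, and $\Deltav$ components agrees with how the paper dispatches the easy cases. But your handling of the main case (iv) takes a genuinely different (and arguably cleaner) route. The paper sums the Birkhoff equations over a whole cell $\tauv_t$ and, after using the partition constraints and marginal-sum identities, arrives at relations of the form $\sum_{w\in\tauv_s}\bigl(|\Delta^i(w)\cap\tauv_t|-|\Delta^i(u)\cap\piv_t|\bigr)\Y{u,w}=0$, whose coefficients have mixed signs; extracting $\Y{u,v}=0$ then requires a further induction on the maximum intersection cardinality $M$, leaning on nonnegativity of the $Y$-coordinates. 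You instead count a single quantity $S=\sum_{w\in A}\sum_{w'\in B}\Y{\phi_i(u,w),\phi_i(v,w')}$ in two ways, with $A=\{w:\phi_i(u,w)\in\piv_t\}$ and $B=\{w':\phi_i(v,w')\in\tauv_t\}$: applying (\ref{eqn2}) to the pair $(\phi_i(u,w),v)$ together with the partition constraint gives $S=|A|\,Y_J$, and applying (\ref{eqn3}) to $(u,\phi_i(v,w'))$ gives $S=|B|\,Y_J$, where $J=\pair{u,v}\setminus\{(u_i,v_i)\}$; since $|A|\ne|B|$ this forces $Y_J=0$, and Lemma~\ref{lem:supset} finishes. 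This bypasses the paper's induction entirely, works directly from the raw Sherali--Adams equations rather than the aggregated marginal identities, and in fact proves the slightly stronger fact $Y_{\pair{u,v}\setminus\{(u_i,v_i)\}}=0$. One small note: your double-counting trick does not port cleanly to the $\deltav$ analogue (Lemma~\ref{lem:deltaPoly}), where the Tinhofer equation gives a \emph{difference} of two $\delta$-restricted sums rather than a single sum equal to a fixed coordinate; the paper's induction, by contrast, applies to both lemmas with essentially the same script, which is likely why the paper uses it here as well. (Minor: you cite Lemma~\ref{lem:supset} for the ``monotonicity inequality'' $\Y{u,v}\le Y_I$; that lemma only states the $Y_I=0\Rightarrow\Y{u,v}=0$ implication, but that is exactly what you need.)
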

\begin{proof} 
First, if $(G,\piv,u) \not\equiv_{\CPv} (H,\tauv,v)$ or $(G,\piv,u) \not\equiv_{\Symv} (H,\tauv,v)$, then clearly $\Y{u,v}=0$ for all $Y\in \TX^k_{G,H}(\piv,\tauv)$,
so we assume that $(G,\piv,u) \equiv_{\CPv} (H,\tauv,v)$ and $(G,\piv,u) \equiv_{\Symv} (H,\tauv,v)$.
Then, $(\piv,u) \not\equiv_\Delta (\tauv,v)$ means that $|\Delta^i(u)\cap \piv_t| \ne |\Delta^i(v)\cap \tauv_t|$ for some $1\leq t \leq m$ and $1\leq i \leq k$.

First, we derive some valid equations for $\QX^k_{G,H}(\piv,\tauv)$.
Now, for all $1\leq s,t\leq m$, $1\leq i\leq k$ and $u\in \piv_s$, we have the following valid equation
derived immediately from equations (\ref{eqn2}) and (\ref{eqn3}):
\begin{align}
\sum_{v\in \tauv_t}\left(\sum_{w\in \Delta^i(v)} \Y{u,w} - \sum_{w\in \Delta^i(u)} \Y{w,v}\right) &= 0 \\
\Rightarrow
\sum_{v\in \tauv_t}\sum_{w\in \Delta^i(v)\cap \tauv_s} \Y{u,w} - \sum_{v\in \tauv_t}\sum_{w\in \Delta^i(u)\cap \piv_t} \Y{w,v} &= 0  \\
\Rightarrow
\sum_{w\in \tauv_s} \sum_{v\in \Delta^i(w)\cap \tauv_t} \Y{u,w} - \sum_{w\in \Delta^i(u)\cap \piv_t}\sum_{v\in \tauv_t} \Y{w,v} &= 0 \\
\Rightarrow
\sum_{w\in \tauv_s}|\Delta^i(w)\cap \tauv_t| \Y{u,w} - \sum_{w\in \Delta^i(u)\cap \piv_t}1 &= 0 \\
\Rightarrow
\sum_{w\in \tauv_s}|\Delta^i(w)\cap \tauv_t| \Y{u,w} - |\Delta^i(u)\cap \piv_t| &= 0 \\
\Rightarrow
\sum_{w\in \tauv_s}|\Delta^i(w)\cap \tauv_t| \Y{u,w} - |\Delta^i(u)\cap \piv_t|\sum_{w\in \tauv_s} \Y{u,w} &= 0 \\
\Rightarrow
\sum_{w\in \tauv_s}\left(|\Delta^i(w)\cap \tauv_t|-|\Delta^i(u)\cap \piv_t|\right)\Y{u,w}&=0.
\label{eqn:DeltaPoly1}
\end{align}

Similarly, swapping the role of $\piv$ and $\tauv$, for all $1\leq s,t\leq m$, $1\leq i\leq k$ and $v\in \tauv_s$, we have
the following valid equation:
\begin{align}
\sum_{u\in \piv_t}\left(\sum_{w\in \Delta^i(v)} \Y{u,w} - \sum_{w\in \Delta^i(u)} \Y{w,v}\right) &= 0 \\
\Rightarrow
\sum_{w\in \piv_s}\left(|\Delta^i(w)\cap \piv_t|-|\Delta^i(v)\cap \tauv_t|\right)\Y{w,v}&=0.
\label{eqn:DeltaPoly2}
\end{align}

Using the above equations (\ref{eqn:DeltaPoly1}) and (\ref{eqn:DeltaPoly2}) and the inequalities 
that $\Y{u,v}\ge0$ for all $u,v\in V^k$,
we can imply that $\Y{u,v}=0$ for all $u\in \piv_s$ and
$v\in \tauv_s$ where $|\Delta^i(u)\cap \piv_t|\ne|\Delta^i(v)\cap \tauv_t|$
for some $1\leq t\leq m$ and $1\leq i \leq k$.
This requires some further argument as follows.

We will proceed by induction.
Let $M\in \N^n$.
Assume that $\Y{u,v}=0$ for all $u,v\in V^k$ such
that $|\Delta^i(u)\cap \piv_t|\ne|\Delta^i(v)\cap \tauv_t|$
and $|\Delta^i(u)\cap \piv_t|, |\Delta^i(v)\cap \tauv_t| \le M$.
This is true for $M=0$, so let us assume it is true for $M$, and
we show it is true $M+1$.

Let $1 \leq s\leq m$, $u\in \piv_s$ and $v\in \tauv_s$ such that $|\Delta^i(u)\cap \piv_t|\ne|\Delta^i(v)\cap \tauv_t|$
and $|\Delta^i(u)\cap \piv_t|, |\Delta^i(v)\cap \tauv_t|\le M+1$.
First, assume that $|\Delta^i(u)\cap \piv_t| < |\Delta^i(v)\cap \tauv_t|$.
Then, by equation (\ref{eqn:DeltaPoly1}), we have 
$$\sum_{w\in \tauv_s}\left(|\Delta^i(w)\cap \tauv_t|-|\Delta^i(u)\cap \piv_t|\right)\Y{u,w}=0.$$
Now, by assumption, $\Y{u,w}=0$ for all $w\in \tauv_s$ where $|\Delta^i(w)\cap \tauv_t| < |\Delta^i(u)\cap \piv_t|$;
thus, after eliminating those variables, the equation above becomes a sum of non-negative variables with non-negative coefficients implying
that each variable with a positive coefficient is 0, and in particular, $\Y{u,v}=0$.
Second, assume that $|\Delta^i(v)\cap \tauv_t|<|\Delta^i(u)\cap \piv_t|$.
Then, similarly, using equation (\ref{eqn:DeltaPoly2}), we have  $\Y{u,v}=0$.
\end{proof}

Next, we show that using the partitions $\piv$ and $\tauv$ where $(G,\piv)\equiv_{\Deltav^*} (H,\tauv)$, then we can construct a feasible solution $Y\in \QX^k_{G,H}(\piv,\tauv)$ such that $\Y{u,v}>0$ if and only if $(G,\piv,u)\equiv_\Deltav (H,\tauv,v)$.
\begin{lemma}
\label{lem:Q non-empty}
Let $\piv,\tauv\in\Piv^k$ where $(G,\piv) \equiv_{\Deltav^*} (H,\tauv)$.
Then, $\QX^k_{G,H}(\piv,\tauv)\ne\emptyset$, and more specifically, $Y\in \QX^k_{G,H}(\piv,\tauv)$ where
$Y_\emptyset = 1$ and for all $u,v\in V^k$,
$\Y{u,v} = |\piv_s|^{-1}=|\tauv_s|^{-1}$ if $[u]_\piv=[v]_\tauv=s$ and $\Y{u,v}=0$ otherwise.
\end{lemma}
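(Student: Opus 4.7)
The plan is to verify directly that the proposed $Y$ lies in $\QX^k_{G,H}(\piv,\tauv)$ by checking each defining condition: (a) $Y$ is well-defined as a function of the underlying set $\pair{u,v}$; (b) $Y\in\BX^k$; (c) $\Y{u,v}=0$ whenever $(G,u)\not\equiv_\CP(H,v)$; and (d) $\Y{u,v}=0$ whenever $[u]_\piv\ne[v]_\tauv$. Condition (d) is immediate. For (c), since $\Deltav$ subsumes $\CPv$ in its lexicographic definition, the partitions $\piv$ and $\tauv$ are $\CPv$-stable; so $[u]_\piv=[v]_\tauv$ together with the hypothesis $(G,\piv)\equiv_{\Deltav^*}(H,\tauv)$ forces $(G,\piv,u)\equiv_\CPv(H,\tauv,v)$, which yields $(G,u)\equiv_\CP(H,v)$.

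For (a), the hypothesis makes $\piv$ and $\tauv$ invariant under permutation of tuple components (since $\Deltav$ subsumes $\Symv$), and gives $\piv\approx\tauv$ so that $|\piv_s|=|\tauv_s|$ for corresponding cells and the value $|\piv_s|^{-1}$ is unambiguous. When a set $\pair{u,v}$ admits multiple tuple representations in $V^k$ with distinct combinatorial patterns (because $\pair{u,v}$ has repeated pairs), consistency follows from the $\CPv$-stability of $\piv$ together with the induced relations (\ref{eqn1:comb})--(\ref{eqn2:comb}), which force the candidate values to agree in the degenerate cases.

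The bulk of the work is condition (b). The requirements $Y_\emptyset=1$ and $\Y{u,v}\ge 0$ are immediate from the formula. For equation (\ref{eqn2}), fix $u,v\in V^k$ and $1\le i\le k$. If $[u]_\piv\ne[v]_\tauv$ I would argue both sides vanish after unwinding definitions; otherwise set $s=[u]_\piv=[v]_\tauv$. On the left, the only nonzero terms come from those $w\in\Delta^i(v)$ with $[w]_\tauv=s$, collapsing the sum to $|\Delta^i(v)\cap\tauv_s|\cdot|\piv_s|^{-1}$. On the right, pick any index $j\ne i$ and form the reduced tuples $u'=\phi_i(u,u_j)$, $v'=\phi_i(v,v_j)$, which represent $\pair{u,v}\setminus\{(u_i,v_i)\}$ via coordinate-removal; then $Y_{\pair{u,v}\setminus\{(u_i,v_i)\}}=|\piv_{s'}|^{-1}$ where $s'=[u']_\piv=[v']_\tauv$. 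The equality then reduces to the $\Deltav$-invariants $|\Delta^i(u)\cap\piv_t|=|\Delta^i(v)\cap\tauv_t|$ guaranteed by $(G,\piv)\equiv_{\Deltav^*}(H,\tauv)$, combined with the cell-size match $|\piv_s|=|\tauv_s|$. Equation (\ref{eqn3}) is handled symmetrically by swapping the roles of $\piv$ and $\tauv$. The hardest part will be the combinatorial bookkeeping when $\pair{u,v}$ has repeated pairs: one must interpret $\pair{u,v}\setminus\{(u_i,v_i)\}$ as coordinate-removal rather than naive set difference, and justify that the cell $s'$ and its size $|\piv_{s'}|$ are determined consistently regardless of the representative $(u',v')$ chosen.
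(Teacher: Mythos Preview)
Your overall shape is right, but two steps do not go through as written.

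First, the case $[u]_\piv\ne[v]_\tauv$ in (\ref{eqn2}) cannot be dismissed by ``both sides vanish.'' Take $G=H$, $\piv=\tauv$, $u=(1,2)$, $v=(1,3)$ in different cells, $i=2$, $j=1$: then $u'=v'=(1,1)$, so $Y_{\pair{u,v}\setminus\{(u_i,v_i)\}}=|\piv_{s'}|^{-1}\ne 0$, and the left sum picks up all $w\in\Delta^2(v)$ with $[w]_\tauv=[u]_\piv$, which is also nonzero. The paper does not restrict to $s=t$; it sets $s=[u]_\piv$, $t=[v]_\tauv$, forms $u'=\phi_i(u,u_j)\in\piv_{s'}$, $v'=\phi_i(v,v_j)\in\tauv_{t'}$, and splits on whether $s'=t'$. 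When $s'\ne t'$ it argues $\Delta^i(v)\cap\tauv_s=\emptyset$ (using that $\tauv_{s'}$ consists of tuples with $i$th and $j$th coordinates equal, and the only such tuple in $\Delta^i(v)$ is $v'\in\tauv_{t'}$); when $s'=t'$ it invokes a cell--size identity.

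Second, even in the case you do treat, the equality does not reduce to the $\Deltav$-invariants $|\Delta^i(u)\cap\piv_r|=|\Delta^i(v)\cap\tauv_r|$. Those invariants only let you rewrite the left side as $|\Delta^i(u)\cap\piv_s|\,|\piv_s|^{-1}$; you still need the structural identity
\[
|\piv_s| \;=\; |\Delta^i(u)\cap\piv_s|\cdot|\piv_{s'}|,
\]
which is the content of Lemma~\ref{lem:mapping} (a double-counting argument over the surjection $\phi_{i,j}:\piv_s\to\piv_{s'}$). This is the crux of the verification and is absent from your outline. Similarly, your well-definedness argument via (\ref{eqn1:comb})--(\ref{eqn2:comb}) is circular: those are constraints on $\BX^k$ you are trying to verify, not facts about $(\piv,\tauv)$. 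The paper instead uses Corollary~\ref{cor:set sizes} (built on Lemma~\ref{lem:mapping}) to show that if $\pair{u,v}=\pair{u',v'}$ with $[u]_\piv=[v]_\tauv$ and $[u']_\piv=[v']_\tauv$, the corresponding cell sizes agree.
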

\begin{proof}
First, we need to show that $Y$ is well-defined meaning that for all
$u\in \piv_s,v \in \tauv_{t},u'\in \piv_{s'},v'\in \tauv_{t'}$
where $\pair{u,v}=\pair{u',v'}$, 
if $s=t$, then $s'=t'$ and $|\piv_s|=|\piv_{s'}|=|\tauv_t|=|\tauv_{t'}|$,
otherwise if $s\ne t$, then $s'\ne t'$.
See Corollary \ref{cor:set sizes} for a proof of this.

Clearly, by definition, $Y$ satisfies the constraints (\ref{eqn:Delta}), (\ref{eqn4}) and (\ref{eqn5}),
so we must show that $Y$ satisfies equations (\ref{eqn2}) and (\ref{eqn3}).
Consider the two $k$-tuples $u\in \piv_s$ and $v \in \tauv_t$.
Then,
$$\sum_{w\in \Delta^i(v)} \Y{u,w} =
\sum_{w\in \Delta^i(v)\cap \tauv_s} \Y{u,w} = 
\sum_{w\in \Delta^i(v)\cap \tauv_s} |\tauv_s|^{-1}
=|\Delta^i(v)\cap \tauv_s||\tauv_s|^{-1}
$$
and
$$\sum_{w\in \Delta^i(u)} \Y{w,v} =
\sum_{w\in \Delta^i(u)\cap \piv_t} \Y{w,v} = 
\sum_{w\in \Delta^i(u)\cap \piv_t} |\piv_t|^{-1}
=|\Delta^i(u)\cap \piv_t||\piv_t|^{-1}.
$$
Let $u'=\phi_i(u,u_j)=(u_1,...,u_{i-1},u_j,u_{i+1},...,u_k)$ and let
$v'= \phi_i(v,v_j)$ $ =(v_1,...,v_{i-1},v_j,v_{i+1},...,v_k)$, and let $1 \leq s',t'\leq m$ where $u'\in \piv_{s'}$
and $v'\in \tauv_{t'}$.  Then, $Y_{\pair{u,v}\setminus\{(u_i,v_i)\}} = \Y{u',v'}$,
and $\Y{u',v'}=|\tauv_{t'}|^{-1}=|\piv_{s'}|^{-1}$ if $s'=t'$ and
$\Y{u',v'}=0$ otherwise.
Thus, in order to show that equations (\ref{eqn2}) and (\ref{eqn3}) are satisfied,
we must show that, if $s'=t'$, then
$|\Delta^i(u)\cap \piv_t||\piv_t|^{-1} = |\piv_{s'}|^{-1}$ and
$|\Delta^i(v)\cap \tauv_s||\tauv_s|^{-1}= |\tauv_{s'}|^{-1}$,
and if $s'\ne t'$, then
$|\Delta^i(u)\cap \piv_t|=0$ and $|\Delta^i(v)\cap \tauv_s|=0$.

First,  assume that $s'=t'$.  By Lemma \ref{lem:mapping}, we have $|\piv_s| = |\Delta^i(u) \cap \piv_s||\piv_{s'}|$
and $|\piv_t| =|\Delta^i(u) \cap \piv_t||\piv_{t'}|$ as required.

Next, assume $s'\ne t'$. %, and so, $\Y{u',v'}=0$. 
First, let $w\in\Delta^i(v)$. 
Then, $|\Delta^i(u)\cap \piv_{s'}|=|\{u'\}|=1$,
but $|\Delta^i(w)\cap \tauv_{s'}| = 0$ since $\tauv_{s'}$ must have the same
combinatorial type as $\piv_{s'}$ meaning that $u_{i-1}=u_i$ for all $u\in \tauv_{s'}$
and the only tuple in $\Delta^i(w)=\Delta^i(v)$ of that type is $v'$, but $v'\in \tauv_{t'}$.
So, $[u]_\piv \ne [w]_\tauv$, and thus, $|\Delta^i(v)\cap \piv_t|=0$.
Similarly, let $w'\in\Delta^i(u)$. 
Then, $|\Delta^i(v)\cap \tauv_{t'}|=|\{v'\}|=1$,
but $|\Delta^i(w')\cap \piv_{s'}|=0$, and therefore, $[w']_\piv \ne [v]_\tauv$, and thus, $|\Delta^i(u)\cap \tauv_s|=0$
as required.

\end{proof}

%As an aside, in the special case that $G$ and $H$ are complete graphs, then $\QX^k_G=\BX^k$ and $\QX^k_{G,H}=\BX^k$,
%so $\Delta$ and $\Deltav$ are also combinatorially equivalent to $\BX^k$.

\subsection{The $\delta$-V-C Algorithm and the Tinhofer Polytope}
\label{sec:VC+SA}
In this section, we prove Theorem \ref{thm:SheraliAdamsVC} by showing that $\delta$ is combinatorially equivalent to $\TX^k_G$,
and also, $\deltav$ is combinatorially equivalent to $\TX^k_{G,H}$.
Specifically, we have that $\delta_G^k$ is complete if and only if $\TX^{k+1}_G=\{\IX^k\}$ or equivalently $\T^{k+1}_G=\{\I\}$,
we have $(G,\deltav_G^k)\equiv_\deltav(H,\deltav_H^k)$ if and only if $\TX^k_{G,H}\ne\emptyset$
or equivalently $\T^k_{G,H}\ne\emptyset$ as required for Theorem \ref{thm:SheraliAdamsVC}.
We now show that $\deltav$ and $\TX^k_{G,H}$ satisfy that two conditions of Definition \ref{defn: comb equiv}.

\begin{lemma}
\label{lem:deltaPoly}
Let $\piv,\tauv\in\Piv^k$.
For all $u,v\in V^k$, if $(G,\piv,u) \not\equiv_{\deltav} (H,\tauv,v)$, then $\Y{u,v}=0$ for all $Y\in \TX^k_{G,H}(\piv,\tauv)$.
\end{lemma}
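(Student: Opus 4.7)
The plan is to mirror the proof of Lemma \ref{lem:DeltaPoly}, replacing the $\Delta$-based summations with ones derived from the defining equation (\ref{eqn1:delta}) of $\TX^k_{G,H}$. First I would dispose of the cases where $(G,\piv,u)\not\equiv_{\CPv}(H,\tauv,v)$ or $(G,\piv,u)\not\equiv_{\Symv}(H,\tauv,v)$. The $\Symv$ case uses the identity $\Y{u,v}=\Y{\sigma(u),\sigma(v)}$ (valid because $\pair{u,v}=\pair{\sigma(u),\sigma(v)}$) applied to some generator $\sigma\in\Sigma_k$ for which $[\sigma(u)]_\piv\neq[\sigma(v)]_\tauv$. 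The $\CPv$ case splits into two subcases: when $u\not\equiv_\CP v$, the combinatorial equations (\ref{eqn1:comb})/(\ref{eqn2:comb}) already force $\Y{u,v}=0$; when the edge types differ, the inclusion $\TX^k_{G,H}\subseteq\QX^k_{G,H}$ just proved in the previous subsection gives $\Y{u,v}=0$ via (\ref{eqn:Delta}). What remains is the substantive case $(G,\piv,u)\equiv_{\CPv\wedge\Symv}(H,\tauv,v)$ with $\deltav_{G,\piv}(u)\neq\deltav_{H,\tauv}(v)$.

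For this case I would derive two families of valid equations on $\TX^k_{G,H}(\piv,\tauv)$ analogous to (\ref{eqn:DeltaPoly1}) and (\ref{eqn:DeltaPoly2}). Fixing $u\in\piv_s$ and summing (\ref{eqn1:delta}) over $v\in\tauv_t$, then swapping the order of summation and using $\Y{u,w}=0$ for $w\notin\tauv_s$, $\Y{w,v}=0$ for $w\notin\piv_t$, the tuple sum $\sum_{v\in\tauv_t}\Y{w,v}=1$ valid for $w\in\piv_t$, and $\sum_{w\in\tauv_s}\Y{u,w}=1$, yields
\[
\sum_{w\in\tauv_s}\bigl(|\delta^i_H(w)\cap\tauv_t|-|\delta^i_G(u)\cap\piv_t|\bigr)\Y{u,w}=0;
\]
swapping the roles of $(G,\piv,u)$ and $(H,\tauv,v)$ gives the mirror equation. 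For the $\deltabar$-analogs, subtracting (\ref{eqn1:delta}) from the Birkhoff identities (\ref{eqn2}) and (\ref{eqn3}) produces $\sum_{w\in\deltabar^i_H(v)}\Y{u,w}=\sum_{w\in\deltabar^i_G(u)}\Y{w,v}$, and the same summation trick yields
\[
\sum_{w\in\tauv_s}\bigl(|\deltabar^i_H(w)\cap\tauv_t|-|\deltabar^i_G(u)\cap\piv_t|\bigr)\Y{u,w}=0
\]
together with its mirror.

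The conclusion then follows by induction on
$M=\max\{|\delta^i_G(u)\cap\piv_t|,|\delta^i_H(v)\cap\tauv_t|,|\deltabar^i_G(u)\cap\piv_t|,|\deltabar^i_H(v)\cap\tauv_t|\}$, exactly parallel to the induction in Lemma \ref{lem:DeltaPoly}: once all pairs with strictly smaller mismatch value have been shown to have $\Y{\cdot,\cdot}=0$, the four displayed equations become sums of nonnegative variables with nonnegative coefficients equal to zero, and so every variable with a strictly positive coefficient must vanish, which includes $\Y{u,v}$. The only real obstacle is the bookkeeping to interleave the $\delta$ and $\deltabar$ families in the induction so that whichever of the four quantities first witnesses a mismatch is the one that triggers the zeroing; once that is organized, the argument is a faithful transposition of the $\Delta$-polytope proof to the Tinhofer setting.
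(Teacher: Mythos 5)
Your overall plan matches the paper's proof: dispose of the $\CPv$ and $\Symv$ cases (which the paper just says are "clear", and your explicit handling of them is correct), derive the two equation families (\ref{eqn:delta1}) and (\ref{eqn:delta2}) from (\ref{eqn1:delta}) by the summation/swap argument, derive the $\deltabar$ analogue by subtraction, and then run a nonnegativity induction. Your $\deltabar$ derivation uses (\ref{eqn2})$-$(\ref{eqn3}) where the paper cites (\ref{eqn1:tuple sum})--(\ref{eqn2:tuple sum}), but these are the same fact and the difference is cosmetic.

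The induction at the end, however, is not set up correctly as you wrote it. You propose inducting on the joint quantity
$M=\max\{|\delta^i_G(u)\cap\piv_t|,|\delta^i_H(v)\cap\tauv_t|,|\deltabar^i_G(u)\cap\piv_t|,|\deltabar^i_H(v)\cap\tauv_t|\}$
and say that "once all pairs with strictly smaller mismatch value" are handled, the nonnegativity argument closes. That step does not go through: in the inductive step for, say, a $\delta$-mismatch $|\delta^i_G(u)\cap\piv_t|<|\delta^i_H(v)\cap\tauv_t|$, the equation
\[
\sum_{w\in \tauv_s}\bigl(|\delta^i_H(w)\cap \tauv_t|-|\delta^i_G(u)\cap \piv_t|\bigr)\Y{u,w}=0
\]
requires you to already know $\Y{u,w}=0$ for every $w\in\tauv_s$ with $|\delta^i_H(w)\cap\tauv_t|<|\delta^i_G(u)\cap\piv_t|$. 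For such a pair $(u,w)$, the $\delta$-quantities are indeed $\le M$, but the $\deltabar$-quantities $|\deltabar^i_G(u)\cap\piv_t|$ and $|\deltabar^i_H(w)\cap\tauv_t|$ can still equal $M+1$ or worse, so your joint-max hypothesis simply does not cover $(u,w)$ and the step stalls. The "interleaving" you flag as bookkeeping is not bookkeeping --- it is genuinely unnecessary and, worse, it is what breaks the argument. The correct move (and what the paper does) is to \emph{not} combine the four quantities: run the induction exactly as in Lemma~\ref{lem:DeltaPoly} but with $\Delta^i$ replaced by $\delta^i$, bounding only $|\delta^i_G(u)\cap\piv_t|$ and $|\delta^i_H(v)\cap\tauv_t|$; then run a second, entirely separate induction with $\deltabar^i$ in place of $\delta^i$. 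Each family of equations is self-contained and suffices on its own. Once you drop the joint max and keep the two inductions separate, the rest of your proposal is a faithful reproduction of the paper's argument.
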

\begin{proof} 
The proof of this lemma is analogous to the proof of Lemma \ref{lem:deltaPoly}.
First, if $(G,\piv,u) \not\equiv_{\CPv} (H,\tauv,v)$ or $(G,\piv,u) \not\equiv_{\Symv} (H,\tauv,v)$, then clearly $\Y{u,v}=0$ for all $Y\in \TX^k_{G,H}(\piv,\tauv)$,
so we assume that $(G,\piv,u) \equiv_{\CPv} (H,\tauv,v)$ and $(G,\piv,u) \equiv_{\Symv} (H,\tauv,v)$.
Then, $(G,\piv,u) \not\equiv_{\delta} (H,\tauv,v)$ means that
$|\delta_G^i(u)\cap \piv_t| \ne |\delta_H^i(v)\cap \tauv_t|$ or $|\deltabar_G^i(u)\cap \piv_t| \ne |\deltabar_H^i(v)\cap \tauv_t|$ for some $1\leq t \leq m$ and $1\leq i \leq k$.

First, we derive some valid equations for $P(\piv,\tauv)$.
Now, for all $1\leq s,t\leq m$, $1\leq i\leq k$ and $u\in \piv_s$, we have the following valid equation:
\begin{align}
\sum_{v\in \tauv_t}\left(\sum_{w\in \delta_H^i(v)} \Y{u,w} - \sum_{w\in \delta_G^i(u)} \Y{w,v}\right) &= 0 \\
\Rightarrow
\sum_{v\in \tauv_t}\sum_{w\in \delta_H^i(v)\cap \tauv_s} \Y{u,w} - \sum_{v\in \tauv_t}\sum_{w\in \delta_G^i(u)\cap \piv_t} \Y{w,v} &= 0  \\
\Rightarrow
\sum_{w\in \tauv_s} \sum_{v\in \delta^i_H(w)\cap \tauv_t} \Y{u,w} - \sum_{w\in \delta_G^i(u)\cap \piv_t}\sum_{v\in \tauv_t} \Y{w,v} &= 0 \\
\Rightarrow
\sum_{w\in \tauv_s}|\delta^i_H(w)\cap \tauv_t| \Y{u,w} - \sum_{w\in \delta_G^i(u)\cap \piv_t}1 &= 0 \\
\Rightarrow
\sum_{w\in \tauv_s}|\delta^i_H(w)\cap \tauv_t| \Y{u,w} - |\delta_G^i(u)\cap \piv_t| &= 0 \\
\Rightarrow
\sum_{w\in \tauv_s}|\delta^i_H(w)\cap \tauv_t| \Y{u,w} - |\delta_G^i(u)\cap \piv_t|\sum_{w\in \tauv_s} \Y{u,w} &= 0 \\
\Rightarrow
\sum_{w\in \tauv_s}\left(|\delta^i_H(w)\cap \tauv_t|-|\delta_G^i(u)\cap \piv_t|\right)\Y{u,w}&=0.
\label{eqn:delta1}
\end{align}

Similarly, swapping the role of $G$ and $H$, and $\piv$ and $\tauv$, we have
the following valid equation for all $1\leq s,t\leq m$, $1\leq i\leq k$ and $v\in \tauv_s$:
\begin{align}
\sum_{u\in \piv_t}\left(\sum_{w\in \delta_H^i(v)} \Y{u,w} - \sum_{w\in \delta_G^i(u)} \Y{w,v}\right) &= 0 \\
\Rightarrow
\sum_{w\in \piv_s}\left(|\delta^i_G(w)\cap \piv_t|-|\delta_H^i(v)\cap \tauv_t|\right)\Y{w,v}&=0.
\label{eqn:delta2}
\end{align}

Using the above equations (\ref{eqn:delta1}) and (\ref{eqn:delta2}) and the inequalities 
that $\Y{u,v}\ge0$ for all $u,v\in V^k$,
we can imply that $\Y{u,v}=0$ for all $u\in \piv_s$ and
$v\in \tauv_s$ where $|\delta_G^i(u)\cap \piv_t|\ne|\delta_H^i(v)\cap \tauv_t|$
for some $1\leq t\leq m$ and $1\leq i \leq k$.
This requires an argument by induction that is analogous to the argument in the proof of Lemma \ref{lem:DeltaPoly} by replacing $\Delta$ with $\delta_G$.

Next, we prove that $\Y{u,v}=0$ for all $u\in \piv_s$ and
$v\in \tauv_s$ where $|\deltabar^i_G(u)\cap \piv_t|\ne|\deltabar^i_H(v)\cap \tau_t|$
for some $1\leq t\leq m$ and $1\leq i \leq k$.
First, using equations (\ref{eqn1:delta}), (\ref{eqn1:tuple sum}) and (\ref{eqn2:tuple sum}), we have
\[\sum_{w\in\delta^i_H(v)} \Y{u,w} -
\sum_{w\in\delta^i_G(u)}\Y{w,v}=0 
\quad \Rightarrow \quad 
\sum_{w\in\deltabar^i_H(v)} \Y{u,w} -
\sum_{w\in\deltabar^i_G(u)}\Y{w,v}=0.
\]
Thus, we can proceed as per the $\delta$ case replacing $\delta$ with $\deltabar$ as required.
\end{proof}

\begin{lemma}
\label{lem:P non-empty}
Let $\piv,\tauv\in\Piv^k$ where $(G,\piv)\equiv_{\deltav^*}(H,\tauv)$.
Then, $\TX^k_{G,H}(\piv,\tauv)\ne\emptyset$, and more specifically, $Y\in \TX^k_{G,H}(\piv,\tauv)$ where
$Y_\emptyset = 1$ and for all $u,v\in V^k$,
$\Y{u,v} = |\piv_s|^{-1}=|\tauv_s|^{-1}$ if $[u]_\piv=[v]_\tauv=s$ and $\Y{u,v}=0$ otherwise.
\end{lemma}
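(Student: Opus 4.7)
The plan is to mirror the proof of Lemma~\ref{lem:Q non-empty} as closely as possible, replacing $\QX^k_{G,H}$ by $\TX^k_{G,H}$ and $\Deltav$ by $\deltav$, and to isolate the one genuinely new verification: the Tinhofer $\delta$-equations~(\ref{eqn1:delta}). To set this up, note that $(G,\piv)\equiv_{\deltav^*}(H,\tauv)$ implies $(G,\piv)\equiv_{\Deltav^*}(H,\tauv)$ by Lemma~\ref{lem:delta+Delta} combined with Lemma~\ref{lem:vcv equiv}; in particular $\piv\approx\tauv$ and $(G,\piv)\equiv_{\CPv^*}(H,\tauv)$ hold, so Corollary~\ref{cor:set sizes} gives well-definedness of $Y$ in exactly the same way as before.

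The constraints defining $\TX^k_{G,H}(\piv,\tauv)$ split into three families. The partition condition $\Y{u,v}=0$ when $[u]_\piv\neq[v]_\tauv$, together with $Y_\emptyset=1$ and $\Y{u,v}\geq 0$, are immediate from the definition of $Y$. The Birkhoff equations~(\ref{eqn2}) and~(\ref{eqn3}) are verified by the same calculation used in Lemma~\ref{lem:Q non-empty}: that calculation uses only $\Deltav$-stability and matched cell sizes, which remain available. Everything therefore reduces to checking equation~(\ref{eqn1:delta}).

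To do that, fix $u\in\piv_s$, $v\in\tauv_t$, and $1\leq i\leq k$. Since $Y$ is supported on matched cells with constant value on each,
\[\sum_{w\in\delta^i_H(v)}\Y{u,w}=\frac{|\delta^i_H(v)\cap\tauv_s|}{|\piv_s|},\qquad \sum_{w\in\delta^i_G(u)}\Y{w,v}=\frac{|\delta^i_G(u)\cap\piv_t|}{|\tauv_t|}.\]
By $\deltav_G$-stability of $\piv$, $\deltav_H$-stability of $\tauv$, and the cell-matching provided by $(G,\piv)\equiv_{\deltav^*}(H,\tauv)$, the values $|\delta^i_G(x)\cap\piv_b|$ for $x\in\piv_a$ and $|\delta^i_H(y)\cap\tauv_b|$ for $y\in\tauv_a$ depend only on $(a,b,i)$ and agree; call the common value $d^i(a,b)$. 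Combined with $|\piv_a|=|\tauv_a|$, equation~(\ref{eqn1:delta}) reduces to the identity $d^i(s,t)\cdot|\piv_t|=d^i(t,s)\cdot|\piv_s|$.

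This last identity is the main (and only interesting) step. I will prove it by double-counting the set $R=\{(x,y)\in\piv_s\times\piv_t:y\in\delta^i_G(x)\}$. The relation $y\in\delta^i_G(x)$ is symmetric in $x$ and $y$, since it says exactly that $x$ and $y$ agree outside coordinate $i$ and $\{x_i,y_i\}\in E_G$. Counting $|R|$ by the first coordinate yields $|\piv_s|\cdot d^i(s,t)$, and counting by the second yields $|\piv_t|\cdot d^i(t,s)$. The hard part, such as it is, is recognizing that this single symmetry identity is precisely what the $\deltav$-stability hypothesis provides beyond the $\Deltav$-stability used in Lemma~\ref{lem:Q non-empty}; everything else is bookkeeping inherited from that earlier proof.
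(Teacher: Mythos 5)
Your proof is correct and takes essentially the same route as the paper's: both reduce to verifying equation~(\ref{eqn1:delta}) by inheriting the Birkhoff checks from Lemma~\ref{lem:Q non-empty}, and both establish the needed balance via the double-counting identity $\sum_{x\in\piv_s}|\delta^i_G(x)\cap\piv_t|=\sum_{y\in\piv_t}|\delta^i_G(y)\cap\piv_s|$ (the paper writes this chain of equalities directly; you phrase it as counting $R$ two ways). One small transcription slip: the identity you actually need, and which your double-count delivers, is $d^i(s,t)\,|\piv_s| = d^i(t,s)\,|\piv_t|$, not $d^i(s,t)\,|\piv_t| = d^i(t,s)\,|\piv_s|$ as written; this does not affect the argument.
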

\begin{proof}
First, we have $(G,\piv)\equiv_{\Deltav^*}(H,\tauv)$. Thus, from Lemma \ref{lem:Q non-empty}, we have the $Y$ is well-defined and $Y\in \QX^k_{G,H}(\piv,\tauv)$.
Thus, it remains to show that $Y$ satisfies (\ref{eqn1:delta}).

Now, for all $1\leq s,t \leq m$, for all $1\leq r \leq k$ and for all $u\in V_s^w,v \in \tauv_t$, we have
\begin{align*}
\sum_{w\in\delta^i_H(v)\cap \tauv_s}\Y{u,w} - \sum_{w\in\delta^i_G(u)\cap \piv_t}\Y{w,v} &=
\sum_{w\in\delta^i_H(v)\cap \tauv_s}|\tauv_s|^{-1} - \sum_{w\in\delta^i_G(u)\cap \piv_t}|\piv_t|^{-1} \\
&= |\delta^i_H(v)\cap \tauv_s||\tauv_s|^{-1} - |\delta^i_G(u)\cap \piv_t||\piv_t|^{-1}.
\end{align*}
Next, we prove that the last expression is 0.
First, note that for all $1\leq s,t\leq m$, for all $1\leq r \leq k$ and for all $u\in \piv_s$
and $v\in \piv_t$, we must have 
\begin{align}
\sum_{u\in \piv_s}|\delta^i_G(u)\cap \piv_t|&=
\sum_{u\in \piv_t}|\delta^i_G(u)\cap \piv_s| \\
\Rightarrow 
|\piv_s||\delta^i_G(u)\cap \piv_t| &= |\piv_t||\delta^i_H(v)\cap \piv_s| \\
\Rightarrow
|\tauv_s||\delta^i_G(u)\cap \piv_t| &= |\piv_t||\delta^i_H(v)\cap \tauv_s| \\
\Rightarrow
|\delta^i_G(u)\cap \piv_t||\piv_t|^{-1} &= |\delta^i_H(v)\cap \tauv_s||\tauv_s|^{-1}
\end{align}
as required.
\end{proof}

\subsection{The $\wl$-V-C Algorithm and the $\Delta$-Polytope}
\label{sec:WL+BP}
In this section, we prove Theorem \ref{thm:SheraliAdamsWL}.
We have seen that $\Delta$ and $\Deltav$ are combinatorially equivalent to $\QX^k_G$ and $\QX^k_{G,H}$ respectively.
Moreover, we have seen that the $k$-dim $\wl$ and $\wlv$ are essentially equivalent to $(k+1)$-dim $\Delta$ and $\Deltav$ respectively.
Combining these facts, the equivalence relation $\wl$ and the polyhedron $\QX^{k+1}_G$ (actually its projection onto $\BX^k$) are combinatorially equivalent in the following sense.
\begin{corollary}
Let $G\in\Graphs$ and $k>1$. Let $\pi\in\Pi^k$. 
We have $\wl_G^*(\pi)$ is complete if and only if $\QX^{k+1}_G(\nu(\pi))=\{\IX^{k+1}\}$.
Moreover, for all $u,v\in V^k$, we have
$(G,\wl_G^*(\pi),u) \not\equiv_\wl (G,\wl_G^*(\pi),v)$ if and only if $\Y{u,v}=0$ for all $Y\in \QX^{k+1}_G(\nu(\pi))$.
\end{corollary}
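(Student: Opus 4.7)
The plan is to derive both statements of the corollary from two ingredients already in hand: the combinatorial equivalence of $\Delta$ with $\QX^{k+1}_G$ established by Lemmas \ref{lem:DeltaPoly} and \ref{lem:Q non-empty}, and the identity $\rho(\Delta_G^*(\nu(\pi))) = \wl_G^*(\pi)$ given by Corollary \ref{cor:xi=Delta}.

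First I would dispatch the ``moreover'' statement, since the first equivalence then falls out as a special case. Fix $u,v\in V^k$. Because $\wl_G^*(\pi)$ is $\wl_G$-stable, the relation $(G,\wl_G^*(\pi),u)\equiv_{\wl}(G,\wl_G^*(\pi),v)$ collapses to $u\equiv_{\wl_G^*(\pi)}v$. By Corollary \ref{cor:xi=Delta} and the definition of $\rho$, this is equivalent to $\nu(u)\equiv_{\Delta_G^*(\nu(\pi))}\nu(v)$, and then, since $\Delta_G^*(\nu(\pi))$ is $\Delta_G$-stable, to $(G,\Delta_G^*(\nu(\pi)),\nu(u))\equiv_{\Delta}(G,\Delta_G^*(\nu(\pi)),\nu(v))$. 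Combinatorial equivalence of $\Delta$ with $\QX^{k+1}_G$ (applied at level $k+1$ to the partition $\nu(\pi)$) converts the inequivalence of these tuples into $Y_{\pair{\nu(u),\nu(v)}}=0$ for every $Y\in\QX^{k+1}_G(\nu(\pi))$. The final identification is $\pair{\nu(u),\nu(v)}=\pair{u,v}$: the extra coordinate pair $(u_k,v_k)$ introduced by $\nu$ is already present in $\pair{u,v}$, so as sets they coincide, and hence $Y_{\pair{\nu(u),\nu(v)}}=\Y{u,v}$. Chaining the biconditionals gives the moreover claim.

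The first biconditional is then short. If $\QX^{k+1}_G(\nu(\pi))=\{\IX^{k+1}\}$, then for any $u\neq v\in V^k$ we have $\Y{u,v}=0$, so the moreover part gives $u\not\equiv_{\wl_G^*(\pi)}v$, whence $\wl_G^*(\pi)$ is complete. For the converse, assume $\wl_G^*(\pi)$ is complete. For any distinct $a,b\in V$, apply the moreover part to $u=(a,\ldots,a)$ and $v=(b,\ldots,b)$ to obtain $Y_{\{(a,b)\}}=\Y{u,v}=0$ for every $Y\in\QX^{k+1}_G(\nu(\pi))$. The row-sum constraint $\sum_{w\in V}Y_{\{(a,w)\}}=1$ then forces $Y_{\{(a,a)\}}=1$, while the Birkhoff equations (\ref{e2})--(\ref{e3}) together with non-negativity give $Y_I\le Y_{I'}$ whenever $I'\subseteq I$, so $Y_I=0$ for any $I$ containing an off-diagonal pair; a short induction on $|I|$ then pins down $Y_I=1$ on diagonal index sets. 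Thus $Y=\IX^{k+1}$ and $\QX^{k+1}_G(\nu(\pi))=\{\IX^{k+1}\}$.

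The main obstacle is purely bookkeeping: ensuring that the $k$-dimensional $\wl$-equivalence on $V^k$ really does correspond to the $(k+1)$-dimensional $\Delta$-equivalence on $V^{k+1}$ restricted to the diagonal image of $\nu$, and that the change from $k$-tuple indexing to set indexing in $V^2$ loses no information. The set identity $\pair{\nu(u),\nu(v)}=\pair{u,v}$ trivialises this, which is why the full result falls out essentially immediately from Corollary \ref{cor:xi=Delta} together with the combinatorial equivalence with $\QX^{k+1}_G$.
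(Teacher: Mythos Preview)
Your proof is correct and follows essentially the same route as the paper's: both arguments hinge on Corollary~\ref{cor:xi=Delta} (giving $\rho(\Delta_G^*(\nu(\pi)))=\wl_G^*(\pi)$), the combinatorial equivalence of $\Delta$ with $\QX^{k+1}_G$, and the set identity $\pair{\nu(u),\nu(v)}=\pair{u,v}$. The only cosmetic difference is in the first biconditional: the paper invokes the already-proved general lemma that combinatorial equivalence of $P$ and $\alpha$ gives $P(\pi)=\{\IX^k\}$ iff $\alpha_G^*(\pi)$ is complete (applied to $\Delta$ and $\QX^{k+1}_G$ at $\nu(\pi)$, together with the observation after Corollary~\ref{cor:xi=Delta} that $\wl_G^*(\pi)$ is complete iff $\Delta_G^*(\nu(\pi))$ is), whereas you re-derive this consequence by hand from the ``moreover'' clause via the row-sum and monotonicity argument. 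Your route works but is longer than necessary given what is already available.
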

\begin{proof}
Let $\pi'=\nu(\pi)$, and let $u'=u'$ and $v'=v'$.
Firstly, from Corollary \ref{cor:xi=Delta}, we have that $\wl_G^*(\pi)$ is complete if and only if $\Delta_G^*(\pi')$ is complete,
and thus, $\wl_G^*(\pi)$ is complete if and only if $\QX^{k+1}_G(\pi')=\{\IX^{k+1}\}$ since $\Delta$ is combinatorially equivalent to $\QX^{k+1}$. 
Secondly, we have $(G,\wl_G^*(\pi),u) \not\equiv_\wl (G,\wl_G^*(\pi),v)$ if and only if $(\Delta_G^*(\pi'),u') \not\equiv_{\Delta} (\Delta_G^*(\pi'),v')$ since $\wl_G^*(\pi) = \rho(\Delta_G^*(\pi'))$ from Corollary \ref{cor:xi=Delta}.
Also, $(G,\Delta_G^*(\pi'),u') \not\equiv_{\Delta} (G,\Delta_G^*(\pi'),v')$ if and only if $\Y{u',v'}=0$ for all $Y\in \QX^{k+1}_G(\pi')$ since $\Delta$ and $\QX^{k+1}_G$ are combinatorially equivalent.
The result follows then since $\pair{u,v}=\pair{u',v'}$.
\end{proof}

Specifically, we have that $\wl_G^k$ is complete if and only if $\QX^{k+1}_G=\{\IX^{k+1}\}$ or equivalently $\Q^{k+1}_G=\{\I\}$
 as required in Theorem \ref{thm:SheraliAdamsWL}.
Similarly, $\wlv$ is combinatorially equivalent to $\QX^{k+1}_{G,H}$ (actually its projection onto $\BX^k$) in the following sense.
\begin{corollary}
Let $G,H\in\Graphs$ and let $k>1$. Let $\piv,\tauv\in\Piv^k$ where $\piv\approx\tauv$ and $\nuv(\piv)\approx\nuv(\tauv)$.
Then, $\QX^k_{G,H}(\nuv(\piv),\nuv(\tauv)) \ne \emptyset$ if and only if $(G,\wlv_G^*(\piv))\equiv_\wlv (H,\wlv_H^*(\tauv))$.
Also, if $(G,\wlv_G^*(\piv))\equiv_\wlv (H,\wlv_H^*(\tauv))$, then for all  $u,v\in V^k$,
we have $(G,\wlv_G^*(\piv),u) \not\equiv_\wlv (H,\wlv_H^*(\tauv),v)$ if and only if $\Y{u,v}=0$ for all $Y\in \QX^k_{G,H}(\nuv(\piv),\nuv(\tauv))$.
\end{corollary}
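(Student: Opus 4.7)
The plan is to reduce this corollary to the corresponding automorphism-style result by running the automorphism argument twice (for $G$ with $\piv$ and for $H$ with $\tauv$), with the one new ingredient being that $(G,\piv)\equiv_{\wlv}(H,\tauv)$ is an equivalence across the two inputs. Concretely, I would let $\piv' = \nuv(\piv)$ and $\tauv' = \nuv(\tauv)$, so that $\piv'\approx\tauv'$ by hypothesis, and work simultaneously with Corollary~\ref{cor:xiv=Deltav}, which equates information about the $k$-dim $\wlv$-stable refinements of $(\piv,\tauv)$ with the $(k{+}1)$-dim $\Deltav$-stable refinements of $(\piv',\tauv')$, and with the combinatorial equivalence of $\Deltav$ and $\QX^{k+1}_{G,H}$ established in the previous subsection via Lemmas \ref{lem:DeltaPoly} and \ref{lem:Q non-empty} (I will treat the ``$\QX^k_{G,H}$'' in the statement as a typo for $\QX^{k+1}_{G,H}$, as forced both by dimension and by the analogue with the preceding corollary).

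For the first equivalence, the combinatorial equivalence of $\Deltav$ and $\QX^{k+1}_{G,H}$ together with the partition-polytope lemmas (Lemma~\ref{lem:partition poly empty} and the subsequent one) give that $\QX^{k+1}_{G,H}(\piv',\tauv')\ne\emptyset$ if and only if $(G,\Deltav_G^*(\piv'))\equiv_{\Deltav}(H,\Deltav_H^*(\tauv'))$. Applying Corollary~\ref{cor:xiv=Deltav} to the right-hand condition converts it directly into $(G,\wlv_G^*(\piv))\equiv_{\wlv}(H,\wlv_H^*(\tauv))$, yielding the first claim.

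For the second assertion, suppose $(G,\wlv_G^*(\piv))\equiv_{\wlv}(H,\wlv_H^*(\tauv))$, and pick $u,v\in V^k$. I would set $u' = \nu(u)$ and $v' = \nu(v)$ in $V^{k+1}$; since $\nu$ merely duplicates the last coordinate, $\pair{u',v'}=\pair{u,v}$, so $\Y{u,v}=\Y{u',v'}$. By the combinatorial equivalence of $\Deltav$ with $\QX^{k+1}_{G,H}$ applied to the partitions $\piv',\tauv'$, the condition $\Y{u,v}=0$ for all $Y\in\QX^{k+1}_{G,H}(\piv',\tauv')$ is equivalent to $(G,\Deltav_G^*(\piv'),u')\not\equiv_{\Deltav}(H,\Deltav_H^*(\tauv'),v')$. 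Because both sides are $\Deltav$-stable and the first part has established $(G,\Deltav_G^*(\piv'))\equiv_{\Deltav}(H,\Deltav_H^*(\tauv'))$, this non-equivalence reduces to the class-index inequality $[u']_{\Deltav_G^*(\piv')}\ne[v']_{\Deltav_H^*(\tauv')}$. By the definition of $\rhov$ (which evaluates at $\nu$-lifts) and the $\simeq$ clause in Corollary~\ref{cor:xiv=Deltav}, this in turn is equivalent to $[u]_{\wlv_G^*(\piv)}\ne[v]_{\wlv_H^*(\tauv)}$, which, using $\wlv$-stability of $\wlv_G^*(\piv)$ and $\wlv_H^*(\tauv)$, is exactly $(G,\wlv_G^*(\piv),u)\not\equiv_{\wlv}(H,\wlv_H^*(\tauv),v)$.

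The main obstacle I foresee is mostly bookkeeping: one must carefully track the distinction between preorders and their induced equivalence relations, use that $\nu$-lifts interact correctly with $\rhov$ (i.e.\ $[\nu(u)]_{\alphav}=[\nu(v)]_{\alphav}$ iff $[u]_{\rhov(\alphav)}=[v]_{\rhov(\alphav)}$) and that $\pair{\nu(u),\nu(v)}=\pair{u,v}$, and ensure that $\nuv(\piv)\approx\nuv(\tauv)$ so that Lemma~\ref{lem:Q non-empty} is applicable in the non-emptiness direction. Once these small compatibilities are in place, the result falls out mechanically from chaining Corollary~\ref{cor:xiv=Deltav} with the combinatorial equivalence of $\Deltav$ and $\QX^{k+1}_{G,H}$.
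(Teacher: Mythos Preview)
Your proposal is correct and follows essentially the same route as the paper: set $\piv'=\nuv(\piv)$, $\tauv'=\nuv(\tauv)$, $u'=\nu(u)$, $v'=\nu(v)$, invoke Corollary~\ref{cor:xiv=Deltav} to pass between the $\wlv$ and $\Deltav$ conditions, use the combinatorial equivalence of $\Deltav$ with $\QX^{k+1}_{G,H}$, and close with $\pair{u,v}=\pair{u',v'}$. Your observation that the statement's $\QX^k_{G,H}$ should read $\QX^{k+1}_{G,H}$ is also correct, matching the automorphism corollary immediately preceding it.
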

\begin{proof}
Let $\piv'=\nuv(\piv)$ and $\tauv'=\nuv(\tauv)$, and let $u'=\nu(u)$ and $v'=\nu(v)$.
Firstly, from Corollary \ref{cor:xiv=Deltav}, $(G,\wlv_G^*(\piv))\equiv_\wlv (H,\wlv_H^*(\tauv))$ if and only if $(G,\Deltav_G^*(\piv'))\equiv_\Deltav (H,\Deltav_H^*(\tauv'))$, and thus, since $\Delta$ is combinatorial equivalent to $\QX^k_{G,H}$, we have $(G,\wlv_G^*(\piv))\equiv_\wlv (H,\wlv_H^*(\tauv))$ if and only if $\QX^k_{G,H}(\piv',\tauv')\ne\emptyset$.
Secondly,
we have $(G,\wlv_G^*(\piv),u) \not\equiv_\wlv (H,\wlv_H^*(\tauv),v)$ if and only if $(G,\Deltav_G^*(\piv'),u') \not\equiv_{\Deltav} (H,\Deltav_H^*(\tauv'),v')$ since $(\wlv_G^*(\piv),\wlv_H^*(\tauv)) \simeq (\rho(\Deltav_G^*(\piv')), \rho(\Deltav_H^*(\tauv')))$ from Corollary \ref{cor:xiv=Deltav}.
Then, since $\Delta$ is combinatorial equivalent to $\QX^k_{G,H}$, we have that $(G,\Deltav_G^*(\piv'),u') \not\equiv_{\Deltav} (H,\Deltav_H^*(\tauv'),v')$ if and only if $\Y{u',v'}=0$ for all $Y\in \QX^k_{G,H}(\piv',\tauv')$. The result follows since $\pair{u,v}=\pair{u',v'}$.
\end{proof}
Specifically, we have $(G,\wlv_G^k)\equiv_\wlv (H,\wlv_H^k)$ if and only if $\QX^{k+1}_{G,H}\ne\emptyset$
or equivalently $\Q^{k+1}_{G,H}\ne\emptyset$ as required for Theorem \ref{thm:SheraliAdamsWL}.

\section{Acknowledgements}
We thank Mohamed Omar for his great help in writing this paper and many fruitful discussions about the results within.

\section{Appendix}
In this appendix, we prove technical results that are necessary to prove the main results of the paper.

\begin{lemma}
\label{lem:tech}
Let $G,H\in\Graphs$. Let $\piv,\tauv\in\Piv^k$ such that $(G,\piv)\equiv_{\Deltav^*}(H,\tauv)$, and let $u,v\in V^k$ such that $(\piv,u) \equiv_\Deltav (\tauv,v)$.
Then, we have $[\phi_i(u,u_j)]_\piv = [\phi_i(v,v_j)]_\tauv$ for all $1\leq i,j \leq k$.
\end{lemma}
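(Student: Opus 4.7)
The plan is to split on whether $i=j$ and, in the substantive case, exploit the combinatorial-type rigidity forced by the $\CPv$-component of $\Deltav$ to reduce the question to a count of $1$.

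First, the case $i=j$ is immediate: $\phi_i(u,u_i)=u$ and $\phi_i(v,v_i)=v$, and $[u]_\piv=[v]_\tauv$ is part of the assumption $(G,\piv,u)\equiv_\Deltav(H,\tauv,v)$. So from now on assume $i\ne j$, and let $t=[\phi_i(u,u_j)]_\piv$. The crucial observation is that the tuple $\phi_i(u,u_j)$ has its $i$-th and $j$-th coordinates both equal to $u_j$. Since $(G,\piv)\equiv_{\Deltav^*}(H,\tauv)$, both $\piv$ and $\tauv$ are $\Deltav$-stable, and $\Deltav$ contains $\CPv$, so both partitions are $\CPv$-stable as well. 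In particular, every tuple in $\piv_t$ has the same combinatorial type (with respect to $G$) as $\phi_i(u,u_j)$, so every $u^\ast\in\piv_t$ satisfies $u^\ast_i=u^\ast_j$. Moreover, the $\Deltav$-equivalence between $(G,\piv)$ and $(H,\tauv)$ transfers this combinatorial type to $\tauv_t$ via its $\CPv$-component, so every $v^\ast\in\tauv_t$ likewise satisfies $v^\ast_i=v^\ast_j$.

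Now I count. The only element of $\Delta^i(u)=\{\phi_i(u,w):w\in V\}$ whose $i$-th and $j$-th coordinates coincide is $\phi_i(u,u_j)$ itself (for this forces $w=u_j$). Hence $|\Delta^i(u)\cap\piv_t|=1$. The $\Deltav$-equivalence of $(G,\piv,u)$ and $(H,\tauv,v)$, specifically the component $\Deltav_\piv(u)=\Deltav_\tauv(v)$, then yields $|\Delta^i(v)\cap\tauv_t|=1$. By the transferred combinatorial type, the unique element of $\Delta^i(v)\cap\tauv_t$ must have positions $i$ and $j$ equal, and the only tuple in $\Delta^i(v)$ with this property is $\phi_i(v,v_j)$. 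Therefore $[\phi_i(v,v_j)]_\tauv=t=[\phi_i(u,u_j)]_\piv$, completing the proof.

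The main (and only real) obstacle is recognising that one should \emph{not} try to read the equality of cells off the $\Deltav'$ counts directly, since the multiset of cell indices hit by $\Delta^i(u)$ and $\Delta^i(v)$ only match up to some unspecified bijection of $V$. The right tool is the $\CPv$-portion of $\Deltav$: it guarantees that the cell $t$ containing $\phi_i(u,u_j)$ is ``combinatorially rigid'' (all its tuples have positions $i,j$ equal), which both collapses the relevant count to $1$ in $\piv_t$ and forces the unique match in $\tauv_t$ to be exactly $\phi_i(v,v_j)$.
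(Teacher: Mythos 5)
Your proof is correct and uses the same key idea as the paper's: the cell containing $\phi_i(u,u_j)$ is rigid under $\CP$-type (all its members have coordinates $i$ and $j$ equal), which collapses $|\Delta^i(u)\cap\piv_t|$ to $1$ and forces the matching cell of $\phi_i(v,v_j)$. The only difference is presentation — the paper argues by contrapositive (assuming the cells differ and deriving $|\Delta^i(v)\cap\tauv_{s'}|=0\ne 1=|\Delta^i(u)\cap\piv_{s'}|$) while you argue directly and also explicitly dispatch the trivial $i=j$ case.
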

\begin{proof}
We prove the contrapositive. 
Let $u'= \phi_i(u,u_j)\in\piv_{s'}$ and $v' = \phi_i(v,v_j)$ such that $[\phi_i(u,u_j)]_\piv \ne [\phi_i(v,v_j)]_\tauv$.
Then, $|\Delta^i(u)\cap \piv_{s'}|=|\{u'\}|=1$ since $\piv_{s'}$ only contains tuples of the same combinatorial type as $u'$
meaning that $w_i=w_j$ for all $w\in \piv_{s'}$.
But, $|\Delta^i(v)\cap \tauv_{s'}|=0$ since the only tuple in $\Delta^i(v)$ with the same combinatorial type as $u'$ is $v'$ and $v'\not\in\tauv_{s'}$ by assumption.
Thus, $(\piv,u)\not\equiv_\Deltav(\tauv,v)$.
\end{proof}

\begin{corollary}
\label{cor:tech}
Let $G\in\Graphs$. Let $\pi\in\Pi^k$ where $\Delta_G(\pi)=\pi$, and let $u,v\in V^k$ such that $(\pi,u) \equiv_\Delta (\pi,v)$.
Then, we have $\phi_i(u,u_j) \equiv_{\pi} \phi_i(v,v_j)$ for all $1\leq i,j \leq k$.
\end{corollary}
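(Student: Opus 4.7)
The plan is to derive this corollary as an essentially immediate consequence of Lemma \ref{lem:tech} by invoking the correspondence between V-C preorders and their induced V-C equivalence relations, applied in the special case $H=G$ and $\tauv=\piv$.

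First, I would choose an arbitrary ordering $\piv\in\Piv^k$ of the partition $\pi$, so that $\piv\simeq\pi$. By the discussion in Section 2.2 relating any V-C preorder $\alphav$ to its induced V-C equivalence relation $\alpha$, we have $\alphav_G(\piv)\simeq \alpha_G(\pi)$, and moreover $\piv$ is $\alphav_G$-stable iff $\pi$ is $\alpha_G$-stable. Applied to $\alphav=\Deltav$: since $\Delta_G(\pi)=\pi$ by hypothesis, we obtain $\Deltav_G(\piv)=\piv$. In particular, the pair $(G,\piv)$ trivially satisfies $(G,\piv)\equiv_{\Deltav^*}(G,\piv)$, meeting the hypothesis of Lemma \ref{lem:tech}.

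Next, the hypothesis $(\pi,u)\equiv_\Delta(\pi,v)$ translates, via the same correspondence, to $(G,\piv,u)\equiv_{\Deltav}(G,\piv,v)$. Now I would simply invoke Lemma \ref{lem:tech} with $H=G$, $\tauv=\piv$, and these $u,v$, which yields $[\phi_i(u,u_j)]_\piv = [\phi_i(v,v_j)]_\piv$ for all $1\leq i,j\leq k$. Since $\piv\simeq\pi$, this is exactly the desired conclusion $\phi_i(u,u_j)\equiv_\pi \phi_i(v,v_j)$.

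This reduction is clean and uses no new ideas beyond what is already established; there is not really a hard step. If one preferred a self-contained proof, one could directly mimic the contrapositive argument of Lemma \ref{lem:tech}: suppose $\phi_i(u,u_j)\not\equiv_\pi \phi_i(v,v_j)$ and let $u'=\phi_i(u,u_j)\in\pi_{s'}$; then since $\Delta_G(\pi)=\pi$ forces $\pi$ to be $\CP_G$-stable, every tuple in $\pi_{s'}$ has the same combinatorial type as $u'$ (in particular the $i$th and $j$th components coincide), so $|\Delta^i(u)\cap\pi_{s'}|=1$ while $|\Delta^i(v)\cap\pi_{s'}|=0$, contradicting $(\pi,u)\equiv_\Delta(\pi,v)$. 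The only potential subtlety in either approach is the appeal to $\CP_G$-stability of $\pi$, which is immediate because every V-C equivalence relation is $\CP$-stable (as noted in Section 2.1.1) and $\Delta$ explicitly incorporates $\CP$ in its definition.
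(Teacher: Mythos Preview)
Your proposal is correct and matches the paper's intended approach: the paper states Corollary~\ref{cor:tech} immediately after Lemma~\ref{lem:tech} with no separate proof, relying on the general correspondence (set up in Section~2.2) between a V-C preorder $\alphav$ and its induced V-C equivalence relation $\alpha$, which is exactly the reduction you carry out with $H=G$ and $\tauv=\piv$. Your alternative self-contained contrapositive argument is also fine and simply replays the proof of Lemma~\ref{lem:tech} verbatim in the unordered setting.
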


\begin{lemma}\label{lem:idv}
Let $G\in\Graphs$. Let $\piv\in\Piv^k$.
Then, $\rhov(\nuv(\piv)) \preceq \piv$.
If $\Deltav_G(\piv)=\piv$, then $\rhov(\nuv(\piv))=\piv$.
\end{lemma}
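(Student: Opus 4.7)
\medskip

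\noindent\textbf{Proof proposal.} The plan is to unfold the definitions of $\rhov$ and $\nuv$ to obtain an explicit description of the preorder on $V^k$ induced by $\rhov(\nuv(\piv))$, after which the first claim reduces to a trivial case analysis and the second reduces to a single application of Lemma \ref{lem:tech}.

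First I would fix $u',v'\in V^k$ and chase the definitions through. By the definition of $\rhov$ applied to $\nuv(\piv)\in\Piv^{k+1}$, we have $u'\leqq_{\rhov(\nuv(\piv))} v'$ iff $\nu(u')\leqq_{\nuv(\piv)} \nu(v')$, where here $\nu:V^k\to V^{k+1}$ appends a copy of the last coordinate. Since $\rho(\nu(u'))=u'$ and $\nu(u')_{k+1}=u'_k$ (and likewise for $v'$), the definition of $\nuv$ translates this condition into: either $u'\lneqq_\piv v'$, or $u'\equiv_\piv v'$ and $\wlv_\piv(u',u'_k)\leq_{lex}\wlv_\piv(v',v'_k)$. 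In either case $u'\leqq_\piv v'$, so $\rhov(\nuv(\piv))\preceq\piv$ follows immediately.

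Now assume $\Deltav_G(\piv)=\piv$. To upgrade the containment to equality it suffices to establish the reverse $\piv\preceq\rhov(\nuv(\piv))$. So suppose $u'\leqq_\piv v'$. If $u'\lneqq_\piv v'$ strictly, the explicit description above gives $u'\leqq_{\rhov(\nuv(\piv))} v'$ for free. The delicate case is $u'\equiv_\piv v'$. Here the $\Deltav_G$-stability of $\piv$ yields $(G,\piv,u')\equiv_\Deltav(G,\piv,v')$, and reflexivity gives $(G,\piv)\equiv_{\Deltav^*}(G,\piv)$; Lemma \ref{lem:tech} applied with $G=H$ and $\piv=\tauv$ then yields $[\phi_i(u',u'_k)]_\piv=[\phi_i(v',v'_k)]_\piv$ for all $1\le i\le k$, which is precisely the componentwise equality $\wlv_\piv(u',u'_k)=\wlv_\piv(v',v'_k)$. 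Both lexicographic inequalities therefore hold, so $u'\equiv_{\rhov(\nuv(\piv))} v'$ and in particular $u'\leqq_{\rhov(\nuv(\piv))} v'$, closing the loop.

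There is no real obstacle; the argument is essentially definitional unwinding combined with Lemma \ref{lem:tech}, which already encapsulates the substantive use of $\Deltav$-stability. The one bookkeeping subtlety is that the symbol $\nu$ denotes different maps ($V^{k-1}\to V^k$ in $\rhov$, and $V^k\to V^{k+1}$ in $\nuv$); the reason the composition $\rhov\circ\nuv$ is tractable is precisely that the outer $\nu$ satisfies $\rho(\nu(u'))=u'$ and $\nu(u')_{k+1}=u'_k$, which is what allows $\wlv_\piv$ to be evaluated at $(u',u'_k)$ rather than at some arbitrary pair.
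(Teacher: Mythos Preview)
Your proposal is correct and follows essentially the same approach as the paper: unfold the definitions of $\rhov$ and $\nuv$ to see that $u'\leqq_{\rhov(\nuv(\piv))} v'$ becomes ``$u'\lneqq_\piv v'$, or $u'\equiv_\piv v'$ and $\wlv_\piv(u',u'_k)\leq_{lex}\wlv_\piv(v',v'_k)$'', and then invoke Lemma~\ref{lem:tech} (with $G=H$, $\piv=\tauv$, $j=k$) under $\Deltav_G$-stability to kill the tiebreaker. The only cosmetic difference is that the paper establishes $\rhov(\nuv(\piv))\preceq\piv$ via the contrapositive ($u\gneqq_\piv v\Rightarrow u\gneqq_{\rhov(\nuv(\piv))} v$) whereas you argue directly; the substance is identical.
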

\begin{proof}
Let $u,v\in V^k$, and let $u'=\nu(u)$ and $v'=\nu(v)$.
Let $\piv' = \nuv(\piv)$ and $\piv'' = \rhov(\piv')$.
First, $u \gneqq_\piv v$ implies $u'\gneqq_{\piv'} v'$ by definition of $\nuv$.
Then, $u \gneqq_{\piv''} v$ by definition of $\rhov$. 
Hence, $u \gneqq_\piv v$ implies $u \gneqq_{\piv''} v$.
The contrapositive is thus $u \leqq_{\piv''} v$ implies $u \leqq_\piv v$, 
and thus, $\rhov(\nuv(\piv)) \preceq \piv$.

Second, assume $\Deltav_G(\piv)=\piv$.
By definition of $\rhov$, we have $u \leqq_{\piv''} v$ if and only if $u' \leqq_{\piv'} v'$.
Then, by definition of $\nuv$, we have $u' \leqq_{\piv'} v'$ if and only if $u \lneqq_\piv v$ or $u \equiv_\piv v$
and $\wlv_\piv(u,u_k) \leq_{lex} \wlv_\piv(v,v_k)$.
But, if $u \equiv_\piv v$, then $(G,\piv,u)\equiv_{\Deltav}(H,\tauv,v)$, and thus, $\phi_i(u, u_k) \equiv_\piv \phi_i(v,v_k)$ for all $1\leq i \leq k$ from Lemma \ref{lem:tech}.
So, $u' \leqq_{\piv'} v'$ if and only if $u \leqq_\piv v$.
Thus, $u \leqq_{\piv''} v$ if and only if $u\leqq_\piv v$, and $\piv'' = \piv$ as required.
\end{proof}

\begin{corollary}\label{cor:id}
Let $G\in\Graphs$. Let $\pi\in\Pi^k$.
Then, $\rho(\nu(\pi))\leq\pi$.
If $\Delta_G(\pi)=\pi$, then $\rho(\nu(\pi))=\pi$.
\end{corollary}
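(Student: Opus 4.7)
The plan is to unfold the definitions of $\rho$ and $\nu$ on partitions and verify the two claims directly, mirroring (in fact simplifying) the ordered version already established in Lemma \ref{lem:idv}. Set $\pi' = \nu(\pi) \in \Pi^{k+1}$ and $\pi'' = \rho(\pi') \in \Pi^k$. For $u,v \in V^k$, let $u^* = \nu(u) = (u_1,\ldots,u_k,u_k)$ and $v^* = \nu(v)$ in $V^{k+1}$, so that $\rho(u^*) = u$ and $u^*_{k+1} = u_k$. By the definition of $\rho$ acting on partitions, $u \equiv_{\pi''} v$ if and only if $u^* \equiv_{\pi'} v^*$, and by Definition \ref{def:nu} this in turn holds if and only if $u \equiv_\pi v$ together with $\phi_i(u,u_k) \equiv_\pi \phi_i(v,v_k)$ for every $1 \leq i \leq k$.

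First I would derive $\rho(\nu(\pi)) \leq \pi$: if $u \equiv_{\pi''} v$ then the above characterization immediately yields $u \equiv_\pi v$, so $\pi''$ refines $\pi$.

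Next, for the second claim, I would assume $\Delta_G(\pi) = \pi$ and show $\pi \leq \pi''$. If $u \equiv_\pi v$ then, since $\pi$ is $\Delta_G$-stable, $(G,\pi,u) \equiv_\Delta (G,\pi,v)$. Applying Corollary \ref{cor:tech} gives $\phi_i(u,u_j) \equiv_\pi \phi_i(v,v_j)$ for all $1 \leq i,j \leq k$; specializing to $j = k$ supplies the extra condition in the characterization above, so $u \equiv_{\pi''} v$. Combined with the first part, this gives $\pi = \rho(\nu(\pi))$.

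There is no real obstacle here: the work has already been done in Corollary \ref{cor:tech}, and the argument is essentially a careful unpacking of the two nested definitions. The only thing to be watchful about is keeping straight which $\rho$ and $\nu$ act on tuples versus on partitions, and correctly observing that $\nu(u)$ has its last coordinate equal to $u_k$ so that $\phi_i(\rho(\nu(u)), \nu(u)_{k+1}) = \phi_i(u,u_k)$.
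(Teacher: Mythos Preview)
Your proof is correct and follows essentially the same approach as the paper: the paper states Corollary~\ref{cor:id} without a separate proof, treating it as the unordered specialization of Lemma~\ref{lem:idv}, and your argument is exactly that specialization written out explicitly, using Corollary~\ref{cor:tech} in place of Lemma~\ref{lem:tech}.
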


\begin{lemma} \label{lem:nuv props}
Let $k>1$. Let $G,H\in\Graphs$. Let $\piv,\tauv\in\Piv^k$ where $(G,\piv) \equiv_{\wlv} (H,\tauv)$. Then, $\nuv(\piv)\approx\nuv(\tauv)$, and
for all $u,v\in V^{k+1}$, we have $[u]_{\nuv(\piv)} = [v]_{\nuv(\tauv)}$ if and only if
$[\rho(u)]_\piv=[\rho(v)]_\tauv$ and $\wlv_\piv(\rho(u),u_{k+1}) = \wlv_\tauv(\rho(v),v_{k+1})$.
Also, for all $\piv'',\tauv''\in \Piv^k$ where $(\piv,\tauv)\leq(\piv'',\tauv'')$,
we have $(\nuv(\piv),\nuv(\tauv))\leq(\nuv(\piv''),\nuv(\tauv''))$.
Furthermore, we have $(G,\piv) \equiv_{\Deltav^*} (H,\tauv)$ implies $\nuv(\piv) \equiv_{\Symv^*} \nuv(\tauv)$ and $(G,\nuv(\piv)) \equiv_{\CPv^*} (H,\nuv(\tauv))$.
\end{lemma}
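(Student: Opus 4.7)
The plan is to prove the four claims in order: first the characterization of equivalence classes, from which the size matching $\nuv(\piv) \approx \nuv(\tauv)$ follows, then the monotonicity under coarsening, and finally the $\Symv^*$ and $\CPv^*$ stability under the stronger $\Deltav^*$ hypothesis.

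Unpacking Definition~\ref{def:nuv}, two tuples $u,v \in V^{k+1}$ lie in the same $\nuv(\piv)$-cell precisely when $[\rho(u)]_\piv=[\rho(v)]_\piv$ and $\wlv_\piv(\rho(u),u_{k+1}) = \wlv_\piv(\rho(v),v_{k+1})$; hence each cell of $\nuv(\piv)$ is indexed by a pair $(s,\vec m)$ where $s$ ranges over cell indices of $\piv$ and $\vec m$ over the wlv-values attained on $\piv_s \times V$. From $(G,\piv) \equiv_{\wlv}(H,\tauv)$ there is a tuple bijection $\gamma\colon V^k \to V^k$ with $(G,\piv,u')\equiv_\wlv (H,\tauv,\gamma(u'))$ for every $u'$, and equality of the sorted tuples $\wlv_\piv(u')$ and $\wlv_\tauv(\gamma(u'))$ supplies, for each $u'$, a bijection $\psi_{u'}\colon V\to V$ with $\wlv_\piv(u',w) = \wlv_\tauv(\gamma(u'), \psi_{u'}(w))$ for every $w$. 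The map $(u',w) \mapsto (\gamma(u'),\psi_{u'}(w))$ is then a bijection of $V^k \times V \cong V^{k+1}$ that preserves the $(s,\vec m)$ label, which yields both the cell-size matching $\nuv(\piv)\approx\nuv(\tauv)$ and the claimed characterization of cross-partition equivalence.

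For the monotonicity claim, assuming $[u]_{\nuv(\piv)} = [v]_{\nuv(\tauv)}$, the characterization gives $[\rho(u)]_\piv = [\rho(v)]_\tauv$ and $[\phi_i(\rho(u),u_{k+1})]_\piv = [\phi_i(\rho(v),v_{k+1})]_\tauv$ for every $i$; the hypothesis $(\piv,\tauv)\leq(\piv'',\tauv'')$ lifts each such cell-pair equality to $(\piv'',\tauv'')$, and the definition of $\nuv$ (applied directly to $\piv''$ and $\tauv''$) then yields $[u]_{\nuv(\piv'')} = [v]_{\nuv(\tauv'')}$.

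For the furthermore, the key identity is $\phi_i(\rho(u),u_{k+1}) = \rho((i,k+1) \cdot u)$ where $(i,k+1) \in \Sym_{k+1}$ denotes the transposition. Because $(G,\piv)\equiv_{\Deltav^*}(H,\tauv)$ forces $\piv,\tauv$ to be $\Sym_k$-invariant and $\CPv^k$-stable with $\piv\equiv_{\Symv^*}\tauv$ and $(G,\piv)\equiv_{\CPv^*}(H,\tauv)$, a direct computation using the identity $\phi_i(\sigma'(u'),w)=\sigma'(\phi_{\sigma'(i)}(u',w))$ shows that for any $\sigma = (j,k+1)\circ\sigma' \in \Sym_{k+1}$ the index $[\rho(\sigma\cdot u)]_\piv$ equals either $[\rho(u)]_\piv$ (if $\sigma \in \Sym_k$) or a specific coordinate of $\wlv_\piv(\rho(u),u_{k+1})$ otherwise. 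Hence if $u \equiv_{\nuv(\piv)} v$, then $[\rho(\sigma\cdot u)]_\piv = [\rho(\sigma\cdot v)]_\piv$ for every $\sigma \in \Sym_{k+1}$, which in turn yields $\sigma \cdot u \equiv_{\nuv(\piv)} \sigma \cdot v$ and thus the $\Sym_{k+1}$-invariance of $\nuv(\piv)$; the analogous argument across $(\piv,\tauv)$ via the characterization gives the full $\Symv^*$-equivalence. For $\CPv^*$-stability, $u\equiv_{\nuv(\piv)}v$ together with $\piv \leq \CPv_G^k$ gives $(G,\rho(u))\equiv_\CPv(G,\rho(v))$, and applying $\piv \leq \CPv_G^k$ to each $[\phi_i(\rho(u),u_{k+1})]_\piv = [\phi_i(\rho(v),v_{k+1})]_\piv$ recovers the adjacencies and equalities between position $k+1$ and every $j \neq i$, so $(G,u)\equiv_\CPv(G,v)$; the cross-partition version uses $(G,\piv)\equiv_{\CPv^*}(H,\tauv)$ in place of $\piv \leq \CPv_G^k$.

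The main technical obstacle is the $\Sym_{k+1}$-invariance step: checking that the collection $\{[\rho(\sigma\cdot u)]_\piv : \sigma \in \Sym_{k+1}\}$ is completely determined by the $\nuv(\piv)$-cell of $u$ requires careful bookkeeping of how $\phi_i$ and $\rho$ interact with the $\Sym_{k+1}$-action, and this is the place where the $\Sym_k$-invariance of $\piv$ (rather than the weaker $\wlv$ assumption) is essential.
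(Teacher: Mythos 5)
Your proof follows essentially the same route as the paper: build a label-preserving bijection of $V^{k+1}$ out of the $\wlv$-equivalence hypothesis to get the characterization and $\approx$, then lift cell-pair equalities for monotonicity, and exploit the identity $\phi_i(\rho(u),u_{k+1})=\rho((i,k+1)\cdot u)$ for the $\Symv^*$ and $\CPv^*$ claims. The characterization, $\approx$, and the two ``furthermore'' parts are all sound and match the paper's argument.

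There is, however, a gap in the monotonicity step. You lift $[\rho(u)]_\piv=[\rho(v)]_\tauv$ and $\wlv_\piv(\rho(u),u_{k+1})=\wlv_\tauv(\rho(v),v_{k+1})$ to the corresponding equalities under $(\piv'',\tauv'')$, and then assert that ``the definition of $\nuv$ applied directly to $\piv''$ and $\tauv''$'' yields $[u]_{\nuv(\piv'')}=[v]_{\nuv(\tauv'')}$. But the definition of $\nuv$ only produces a preorder on $V^{k+1}$ from each of $\piv''$ and $\tauv''$ separately; to conclude equality of cell \emph{indices} across the two (generally different) ordered partitions $\nuv(\piv'')$ and $\nuv(\tauv'')$, you need the cross-partition characterization (from your first paragraph) to hold for the pair $(\piv'',\tauv'')$, not just for $(\piv,\tauv)$. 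That characterization in turn required a label-preserving bijection built from $(G,\piv'')\equiv_\wlv(H,\tauv'')$, which you have not established. This is not automatic: if the sets of $\wlv$-values attained on corresponding cells of $\piv''$ and $\tauv''$ do not match up, the index comparison fails even though the two lifted equalities hold. The missing step is exactly Lemma \ref{lem:equiv props}, part (2): from $(G,\piv)\equiv_\wlv(H,\tauv)$ and $(\piv,\tauv)\leq(\piv'',\tauv'')$ one deduces $(G,\piv'')\equiv_\wlv(H,\tauv'')$, after which the characterization applies at the coarser level and the conclusion follows. Once this citation is inserted, the argument is complete.
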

\begin{proof}
Let $\piv'=\nuv(\piv)$ and $\tauv'=\nuv(\tauv)$. 
Since $(G,\piv) \equiv_\wlv (H,\tauv)$, 
there exists a tuple bijection $\gamma':V^k\rightarrow V^k$ such that $(G,\piv',u')\equiv_\wlv(H,\tauv',\gamma'(u'))$ for all $u'\in V^k$.
Then, for all $u'\in V^k$, since $(G,\piv',u')\equiv_\wlv (H,\tauv',\gamma(u'))$, there must exist a bijection $\psi:V\rightarrow V$ such that 
$\wlv_{\piv'}(u',w) = \wlv_{\tauv'}(\gamma'(u'),\psi(w))$ for all $w\in V$.
We now define the bijection $\gamma:V^{k+1}\rightarrow V^{k+1}$ where for all $u\in V^{k+1}$, we have $\gamma(u) = (\gamma(u')_1,...,\gamma(u')_k,\psi(u_{k+1}))$
where $u'=\rho(u)$.
Thus, for all $u\in V^{k+1}$, we have 
$[\rho(u)]_\piv=[\rho(\gamma(u))]_\tauv$ and $\wlv_\piv(\rho(u),u_{k+1}) = \wlv_\tauv(\rho(\gamma(u)),\gamma(u)_{k+1})$.
Then, by construction of $\nuv$, for all $u,u''\in V^{k+1}$, we have $u\leqq_{\piv'} u''$ if and only
if $\gamma(u) \leqq_{\tauv'} \gamma(u'')$ and thus $[u]_{\piv'} = [\gamma(u)]_{\tauv'}$;
therefore, $\piv'\approx\tauv'$ and $[u]_{\piv'} = [v]_{\tauv'}$ if and only if
$[\rho(u)]_\piv=[\rho(v)]_\tauv$ and $\wlv_\piv(\rho(u),u_{k+1}) = \wlv_\tauv(\rho(v),v_{k+1})$.

Let $\piv'',\tauv''\in\Piv^k$ such that $(\piv,\tauv)\leq(\piv'',\tauv'')$. Let $u,v\in V^{k+1}$ such that $[u]_{\piv'} = [v]_{\tauv'}$.
From above, we have 
$[\rho(u)]_\piv=[\rho(v)]_\tauv$ and $\wlv_\piv(\rho(u),u_{k+1}) = \wlv_\tauv(\rho(v),v_{k+1})$
implying $[\rho(u)]_{\piv''}=[\rho(v)]_{\tauv''}$ and $\wlv_{\piv''}(\rho(u),u_{k+1}) = \wlv_{\tauv''}(\rho(v),v_{k+1})$
since $(\piv,\tauv)\leq(\piv'',\tauv'')$, and thus, since $(G,\piv'')\equiv_\wlv (H,\tauv'')$ from Lemma \ref{lem:equiv props}, 
we have $[u]_{\nuv(\piv'')} = [v]_{\nuv(\tauv'')}$ from above implying $(\piv',\tauv')\leq(\nuv(\piv''),\nuv(\tauv''))$ as required.

Assume $(G,\piv) \equiv_{\Deltav^*} (H,\tauv)$, and so, $(G,\piv)\equiv_{\CPv^*}(H,\tauv)$.
Let $u',v'\in V^{k+1}$ where $[u']_{\piv'} = [v']_{\tauv'}$ and let $u=\rho(u')$ and $v=\rho(v')$.
We must show that $(G,\piv',u')\equiv_{\CPv} (H,\tauv',v')$, and it follows that
$(G,\piv')\equiv_{\CPv^*}(H,\tauv')$ since $\piv'\approx\tauv'$.
From above, we have $[u]_\piv=[v]_\tauv$ and $\wlv_\piv(u,u'_{k+1}) = \wlv_\tauv(v,v'_{k+1})$,
and so by assumption, $(G,u)\equiv_{\CPv} (H,v)$ and $(G,\phi_i(u,u'_{k+1}))\equiv_{\CPv} (H,\phi_i(v,v'_{k+1}))$, 
which implies that $(G,u') \equiv_{\CPv} (H,v')$, and thus, $(G,\piv', u') \equiv_\CPv(H,\tauv', v')$ as required.

Assume $(G,\piv) \equiv_{\Deltav^*} (H,\tauv)$, and so, $\piv\equiv_{\Symv^*}\tauv$.
Let $u',v'\in V^{k+1}$ and let $u=\rho(u')$ and $v=\rho(v')$.
Then, it follows by construction that
$[u]_\piv =[v]_\tauv$ and $[\phi_i(u,u'_{k+1})]_\piv = [\phi_i(v,v'_{k+1})]_\tauv$ for all $1\leq i\leq k$
if and only if $[\rho(\sigma(u'))]_\piv = [\rho(\sigma(v'))]_\tauv$ and 
$[\phi_i(\rho(\sigma(u')),\sigma(u')_{k+1})]_\piv = [\phi_i(\rho(\sigma(v')),\sigma(v')_{k+1})]_\tauv$ 
for all $\sigma\in\Sym_{k+1}$.
Thus, $[u']_{\piv'} =[v']_{\tauv'}$ if and only if $[\sigma(u')]_{\piv'} = [\sigma(v')]_{\tauv'}$  and for all $\sigma\in\Sym_{k+1}$.
Thus, $\piv'\equiv_{\Symv^*}\tauv'$ since $\piv'\approx\tauv'$.
\end{proof}

\begin{corollary}\label{lem:nu props}
Let $k>1$ and let $G\in\Graphs$, and let $\pi\in\Pi^k$.
For all $\tau\in\Pi^k$ where $\pi\leq \tau$, we have $\nu(\pi)\leq\nu(\tau)$.
Also, $\wl_G(\pi)=\pi$ implies $\CP_G(\nu(\pi)) = \nu(\pi)$ and $\Sym(\nu(\pi))=\nu(\pi)$.
\end{corollary}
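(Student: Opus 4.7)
The corollary is the unordered counterpart of Lemma~\ref{lem:nuv props}, so the plan is to obtain it by specializing that lemma to $G = H$, $\piv = \tauv$, and then translating back to unordered partitions. A preliminary observation makes the translation easy: for any $\piv \simeq \pi$, we have $\nuv(\piv) \simeq \nu(\pi)$. Indeed, the hypothesis $(G,\piv) \equiv_\wlv (G,\piv)$ is trivial, so the first part of Lemma~\ref{lem:nuv props} tells us that $[u]_{\nuv(\piv)} = [v]_{\nuv(\piv)}$ if and only if $[\rho(u)]_\piv = [\rho(v)]_\piv$ and $\wlv_\piv(\rho(u),u_{k+1}) = \wlv_\piv(\rho(v),v_{k+1})$; unpacking the latter componentwise gives $[\phi_i(\rho(u),u_{k+1})]_\piv = [\phi_i(\rho(v),v_{k+1})]_\piv$ for all $1 \le i \le k$, which is exactly Definition~\ref{def:nu} rewritten in terms of $\pi$.

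For the first claim, fix $\pi \leq \tau$ and pick any ordered refinements $\piv \simeq \pi$ and $\tauv \simeq \tau$. The hypothesis $\pi \leq \tau$ is equivalent to the pair inequality $(\piv,\piv) \leq (\tauv,\tauv)$. Applying the second part of Lemma~\ref{lem:nuv props} (with $G=H$ and the trivial equivalence $(G,\piv) \equiv_\wlv (G,\piv)$) yields $(\nuv(\piv),\nuv(\piv)) \leq (\nuv(\tauv),\nuv(\tauv))$, that is, $\nuv(\piv) \leq \nuv(\tauv)$. By the preliminary observation, this translates to $\nu(\pi) \leq \nu(\tau)$.

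For the second claim, assume $\wl_G(\pi) = \pi$. By Lemmas~\ref{lem:xi+delta} and~\ref{lem:delta+Delta} combined with Lemma~\ref{lem:vc equiv}, $\wl_G$-stability of $\pi$ implies $\Delta_G$-stability, so $\Delta_G(\pi) = \pi$. To apply the last part of Lemma~\ref{lem:nuv props} we need an ordered $\piv \simeq \pi$ with $\Deltav_G(\piv) = \piv$; we take $\piv := \Deltav_G^*(\piv_0)$ for any $\piv_0 \simeq \pi$, which is $\Deltav_G$-stable by construction and satisfies $\piv \simeq \Delta_G^*(\pi) = \pi$ thanks to the unordered stability just noted. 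The trivial equivalence $(G,\piv) \equiv_\Deltav (G,\piv)$ together with $\Deltav_G(\piv)=\piv$ gives $(G,\piv) \equiv_{\Deltav^*} (G,\piv)$. Lemma~\ref{lem:nuv props} then delivers $(G,\nuv(\piv)) \equiv_{\CPv^*} (G,\nuv(\piv))$ and $\nuv(\piv) \equiv_{\Symv^*} \nuv(\piv)$; that is, $\nuv(\piv)$ is both $\CPv_G$- and $\Symv$-stable. Using $\nuv(\piv) \simeq \nu(\pi)$ and the equivalence between ordered and unordered V-C stability from Section~\ref{sec:CombAlg}, we conclude $\CP_G(\nu(\pi)) = \nu(\pi)$ and $\Sym(\nu(\pi)) = \nu(\pi)$.

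The only mildly subtle point is the chain $\wl \Rightarrow \delta \Rightarrow \Delta$ in the unordered setting and the construction of a $\Deltav$-stable ordered refinement of $\pi$; both are routine uses of machinery already established earlier in the paper, so I do not foresee a real obstacle.
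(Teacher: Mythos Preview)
Your proposal is correct and follows exactly the approach the paper intends: the corollary is stated without proof immediately after Lemma~\ref{lem:nuv props}, and you correctly recover it by specializing that lemma to $G=H$, $\piv=\tauv$ and translating via the general correspondence between ordered and unordered V-C stability. One minor remark: the monotonicity claim $\pi\le\tau\Rightarrow\nu(\pi)\le\nu(\tau)$ follows even more directly from Definition~\ref{def:nu}, but going through Lemma~\ref{lem:nuv props} as you do is equally valid and is clearly what the placement of the corollary invites.
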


\begin{lemma} \label{lem:rhov props}
Let $k>1$. Let $G,H\in\Graphs$. Let $\piv,\tauv\in\Piv^k$ where $(G,\piv) \equiv_{\Deltav} (H,\tauv)$.
Then, $\rhov(\piv)\approx\rhov(\tauv)$ and for all $u',v'\in V^{k-1}$, 
we have $[u']_{\rhov(\piv)} = [v']_{\rhov(\tauv)}$ if and only if $[\nu(u')]_\piv=[\nu(v')]_\tauv$.
Moreover, for all $u,v\in V^k$, we have $(G,\piv,u)\equiv_\Deltav(H,\tauv,v)$ implies $[\rho(u)]_{\rhov(\piv)} = [\rho(v)]_{\rhov(\tauv)}$.
Also, for all $\piv'',\tauv''\in \Piv^k$ where $(\piv,\tauv)\leq(\piv'',\tauv'')$,
we have $(\rhov(\piv),\rhov(\tauv))\leq(\rhov(\piv''),\rhov(\tauv''))$.
Furthermore, $(G,\piv) \equiv_{\Deltav^*} (H,\tauv)$ implies $\rhov(\piv) \equiv_{\Symv^*} \rhov(\tauv)$ and $(G,\rhov(\piv)) \equiv_{\CPv^*} (H,\rhov(\tauv))$.
\end{lemma}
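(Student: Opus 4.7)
The central device will be the tuple bijection $\gamma:V^k\to V^k$ guaranteed by $(G,\piv)\equiv_{\Deltav}(H,\tauv)$, satisfying $(G,\piv,u)\equiv_{\Deltav}(H,\tauv,\gamma(u))$ for all $u\in V^k$. Since $\Deltav$ implies $\CPv$, the bijection $\gamma$ preserves combinatorial type; in particular it preserves the property that coordinates $k-1$ and $k$ coincide, so $\gamma$ restricts to a bijection on the $\nu$-image set $\{\nu(u'):u'\in V^{k-1}\}$. This induces a bijection $\gamma':V^{k-1}\to V^{k-1}$ characterized by $\gamma(\nu(u'))=\nu(\gamma'(u'))$, which will be the workhorse for the first and fourth assertions.

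For the first assertion, the biconditional is essentially the definition of $\rhov$ once the cell indexing is matched. To see $\rhov(\piv)\approx\rhov(\tauv)$, observe that the cells of $\rhov(\piv)$ are in order-preserving bijection with those cells $\piv_s$ of $\piv$ that meet the $\nu$-image set, with size $|\{u'\in V^{k-1}:\nu(u')\in\piv_s\}|$; the bijection $\gamma'$ provides an equi-indexed size-preserving correspondence with the analogous data for $\tauv$. The second assertion follows from a single application of Lemma \ref{lem:tech} to $(G,\piv,u)\equiv_{\Deltav}(H,\tauv,v)$ with $i=k$, $j=k-1$: this yields $[\phi_k(u,u_{k-1})]_\piv=[\phi_k(v,v_{k-1})]_\tauv$, and since $\phi_k(u,u_{k-1})=\nu(\rho(u))$ and similarly for $v$, the biconditional from the first assertion finishes. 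The third assertion is an immediate chase of definitions: $[u']_{\rhov(\piv)}=[v']_{\rhov(\tauv)}$ forces $[\nu(u')]_\piv=[\nu(v')]_\tauv$, hence $[\nu(u')]_{\piv''}=[\nu(v')]_{\tauv''}$, hence $[u']_{\rhov(\piv'')}=[v']_{\rhov(\tauv'')}$.

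For the fourth assertion, the $\CPv^*$ part is straightforward: $u'\equiv_{\rhov(\piv)}v'$ iff $\nu(u')\equiv_\piv\nu(v')$, and $\CPv$-stability of $\piv$ (which follows from $\Deltav_G(\piv)=\piv$) forces $(G,\nu(u'))\equiv_{\CPv}(G,\nu(v'))$, which restricts to $(G,u')\equiv_{\CPv}(G,v')$ on the first $k-1$ coordinates; the cross-graph alignment is treated identically using $(G,\piv)\equiv_{\CPv^*}(H,\tauv)$. The main obstacle will be $\rhov(\piv)\equiv_{\Symv^*}\rhov(\tauv)$, because a permutation $\sigma\in\Sym_{k-1}$ does not lift to a $\Sym_k$-permutation that commutes with $\nu$: the $\nu$-images already identify coordinates $k-1$ and $k$, so the two natural lifts conflict. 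The plan to overcome this is: given $\sigma\in\Sym_{k-1}$, extend to $\tilde\sigma\in\Sym_k$ by $\tilde\sigma(k)=k$; apply $\Sym_k$-invariance of $\piv$ to transform $\nu(u')\equiv_\piv\nu(v')$ into $\tilde\sigma(\nu(u'))\equiv_\piv\tilde\sigma(\nu(v'))$, which agrees with $\nu(\sigma(u'))$ in the first $k-1$ positions but carries $u'_{k-1}$ rather than $u'_{\sigma(k-1)}$ in position $k$; then apply Lemma \ref{lem:tech} with $i=k$, $j=k-1$ to replace the trailing coordinate, arriving at $\nu(\sigma(u'))\equiv_\piv\nu(\sigma(v'))$. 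This establishes $\Sym_{k-1}$-invariance of $\rhov(\piv)$ (and analogously of $\rhov(\tauv)$), and the cross-graph index alignment is obtained by running the same two-step maneuver with $\piv\equiv_{\Symv^*}\tauv$ in place of $\Sym_k$-invariance of $\piv$.
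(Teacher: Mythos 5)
Your proof is correct and takes essentially the same route as the paper's: both build the bijection $\gamma'=\rho\circ\gamma\circ\nu$ from the $\Deltav$-witness $\gamma$ (using that $\Deltav$ implies $\CPv$ so $\gamma$ preserves the $\nu$-image), apply Lemma \ref{lem:tech} with $i=k$, $j=k-1$ for the second assertion, and handle the $\Symv$ part by extending $\sigma'\in\Sym_{k-1}$ to $\sigma\in\Sym_k$ with $\sigma(k)=k$ and then invoking Lemma \ref{lem:tech} (directly in your version, via the second assertion in the paper's) to repair the trailing coordinate.
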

\begin{proof}
Let $\piv'=\rhov(\piv)$ and $\tauv'=\rhov(\tauv)$. 
Since $(G,\piv) \equiv_\Deltav(H,\tauv)$,
there exists a tuple bijection $\gamma:V^k\rightarrow V^k$ such that $(G,\piv,u)\equiv_\Deltav(H,\tauv,\gamma(u))$ for all $u\in V^k$.
Now, we define the bijection $\gamma':V^{k-1}\rightarrow V^{k-1}$ where for all $u'\in V^{k-1}$, we have $\gamma'(u') = \rho(\gamma(\nu(u')))$.
Since $(G,\piv,u)\equiv_\Deltav(H,\tauv,\gamma(u))$ and thus $(G,\piv,u)\equiv_\CPv(H,\tauv,\gamma(u))$,
we have %$u_{k-1}=u_k$ if and only if $\gamma(u)_{k-1}=\gamma(u)_k$, that is, 
$\nu(\rho(u))=u$ if and only if $\nu(\rho(\gamma(u)))=\gamma(u)$.
Thus,  for all $u'\in V^{k-1}$, we have $[\nu(u')]_\piv=[\nu(\gamma'(u'))]_\tauv$ 
since $\nu(\rho(\nu(u')))=\nu(u')$ and thus $\nu(\gamma'(u'))=\nu(\rho(\gamma(\nu(u'))))=\gamma(\nu(u'))$.
Then, by construction of $\rhov$, for all $u',u''\in V^{k-1}$, we have $u' \leqq_{\piv'} u''$ if and only
if $\gamma(u') \leqq_{\tauv'} \gamma(u'')$ and thus $[u']_{\piv'} = [\gamma(u')]_{\tauv'}$;
therefore, $\piv'\approx\tauv'$ and $[u']_{\rhov(\piv)} = [v']_{\rhov(\tauv)}$ if and only if $[\nu(u')]_\piv=[\nu(v')]_\tauv$ for all $u',v'\in V^{k-1}$.

Let $u,v\in V^k$ such that $(G,\piv,u)\equiv_\Deltav(H,\tauv,v)$. 
Then, by Lemma \ref{lem:tech}, we have $[\phi_k(u,u_{k-1})]_{\piv} = [\phi_k(v,v_{k-1})]_{\tauv}$
implying that $[\rho(u)]_{\piv'} = [\rho(v)]_{\tauv'}$ from above since $\nu(\rho(u)) = \phi_k(u,u_{k-1})$
and $\nu(\rho(v)) = \phi_k(v,v_{k-1})$ as required.

Let $\piv'',\tauv''\in\Piv^k$ such that $(\piv,\tauv)\leq(\piv'',\tauv'')$. 
Let $u,v\in V^{k-1}$ such that $[u]_{\rhov(\piv)} = [v]_{\rhov(\tauv)}$.
From above, we have 
$[\nu(u)]_\piv=[\nu(v)]_\tauv$ implying $[\nu(u)]_{\piv''}=[\nu(v)]_{\tauv''}$ 
since $(\piv,\tauv)\leq(\piv'',\tauv'')$, and thus, from above,
$[u]_{\rhov(\piv'')}=[v]_{\rhov(\tauv'')}$ implying 
$(\rhov(\piv),\rhov(\tauv))\leq(\rhov(\piv''),\rhov(\tauv''))$ as required.

Assume $(G,\piv)\equiv_{\Deltav^*}(H,\tauv)$, and so,  $(G,\piv)\equiv_{\CPv^*}(H,\tauv)$.
Let $u',v'\in V^{k-1}$ where $[u']_{\piv'}=[v']_{\tauv'}$, and let $u=\nu(u')$ and $v=\nu(v')$.
We must show that $(G,\piv',u')\equiv_{\CPv}(H,\tauv',v')$ and it follows that 
$(G,\piv') \equiv_{\CPv^*} (H,\tauv')$ since $\piv'\approx\tauv'$.
First, $[u']_{\piv'}=[v']_{\tauv'}$ implies $[u]_{\piv} = [v]_{\tauv}$ from above, and thus, $(G,u)\equiv_{\CPv} (H,v)$ by assumption,
which implies that $(G,u') \equiv_{\CPv} (H,v')$.
Thus, $(G,\piv',u')\equiv_{\CPv} (H,\tauv',v')$ as required.

Assume $(G,\piv)\equiv_{\Deltav^*}(H,\tauv)$, and so, $(G,\piv)\equiv_{\Symv^*}(H,\tauv)$. 
Let $u',v'\in V^{k-1}$ where $[u']_{\piv'}=[v']_{\tauv'}$ and let $u=\nu(u')$ and $v=\nu(v')$.
Let $\sigma'\in\Sym_{k-1}$. We must show that $[\sigma'(u')]_{\piv'} = [\sigma'(v')]_{\tauv'}$.
Since  $[u']_{\piv'}=[v']_{\tauv'}$, we have $[u]_{\piv} = [v]_{\tauv}$.
Let $\sigma\in\Sym_k$ such that $\rho(\sigma(u))=\sigma'(u')$ and $\rho(\sigma(v))=\sigma'(v')$, which clearly exists.
Then, $[\sigma(u)]_{\piv} = [\sigma(v)]_{\tauv}$ by assumption.
Thus, $[\sigma'(u')]_{\piv'} = [\sigma'(v')]_{\tauv'}$ from above.
Thus, $\piv'\equiv_{\Symv^*}\tauv'$ since $\piv'\approx\tauv'$.
\end{proof}

\begin{corollary} \label{lem:rho props}
Let $k>1$ and let $G\in\Graphs$, and let $\pi\in\Pi^k$.
For all $u,v\in V^k$, we have $(G,\pi,u)\equiv_\Delta(G,\pi,v)$ implies $\rho(u)\equiv_{\rho(\pi)} \rho(v)$.
For all $\tau\in \Piv^k$ where $\pi\leq\tau$, we have $\rho(\pi)\leq\rho(\tau)$.
Also, $\Delta_G(\pi)=\pi$ implies $\CP_G(\rho(\pi)) = \rho(\pi)$ and $\Sym(\rho(\pi))=\rho(\pi)$.
\end{corollary}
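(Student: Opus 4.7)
The plan is to deduce this corollary from its ordered counterpart, Lemma~\ref{lem:rhov props}, by means of the standard descent from V-C preorders to V-C equivalence relations discussed after Definition~\ref{def:preorder equiv}. Concretely, for any $\pi \in \Pi^k$ I will choose some $\piv \in \Piv^k$ with $\piv \simeq \pi$ (any total ordering of the cells of $\pi$ works), and use the two facts that (i) $(G,\piv,u) \equiv_\Deltav (G,\piv,v)$ iff $(G,\pi,u) \equiv_\Delta (G,\pi,v)$, and (ii) $\rhov(\piv) \simeq \rho(\pi)$, which follows immediately from the definitions of $\rho$ and $\rhov$ together with $\piv \simeq \pi$.

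For the first claim, let $u,v \in V^k$ with $(G,\pi,u) \equiv_\Delta (G,\pi,v)$. Pick $\piv \simeq \pi$. Then $(G,\piv,u) \equiv_\Deltav (G,\piv,v)$ by (i), and in particular $(G,\piv) \equiv_\Deltav (G,\piv)$. Applying Lemma~\ref{lem:rhov props} with $H = G$ and $\tauv = \piv$, we get $[\rho(u)]_{\rhov(\piv)} = [\rho(v)]_{\rhov(\piv)}$, i.e.\ $\rho(u) \equiv_{\rhov(\piv)} \rho(v)$. By (ii), $\rho(u) \equiv_{\rho(\pi)} \rho(v)$.

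For the second claim, I will argue directly from the definition of $\rho$ rather than invoke the ordered lemma. If $u' \equiv_{\rho(\pi)} v'$, then $\nu(u') \equiv_\pi \nu(v')$ by definition of $\rho$; since $\pi \leq \tau$, this yields $\nu(u') \equiv_\tau \nu(v')$, whence $u' \equiv_{\rho(\tau)} v'$. So $\rho(\pi) \leq \rho(\tau)$.

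For the third claim, assume $\Delta_G(\pi) = \pi$. Choose $\piv \simeq \pi$; then $\piv$ is $\Deltav_G$-stable (again by (i)), so $(G,\piv) \equiv_{\Deltav^*} (G,\piv)$ trivially. Lemma~\ref{lem:rhov props} then gives $\rhov(\piv) \equiv_{\Symv^*} \rhov(\piv)$ and $(G,\rhov(\piv)) \equiv_{\CPv^*} (G,\rhov(\piv))$, which unwind to $\Symv(\rhov(\piv)) = \rhov(\piv)$ and $\CPv_G(\rhov(\piv)) = \rhov(\piv)$. Finally, the unordered identities $\Sym(\rho(\pi)) = \rho(\pi)$ and $\CP_G(\rho(\pi)) = \rho(\pi)$ follow because (ii) together with the fact that $\Sym$ and $\CP$ depend only on the underlying unordered partition (they are induced from $\Symv$ and $\CPv$ exactly the way $\Delta$ is induced from $\Deltav$). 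The only subtlety worth being careful about is the clean correspondence $\rhov(\piv) \simeq \rho(\pi)$ under $\piv \simeq \pi$, which is a direct unpacking of the two definitions but is the linchpin that makes all three descents go through.
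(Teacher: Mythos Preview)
Your proposal is correct and follows exactly the paper's intended route: the paper states this corollary without proof immediately after Lemma~\ref{lem:rhov props}, relying on the general descent from V-C preorders to V-C equivalence relations described after Definition~\ref{def:preorder fn} (namely, choose any $\piv\simeq\pi$, use that $\piv$ is $\Deltav_G$-stable iff $\pi$ is $\Delta_G$-stable, and that $\rhov(\piv)\simeq\rho(\pi)$). Your direct argument for the monotonicity claim $\pi\leq\tau\Rightarrow\rho(\pi)\leq\rho(\tau)$ is cleaner than pushing that particular part through the ordered lemma, but otherwise this is precisely the paper's approach.
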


\begin{lemma}\label{lem:mapping}
Let $\pi=\{\pi_1,...,\pi_m\}\in\Pi^k$ where $\Delta_G(\pi)=\pi$.
Let $u \in \pi_s$ and $u'\in \pi_{s'}$ where $u'=\phi_i(u, u_j)$ for some $1\leq i,j\leq k$ where $j\ne i$.
Then, we have $\phi_{i,j}(\pi_s)=\{\phi_i(v, v_j):v\in \pi_s\} = \pi_{s'}$ and $|\pi_s| =
|\Delta^i(u) \cap \pi_s||\pi_{s'}|$.
\end{lemma}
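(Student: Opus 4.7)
The plan is to prove both conclusions by analyzing the map $\phi_{i,j}: \pi_s \to V^k$ defined by $v \mapsto \phi_i(v,v_j)$. I will show that this map lands in $\pi_{s'}$, that each point of $\pi_{s'}$ has the same (nonzero) number of preimages, and that these preimages exhaust $\pi_s$.

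First, I would handle containment $\phi_{i,j}(\pi_s)\subseteq \pi_{s'}$. For any $v\in \pi_s$, the hypothesis that $\Delta_G(\pi)=\pi$ gives $(G,\pi,u)\equiv_\Delta (G,\pi,v)$, and Corollary \ref{cor:tech} then yields $\phi_i(u,u_j)\equiv_\pi \phi_i(v,v_j)$. Since $u'=\phi_i(u,u_j)\in \pi_{s'}$ by assumption, this shows $\phi_i(v,v_j)\in \pi_{s'}$, so $\phi_{i,j}$ really maps $\pi_s$ into $\pi_{s'}$.

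Next, I would describe the fibers. Observe that every $w\in \pi_{s'}$ has the same combinatorial type as $u'$, in particular $w_i=w_j$, because $u'_i=u_j=u'_j$ and $\pi$ refines combinatorial type (this is built into any $\Delta_G$-stable partition through its $\CP$-component). For such $w$, I claim $\phi_{i,j}^{-1}(w)\cap \pi_s = \Delta^i(w)\cap \pi_s$. Indeed, $\phi_i(v,v_j)=w$ forces $v$ to agree with $w$ outside position $i$ and $v_j=w_i=w_j$, so $v\in \Delta^i(w)$; conversely, given $v=\phi_i(w,x)\in \pi_s$, the identity $w_i=w_j$ and $v_j=w_j$ yields $\phi_i(v,v_j)=\phi_i(w,w_i)=w$. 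These fibers are pairwise disjoint as $w$ ranges over $\pi_{s'}$: if $v\in \Delta^i(w_1)\cap \Delta^i(w_2)$ then $w_1,w_2$ agree outside position $i$, and combined with $(w_1)_i=(w_1)_j=(w_2)_j=(w_2)_i$ this forces $w_1=w_2$.

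Finally, I would invoke $\Delta_G$-stability to count. Since $w\equiv_\pi u'$ for every $w\in \pi_{s'}$, stability gives $|\Delta^i(w)\cap \pi_s|=|\Delta^i(u')\cap \pi_s|$, and since $u'=\phi_i(u,u_j)$ differs from $u$ only in coordinate $i$, we have $\Delta^i(u')=\Delta^i(u)$. Thus every fiber has size $|\Delta^i(u)\cap \pi_s|$, which is at least $1$ because $u\in \pi_s\cap \Delta^i(u')$. This proves surjectivity (so $\phi_{i,j}(\pi_s)=\pi_{s'}$) and gives the decomposition $\pi_s=\bigsqcup_{w\in \pi_{s'}} (\Delta^i(w)\cap \pi_s)$, yielding $|\pi_s|=|\Delta^i(u)\cap \pi_s|\cdot|\pi_{s'}|$. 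The main conceptual point, which I expect to need the most care in writing, is the fiber identification together with disjointness; everything else reduces to bookkeeping once Corollary \ref{cor:tech} is in hand.
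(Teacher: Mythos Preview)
Your proof is correct and follows essentially the same route as the paper's: you both use Corollary~\ref{cor:tech} for the inclusion $\phi_{i,j}(\pi_s)\subseteq\pi_{s'}$, then the fiber identification $\phi_{i,j}^{-1}(w)\cap\pi_s=\Delta^i(w)\cap\pi_s$ together with $\Delta_G$-stability to get equal-size fibers and the counting formula. The only cosmetic difference is that the paper proves surjectivity separately by contradiction (if some $v'\in\pi_{s'}$ had no preimage then $|\Delta^i(v')\cap\pi_s|=0\neq|\Delta^i(u')\cap\pi_s|$), whereas you fold it into the fiber-size argument; your version is slightly more streamlined and makes the role of the combinatorial type $w_i=w_j$ in the disjointness step more explicit than the paper's ``by construction''.
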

\begin{proof}
First, we show that $\phi_{i,j}(\pi_s) \subseteq \pi_{s'}$.
Let $v\in \pi_s$, and let $v'=\phi_i(v,v_j)$. 
From Corollary \ref{cor:tech}, we have $v'\equiv_{\pi} u'$, and thus, $v'\in\pi_{s'}$.
So,  $\phi_{i,j}(\pi_s) \subseteq \pi_{s'}$.

Next, we show that $\phi_{i,j}(\pi_s) \supseteq \pi_{s'}$.
Let $v'\in \pi_{s'}$. 
Assume that for all $v\in V^k$ where $v'=\phi_i(v,v_j)$, we have $v\not\in \pi_s$.
Then, $|\Delta^i(v')\cap \pi_s| = 0$.
But, $|\Delta^i(u')\cap \pi_s| > 0$ since $u\in \Delta^i(u')\cap \pi_s$.
Thus, $u'\not\equiv_{\pi} v'$, a contradiction.
Hence, there exists $v\in \pi_s$ such that $v'=\phi_i(v,v_j)$,
and thus, $\phi_{i,j}(\pi_s) \supseteq \pi_{s'}$. Therefore,
$\phi_{i,j}(\pi_s) = \pi_{s'}$.

Now, since $\phi_{i,j}(\pi_s) = \pi_{s'}$, 
we have $\bigcup_{v'\in \pi_{s'}} \Delta^i(v')\cap \pi_s = \pi_s$, 
and for all $w,w'\in \pi_{s'}$, we have $\Delta^i(w) \cap \Delta^i(w')=\emptyset$ by construction, and $|\Delta^i(w)\cap \pi_s|= |\Delta^i(w')\cap \pi_s|$ since $\Delta_G(\pi)=\pi$.
Thus, $|\pi_s| =|\Delta^i(u') \cap \pi_s||\pi_{s'}|=|\Delta^i(u)\cap \pi_{s'}||\pi_{s'}|$
as required.
\end{proof}

\begin{definition}
Let $u\in V^k$. We define $\pair{u}= \{u_1,...,u_k\}$.
\end{definition}

\begin{lemma}\label{lem:tuple invariance}
Let $\pi=\{\pi_1,...,\pi_m\}\in\Pi^k$  where $\Delta_G(\pi)=\pi$, and let $u\in \pi_s,v\in \pi_{t}$.
If $\pair{u} = \pair{v}$, then $|\pi_s|=|\pi_{t}|$.
\end{lemma}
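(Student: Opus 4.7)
The plan is to combine $\Sym_k$-invariance of $\pi$ with two applications of Lemma~\ref{lem:mapping} in opposite directions along a single-coordinate change, transforming $u$ into $v$ through a chain of tuples whose containing cells all share a common size. Because $\Delta_G(\pi)=\pi$ forces $\Sym(\pi)=\pi$, each $\sigma\in\Sym_k$ carries cells of $\pi$ to cells of $\pi$ of the same size, so tuples in a common $\Sym_k$-orbit trivially lie in equal-sized cells. The heart of the matter is the subtler case where $\pair{u}=\pair{v}$ but $u,v$ are not in a common $\Sym_k$-orbit, for example $(1,1,2)$ versus $(1,2,2)$.

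The key auxiliary I would establish is: if $w'=\phi_i(w,w_j)$ with $j\ne i$ and the value $w_i$ also occurs at some position $l\ne i$ of $w$, then the cells of $\pi$ containing $w$ and $w'$ have equal size. To prove it, let $w\in\pi_a$ and $w'\in\pi_b$. Lemma~\ref{lem:mapping} first yields $|\pi_a|=|\Delta^i(w)\cap\pi_a|\cdot|\pi_b|$. But $w'_l=w_l=w_i$ with $l\ne i$, so $\phi_i(w',w'_l)=w\in\pi_a$, and a second application of Lemma~\ref{lem:mapping} at $w'$, $i$, $l$ gives $|\pi_b|=|\Delta^i(w')\cap\pi_b|\cdot|\pi_a|$. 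Multiplying and cancelling $|\pi_a|>0$ forces each intersection size to equal $1$, so $|\pi_a|=|\pi_b|$.

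To finish, set $S=\pair{u}=\pair{v}$, $r=|S|$, and list $S=\{a_1,\ldots,a_r\}$. Pick $\sigma,\sigma'\in\Sym_k$ so that $\sigma(u)=(a_1,\ldots,a_r,c_{r+1},\ldots,c_k)$ and $\sigma'(v)=(a_1,\ldots,a_r,d_{r+1},\ldots,d_k)$ with each $c_j,d_j\in S$; such $\sigma,\sigma'$ exist because every element of $S$ occurs in each tuple. Then transform $\sigma(u)$ into $\sigma'(v)$ one coordinate at a time: for $j=r+1,\ldots,k$ in order, replace the current entry at position $j$ by $d_j$ via $\phi_j(\cdot,\cdot_{j'})$, where $j'$ is any position currently holding $d_j$ (such $j'$ exists because $d_j\in S$ and positions $1,\ldots,r$ always carry $a_1,\ldots,a_r$, so one may take $j'\le r<j$). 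The entry discarded at position $j$ is some $a_m$ with $m\le r<j$, so it still occurs at the unchanged position $m\ne j$, meaning the auxiliary applies at every step and cell size is preserved. Chaining these moves with the two $\Sym_k$-steps at the ends produces $|\pi_s|=|\pi_t|$.
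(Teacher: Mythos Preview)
Your proof is correct and follows essentially the same strategy as the paper: normalize via $\Sym_k$-invariance so that the first $r=|\langle u\rangle|$ positions list the elements of $S$, then walk from one tuple to the other by single-coordinate replacements $\phi_i(\cdot,\cdot_j)$, invoking Lemma~\ref{lem:mapping} at each step. Your two-sided application of Lemma~\ref{lem:mapping} to force $|\Delta^i(w)\cap\pi_a|=1$ and hence $|\pi_a|=|\pi_b|$ is a cleaner justification of the size equality than the paper's one-line appeal to that lemma, but the underlying argument is the same.
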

\begin{proof}
Assume $\pair{u}=\pair{v}$, but $u\neq v$.
Since $\Sym(\pi)=\pi$, we may assume that $\pair{u}=\{u_1,...,u_r\}$
and $\pair{v}=\{v_1,...,v_r\}$ where $u_i=v_i$ for all $1\leq i \leq r$ where
$r\geq|\pair{u}|=|\pair{v}|$.
Now, there exists $r < i \leq k$ such that $u_i\neq v_i$.
Since $\pair{u}=\pair{v}$, we must have $u_i=u_j=v_j$ for some $1\leq j \leq r$.
Let $u'=\phi_i(u, u_j)\in \pi_{s'}$, so $u'$ differs from $v$ in one less component than $u$, and $\pair{u}=\pair{u'}$.
By Lemma \ref{lem:mapping}, we have $|\pi_s|=|\pi_{s'}|$.
Repeating this process until we arrive at $v$ proves the result.
\end{proof}

\begin{corollary}\label{cor:set sizes}
Let $G,H\in\Graphs$. Let $\piv=(\piv_1,...,\piv_m),\tauv=(\tauv_1,...,\tauv_m)\in\Piv^k$ where $(G,\piv)\equiv_{\Deltav^*}(H,\tauv)$, and let $u\in \piv_s,v\in \tauv_s$
and $u'\in \piv_t,v'\in \tauv_t$.
If $\pair{u,v} = \pair{u',v'}$, then $|\piv_s|=|\piv_t|=|\tauv_s|=|\tauv_t|$.
\end{corollary}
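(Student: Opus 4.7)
The plan is to reduce the corollary to Lemma \ref{lem:tuple invariance} by separately handling the $G$-side and the $H$-side, and then to close the gap between $\piv$ and $\tauv$ using $\piv\approx\tauv$. First, I would observe that, since $\pair{u,v}$ and $\pair{u',v'}$ are \emph{sets} of pairs of vertices and $\pair{u,v}=\pair{u',v'}$, projecting onto the first coordinate gives $\pair{u}=\pair{u'}$ and projecting onto the second gives $\pair{v}=\pair{v'}$ (as unordered sets of vertices). This is the elementary set-theoretic observation that turns the hypothesis into the form directly consumed by Lemma \ref{lem:tuple invariance}.

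Next, I would pass from the ordered setting to the unordered one. Since $\piv$ is $\Deltav_G$-stable by hypothesis (because $(G,\piv)\equiv_{\Deltav^*}(H,\tauv)$ in the sense of Definition \ref{def:preorder equiv star}), the induced unordered partition $\pi\simeq\piv$ is $\Delta_G$-stable; this is exactly the correspondence between a V-C preorder and its induced V-C equivalence relation described just after Definition \ref{def:preorder fn}. With $u\in\pi_s$, $u'\in\pi_t$, and $\pair{u}=\pair{u'}$, Lemma \ref{lem:tuple invariance} yields $|\piv_s|=|\pi_s|=|\pi_t|=|\piv_t|$. Applying the same argument on the $H$-side to the unordered partition $\tau\simeq\tauv$ (which is $\Delta_H$-stable by the same reasoning) and to the pair $v,v'$ with $\pair{v}=\pair{v'}$, we obtain $|\tauv_s|=|\tauv_t|$.

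To close the gap between the $G$-side sizes and the $H$-side sizes, I would invoke $\piv\approx\tauv$. This comes from $(G,\piv)\equiv_{\Deltav^*}(H,\tauv)$ via Lemma \ref{lem:equiv props}(1), which guarantees $\piv\approx\tauv$, i.e.\ $|\piv_i|=|\tauv_i|$ for every $i$. In particular $|\piv_s|=|\tauv_s|$ and $|\piv_t|=|\tauv_t|$. Chaining the three equalities yields $|\piv_s|=|\piv_t|=|\tauv_s|=|\tauv_t|$, as required.

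The only subtle step is the transfer from $\Deltav_G$-stability of the ordered partition $\piv$ to $\Delta_G$-stability of the unordered partition $\pi$; this is routine once one unpacks Definitions \ref{def:preorder fn} and \ref{def:preorder equiv star} together with the remark (after Definition \ref{def:preorder equiv}) that $\alphav_G(\piv)\simeq\alpha_G(\pi)$ whenever $\piv\simeq\pi$. With that in hand the whole proof is essentially a one-line reduction to Lemma \ref{lem:tuple invariance} plus one application of $\piv\approx\tauv$, so I do not anticipate any real obstacle beyond carefully recording these bookkeeping conversions.
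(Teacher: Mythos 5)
Your proof is correct and follows the same route as the paper's: project $\pair{u,v}=\pair{u',v'}$ onto first coordinates to obtain $\pair{u}=\pair{u'}$, invoke Lemma~\ref{lem:tuple invariance}, and use $\piv\approx\tauv$ (from $(G,\piv)\equiv_{\Deltav^*}(H,\tauv)$) to transfer the equality of cell sizes to the $\tauv$ side. The paper's one-line proof is terser: it performs only the first-coordinate projection (your separate $H$-side argument via $\pair{v}=\pair{v'}$ is correct but redundant, since $\piv\approx\tauv$ already yields $|\tauv_s|=|\tauv_t|$), and it leaves implicit the ordered-to-unordered stability transfer that you spell out.
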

\begin{proof}
The fact $\pair{u,v}=\pair{u',v'}$ implies $\pair{u}=\pair{u'}$,
so applying Lemma \ref{lem:tuple invariance}, the result follows.
\end{proof}

\begin{lemma}\label{lem:supset}
Let $P\subseteq \BX^k$, and let $Y\in P$. Let $I \subseteq V^2$.
If $Y_I = 0$, then $Y_{I'}= 0$ for all $I'\supseteq I$.
\end{lemma}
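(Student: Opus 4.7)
The plan is to proceed by induction on $|I'\setminus I|$, relying on the Sherali--Adams projection equations (\ref{e2}) of $\BX^k$ together with the non-negativity constraints $Y_K\geq0$ for $|K|\leq k$. Since $Y\in P\subseteq\BX^k$, the variables $Y_{I'}$ make sense only for $|I'|\leq k$, so we may assume throughout that $|I'|\leq k$.

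For the base case $|I'\setminus I|=0$, we have $I'=I$ and the claim is immediate. For the inductive step, suppose the conclusion holds whenever the symmetric difference has size at most $\ell$, and let $I'\supseteq I$ with $|I'\setminus I|=\ell+1$. Pick any $(u,w)\in I'\setminus I$ and set $J=I'\setminus\{(u,w)\}$. Then $J\supseteq I$ with $|J\setminus I|=\ell$, so by the inductive hypothesis $Y_J=0$. Since $|J|=|I'|-1\leq k-1$, equation (\ref{e2}) (applied with $I$ replaced by $J$ and the choice of row $u$) gives
\[
\sum_{w'\in V} Y_{J\cup\{(u,w')\}} \;=\; Y_J \;=\; 0.
\]
Every summand satisfies $Y_{J\cup\{(u,w')\}}\geq 0$, so each term must vanish. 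In particular, taking $w'=w$, we conclude $Y_{I'}=Y_{J\cup\{(u,w)\}}=0$, as required.

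I do not expect any real obstacle here; the argument is essentially a one-line application of the projection equation together with non-negativity. The only mild subtlety is that one might worry about the case where $u$ already appears as the first coordinate of some pair $(u,w'')\in J$. But in that case the term $w'=w''$ of the sum just reproduces $Y_J$, and every remaining term is still non-negative and sums to $0$, so the conclusion $Y_{J\cup\{(u,w)\}}=0$ is unaffected. (Symmetrically, one could instead use equation (\ref{e3}) by stripping a pair by its second coordinate; either suffices.)
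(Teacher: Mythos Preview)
Your proof is correct and follows essentially the same approach as the paper: induction on $|I'\setminus I|$, stripping one pair from $I'$, applying equation (\ref{e2}) to the smaller set, and using non-negativity to conclude each summand vanishes. The paper's own proof is more terse (it simply says ``equations (\ref{e2}) imply that $Y_{I'}=0$''), but the underlying argument is identical; your version just spells out the non-negativity step and the minor bookkeeping explicitly.
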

\begin{proof}
We show by induction that $Y_{I'}=0$ for all $I'\supseteq I$ where $|I'\setminus I| \leq r$
for all $r$.
It is trivially true for $r=0$, so assume true for $r$.
Let $I'\supseteq I$ where $|I'\setminus I| = r+1$.
Then, there exists $I''\supseteq I$ where $|I''\setminus I| = r$
and $I' = I''\cup\{u,v\}$ for some $u,v\in V$.
By assumption $Y_{I''} = 0$. Then, equations (\ref{e2}) imply that $Y_{I'} = 0$ as required.
\end{proof}

\bibliography{references}{}
\bibliographystyle{amsplain}

\end{document}